\newtheorem{thm}{Theorem}[section]
\newtheorem{cor}[thm]{Corollary}
\newtheorem{claim}[thm]{Claim}
\newtheorem{fact}[thm]{Fact}
\newtheorem{lemma}[thm]{Lemma}
\newtheorem{prop}[thm]{Proposition}
\newtheorem{conj}[thm]{Conjecture}
\theoremstyle{definition}
\newtheorem{definition}[thm]{Definition}
\newtheorem{remark}[thm]{Remark}
\def\rquotient#1#2{%
	\makeatletter
	\raise.3ex\hbox{$#1$}/\lower.3ex\hbox{$#2$}%
	\makeatother
}	
\newcommand{\subjclass}[2][2010]{%
	\let\@oldtitle\@title%
	\gdef\@title{\@oldtitle\footnotetext{#1 \emph{Mathematics subject classification.} #2}}%
}
\newcommand{\keywords}[1]{%
	\let\@@oldtitle\@title%
	\gdef\@title{\@@oldtitle\footnotetext{\emph{Key words and phrases.} #1.}}%
}
\newcommand{\Address}{{
		\bigskip
		\small
		
		\textsc{D\'epartement de Math\'ematiques B\^atiment 307, Facult\'e des Sciences d'Orsay, Universit\'e Paris-Sud, F-91405 Orsay Cedex, France.}\par\nopagebreak
		\textit{E-mail address}: \texttt{anthony.genevois@math.u-psud.fr}
		
}}
\title{Cyclic hyperbolicity in CAT(0) cube complexes}
\date{\today}
\author{Anthony Genevois}
\subjclass{Primary 20F65. Secondary 20F67.}
\keywords{CAT(0) cube complexes, special groups, relative hyperbolicity}
\begin{document}

\maketitle

\begin{abstract}
It is known that a cocompact special group $G$ does not contain $\mathbb{Z} \times \mathbb{Z}$ if and only if it is hyperbolic; and it does not contain $\mathbb{F}_2 \times \mathbb{Z}$ if and only if it is toric relatively hyperbolic. Pursuing in this direction, we show that $G$ does not contain $\mathbb{F}_2 \times \mathbb{F}_2$ if and only if it is weakly hyperbolic relative to cyclic subgroups, or cyclically hyperbolic for short. This observation motivates the study of cyclically hyperbolic groups, which we initiate in the class of groups acting geometrically on CAT(0) cube complexes. Given a cubulable cyclically hyperbolic group $G$, we first prove a structure theorem: $G$ virtually splits as the direct sum of a free abelian group and an acylindrically hyperbolic cubulable group. Next, we prove a strong Tits alternative: every subgroup $H \leq G$ either is virtually abelian or or contains a finite-index subgroup whose commutator subgroup is acylindrically hyperbolic. As a consequence, $G$ is SQ-universal and it cannot contain subgroups such as products of free groups and virtually simple groups. 
\end{abstract}

\tableofcontents

\section{Introduction}

\noindent
A major theme in geometric group theory is to find algebraic characterisations of various (quasi-)geometric properties of finitely generated groups. In this direction, it has been observed that being (Gromov-)hyperbolic often amounts to not containing $\mathbb{Z}^2$ as a subgroup. Of course, this is not true in full generality, as shown for instance by Baumslag-Solitar groups and lamplighter groups. But it turns out to be true for many families of groups exhibiting a nonpositively curved behaviour, even though it is still unknown whether this holds for arbitrary CAT(0) groups or even for cubulable groups.

\medskip \noindent
A large class of CAT(0) groups for which it is known that being hyperbolic is equivalent to not containing $\mathbb{Z}^2$ as a subgroup is given by (virtually) \emph{cocompact special groups}, as introduced in \cite{MR2377497}. This is a theorem proved by Caprace and Haglund in \cite{SpecialHyp}. In \cite{SpecialRH}, we introduced a new combinatorial formalism that, roughly speaking, allows us to study special groups similarly to right-angled Artin groups, highlighting a strong connection between product subcomplexes and product subgroups. As an application, we generalised \cite{SpecialHyp} by proving that a (virtually) cocompact special group is toric relatively hyperbolic if and only if it does not contain $\mathbb{F}_2 \times \mathbb{Z}$ as a subgroup.

\medskip \noindent
In view of these results, a natural question rises: is there a property of negative curvature corresponding to not containing $\mathbb{F}_2 \times \mathbb{F}_2$ as a subgroup? The first main result of this article answers this question as follows:

\begin{thm}\label{thm:Special}
A virtually cocompact special group is cyclically hyperbolic if and only if it does not contain $\mathbb{F}_2 \times \mathbb{F}_2$ as a subgroup.
\end{thm}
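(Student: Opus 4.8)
The plan is to prove the two implications separately, using throughout the combinatorial formalism of \cite{SpecialRH}. Since being cyclically hyperbolic and containing $\mathbb{F}_2\times\mathbb{F}_2$ are both preserved under passing to finite-index sub- and over-groups, I would first reduce to the case where $G$ is cocompact special, and fix a system $(\Gamma,\mathcal{G})$ for $G$ together with the quasi-median graph $\QM$ on (a convex subcomplex of) which $G$ acts geometrically, arranged so that the product subgroups of $G$ are governed by the joins of $\Gamma$ and their vertex subgroups. Combining this with the Tits alternative for special groups, the preliminary step is a dictionary: $G$ contains $\mathbb{F}_2\times\mathbb{F}_2$ if and only if it contains a product subgroup $H_1\times H_2$ in which neither factor is virtually abelian, equivalently if and only if $\Gamma$ contains a ``square'' --- two non-adjacent sub-joins $\Gamma_1,\Gamma_2$, joined to one another, each carrying a non-virtually-abelian subgroup. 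When $G$ contains no $\mathbb{F}_2\times\mathbb{F}_2$, this says that every maximal product subgroup of $G$ splits as $A\times H$ with $A$ virtually abelian and $H$ directly indecomposable and non-virtually-abelian (or trivial); moreover there are, up to conjugacy, only finitely many maximal product subgroups, hence only finitely many ``abelian directions'' to keep track of.

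For the forward implication, suppose $G$ is weakly hyperbolic relative to a finite family $\mathcal{P}=\{P_1,\dots,P_n\}$ of virtually cyclic subgroups, so that $Y:=\mathrm{Cay}(G,S\sqcup\mathcal{P})$ is hyperbolic, and assume for contradiction that $F\times F'\leq G$ with $F,F'$ non-abelian free. Each $P_i$ is a virtually cyclic subgroup of a special group, hence has a well-defined axial direction, and only finitely many directions occur among the $P_i$. Since $F$ and $F'$ each contain infinitely many pairwise non-commensurable maximal cyclic subgroups, I can choose $w\in F$ and $v\in F'$ so that the ``rectangular'' subgroup $R:=\langle w\rangle\times\langle v\rangle\cong\mathbb{Z}^2$ is transverse to every $P_i$, in the sense that no coset of any $P_i$ meets $R$ in an infinite set; moreover $R$ lies inside a maximal product subgroup and is therefore undistorted in $G$. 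A geodesic analysis in $Y$ --- exploiting that finitely many virtually cyclic peripherals permit only bounded ``shortcutting'' across such a transverse, undistorted $\mathbb{Z}^2$ --- then shows that the orbit map $R\to Y$ is a quasi-isometric embedding, producing a quasi-isometrically embedded Euclidean plane in the hyperbolic space $Y$: a contradiction. This argument uses only the presence of $F\times F'$ as a subgroup, so it applies verbatim to any group containing $\mathbb{F}_2\times\mathbb{F}_2$; in particular $\mathbb{F}_2\times\mathbb{F}_2$ is not itself cyclically hyperbolic.

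For the converse, assume $G$ contains no $\mathbb{F}_2\times\mathbb{F}_2$, so that every maximal product subgroup has the form $A\times H$ as above. For each of the finitely many conjugacy classes of maximal product subgroups $P=A\times H$, fix a basis $g_1^P,\dots,g_{k_P}^P$ of a finite-index free abelian subgroup $\mathbb{Z}^{k_P}\leq A$, and let $\mathcal{P}$ be the resulting finite family of infinite cyclic subgroups $\langle g_i^P\rangle$; I claim $G$ is weakly hyperbolic relative to $\mathcal{P}$. The elementary half of the claim is that coning off these subgroups destroys all flats: each $\langle g_i^P\rangle$ stabilises a combinatorial line of $\QM$, coning off such a line together with all its parallels collapses the corresponding $\ell^1$-direction to bounded diameter, and since every flat of $\QM$ lies inside some maximal product subcomplex --- whose ``abelian directions'' are precisely those being coned off --- the resulting coned-off complex contains no quasi-isometrically embedded plane. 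The substantive half is to upgrade ``no flats'' to genuine hyperbolicity: for this I would run a disc-diagram argument in the special cube complex, exploiting the convexity of the coned-off subcomplexes and the control on how they can overlap, to show that every sufficiently ``thick'' configuration in $\QM$ is absorbed into a maximal product subcomplex whose abelian directions have been collapsed, whence the coned-off space satisfies a linear isoperimetric inequality and is therefore hyperbolic. This gives that $G$ is cyclically hyperbolic.

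The main obstacle, as this outline makes clear, is the last step: proving that coning off the abelian directions of the maximal product subcomplexes yields a space that is not merely flat-free but genuinely hyperbolic. This demands precise control of how combinatorial geodesics and minimal-area disc diagrams in $\QM$ interact with the coned-off subcomplexes, and it is here that the structural input of \cite{SpecialRH} --- convexity and parallelism of the relevant subcomplexes, together with the finiteness of the crossing data --- is indispensable.
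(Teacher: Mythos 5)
Your overall architecture --- reduce to the cocompact special case, use the combinatorial formalism of diagrams, and identify the peripherals with the abelian directions of product subcomplexes --- matches the paper's strategy for the ``no $\mathbb{F}_2 \times \mathbb{F}_2$ implies cyclically hyperbolic'' direction, but there are two substantive gaps, one in each implication.

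For the implication ``cyclically hyperbolic implies no $\mathbb{F}_2 \times \mathbb{F}_2$'', you attempt a direct argument: choose $w \in F$, $v \in F'$ so that $R = \langle w \rangle \times \langle v \rangle$ is ``transverse'' to all peripherals (cosets meet $R$ in bounded sets), then assert that the orbit map $R \to Y$ is a quasi-isometric embedding. This is precisely where the argument is incomplete. Bounded intersection of cosets with $R$ does not control shortcuts in the cone-off: a coset $gP_i$ can fellow-travel a diagonal of $R$ at bounded distance without meeting $R$ in more than a point, and concatenating such nearby cosets can shorten distances unboundedly. The paper confronts exactly this difficulty and handles it via the ``trees of hyperplanes'' criterion (Proposition~\ref{prop:NotHyp}), whose proof is a delicate combinatorial analysis (Claim~\ref{claim:Rays}, Fact~\ref{fact:CrossingBeta}) showing that one can pick rays of hyperplanes that interact only boundedly with \emph{every} translate of a peripheral axis. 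In fact the paper's proof of this direction of Theorem~\ref{thm:Special} does not even argue directly: it invokes Corollary~\ref{cor:F2xF2}, obtained from the Tits alternative (Theorem~\ref{thm:TitsAlt}), which in turn relies on the horomorphism machinery of Section~\ref{section:Horo} and the Caprace--Sageev rank-one results. Your ``geodesic analysis in $Y$ $\ldots$ permits only bounded shortcutting'' is thus the entire content of the hard direction, stated rather than proved.

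For the converse implication, your idea is the right one, but your peripherals are too restrictive and the crucial upgrade is unaddressed. You cone off only the abelian factors of \emph{maximal} product subgroups $P = A \times H$. However, the flat rectangles you must kill produce local product decompositions $w\langle\Phi\rangle w^{-1} \times w\langle\Psi\rangle w^{-1}$ coming from joins $\Phi \ast \Psi \subset \Delta X$, and the abelian factor $w\langle\Phi\rangle w^{-1}$ need not sit inside the $A$ of any maximal $A \times H$ --- it could be a free abelian subgroup living entirely inside the non-abelian factor $H$. The paper avoids this alignment problem altogether by taking $\mathcal{H}$ to be \emph{every} abelian $w(x)\langle\Lambda\rangle w(x)^{-1}$ with $x \in X$, $\Lambda \subset \Delta X$, which automatically matches whatever $\Phi$ arises, and then invokes Lemma~\ref{lem:CyclicVSabelian} to pass from abelian to cyclic peripherals. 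More importantly, you correctly flag the ``substantive half'' --- upgrading ``the cone-off has no quasi-flats'' to genuine hyperbolicity --- but you do not supply the tool. A disc-diagram/linear-isoperimetric argument of the kind you gesture at is exactly what fails here: the cone-off may still contain unbounded flats collapsed in only one coordinate (think of $\mathbb{F}_2 \times \mathbb{Z}$ coned over $\mathbb{Z}$). The paper's key new input is Theorem~\ref{thm:WhenHyp}, which shows hyperbolicity of the cone-off when every flat rectangle collapses \emph{in one of its two coordinates}, rather than becoming bounded; its proof requires the staircase lemma (Lemma~\ref{lem:Interval}) on the structure of intervals, a genuine improvement over the earlier criterion from \cite{coningoff}. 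Without this you cannot conclude hyperbolicity from the local product analysis, and that is the actual crux of the proof.

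A minor remark: you invoke ``the quasi-median graph $\QM$'', but for cocompact special groups the paper works with the CAT(0) cube complex $M(X,x_0)$ (the universal cover of the special cube complex $X$); quasi-median graphs appear only in the graph product applications of Section~\ref{section:Examples}.
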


\noindent
Here, \emph{cyclically hyperbolic} is a shorthand for \emph{weakly hyperbolic relative to a finite family of cyclic subgroups}. Recall from \cite{MR1650094} that a finitely generated group $G$ is weakly hyperbolic relative to some collection of subgroups $\mathcal{H}$ if the cone-off of a Cayley graph of $G$ (constructed from an arbitrary finite generating set) over the cosets of subgroups in $\mathcal{H}$ is (Gromov-)hyperbolic. 

\medskip \noindent
It is known that there is no global theory of weakly relatively hyperbolic groups similar to the theory of (strongly) relatively hyperbolic groups. For instance, it is shown in \cite{MR1929719} that there exist finitely generated groups that are weakly hyperbolic relative to hyperbolic groups but that are not finitely presented, that have unsolvable word problem, or that are simple. However, these constructions do not provide cyclically hyperbolic groups. Theorem~\ref{thm:Special} suggests that an investigation of cyclically hyperbolic groups could be of interest. Observe, however, that containing a product of free groups is not an obstruction to cyclic hyperbolicity in full generality. Indeed, obvious examples of cyclically hyperbolic groups are boundedly generated groups. These include $\mathrm{SL}(n,\mathbb{Z})$ for $n \geq 3$, which contains products of non-abelian groups (at least for $n \geq 4$). Even worse, there exists a boundedly generated (finitely presented) group containing an isomorphic copy of every recursively presented group \cite[Section~6]{MR2181790}. Nevertheless, it is reasonable to think that cyclically hyperbolic groups satisfy hyperbolic properties relative to their subsets living in products of cyclic subgroups (where products here refer to the group law) and that these subsets are ``small'' in many classes of nonpositively-curved groups met in geometric group theory. 

\medskip \noindent
Of course, cyclic hyperbolicity might not be the property to look at in full generality. There might exist a property that remains to define and that turns out to be equivalent to cyclic hyperbolicity among cocompact special groups. The main objective of this article is to motivate the idea that cyclic hyperbolicity is of interest at least in the family of groups acting geometrically on CAT(0) cube complexes. We emphasize that (cubulable) cyclically hyperbolic groups do not contain neither are contained in acylindrically or relatively hyperbolic groups. For instance, the free product $\mathbb{Z} \ast (\mathbb{F}_2 \times \mathbb{F}_2)$ is relatively hyperbolic but not cyclically hyperbolic (as a consequence of Corollary~\ref{cor:F2xF2}) and the direct product $\mathbb{Z} \times \mathbb{F}_2$ is cyclically hyperbolic but not acylindrically hyperbolic. Right-angled Artin groups $A(\Gamma)$ given by connected square-free graphs $\Gamma$ also provide many examples of cyclically hyperbolic groups (see Proposition~\ref{prop:RAAGcyc}) that are not relatively hyperbolic (see e.g.\ \cite{MR2364824}). Also, it is worth mentioning that it follows from \cite{MR3217625} that cubulable groups are always weakly hyperbolic relative their hyperplane-stabilisers. However, the structure of groups acting geometrically on CAT(0) cube complexes with cyclic (or more generally, virtually abelian) hyperplane-stabilisers is very restrictive; see for instance \cite{MR4579353}. Therefore, typically one cannot deduce some cyclic hyperbolicity from \cite{MR3217625}. And, as mentioned earlier, more general weak relative hyperbolicity usually does not bring valuable information. For instance, uniform lattices in products of trees are weakly hyperbolic relative to virtually free groups, but they may not be virtually torsion-free \cite{MR2694733} or they may be simple \cite{BurgerMozes}. Therefore, \cite{MR3217625} does not bring the kind of information we are interested in here.

\medskip \noindent
The typical example of a cubulable group that is not cyclically hyperbolic is $\mathbb{F}_2 \times \mathbb{F}_2$. More generally, if a group $G$ acts geometrically on a CAT(0) cube complex $X$, then the cyclic hyperbolicity of $G$ prevents $X$ from containing a subspace that ``looks like'' a product of two bushy trees. For instance, the cube complex cannot have two non-Euclidean factors. Combining this fact in particular with the Rank Rigidity Theorem \cite{MR2827012}, this implies that implies that:

\begin{thm}[Structure theorem]\label{thm:Structure}
Let $G$ be a group acting geometrically on a CAT(0) cube complex. If $G$ is cyclically hyperbolic, then it virtually splits as a direct sum $A \times H$ where $A$ is a free abelian group and where $H$ is either trivial or an acylindrically hyperbolic group that acts geometrically on a CAT(0) cube complex.
\end{thm}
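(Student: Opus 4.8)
The plan is to use the theory of cubulable groups and their canonical product decompositions. First I would invoke the standard decomposition theory for groups acting geometrically on CAT(0) cube complexes: after passing to a finite-index subgroup, we may assume $G$ acts geometrically and essentially on a CAT(0) cube complex $X$, and by Caprace--Sageev's decomposition (together with a geometric version of the flat torus theorem for cube complexes), $X$ decomposes as a finite product $X \cong X_1 \times \cdots \times X_n \times \mathbb{R}^m$ where each $X_i$ is unbounded, irreducible, not a line, and $G$ acts on this product with a finite-index subgroup $G_0$ respecting the factorisation, so that $G_0$ virtually splits as $G_1 \times \cdots \times G_n \times \mathbb{Z}^m$ with $G_i$ acting geometrically and essentially on $X_i$. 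Each irreducible non-line factor $X_i$, equipped with a geometric action of the essential group $G_i$, is acylindrically hyperbolic --- this is a theorem of Caprace--Sageev / Genevois (an irreducible cube complex with a geometric action either is a line or contains a rank-one isometry and the action is acylindrical on the contact/crossing graph), which I may cite as known.

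The heart of the argument is then to show that cyclic hyperbolicity of $G$ forces $n \leq 1$. Suppose for contradiction that $n \geq 2$. Each $G_i$ ($i=1,2$) is acylindrically hyperbolic, hence non-elementary acylindrically hyperbolic, and therefore contains a copy of $\mathbb{F}_2$; more precisely each $G_i$ contains a free subgroup generated by two loxodromic elements acting on $X_i$ with unbounded, ``bushy'' orbits. Taking $F_1 \leq G_1$ and $F_2 \leq G_2$ two such free subgroups, the product $F_1 \times F_2 \leq G_1 \times G_2 \leq G_0 \leq G$ is a copy of $\mathbb{F}_2 \times \mathbb{F}_2$. It then remains to check that the presence of such a subgroup obstructs cyclic hyperbolicity: if $G$ were cyclically hyperbolic, coning off cosets of finitely many cyclic (hence two-ended, or finite) subgroups yields a hyperbolic space on which $G$ acts; but the subgroup $\mathbb{F}_2 \times \mathbb{F}_2$ acts on a quasiconvex-embedded piece, and I would argue (using that the coned-off cyclic subgroups intersect $\mathbb{F}_2 \times \mathbb{F}_2$ in subgroups that are undistorted in the product --- because cyclic subgroups of $\mathbb{F}_2 \times \mathbb{F}_2$ are cyclic in at most one factor up to finite index) that $\mathbb{F}_2 \times \mathbb{F}_2$ would itself become, roughly, hyperbolic relative to cyclic subgroups, which is impossible since $\mathbb{F}_2 \times \mathbb{F}_2$ contains $\mathbb{F}_2 \times \mathbb{Z}$ and even $\mathbb{Z}^2$ in a way that cannot be peripheral. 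Concretely, I expect the cleanest route is: cyclic hyperbolicity passes to a property of the Cayley graph that, combined with quasiconvexity of the $\mathbb{F}_2 \times \mathbb{F}_2$ copy coming from convexity of the product subcomplex $X_1 \times X_2$, shows $\mathbb{F}_2 \times \mathbb{F}_2$ is cyclically hyperbolic; then a direct contradiction, since in $\mathbb{F}_2 \times \mathbb{F}_2$ there are two commuting non-abelian free subgroups whose orbits stay bounded in any cone-off over finitely many cyclic subgroups only if one of the factors is coned, which it is not.

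Once $n \leq 1$ is established, the conclusion is immediate: either $n = 0$, in which case $G_0 \cong \mathbb{Z}^m$ and $G$ is virtually abelian so we may take $A = G_0$ and $H$ trivial; or $n = 1$, in which case $G_0 \cong G_1 \times \mathbb{Z}^m$ with $G_1$ acting geometrically, essentially and irreducibly on the non-line cube complex $X_1$, hence acylindrically hyperbolic, so we set $A = \mathbb{Z}^m$ and $H = G_1$. In either case $G$ virtually splits as $A \times H$ with $A$ free abelian and $H$ trivial or acylindrically hyperbolic cubulable, as claimed.

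The main obstacle I anticipate is the second step: transferring cyclic hyperbolicity from $G$ down to the subgroup $\mathbb{F}_2 \times \mathbb{F}_2$ and extracting a contradiction. Weak relative hyperbolicity does not in general pass to subgroups, so the argument must genuinely use the geometry of the cubulation --- in particular that the product subcomplex $X_1 \times X_2$ is convex in $X$, so that its stabiliser's orbits are quasiconvex, and that the two free factors act on it with genuinely two-dimensional ``flat-like'' behaviour (products of bushy trees) that cannot be absorbed into a bounded-rank collection of two-ended peripherals. I would expect to handle this by choosing the loxodromic generators of $F_1, F_2$ to be WPD/rank-one in $G$ acting on $X$, so that in any cone-off over cyclic subgroups they still have positive, indeed linear, translation length unless a conjugate of one of them lies in a peripheral cyclic subgroup --- and then a counting/pigeonhole argument on the finitely many peripherals rules this out for a suitable choice of free subgroups, yielding the contradiction.
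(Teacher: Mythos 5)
Your overall skeleton matches the paper's: reduce to an essential action, use a Caprace--Sageev-type product decomposition, split off the quasi-line factors as a virtual $\mathbb{Z}^m$ via \cite[Corollary~2.8]{MR3095714} (as corrected by Fioravanti), show that at most one irreducible non-line factor can survive, and then invoke rank-one isometries and \cite{MR3849623} to conclude acylindrical hyperbolicity of the remaining factor group. The last step and the bookkeeping are fine.

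The genuine gap is the central step, showing $n \leq 1$. You propose to extract a copy of $\mathbb{F}_2 \times \mathbb{F}_2$ from two non-line factors and then to argue that this contradicts cyclic hyperbolicity by ``transferring'' the property down to the quasiconvex subgroup $\mathbb{F}_2 \times \mathbb{F}_2$. You correctly flag that weak relative hyperbolicity does not pass to subgroups, but the sketch you offer for circumventing this --- choosing WPD loxodromics, a pigeonhole argument on finitely many peripherals, asserting that the product copy inherits a version of cyclic hyperbolicity --- is not an argument; it is a list of things one would like to be true. In fact the implication ``cyclically hyperbolic $\Rightarrow$ no $\mathbb{F}_2 \times \mathbb{F}_2$'' for cubulable groups is itself one of the paper's main results (Corollary~\ref{cor:F2xF2}), derived from Theorem~\ref{thm:TitsAlt}, which in turn rests on exactly the same obstruction the Structure Theorem needs; so taking it as a known tool here would be circular. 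The paper avoids this entirely by never descending to the subgroup: it works directly in the ambient cube complex. Proposition~\ref{prop:ForCriterion} and Lemma~\ref{lem:TreeHyp} produce, inside each of the two factors $X_1, X_2$, a branching, uniformly distributed tree of pairwise strongly separated hyperplanes; these are automatically transverse and have no transeparation; and then Proposition~\ref{prop:NotHyp} --- the technical heart, whose proof occupies most of Section~\ref{section:Obstruction} and constructs rays of hyperplanes avoiding all $G$-translates of the coning axes so that large flat rectangles quasi-isometrically survive in the cone-off --- shows $G$ cannot be cyclically hyperbolic. Without something playing the role of Proposition~\ref{prop:NotHyp}, your proof does not close, and the ideas you float in its place (undistortedness of cyclic subgroups, counting peripherals) do not engage with the actual difficulty: one must control, for an \emph{arbitrary} finite list of cyclic subgroups, how their axes can interact with a two-dimensional bushy flat region of $X$, which requires the ray-of-hyperplanes construction, not just translation-length bookkeeping.
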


\noindent
Because acylindrically hyperbolic groups are SQ-universal \cite{DGO}, it immediately follows that:

\begin{cor}\label{cor:SQuniversal}
Let $G$ be a group acting geometrically on a CAT(0) cube complex. If $G$ is cyclically hyperbolic, then it is either virtually abelian or SQ-universal.
\end{cor}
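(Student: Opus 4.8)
The corollary should follow quickly from the structure theorem (Theorem~\ref{thm:Structure}) together with two facts about SQ-universality: acylindrically hyperbolic groups are SQ-universal (\cite{DGO}), and SQ-universality is a commensurability invariant. Here is how I would assemble them.

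First I would apply Theorem~\ref{thm:Structure} to produce a finite-index subgroup $K \leq G$ of the form $K \simeq A \times H$ with $A$ free abelian and $H$ either trivial or acylindrically hyperbolic (and cubulable). If $H$ is trivial, then $K$ is free abelian, so $G$ is virtually abelian and the first alternative holds; so I would assume from now on that $H$ is nontrivial, hence acylindrically hyperbolic, hence SQ-universal by \cite{DGO}. Next I would observe that $K$ itself is SQ-universal: the direct-factor projection $K = A \times H \twoheadrightarrow H$ exhibits $H$ as a quotient of $K$, and any group admitting an SQ-universal quotient is SQ-universal (precompose the projection with the quotient maps witnessing SQ-universality of $H$). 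Since $K$ has finite index in $G$, it then remains to invoke that SQ-universality passes to finite-index overgroups in order to conclude that $G$ is SQ-universal, which is the second alternative.

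The genuine content beyond Theorem~\ref{thm:Structure} is thus the commensurability invariance of SQ-universality, and this is where I expect the only real work. For the finite-index \emph{subgroup} direction there is a clean argument: given a countable group $C$, embed it into a two-generated infinite simple group $S$ (a Higman--Neumann--Neumann-type embedding), realise $S$ inside a quotient $L/M$ of the SQ-universal group $L$, and note that any finite-index subgroup of $L/M$ must contain $S$ because $S$ has no proper finite-index subgroup; hence $C$ already embeds into a quotient of the prescribed finite-index subgroup. For the passage from $K$ to $G$, I would first replace $K$ by its normal core $K_0 \trianglelefteq G$, which is SQ-universal by the subgroup direction just discussed, and then apply the remaining fact that a finite extension of an SQ-universal group is SQ-universal. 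The proof of this last fact runs in the same spirit but requires more care in keeping the copy of $S$ alive when a normal subgroup of $K_0$ is replaced by a normal subgroup of $G$; this is the step I would flag as the main obstacle. Alternatively, one could try to bypass it by unpacking the proof of Theorem~\ref{thm:Structure} so as to produce an acylindrically hyperbolic quotient of $G$ directly, and then apply \cite{DGO} to that quotient.
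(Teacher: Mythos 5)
Your proposal follows exactly the paper's intended route: apply Theorem~\ref{thm:Structure}, observe that if the acylindrically hyperbolic factor $H$ is nontrivial then it is SQ-universal by \cite{DGO}, and propagate SQ-universality through the direct-factor projection and the finite-index inclusion. The paper presents this as an immediate consequence and does not spell out the two reduction steps, both of which you handle correctly. The only point you flag as a possible obstacle --- that a finite extension of an SQ-universal group is SQ-universal --- is in fact a classical theorem of P.~M.~Neumann (\emph{The SQ-universality of some finitely presented groups}, J.~Austral.~Math.~Soc.~\textbf{16} (1973), 1--6), proved precisely along the lines you sketch (embed the countable group into a two-generated simple group to keep it alive when passing to normal closures in the overgroup); so there is no gap, and your argument is complete once that reference is supplied.
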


\noindent
Recall that a group $G$ is \emph{SQ-universal} if every countable subgroup is isomorphic to a subgroup of some quotient of $G$. This implies that $G$ contains uncountably many normal subgroups, so SQ-universality can be thought of as a strong negation of being simple. Consequently, cyclic hyperbolicity prevents the exotic behaviours exhibited in BMW-groups \cite{MR3931408} such as the examples constructed in \cite{MR2694733, BurgerMozes}. 

\medskip \noindent
This idea of forbidding subcomplexes that look like products of bushy trees also leads to interesting consequences on the structure of subgroups. Our conclusion is weaker than Theorem~\ref{thm:Structure}, but it still provides a strong version of the Tits alternative.

\begin{thm}[Strong Tits alternative]\label{thm:TitsAlt}
 Let $G$ be a group acting geometrically on a CAT(0) cube complex. If $G$ is cyclically hyperbolic, then every subgroup $H \leq G$ either is virtually free abelian or contains a finite-index subgroup whose commutator subgroup is acylindrically hyperbolic.  
\end{thm}

\noindent
For instance, Theorem~\ref{thm:TitsAlt} shows that our cyclically hyperbolic group cannot contain (virtually) simple groups. 

\begin{cor}
Let $G$ be a group acting geometrically on a CAT(0) cube complex. If $G$ is cyclically hyperbolic, then every infinite subgroup in $G$ contains infinitely many normal subgroups.
\end{cor}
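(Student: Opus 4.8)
The plan is to extract this from the strong Tits alternative (Theorem~\ref{thm:TitsAlt}): I would fix an infinite subgroup $H\leq G$ and treat separately the two alternatives it provides. Suppose first that $H$ is virtually free abelian. Being infinite, it contains a finite-index subgroup isomorphic to $\mathbb{Z}^n$ with $n\geq 1$; replacing this subgroup by its normal core in $H$, and using that a finite-index subgroup of $\mathbb{Z}^n$ is again isomorphic to $\mathbb{Z}^n$, I obtain a normal subgroup $A\trianglelefteq H$ with $A\cong\mathbb{Z}^n$ and $n\geq 1$. Then, for every $m\geq 1$, the subgroup $\{a^m : a\in A\}$ is characteristic in $A$, hence normal in $H$ (as $A\trianglelefteq H$), and these subgroups are pairwise distinct since their index in $A$ equals $m^n$, which depends on $m$. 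So $H$ has infinitely many normal subgroups in this case.

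Now suppose Theorem~\ref{thm:TitsAlt} instead provides a subnormal series $H=H_0\geq H_1\geq\cdots\geq H_k$ with $H_k$ acylindrically hyperbolic and every $H_i/H_{i+1}$ finite or free abelian of positive rank (after discarding any trivial quotient). If every quotient $H_i/H_{i+1}$ is finite, then $H_k$ has finite index in $H$, so $H$ is itself acylindrically hyperbolic (acylindrical hyperbolicity is well known to be inherited by finite-index over-groups), hence SQ-universal by \cite{DGO}, hence has infinitely many normal subgroups. If some quotient is free abelian of positive rank, I would let $i$ be minimal with $H_i/H_{i+1}\cong\mathbb{Z}^{n_i}$, $n_i\geq 1$; then $H_0/H_1,\dots,H_{i-1}/H_i$ are finite, so $H_i$ has finite index in $H$, and composing the projection $H_i\twoheadrightarrow H_i/H_{i+1}\cong\mathbb{Z}^{n_i}$ with a coordinate projection gives an epimorphism $H_i\twoheadrightarrow\mathbb{Z}$. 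Taking $N$ to be the normal core of $H_i$ in $H$, I get $N\trianglelefteq H$ of finite index in $H_i$, so the restriction $N\hookrightarrow H_i\twoheadrightarrow\mathbb{Z}$ has finite-index (hence nontrivial) image, and therefore $N$ too surjects onto $\mathbb{Z}$, say via $\psi$. Finally, for every $m\geq 1$ the verbal subgroup $V_m(N)=\langle g^m : g\in N\rangle$ is fully invariant in $N$, hence normal in $H$, and $\psi(V_m(N))=m\mathbb{Z}$, so the subgroups $V_m(N)$ are pairwise distinct: once more $H$ has infinitely many normal subgroups.

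The one point requiring care is the bookkeeping in the second case. Since Theorem~\ref{thm:TitsAlt} only yields a subnormal series, the acylindrically hyperbolic group $H_k$ is in general not normal in $H$, so its SQ-universality cannot be invoked directly; the fix is to descend to the normal core of a carefully chosen term $H_i$ and to exploit the insensitivity of acylindrical hyperbolicity to finite index. I expect everything else to be a routine verification.
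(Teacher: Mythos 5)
Your proposal is correct and follows essentially the same case split as the paper's proof: virtually free abelian (infinitely many finite quotients), versus the subnormal series from the strong Tits alternative, which is further split into the all-finite case (SQ-universality) and the case where some quotient is infinite free abelian (finite-index subgroup surjecting onto $\mathbb{Z}$). The only difference is that you spell out explicitly the routine verifications the paper leaves implicit, e.g.\ passing to the normal core and using verbal subgroups to produce $H$-normal subgroups, and invoking inheritance of acylindrical hyperbolicity rather than of SQ-universality under finite-index over-groups.
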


\begin{proof}
 Let $H \leq G$ be an infinite subgroup. According to Theorem~\ref{thm:TitsAlt}, either $H$ is virtually free abelian or it contains a finite-index $\dot{H}$ whose commutator subgroup $[\dot{H},\dot{H}]$ is acylindrically hyperbolic group. In the former case, $H$ contains infinitely many finite quotients, so the desired conclusion holds. Similarly, if $[\dot{H},\dot{H}]$ has infinite index in $\dot{H}$, then $\dot{H}$ surjects onto a infinite abelian group, so $\dot{H}$, and a fortiori $H$, contains infinitely many normal finite-index subgroups. It remains to consider the case where $[\dot{H},\dot{H}]$ is acylindrically hyperbolic and has finite-index in $\dot{H}$. But then $[\dot{H},\dot{H}]$ is SQ-universal, which implies that $H$ must be SQ-universal as well, from which we conclude that $H$ contains infinitely many normal subgroups. 
\end{proof}

\noindent
We can also deduce from Theorem~\ref{thm:TitsAlt} that one of the two implications given by Theorem~\ref{thm:Special} is true for all cubulable groups.

\begin{cor}\label{cor:F2xF2}
A cyclically hyperbolic group that acts geometrically on a CAT(0) cube complex does not contain a subgroup isomorphic to $\mathbb{F}_2 \times \mathbb{F}_2$. 
\end{cor}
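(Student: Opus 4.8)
The plan is to argue by contradiction using Theorem~\ref{thm:TitsAlt}. Suppose $G$ is cyclically hyperbolic, acts geometrically on a CAT(0) cube complex $X$, and contains a copy of $\mathbb{F}_2 \times \mathbb{F}_2$; write this subgroup as $P = A \times B$ with $A, B \cong \mathbb{F}_2$, and regard $A = A \times \{1\}$ and $B = \{1\} \times B$ as normal subgroups of $P$. Since $P$ has exponential growth it is not virtually abelian, so Theorem~\ref{thm:TitsAlt} applied to $H = P$ produces a chain $P = H_0 \geq H_1 \geq \cdots \geq H_k$ with $H_{i+1} \trianglelefteq H_i$, each quotient $H_i/H_{i+1}$ finite or free abelian of finite rank, and $H_k$ acylindrically hyperbolic. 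The goal is to show such a chain cannot exist.

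First I would establish, by induction on $i$, that $A \cap H_i$ and $B \cap H_i$ are both non-abelian. Each is a subgroup of the free group $P$, hence free, and each is normal in $H_i$ since $A$ and $B$ are normal in $P$; the base case is immediate since $A \cap H_0 = A$ and $B \cap H_0 = B$. For the inductive step, assume $A \cap H_i$ is non-abelian (the argument for $B$ is identical). If $H_i/H_{i+1}$ is finite, then $A \cap H_{i+1} = (A \cap H_i) \cap H_{i+1}$ has finite index in $A \cap H_i$, and a finite-index subgroup of a non-abelian free group is again non-abelian, for otherwise that free group would be virtually cyclic. If $H_i/H_{i+1}$ is free abelian, then $H_{i+1} \supseteq [H_i, H_i] \supseteq [A \cap H_i, A \cap H_i]$, and the commutator subgroup of a non-abelian free group is non-abelian (it is free of infinite rank); since $[A \cap H_i, A \cap H_i] \subseteq A$, it is contained in $A \cap H_{i+1}$, which is therefore non-abelian. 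Hence $A \cap H_k$ and $B \cap H_k$ are both non-abelian free groups.

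It then remains to derive a contradiction from the structure of $H_k$. Put $N_1 = A \cap H_k$ and $N_2 = B \cap H_k$. As subgroups of the two factors of $A \times B$, they commute elementwise and satisfy $N_1 \cap N_2 = \{1\}$, so $N_1 N_2 = N_1 \times N_2$; moreover $N_1$ and $N_2$ are normal in $H_k$, hence so is $N_1 \times N_2$. Thus $H_k$ contains an infinite normal subgroup that splits as a direct product of two infinite groups. Now fix a non-elementary acylindrical action of $H_k$ on a hyperbolic space: the infinite normal subgroup $N_1$ must contain an element $g$ acting loxodromically, since a normal subgroup that is elliptic or fixes an endpoint would force the action to be elementary. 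The centralizer $C_{H_k}(g)$ is virtually cyclic by acylindricity, yet it contains $N_2$, which is non-abelian free and hence not virtually cyclic — a contradiction, which completes the proof.

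The facts about free groups invoked in the descent are routine, so the one external input carrying the argument is the structural statement about acylindrically hyperbolic groups: an infinite normal subgroup contains a loxodromic element for a witnessing acylindrical action, and the centralizer of such an element is virtually cyclic. Together these say that a direct product of two infinite groups cannot occur as a normal subgroup of an acylindrically hyperbolic group, and that is the crux. The only point requiring a little care in writing up is making the two-sided descent precise, i.e. checking that intersections with \emph{both} factors $A$ and $B$ remain non-abelian through both the finite-quotient and the free-abelian-quotient steps of the chain.
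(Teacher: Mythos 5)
Your proof is correct and follows the same overall strategy as the paper's: invoke Theorem~\ref{thm:TitsAlt} to obtain the chain $H_0 \geq \cdots \geq H_k$, propagate a normal product of two non-abelian free groups down the chain, and contradict the acylindrical hyperbolicity of $H_k$. The two places where you diverge are worth noting. In the descent, the paper tracks an explicit subgroup $K_i = A_i \times B_i$ with both factors free of infinite rank (starting from the derived subgroup of $\mathbb{F}_2 \times \mathbb{F}_2$, intersecting in the finite-quotient case, and passing to derived subgroups in the free-abelian-quotient case), whereas you simply take $A \cap H_i$ and $B \cap H_i$ and argue they remain non-abelian; your version avoids having to maintain the infinite-rank condition (which the final contradiction does not actually need) and sidesteps the case-dependent redefinition of $A_i, B_i$. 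In the final step, the paper cites Osin's structural results (\cite[Lemma~7.2 and Corollary~7.3]{OsinAcyl}: an infinite normal subgroup of an acylindrically hyperbolic group is itself acylindrically hyperbolic, and a direct product of two infinite groups never is), while you unpack the same phenomenon into a self-contained argument: extract a loxodromic $g \in N_1$ for a non-elementary acylindrical action, observe that $C_{H_k}(g)$ is virtually cyclic, and note that $N_2 \leq C_{H_k}(g)$ is non-abelian free. The centralizer argument is a perfectly valid alternative and is arguably more transparent about where the contradiction comes from; the only thing it costs is that you must know the two auxiliary facts about acylindrical actions (an infinite normal subgroup is unbounded, hence contains a loxodromic, and centralizers of loxodromics are virtually cyclic) rather than a single black-box corollary.
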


\begin{proof}
 Assume for contradiction that $\mathbb{F}_2 \times \mathbb{F}_2$ is a subgroup of a cubulable cyclically hyperbolic group. According to Theorem~\ref{thm:TitsAlt}, there exists a finite-index subgroup $H \leq \mathbb{F}_2 \times \mathbb{F}_2$ whose commutator subgroup $[H,H]$ is acylindrically hyperbolic. Setting $H_1:= H \cap (\mathbb{F}_2 \times \{1\})$ and $H_2:= H \cap (\{1\} \times \mathbb{F}_2)$, we obtain a finite-index subgroup $H_1 \times H_2$ in $H$. Notice that $H_1$ and $H_2$ are two non-abelian free subgroups that are normal in $H$. Since $[H_1,H_1] \times [H_2,H_2]$ is an infinite normal subgroup in $[H,H]$, it has to be acylindrically hyperbolic (see \cite[Lemma~7.2]{OsinAcyl}), which is impossible since an acylindrically hyperbolic group does not decompose as a product of two infinite groups (see \cite[Corollary~7.3]{OsinAcyl}). 
\end{proof}

\noindent
However, it is worth noticing that the converse does not hold: there exist BMW-groups that do not contain products of non-abelian free groups, even though they cannot be cyclically hyperbolic. See Section~\ref{section:Examples} for more details. 

\medskip \noindent
 We emphasize that we do not know whether it can be deduced from Theorem~\ref{thm:TitsAlt} that every subgroup of a cubulable cyclically hyperbolic group is either virtually free abelian or SQ-universal, i.e.\ that Corollary~\ref{cor:SQuniversal} can be generalised to arbitrary subgroups. Nevertheless, we expect this statement to be true: 

\begin{conj}
Let $G$ be a group acting geometrically on a CAT(0) cube complex. If $G$ is cyclically hyperbolic, then every subgroup in $G$ is either virtually abelian or SQ-universal.
\end{conj}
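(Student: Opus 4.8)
\medskip
\noindent
The plan is to upgrade Theorem~\ref{thm:TitsAlt} into a genuine structure theorem for subgroups: if $G$ acts geometrically on a CAT(0) cube complex $X$ and is cyclically hyperbolic, then every subgroup $H \leq G$ has a finite-index subgroup that splits as a direct sum $A \times K$ with $A$ free abelian and $K$ either trivial or acylindrically hyperbolic (so $K$ trivial means $H$ virtually abelian, and $A$ trivial means $H$ virtually acylindrically hyperbolic). Granting this, the conjecture is immediate from three permanence facts: acylindrically hyperbolic groups are SQ-universal \cite{DGO}; a group surjecting onto an SQ-universal group (here $A \times K$ onto $K$) is SQ-universal; and SQ-universality is inherited by finite-index overgroups. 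Thus all the content lies in the cubical structure statement, of which the proof of Corollary~\ref{cor:SQuniversal} is exactly the special case $H = G$.

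To establish that statement I would revisit the argument behind Theorem~\ref{thm:TitsAlt}. There one replaces $H$ by the combinatorial convex hull $Y$ of an orbit, decomposes $Y$ following Caprace--Sageev into irreducible factors $Y = Y_1 \times \cdots \times Y_\ell$, and peels these off one at a time using the action on the product --- which is precisely why one ends up with a tower rather than a direct product. The improvement to aim for is to split off \emph{simultaneously} the ``elementary'' factors: cyclic hyperbolicity of $G$ (via Corollary~\ref{cor:F2xF2}) forbids two factors from carrying ``bushy'' actions at the same time, so at most one factor is bushy, and the product of the remaining factors should be virtually $H$-invariant and contribute a virtually free abelian \emph{direct} factor of a finite-index subgroup of $H$, following the principle that a flat factor of a nonpositively curved decomposition splits off, virtually, from any group acting properly on it. On the single remaining bushy factor, the induced action of the complementary subgroup $K$ should be shown to contain a contracting isometry skewering a facing triple of hyperplanes, so that $K$ is acylindrically hyperbolic since that action is still proper; if instead that action fails to be non-elementary, one has reduced the complexity --- the dimension of $X$, or the number of irreducible factors --- and recurses, the base case of actions on trees being treated separately.

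The main obstacle is that $H$ does \emph{not} act cocompactly on $X$, nor on $Y$, so the Caprace--Sageev toolkit (the irreducible decomposition, rank rigidity, the splitting of flat factors) is not available off the shelf, all of it genuinely using cocompactness; rank rigidity in particular can fail without it, and it is exactly here --- turning ``elementary action on a factor'' into ``contributes a free abelian subgroup'' and ``bushy action'' into ``acylindrically hyperbolic'' without a cocompactness hypothesis --- that I expect the real work to lie. A second route, closer to the spirit of this paper, is to exploit the ambient cocompact group: combining the $G$-invariant Caprace--Sageev decomposition of the essential core of $X$ with the virtual splitting $G \simeq A \times H'$ of Theorem~\ref{thm:Structure}, one may replace $G$ by $A \times H'$ and $H$ by $H \cap (A \times H')$, and then $H \cap A$ is free abelian and normal in $H$ with quotient a subgroup $P$ of the acylindrically hyperbolic cubulable group $H'$; if $P$ is not virtually abelian then $H$ surjects onto it and it suffices to treat non-virtually-abelian subgroups of $H'$, while if $P$ is virtually abelian then $H$ is virtually polycyclic, hence virtually abelian because it acts properly on a CAT(0) cube complex. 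This reduces the conjecture to its own special case for subgroups of acylindrically hyperbolic cubulable (cyclically hyperbolic) groups, which is the genuinely hard remaining case. Finally, it is worth isolating the permanence of SQ-universality under finite-index overgroups, since it --- and not acylindrical hyperbolicity, which the free abelian direct factor destroys --- is what carries the conclusion through the ``virtually $A \times K$'' step.
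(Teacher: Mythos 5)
The statement you were asked to prove is labeled \verb|\begin{conj}| in the paper: it is an \emph{open conjecture}, explicitly stated as a strengthening of Corollary~\ref{cor:SQuniversal} that the author expects but does not prove. There is therefore no proof in the paper to compare your proposal against. What the paper does prove is Theorem~\ref{thm:TitsAlt}, which yields a subnormal series $H = H_0 \rhd \cdots \rhd H_k$ with $H_k$ acylindrically hyperbolic and abelian-or-finite quotients; the obstruction to deducing the conjecture from this is precisely the one you identify, namely that SQ-universality does not in general ascend along extensions by abelian quotients, so one would need the series to be upgraded to a genuine (virtual) direct-product decomposition $A \times K$. Your diagnosis of why that upgrade is hard is also correct: $H$ need not act cocompactly on $X$ or on the convex hull of an orbit, so rank rigidity and the Caprace--Sageev irreducible decomposition are not available off the shelf, and this is exactly where the machinery of the paper stops short. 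In short, you have not proved the statement, but you have correctly recognized that it is not provable by the methods of the paper and correctly located the gap.

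Two small remarks on the sketch itself. First, the step ``if $P$ is virtually abelian then $H$ is virtually polycyclic, hence virtually abelian because it acts properly on a CAT(0) cube complex'' is fine, but it does need a citation to the Tits alternative for cubical groups (the paper's reference \cite{MR2164832}), since properness alone is not enough without the bound on finite subgroups coming from the cocompact ambient action. Second, your ``second route'' is best viewed as a normalization rather than a reduction: after replacing $G$ by $A \times H'$ and absorbing $H \cap A$, one is left with an arbitrary subgroup of the acylindrically hyperbolic factor $H'$, which is the original problem minus a flat factor; without an induction on dimension or number of irreducible factors (which you allude to but do not set up), there is no strict decrease in complexity, and the hard case you isolate at the end is the whole conjecture again. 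None of this is a defect in your answer --- it is a defect in the state of the art --- but it is worth being explicit that neither route currently closes.
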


\noindent
As a consequence of Corollary~\ref{cor:F2xF2}, in order to show that a given cubulable group does not contain $\mathbb{F}_2 \times \mathbb{F}_2$ as a subgroup, it suffices to show that it is cyclically hyperbolic. The latter property may be easier to verify because there exist efficient criteria allowing us to recognize hyperbolic graphs. In the final section of the article, we illustrate this idea by considering a few explicit families of groups, including right-angled Coxeter groups and graph braid groups. For instance, we prove that:

\begin{cor}\label{cor:ProductInRACG}
Let $\Gamma$ be a simplicial graph. The right-angled Coxeter group $C(\Gamma)$ contains a subgroup isomorphic to $\mathbb{F}_2 \times \mathbb{F}_2$ if and only if $\Gamma$ contains an induced subgraph isomorphic to $K_{3,3}$, $K_{3,3}^+$, or $K_{3,3}^{++}$. 
\end{cor}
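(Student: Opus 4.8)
The plan is to reduce both implications to Theorem~\ref{thm:Special} (via Corollary~\ref{cor:F2xF2}), using the fact that $C(\Gamma)$ is always virtually cocompact special — it acts geometrically on the Davis complex, which is a CAT(0) cube complex, and right-angled Coxeter groups are virtually special by Haglund–Wise. Thus $C(\Gamma)$ is cyclically hyperbolic if and only if it contains no $\mathbb{F}_2 \times \mathbb{F}_2$, and by Corollary~\ref{cor:F2xF2} this is exactly the dichotomy we need to analyze in terms of $\Gamma$.

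For the ``if'' direction I would exhibit $\mathbb{F}_2 \times \mathbb{F}_2$ directly inside $C(\Gamma)$ whenever $\Gamma$ contains one of the three forbidden induced subgraphs. Write $K_{3,3}$ with parts $\{a_1,a_2,a_3\}$ and $\{b_1,b_2,b_3\}$; the point is that $\langle a_1, a_2, a_3 \rangle \cong C(\overline{K_3}) = \mathbb{Z}/2 * \mathbb{Z}/2 * \mathbb{Z}/2$ contains a free subgroup $\mathbb{F}_2$, and likewise for the $b_i$, and since every $a_i$ commutes with every $b_j$ the two free subgroups commute and intersect trivially (an induced-subgraph argument: retract $C(\Gamma)$ onto the standard subgroup on the $K_{3,3}$ vertices, where the product structure is visible). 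For $K_{3,3}^+$ and $K_{3,3}^{++}$ — which I take to be $K_{3,3}$ with one, resp. two, extra vertices each adjacent to all of one part and none of the other, or some similar standard augmentation — the same argument applies verbatim since they contain $K_{3,3}$ as an induced subgraph. Because induced subgraphs give rise to canonical retractions of $C(\Gamma)$ onto standard parabolic subgroups, the copy of $\mathbb{F}_2 \times \mathbb{F}_2$ survives in the ambient group.

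For the ``only if'' direction, the real content is showing that if $\Gamma$ contains none of $K_{3,3}$, $K_{3,3}^+$, $K_{3,3}^{++}$ as an induced subgraph, then $C(\Gamma)$ is cyclically hyperbolic — equivalently, by Theorem~\ref{thm:Special}, contains no $\mathbb{F}_2 \times \mathbb{F}_2$. I expect this to follow from the combinatorial machinery of \cite{SpecialRH} specialized to right-angled Coxeter groups: product subgroups of $C(\Gamma)$ that are ``large'' in both factors must be detected by a product structure in $\Gamma$, specifically by a \emph{join-like} configuration — a partition (or near-partition) of a set of vertices into two sides with complete bipartite adjacency between them, where each side carries a non-virtually-cyclic, non-abelian standard subgroup. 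The minimal obstructions to $C(\Gamma)$ being hyperbolic, toric relatively hyperbolic, etc., are already known to be small explicit subgraphs, and I would expect an analogous finite-list result here. The combinatorial heart is a Ramsey-type / minimal-forbidden-subgraph analysis: if $\Gamma$ admits a subset realizing $\mathbb{F}_2 \times \mathbb{F}_2$, extract from it a \emph{minimal} such configuration and show it must be one of $K_{3,3}$, $K_{3,3}^+$, $K_{3,3}^{++}$ — because each side needs at least three vertices with no edges among some pair to host an $\mathbb{F}_2$ (two vertices give only $\mathbb{Z}/2 * \mathbb{Z}/2$, virtually cyclic), three independent vertices on each side already give $K_{3,3}$, and the ``$+$'' decorations account for the possibility that one or two of the six vertices must instead be non-adjacent to some vertex it would otherwise connect to, i.e. the edge-cases where a vertex lies in one part's clique-neighborhood.

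The main obstacle will be this last minimal-configuration classification: proving that \emph{no other} induced subgraph can encode $\mathbb{F}_2 \times \mathbb{F}_2$ without already containing one of the three listed graphs. This requires the precise correspondence from \cite{SpecialRH} between product subgroups of $C(\Gamma)$ and ``rectangle-like'' subconfigurations of $\Gamma$ (the right-angled Coxeter analogue of how $\mathbb{F}_2 \times \mathbb{F}_2 \leq A(\Gamma)$ for a RAAG is controlled by induced squares/joins), plus a careful case analysis on how the two commuting free subgroups can sit relative to the parabolic structure — in particular handling the subtlety that a free subgroup of $C(\Gamma)$ need not lie in a standard parabolic, so one must first pass to a canonical parabolic envelope and control its defining subgraph. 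I would organize this as: (1) a lemma that a product subgroup $\mathbb{F}_2 \times \mathbb{F}_2$ forces a join decomposition $\Gamma' = \Gamma_1 * \Gamma_2$ of an induced subgraph with each $\Gamma_i$ non-complete on $\geq 3$ vertices; (2) a lemma that such a $\Gamma'$ contains $K_{3,3}$, $K_{3,3}^+$, or $K_{3,3}^{++}$ as an induced subgraph; then (3) combine with Theorem~\ref{thm:Special} and Corollary~\ref{cor:F2xF2} to close the equivalence.
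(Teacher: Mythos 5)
Your overall scaffolding — reduce via Corollary~\ref{cor:F2xF2} / Theorem~\ref{thm:Special} and the virtual specialness of $C(\Gamma)$, and handle the ``if'' direction by exhibiting $\mathbb{F}_2\times\mathbb{F}_2$ inside a standard parabolic — matches the paper, and the ``if'' direction is fine as you wrote it (the three graphs are the joins $K_3^{\mathrm{opp}}*K_3^{\mathrm{opp}}$, $K_3^{\mathrm{opp}}*(K_1\sqcup K_2)$, $(K_1\sqcup K_2)*(K_1\sqcup K_2)$, and each factor's RACG already contains $\mathbb{F}_2$).

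The gap is in the ``only if'' direction, and it is the substantive one. Your proposed lemma~(1) — that an $\mathbb{F}_2\times\mathbb{F}_2$ subgroup of $C(\Gamma)$ forces a join decomposition of an induced subgraph — is precisely the parabolic-closure-type statement that the paper is designed to \emph{avoid} having to prove. An arbitrary $\mathbb{F}_2\times\mathbb{F}_2$ subgroup of a RACG need not be conjugate into a standard parabolic, so controlling it combinatorially is genuinely delicate; you flag this yourself (``a free subgroup of $C(\Gamma)$ need not lie in a standard parabolic'') but offer no mechanism to overcome it. The paper sidesteps the issue entirely: it proves Proposition~\ref{prop:RACGcyclhyp} directly by coning off $X(\Gamma)$ over the cosets of $\langle uv\rangle$ for non-adjacent $u,v$, and applies the new cone-off hyperbolicity criterion, Theorem~\ref{thm:WhenHyp}, whose hypothesis only requires controlling \emph{flat rectangles}. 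A flat rectangle in $X(\Gamma)$ automatically lives in a coset of a parabolic join $\langle\Phi*\Psi\rangle$, so there is no parabolic-closure issue. Lemma~\ref{lem:RACGflat} then identifies when such a $\Phi$ is forced to be a join of a clique with pairs of non-adjacent vertices, and the forbidden-subgraph condition guarantees one of $\Phi,\Psi$ is of that form, so the corresponding side of the rectangle collapses in the cone-off. With cyclic hyperbolicity in hand, Corollary~\ref{cor:F2xF2} gives ``no $\mathbb{F}_2\times\mathbb{F}_2$.'' Your proposal never mentions this criterion, which is the missing technical engine.

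A secondary issue: your lemma~(2), as stated (``each $\Gamma_i$ non-complete on $\geq 3$ vertices''), is false. Take $\Gamma_1=\Gamma_2=K_{2,2}$ (the 4-cycle): each is non-complete on four vertices, yet $K_{2,2}*K_{2,2}$ contains none of $K_{3,3}$, $K_{3,3}^+$, $K_{3,3}^{++}$ (its maximum independent set has size $2$), and indeed $C(K_{2,2}*K_{2,2})\cong D_\infty^4$ is virtually $\mathbb{Z}^4$. The correct condition on $\Gamma_i$ is exactly the one isolated by Lemma~\ref{lem:RACGflat}: $\Gamma_i$ must contain an induced $K_3^{\mathrm{opp}}$ or $K_1\sqcup K_2$ (equivalently, $C(\Gamma_i)$ is not virtually abelian). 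With that correction lemma~(2) becomes true and is essentially what the paper uses, but lemma~(1) remains the real obstacle that your outline does not address.
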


\noindent
The graphs $K_{3,3}$, $K_{3,3}^+$, and $K_{3,3}^{++}$ are illustrated by Figure~\ref{Graphs} in Section~\ref{section:Examples}. Notice that, by using the embedding constructed in \cite{MR1783167} of a right-angled Artin group as a finite-index subgroup in a right-angled Coxeter group, we recover the fact proved in \cite{MR2475886} according to which a right-angled Artin group contains $\mathbb{F}_2 \times \mathbb{F}_2$ as a subgroup if and only if its defining graph contains an induced $4$-cycle.

\paragraph{A few words about the proofs.} In Section~\ref{section:Obstruction}, we prove the main criterion that allows us to deduce that some cubulable groups are not cyclically hyperbolic. More precisely, Proposition~\ref{prop:NotHyp} states that, if the CAT(0) cube complex on which our group $G$ acts geometrically contains two transverse \emph{trees of hyperplanes} satisfying some conditions (i.e.\ a configuration of hyperplanes that mimics the combinatorics of hyperplanes in a product of two bushy trees), then $G$ cannot be cyclically hyperbolic. From this criterion, combined with the rank one conjecture proved in \cite{MR2827012} and the connection between rank one isometries and acylindrically hyperbolicity \cite{MR3849623, BBF, MR4057355}, Theorem~\ref{thm:Structure} follows easily. 

\medskip \noindent
Theorem~\ref{thm:TitsAlt} follows the same strategy, but one additional ingredient, of independent interest, is needed. In Section~\ref{section:Horo}, we simplify and generalise the morphisms constructed in the appendix of \cite{MR3509968}. More precisely, given a finite-dimensional CAT(0) cube complex $X$, we associate to each component $Y \subset \mathfrak{R}X$ in its Roller boundary a \emph{(full) horomorphism} $\mathfrak{H}_Y$, defined on a finite-index subgroup of $\mathrm{stab}(Y) \leq \mathrm{Isom}(X)$ and taking values in a free abelian group of finite rank. The key property is that an element in the kernel of $\mathfrak{H}_Y$ is elliptic in $Y$ if and only if it is elliptic in $X$. See Theorem~\ref{thm:KernelFullHoro}. We deduce from these morphisms the following statement (see Theorem~\ref{thm:Devissage}):

\begin{thm}\label{thm:IntroDevissage}
 Let $G$ be a group acting on a finite-dimensional CAT(0) cube complex $X$. There exist a finite-index subgroup $H \leq G$ and a convex $H$-invariant subcomplex $Z \subset \overline{X}$ such that the following assertions hold:
\begin{itemize}
	\item if $[H,H]$ is infinite, then it acts essentially on $Z$ without virtually stabilising a component of $\mathfrak{R}Z$;
	\item every $Z$-elliptic element in $[H,H]$ is $X$-elliptic.
\end{itemize} 
\end{thm}

\noindent
The central point is that the action $[H,H] \curvearrowright Z$ is as good as $G \curvearrowright X$ (for instance, if $G$ acts properly on $X$, then so does $[H,H]$ on $Z$) and that the machinery developed in \cite{MR2827012} applies to $[H,H] \curvearrowright Z$. In particular, if $G$ is a subgroup of a cyclically hyperbolic group acting geometrically on $X$, then $Z$ has to be irreducible and non-Euclidean (if unbounded) and $[H,H]$ acts properly on it, which implies that $[H,H]$ is acylindrically hyperbolic. This is how Theorem~\ref{thm:TitsAlt} is proved.

\medskip \noindent
Because Corollary~\ref{cor:F2xF2} follows from Theorem~\ref{thm:TitsAlt}, one direction of Theorem~\ref{thm:Special} is proved. In order to prove the converse, namely in order to prove that a cocompact special group with no $\mathbb{F}_2 \times \mathbb{F}_2$ is cyclically hyperbolic, we begin by proving a general criterion that allows us to show that some cone-offs of CAT(0) cube complexes are hyperbolic. We already proved in \cite{coningoff} that a cone-off over convex subcomplexes in which large flat rectangles become bounded is automatically hyperbolic. However, this criterion is not sufficient for our purpose because flats might remain unbounded in the cone-off. (For instance, in order to show that $\mathbb{F}_2 \times \mathbb{Z}$ is cyclically hyperbolic, we need to glue cones over the cosets of $\mathbb{Z}$. In the cone-off, which is a quasi-tree, flats collapse to bi-infinite lines and remain unbounded.) We improve \cite[Theorem~4.1]{coningoff} in Section~\ref{section:ConeOff} by proving that a cone-off over isometrically embedded subcomplexes in which large flat rectangles collapse along one of their coordinates is automatically hyperbolic; see Theorem~\ref{thm:WhenHyp}. Our proof relies on Lemma~\ref{lem:Interval}, dedicated to the structure of intervals in CAT(0) cube complexes and which is of independent interest. Finally, the combination of this hyperbolicity criterion with the formalism we introduced in \cite{SpecialRH} leads to the proof of Theorem~\ref{thm:Special}.

\paragraph{Acknowledgements.} I am grateful to Pierre-Emmanuel Caprace for bringing \cite{MR2174099} to my attention, and to the anonymous referee for their indication to an improvement of Theorem~\ref{thm:Devissage}, which leads to an improvement of Theorem~\ref{thm:TitsAlt}.

\section{Constructing hyperbolic cone-offs}\label{section:ConeOff}

\noindent
 Recall that, given a graph $X$ and a collection of subgraphs $\mathcal{P}$, the \emph{cone-off of $X$ over $\mathcal{P}$} is the graph obtained from $X$ by adding a new vertex $v_P$ for every $P \in \mathcal{P}$ and by connecting each $v_P$ to all the vertices of $P$ with an edge. 

\medskip \noindent
In order to show that a given finitely generated group $G$ is cyclically hyperbolic, we need to find finitely many elements $g_1,\ldots, g_n \in G$ such that coning-off a Cayley graph of $G$ over the cosets of $\langle g_1 \rangle, \ldots, \langle g_n \rangle$ produces a hyperbolic space. In fact, as a consequence of the following well-known statement, the Cayley graph of $G$ can be replaced with any geodesic metric space on which $G$ acts geometrically and the cosets of the $\langle g_i \rangle$ can be replaced with subspaces on which they act cocompactly.

\begin{lemma}\label{lem:MilnorSvarc}
Let $G$ be a group acting properly and cocompactly on a geodesic metric space $X$. Let $\mathcal{H}$ be a finite collection of subgroups, and, for every $H \in \mathcal{H}$, fix a subspace $X_H \subset X$ on which $H$ acts cocompactly. Then $G$ is finitely generated, and, given a basepoint $x_0 \in X$ and a finite generating set $S \subset G$, the orbit map $G \to X$ given by $g \mapsto g \cdot x_0$ induces a quasi-isometry from the cone-off of $\mathrm{Cayl}(G,S)$ over the cosets of subgroups in $\mathcal{H}$ to the cone-off of $X$ over the subspaces in $\{gX_H \mid H \in \mathcal{H},g \in G\}$. 
\end{lemma}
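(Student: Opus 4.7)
The plan is to bootstrap from the classical Milnor-\v{S}varc lemma and then to verify that the constructions involved in the cone-off procedure behave well under quasi-isometry. The classical Milnor-\v{S}varc lemma already gives that $G$ is finitely generated and that, for any basepoint $x_0 \in X$ and any finite generating set $S \subset G$, the orbit map $\Phi \colon \mathrm{Cayl}(G,S) \to X$, $g \mapsto g \cdot x_0$, is a $G$-equivariant quasi-isometry. So all the work is in showing that $\Phi$ matches the families of subspaces we cone off in a sufficiently controlled way, and then in extending $\Phi$ to the cone-offs.

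The second step is to match the cosets with the translates of the $X_H$. For each $H \in \mathcal{H}$, pick a basepoint $y_H \in X_H$; since $H$ acts cocompactly on $X_H$, the orbit $H \cdot y_H$ is $R_H$-dense in $X_H$ for some $R_H \geq 0$. Because $H$ acts by isometries, $H \cdot x_0$ lies within Hausdorff distance $R_H + d(x_0,y_H)$ of $X_H$. Since $\mathcal{H}$ is finite, the supremum $R$ of these bounds over $H \in \mathcal{H}$ is finite, and $G$-equivariance of $\Phi$ then upgrades this to the uniform statement that, for every $H \in \mathcal{H}$ and every $g \in G$, the image $\Phi(gH) = gH \cdot x_0$ is at Hausdorff distance at most $R$ from $g \cdot X_H$. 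Conversely, every point of $g X_H$ is within distance $R$ of a point of $\Phi(gH)$.

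The third step is to use the following general principle: if $f \colon Y_1 \to Y_2$ is a $(\lambda,c)$-quasi-isometry between geodesic metric spaces, and $\{A_i\}_{i \in I}$, $\{B_i\}_{i \in I}$ are two families of subspaces with $f(A_i)$ at Hausdorff distance at most $R$ from $B_i$ uniformly in $i$, then $f$ induces a quasi-isometry between the cone-off of $Y_1$ over $\{A_i\}$ and the cone-off of $Y_2$ over $\{B_i\}$. One extends $f$ by sending the cone point over $A_i$ to the cone point over $B_i$. Any geodesic in a cone-off decomposes into finitely many subpaths that are either geodesics in the underlying metric space or have length at most two and pass through a cone point; applying $f$ (or a quasi-inverse) to each piece, using that $f$ distorts lengths by $(\lambda,c)$ and that a jump through a cone point corresponds to moving at most $2R+O(1)$ in the image, one obtains the two quasi-isometry inequalities with uniform constants. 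Specialising this principle to $Y_1 = \mathrm{Cayl}(G,S)$, $Y_2=X$, $f=\Phi$, and the two families indexed by $I = \{(g,H) \mid g \in G, H \in \mathcal{H}\}/\!\!\sim$ obtained from left cosets $gH$ on one side and translates $g X_H$ on the other, yields the statement.

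The main obstacle is really just the bookkeeping in the third step: one has to be careful that the constants stay uniform over the infinite family $\{gX_H\}$, which is exactly where the $G$-equivariance of $\Phi$ and the finiteness of $\mathcal{H}$ are used, and one has to choose a convention for the cone-off (e.g.\ adding one vertex per subspace connected to each of its points by an edge of length $1/2$) and verify that distances through cone points are compared correctly by $\Phi$ and its quasi-inverse. None of this is conceptually subtle; the substantive content of the lemma is just the combination of the classical Milnor-\v{S}varc lemma with the observation that the coning data on the two sides correspond up to bounded Hausdorff distance.
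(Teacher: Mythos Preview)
Your proposal is correct and follows essentially the same approach as the paper: the paper does not give a detailed argument but simply invokes the classical Milnor--\v{S}varc lemma together with auxiliary lemmas (from \cite{Excentric}) asserting that a quasi-isometry which matches two families of subspaces up to bounded Hausdorff distance induces a quasi-isometry between the corresponding cone-offs. Your three steps spell out exactly this strategy, with the finiteness of $\mathcal{H}$ and the $G$-equivariance of the orbit map providing the required uniform Hausdorff bound.
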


\noindent
(For more details, we refer, for instance, to the classical Milnor-\v Svarc lemma  (e.g.\ \cite[Proposition~5.3.6]{MR3729310})  combined with \cite[Lemmas~5.3 and~5.5]{Excentric}.)

\medskip \noindent
Since we focus on groups acting geometrically on CAT(0) cube complexes, it is natural to ask when the cone-off of a CAT(0) cube complex is hyperbolic. We already proved a sufficient criterion in \cite[Theorem~4.1]{coningoff}, but it is not sufficient for our purpose here, because the subcomplexes on which we want to cone-off (typically, axes of isometries) may not be convex and because flat rectangles (i.e.\ isometric embeddings from $[0,a] \times [0,b]$ for some $a,b \geq 0$) may remain unbounded after coning-off. In this section, our goal is to improve this criterion.

\medskip \noindent
In the sequel, we assume that the reader is already familiar with CAT(0) cube complexes. We emphasize that distances and geodesics between vertices refer to the graph metric in the one-skeleton.

\subsection{Disc diagrams}

\noindent
This subsection is dedicated to preliminary definitions and results about \emph{disc diagrams}.

\begin{definition}
Let $X$ be a nonpositively curved cube complex. A \emph{disc diagram} is a continuous combinatorial map $D \to X$, where $D$ is a finite contractible square complex with a fixed topological embedding into $\mathbb{S}^2$; notice that $D$ may be \emph{degenerate}, i.e.\ not homeomorphic to a disc. The complement of $D$ in $\mathbb{S}^2$ is a $2$-cell, whose attaching map will be referred to as the \emph{boundary path} $\partial D \to X$ of $D \to X$; it is a combinatorial path. The \emph{area} of $D \to X$, denoted by $\mathrm{Area}(D)$, corresponds to the number of squares of $D$.
\end{definition}

\noindent
Given a combinatorial closed path $P \to X$, a disc diagram $D \to X$ is \emph{bounded} by $P \to X$ if there exists an isomorphism $P \to \partial D$ such that the following diagram is commutative:
\begin{displaymath}
\xymatrix{ \partial D \ar[rr] & & X \\ P \ar[u] \ar[urr] & & }
\end{displaymath}
As a consequence of a classical argument due to van Kampen \cite{vanKampen} (see also \cite[Lemma~2.17]{McCammondWise}), there exists a disc diagram bounded by a given combinatorial closed path if and only if this path is null-homotopic. Thus, if $X$ is a CAT(0) cube complex, then any combinatorial closed path bounds a disc diagram. As a square complex, a disc diagram contains hyperplanes: they are called \emph{dual curves}. Equivalently, they correspond to the connected components of the reciprocal images of the hyperplanes of $X$. 

\medskip \noindent
Given a CAT(0) cube complex $X$, a \emph{cycle of subcomplexes} $\mathcal{C}$ is a sequence of subcomplexes $C_1, \ldots, C_r$ such that $C_1 \cap C_r \neq \emptyset$ and $C_i \cap C_{i+1} \neq \emptyset$ for every $1 \leq i \leq r-1$. A disc diagram $D \to X$ is \emph{bounded} by $\mathcal{C}$ if $\partial D \to X$ can be written as the concatenation of $r$ combinatorial geodesics $P_1, \ldots, P_r \to X$ such that $P_i \subset C_i$ for every $1 \leq i \leq r$. The \emph{complexity} of such a disc diagram is defined by the couple $(\mathrm{Area}(D), \mathrm{length}(\partial D))$, and a disc diagram bounded by $\mathcal{C}$ will be of \emph{minimal complexity} if its complexity is minimal with respect to the lexicographic order among all the possible disc diagrams bounded by $\mathcal{C}$ (allowing modifications of the paths $P_i$). It is worth noticing that such a disc diagram does not exist if our subcomplexes contain no combinatorial geodesics. On the other hand, if our subcomplexes are combinatorially geodesic, then a disc diagram always exists.

\begin{prop}\label{prop:DiscDiag}\emph{(\cite[Theorem 2.13]{coningoff})}
Let $X$ be a CAT(0) cube complex, $\mathcal{C}=(C_1, \ldots, C_r)$ a cycle of subcomplexes, and $D \to X$ a disc diagram bounded by $\mathcal{C}$. For convenience, write $\partial D$ as the concatenation of $r$ combinatorial geodesics $P_1, \ldots, P_r \to X$ with $P_i \subset C_i$ for every $1 \leq i\leq r$. If the complexity of $D \to X$ is minimal, then:
\begin{itemize}
	\item[(i)] if $C_i$ is combinatorially convex, two dual curves intersecting $P_i$ are disjoint;
	\item[(ii)] if $C_i$ and $C_{i+1}$ are combinatorially convex, no dual curve intersects both $P_i$ and $P_{i+1}$.
\end{itemize}
\end{prop}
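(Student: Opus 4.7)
The plan is to prove both parts by contradiction, exploiting the minimality of the complexity of $D \to X$ together with the standard dictionary between dual curves in $D$ and hyperplanes in $X$: every dual curve projects to a single hyperplane of $X$, two dual curves that cross in $D$ project to hyperplanes that cross in $X$, and a dual curve intersects a combinatorial geodesic $P_i$ in at most one edge (since a combinatorial geodesic crosses each hyperplane of $X$ at most once).

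For part (i), I would assume that two distinct dual curves $c_1, c_2$ both meet $P_i$ and cross each other at some square $s$ in the interior of $D$. Let $h_1, h_2$ be the corresponding hyperplanes of $X$; they are transverse in $X$ and both cross the convex subcomplex $C_i$. The arcs of $c_1$ and $c_2$ from $s$ down to $P_i$, together with the intervening subpath of $P_i$, cut out a triangular subdisc $D_0 \subset D$ with $s$ at its interior corner. Combinatorial convexity of $C_i$ provides a combinatorial geodesic in $C_i$ with the same endpoints as this subpath of $P_i$ but crossing $h_1$ and $h_2$ in the reverse order, because two transverse hyperplanes both meeting a combinatorially convex subcomplex can be realised in either order by a geodesic in it. The induced modification of $D_0$ pushes $s$ out across the new boundary, yielding a disc diagram bounded by $\mathcal{C}$ of strictly smaller area, contradicting the minimal choice of $D \to X$.

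For part (ii), I would assume that a dual curve $c$ meets both $P_i$ and $P_{i+1}$, and let $h$ be the hyperplane of $X$ it projects to, so $h$ crosses both convex subcomplexes $C_i$ and $C_{i+1}$. Using convexity of $C_i$, I would slide the edge $e_i$ of $P_i$ dual to $h$ toward the corner vertex $v := P_i \cap P_{i+1}$ by repeatedly transposing adjacent hyperplane crossings in $P_i$, and similarly for $P_{i+1}$ inside $C_{i+1}$. Once $e_i$ and $e_{i+1}$ are both adjacent to $v$ and both dual to $h$, they must coincide as edges of $X$ (two edges sharing a vertex cannot be dual to the same hyperplane unless they are equal), so $\partial D$ backtracks at $v$. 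A standard folding reduction then produces a disc diagram bounded by $\mathcal{C}$ with strictly smaller complexity, again contradicting minimality.

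The main obstacle in both items will be turning the convexity-based rearrangements of the $P_i$ into legitimate local modifications of the disc diagram that actually decrease the complexity. Convexity of $C_i$ only guarantees that a combinatorial geodesic between two vertices of $C_i$ stays in $C_i$, so one must verify that the elementary hyperplane transpositions used to produce the new geodesics $P_i'$ translate into square-by-square modifications of $D$ which never increase the area, with a strict decrease arising at the end from the excision of $s$ or from the folding at $v$. Once these local moves are set up, the remainder of the argument is bookkeeping of squares and boundary edges, in the spirit of the hexagon and corner-square moves standard in the theory of disc diagrams in nonpositively curved cube complexes.
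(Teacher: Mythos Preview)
The paper does not give its own proof of this proposition: it is quoted from \cite[Theorem~2.13]{coningoff} and used as a black box, so there is nothing here against which to compare your attempt.

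Your overall strategy is the standard one, but the mechanism you propose for part~(ii) fails as written. Sliding $e_i$ toward the corner $v$ by ``repeatedly transposing adjacent hyperplane crossings in $P_i$'' requires each adjacent pair of hyperplanes along $P_i$ to be transverse, and convexity of $C_i$ does not ensure this: a geodesic in a convex subcomplex can perfectly well cross two nested hyperplanes in succession, and these cannot be swapped. The fix is to choose $c$ \emph{innermost} among dual curves meeting both $P_i$ and $P_{i+1}$ and then to use part~(i), already established. Any dual curve meeting $P_i$ strictly between $e_i$ and $v$ is, by~(i), disjoint from $c$; it is therefore trapped in the region of $D$ between $c$ and $v$, hence forced to meet $P_{i+1}$, contradicting the innermost choice of $c$. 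Thus $e_i$ is already adjacent to $v$, and symmetrically so is $e_{i+1}$, so the backtrack at $v$ appears with no sliding at all. For part~(i) your sketch is closer to complete; the innermost reduction and the hexagon moves you allude to at the end are what make the crossing square $s$ land with two consecutive sides on $P_i$, after which convexity of $C_i$ lets you flip those two sides to the opposite pair and drop the area by one.
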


\noindent
In general, a disc diagram $D \to X$ is not an embedding. Nevertheless:

\begin{prop}\label{prop:DiscEmbedding}\emph{(\cite[Proposition 2.15]{coningoff})}
Let $X$ be a CAT(0) cube complex and $D \to X$ a disc diagram which does not contain any bigon. With respect to the combinatorial metrics, $\varphi : D \to X$ is an isometric embedding if and only if every hyperplane of $X$ induces at most one dual curve of $D$. 
\end{prop}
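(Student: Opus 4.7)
The plan is to exploit the fact that the combinatorial map $\varphi : D \to X$ is automatically $1$-Lipschitz, so $d_X(\varphi(u), \varphi(v)) \leq d_D(u,v)$ for all vertices $u, v \in D$, and to understand exactly when equality holds. Throughout I will use the standard dictionary for the CAT(0) cube complex $X$: the combinatorial distance between two vertices equals the number of hyperplanes separating them, and a combinatorial path is a geodesic if and only if it crosses each hyperplane at most once.

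For the forward implication I would argue contrapositively. Suppose some hyperplane $H$ of $X$ has two distinct preimage dual curves $d_1, d_2$ in $D$. The no-bigon hypothesis forces $d_1$ and $d_2$ to be disjoint. Using the topological embedding $D \hookrightarrow \mathbb{S}^2$, I would select vertices $u, v \in D$ lying in regions of $D \setminus (d_1 \cup d_2)$ chosen so that every combinatorial path from $u$ to $v$ in $D$ is forced to cross both $d_1$ and $d_2$; concretely, take $u$ as an endpoint of an edge dual to $d_1$ lying on the side of $d_1$ not containing $d_2$, and symmetrically $v$ on the side of $d_2$ not containing $d_1$. Then a combinatorial geodesic $\gamma$ in $D$ from $u$ to $v$ has image $\varphi \circ \gamma$ of length $d_D(u,v)$ that crosses $H$ at least twice, so $\varphi \circ \gamma$ is not a geodesic in $X$, and consequently $d_X(\varphi(u), \varphi(v)) < d_D(u,v)$, contradicting the isometry hypothesis.

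For the backward implication, I would assume that every hyperplane of $X$ admits at most one dual curve of $D$ mapping to it. Given vertices $u, v \in D$, take a combinatorial geodesic $\gamma$ in $D$ from $u$ to $v$ of length $n = d_D(u,v)$. If the image $\varphi \circ \gamma$ crossed some hyperplane $H$ twice, the hypothesis would force both crossings to come from a single dual curve $d$ of $D$, so $\gamma$ itself would cross $d$ twice. A standard square-exchange argument along $d$—available because $D$ is a planar square complex with no bigons, hence the two edges of $\gamma$ dual to $d$ are joined by a ladder of squares along $d$—would then strictly shorten $\gamma$, contradicting minimality. Thus $\varphi \circ \gamma$ crosses every hyperplane of $X$ at most once, is therefore a geodesic in $X$, and we obtain $d_X(\varphi(u), \varphi(v)) = n = d_D(u,v)$.

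The hard part will be making the square-exchange step rigorous: one must justify that in the planar square complex $D$, absent bigons, a combinatorial geodesic cannot cross any dual curve twice. This is classical in the disc-diagram literature (and aligned with the McCammond--Wise style analysis already invoked earlier), but the careful version relies on the interplay between the planarity of $D \subset \mathbb{S}^2$, the no-bigon hypothesis, and the fact that edges dual to the same curve can be connected by a ladder of squares whose opposite sides also form a combinatorial path of equal length, providing the replacement path that shortens $\gamma$.
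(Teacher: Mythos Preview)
The paper does not prove this proposition---it is quoted from \cite{coningoff}---so there is no in-paper argument to compare against; I will assess your outline directly.

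The forward direction is fine, except that the disjointness of $d_1$ and $d_2$ follows from $X$ being CAT(0) (if they crossed in a square of $D$, the image square in $X$ would have the single hyperplane $H$ transverse to itself), not from the no-bigon hypothesis, which only forbids a pair of dual curves from crossing \emph{twice}.

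The backward direction has a genuine gap precisely at the step you flag as hard. If $\gamma$ crosses $d$ at consecutive rungs $e_i,e_j$ and the ladder of $d$ between them consists of $k$ squares, then swapping to the far side of the ladder replaces a subpath of length $(j-i)+2$ by one of length $k$, and nothing you have written bounds $k$. The missing observation is this: the $k$ dual curves transverse to $d$ in those $k$ squares are pairwise distinct (a coincidence would give a bigon with $d$) and each of them separates $\gamma(i)$ from $\gamma(j)$ (the near-side ladder path crosses each exactly once); hence the geodesic subpath $\gamma|_{[i,j]}$, of length $j-i$, must cross all $k$ of them, forcing $k\le j-i$. Combined with $j-i\le k$ (the near-side ladder path is a competing path between the same endpoints), one gets $k=j-i$, and the swap shortens $\gamma$ by exactly two edges. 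With this step inserted, your argument is complete and is essentially the standard one.
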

\begin{figure}
\begin{center}
\includegraphics[width=0.6\linewidth]{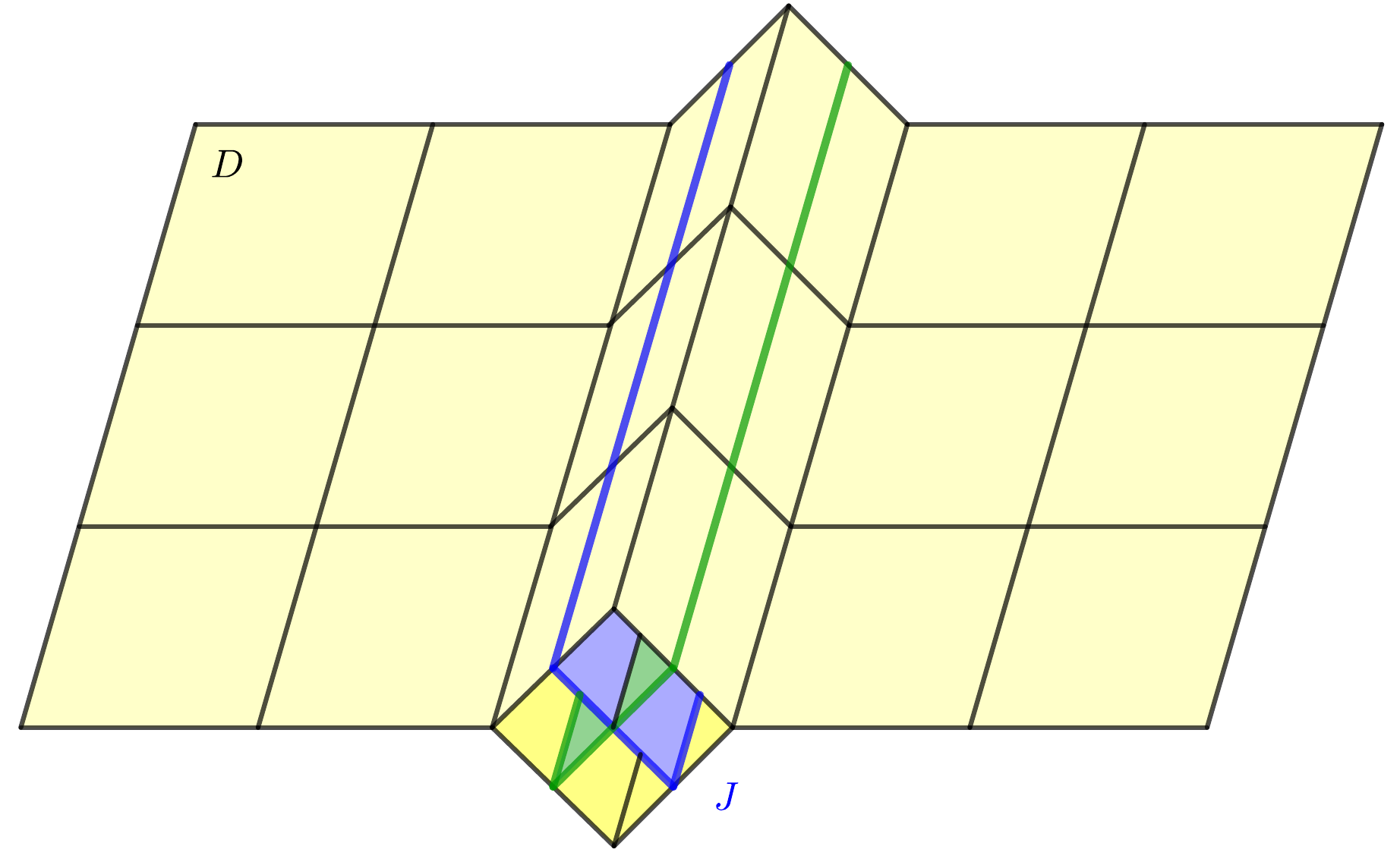}
\caption{Shifting the hyperplane $J$ by modifying the disc diagram.}
\label{DiagramFlip}
\end{center}
\end{figure}

\noindent
We conclude this section with an observation that we will use in Section~\ref{section:Obstruction}. Recall that, in a CAT(0) cube complex $X$, a \emph{flat rectangle} is an isometric embedding $[0,a] \times [0,b]$ for some $a,b \geq 0$. 

\begin{prop}\label{prop:FlatRectangles}
Let $X$ be a CAT(0) cube complex, $\mathcal{C}=(C_1,C_2,C_3,C_4)$ be a cycle of four subcomplexes, and $D \to X$ a disc diagram of minimal complexity bounded by $\mathcal{C}$. Then $D \to X$ is a flat rectangle. Moreover, every hyperplane crossing the part of $\partial D \to X$ lying in $C_1$ separates $C_2$ and $C_4$. 
\end{prop}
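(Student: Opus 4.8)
The plan is to analyse the dual curves of $D$ by means of Propositions~\ref{prop:DiscDiag} and~\ref{prop:DiscEmbedding}, show that they form a grid pattern, and read off from this both the flat‑rectangle structure and the statement about hyperplanes. Throughout I write $\partial D \to X$ as a cyclic concatenation $P_1P_2P_3P_4$ of combinatorial geodesics with $P_i \subset C_i$, and I use that the $C_i$ are combinatorially convex, so that the conclusions of Proposition~\ref{prop:DiscDiag} apply to $D$.

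First I would record the standard structural facts about a minimal‑complexity disc diagram: it has no spur, no dual curve self‑intersects, and no two dual curves form a bigon; consequently every dual curve of $D$ is an embedded arc whose two endpoints lie in the interiors of two distinct boundary edges, and any two distinct dual curves meet in at most one point. Next I would locate the endpoints of a dual curve $c$. Since each $P_i$ is a combinatorial geodesic of $X$, no two edges of $P_i$ are dual to the same hyperplane of $X$, so $c$ cannot have both endpoints on the same $P_i$; and by Proposition~\ref{prop:DiscDiag}(ii) it cannot have one endpoint on $P_i$ and the other on $P_{i\pm 1}$. Hence the dual curves of $D$ split into two families: the \emph{horizontal} ones, with one endpoint on $P_1$ and one on $P_3$, and the \emph{vertical} ones, with one endpoint on $P_2$ and one on $P_4$. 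As each boundary edge is dual to exactly one dual curve, and each horizontal (resp. vertical) dual curve has exactly one endpoint on $P_1$ (resp. $P_2$), this gives bijections between the edges of $P_1$, resp. $P_3$, and the horizontal dual curves, and between the edges of $P_2$, resp. $P_4$, and the vertical dual curves; in particular $|P_1|=|P_3|=:a$ and $|P_2|=|P_4|=:b$.

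The core of the argument is then a Jordan‑curve count. By Proposition~\ref{prop:DiscDiag}(i), two horizontal dual curves are disjoint and two vertical dual curves are disjoint. On the other hand, a horizontal and a vertical dual curve have their four endpoints occurring in alternating cyclic order along $\partial D=P_1P_2P_3P_4$, so they must cross inside the disc $D$, and they do so exactly once by reducedness. Since the two midcubes of any square of $D$ cannot both be horizontal or both vertical (same‑family curves being disjoint), each square of $D$ corresponds to a unique pair (horizontal dual curve, vertical dual curve); hence $\mathrm{Area}(D)=ab$, and, ordering the two families along the boundary and sending each vertex to the pair counting the horizontal, resp. vertical, dual curves separating it from $P_1$, resp. $P_2$, identifies $D$, as a square complex, with the grid $[0,a]\times[0,b]$, with $P_1,P_3$ its horizontal sides and $P_2,P_4$ its vertical sides. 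It remains to see that $D\to X$ is an isometric embedding: by Proposition~\ref{prop:DiscEmbedding} it suffices to check that no two dual curves of $D$ map to the same hyperplane $J$ of $X$. Two horizontal (resp. vertical) ones would force the geodesic $P_1$ (resp. $P_2$) to cross $J$ twice; a horizontal and a vertical one cross inside $D$, so their common image would be a hyperplane crossing itself — both impossible. Hence $D\to X$ is a flat rectangle.

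For the last assertion, let $J$ be a hyperplane crossing the part of $\partial D\to X$ lying in $C_1$, that is, crossing $P_1$; then $J$ is the image of a horizontal dual curve $c$, whose endpoints lie on $P_1$ and $P_3$. If $J$ crossed an edge of $P_2$ (or of $P_4$), that edge would be dual to a dual curve of $D$ distinct from $c$ yet mapping into $J$, which we just excluded; so $J$ is disjoint from $P_2$ and from $P_4$, and each of these connected subcomplexes lies in a single halfspace delimited by $J$. Finally, inside the disc $D$ the arc $c$ separates $P_2$ from $P_4$ (its endpoints split $\partial D$ into an arc through $P_2$ and an arc through $P_4$), and since $D\to X$ is an isometric embedding these two halfspaces of $J$ are distinct; thus $J$ separates $C_2$ and $C_4$. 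I expect the only step requiring genuine care to be the combination of reducedness of a minimal‑complexity diagram with the precise endpoint structure of dual curves — once that is in place, everything else is bookkeeping with Propositions~\ref{prop:DiscDiag} and~\ref{prop:DiscEmbedding} and an elementary planarity argument.
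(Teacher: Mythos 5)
Your argument for the flat‑rectangle structure of $D$ is sound and essentially the same approach the paper uses (it delegates that part to \cite[Corollary~2.17]{coningoff}, but the reasoning is the one you spell out, via Propositions~\ref{prop:DiscDiag} and~\ref{prop:DiscEmbedding} and a planarity count of dual curves). The problem is in the last paragraph.

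You show that the hyperplane $J$ is disjoint from the geodesic sides $P_2$ and $P_4$ of the rectangle, and that these sides lie in distinct halfspaces of $J$; from this you conclude that ``$J$ separates $C_2$ and $C_4$.'' That step is a genuine gap. To say $J$ separates the subcomplexes $C_2$ and $C_4$ you need $J$ to be disjoint from $C_2$ and from $C_4$, and nothing in your argument rules out that $J$ crosses $C_2$ (or $C_4$) while missing the particular geodesic $P_2 \subset C_2$ (or $P_4 \subset C_4$) that appears on $\partial D$. Convexity of $C_2$ does not rescue this: convexity only tells you that $C_2$ lies in a single halfspace \emph{provided} $J$ does not cross $C_2$, which is precisely what is not yet known. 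This is the substantive content of the second half of the statement, and it is where the paper genuinely uses minimality of complexity a second time: assuming $J$ crosses $C_2$, it shifts $J$ (by elementary diagram moves that preserve area and perimeter) to the leftmost position along $[0,a]\times\{0\}$, deduces from convexity of $C_2$ that the whole strip $[0,1]\times[0,b]$ lies in $C_2$, and then cuts this strip off to produce a bounded diagram of strictly smaller complexity, a contradiction. You would need to add an argument of this kind (or an equivalent one) to close the gap; merely separating $P_2$ from $P_4$ is not enough.
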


\begin{proof}
The fact that $D \to X$ is a flat rectangle is an easy consequence of Propositions~\ref{prop:DiscDiag} and~\ref{prop:DiscEmbedding}. See \cite[Corollary~2.17]{coningoff} for more details. For simplicity, we identify $D$ with $[0,a] \times [0,b]$ for some $a,b \geq 0$ where $[0,a] \times \{0\} \subset C_1$. Also, we identify $D$ with its image under $D \to X$. We need to show that every hyperplane $J$ crossing $[0,a] \times\{0\}$ separates $C_2$ and $C_4$. Assume for contradiction that it is not the case. So $J$ crosses $C_2$ or $C_4$, say $C_2$. Without loss of generality, we assume that $J$ is the leftmost such hyperplane along $[0,a] \times \{0\}$. As a consequence, $J$ is transverse to all the hyperplanes crossing $[0,a] \times \{0\}$ on its left. As shown by Figure~\ref{DiagramFlip}, we can modify the disc diagram without modifying its area nor its perimeter but shifting $J$ to the left. Therefore, we can assume that $J$ crosses $[0,1] \times \{0\}$. But the fact that $J$ crosses $C_2$, and because $C_2$ is convex, implies that $\{1\} \times [0,b]$ is contained $C_2$. Indeed, if $z$ denotes an arbitrary vertex of $C_2$ separated from $\{0\} \times [0,b]$ by $J$, then, for every $0 \leq p \leq b$, there exists a geodesic from $(0,p)$ to $z$ passing through $(1,p)$, hence $(1,p) \in C_2$ by convexity. Thus, because $[0,1] \times [0,b]$ is contained in $C_2$, we can replace our disc diagram $[0,a] \times [0,b]$ with $[1,a] \times [0,b]$, contradicting the minimality of its complexity. This completes the proof of our proposition.
\end{proof}

\subsection{A lemma on intervals}

\noindent
The first step in the proof of \cite[Theorem~4.1]{coningoff} was to show that, given two geodesics in a CAT(0) cube complex with the same endpoints, every vertex of one geodesic is the corner of a flat square that has its opposite vertex on the other geodesic (see Figure~\ref{staircase}). We improve this observation by constructing a \emph{staircase} instead of a square.
\begin{figure}
\begin{center}
\includegraphics[scale=0.4]{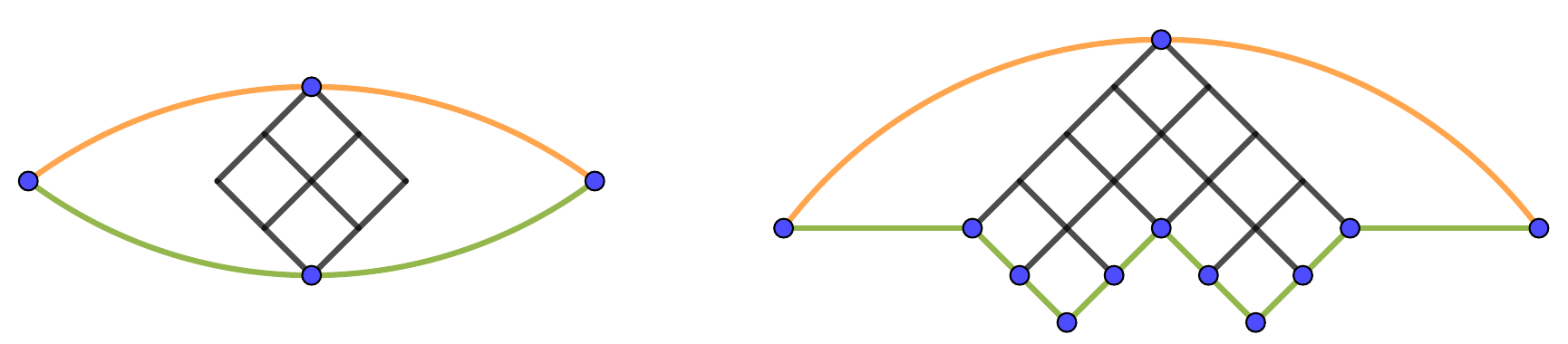}
\caption{A comparison between \cite{coningoff} and Lemma~\ref{lem:Interval}.}
\label{staircase}
\end{center}
\end{figure}

\begin{definition}
A \emph{staircase} is a square complex isomorphic to the subcomplex in $\mathbb{R}^2$ delimited by $[0,a] \times \{0\}$, $\{0\} \times [0,b]$ and some ($\ell^1$-)geodesic $\alpha$ between $(a,0)$ and $(0,b)$ where $a,b \geq 0$. Its \emph{corner} refers to the vertex corresponding to $(0,0)$, and its \emph{broken path} as the geodesic corresponding to $\alpha$.
\end{definition}

\noindent
The next observation will be fundamental in the proof of Theorem~\ref{thm:WhenHyp} below, and is of independent interest.

\begin{lemma}\label{lem:Interval}
Let $X$ be a CAT(0) cube complex, $[x,y]$ a geodesic between two vertices $x,y \in X$, and $z \in I(x,y)$ a third vertex. Then there exists an isometrically embedded staircase $E \subset X$ such that $z$ is its corner and such that its broken path is contained in~$[x,y]$.
\end{lemma}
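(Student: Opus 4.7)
My plan is to argue via disc diagrams. Since $z \in I(x,y)$, there is a combinatorial geodesic $\gamma$ from $x$ to $y$ passing through $z$. Among all such $\gamma$, pick one sharing with $[x,y]$ as long an initial and a final segment as possible, and write $p,q\in [x,y]$ for the endpoints of these maximal shared prefix and suffix; equivalently, $p=v_k$ where $k$ is the largest index with $\{h_1,\dots,h_k\}\subset\mathcal{H}(x,z)$, and similarly $q=v_m$ with $\{h_{m+1},\dots,h_n\}\subset\mathcal{H}(z,y)$. I refine $\gamma$ further so that its internal segments $[p,z]$ and $[z,q]$ cross the hyperplane sets $\mathcal{H}(z,p)=\mathcal{H}(x,z)\setminus\mathcal{H}(x,p)$ and $\mathcal{H}(z,q)=\mathcal{H}(z,y)\setminus\mathcal{H}(q,y)$ in the \emph{same} relative order as does the subgeodesic $[p,q]\subset[x,y]$; this is possible because these orders are all linear extensions of the same underlying hyperplane poset.

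The concatenation of $[p,z]$, $[z,q]$ and $[p,q]^{-1}$ is a nullhomotopic combinatorial loop, and so bounds a disc diagram in $X$. Let $E\to X$ be one of minimal complexity bounded by the cycle $([p,z],[z,q],[p,q]^{-1})$. I intend to show that $E$ is an isometrically embedded staircase with corner $z$ and broken path $[p,q]\subset[x,y]$. Because each side of $\partial E$ is a combinatorial geodesic, minimality implies that each hyperplane of $X$ induces at most one dual curve of $E$ and that no dual curve crosses a single side twice. Furthermore, $[p,z]\cup[z,q]$ is a subpath of the geodesic $\gamma$, so no dual curve can have both endpoints on $[p,z]\cup[z,q]$ without double-crossing a hyperplane of $\gamma$. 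Hence every dual curve of $E$ joins $[p,q]$ to exactly one of $[p,z]$ or $[z,q]$; call these \emph{horizontal} and \emph{vertical} respectively, and note the bijections with $\mathcal{H}(z,p)$ and $\mathcal{H}(z,q)$.

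Next, I claim that no two horizontal dual curves cross in $E$, and symmetrically that no two verticals cross. A crossing between two horizontal dual curves would mean two transverse hyperplanes in $\mathcal{H}(z,p)$ whose relative order along $[p,z]$ is opposite to their relative order along $[p,q]$, which contradicts the compatibility choice of $\gamma$. Granted this claim, the combinatorial structure of $E$ is precisely that of a subcomplex of $\mathbb{R}^2$ delimited as in the definition of a staircase: the horizontal (resp.\ vertical) dual curves of $E$ play the role of horizontal (resp.\ vertical) hyperplanes of $\mathbb{R}^2$; the vertex $z$ is the corner; $[p,z]$ and $[z,q]$ are the two arms, of lengths $|\mathcal{H}(z,p)|$ and $|\mathcal{H}(z,q)|$; and $[p,q]$ is the broken path, already lying inside $[x,y]$ by construction. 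The isometric embedding then follows directly from Proposition~\ref{prop:DiscEmbedding}, since each hyperplane of $X$ induces at most one dual curve of $E$.

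The hard part will be the non-crossing claim for horizontal (and vertical) dual curves. It rests on the standard principle that two dual curves crossing in a minimal disc diagram between two geodesic paths correspond to transverse hyperplanes whose relative order is swapped between the two paths; the compatibility choice of $\gamma$ is designed precisely so that no such swap occurs within $\mathcal{H}(z,p)$ or within $\mathcal{H}(z,q)$, while freely permitting them between the two classes (which account for the squares of the staircase). The degenerate boundary cases (e.g.\ $z\in[x,y]$, in which case $p=q=z$ and $E$ is a single point) are handled uniformly by allowing $a=0$ or $b=0$ in the definition of a staircase.
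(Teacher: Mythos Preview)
Your argument is essentially correct and reaches the same conclusion, but it takes a different route from the paper. The paper bounds a minimal-complexity disc diagram by the cycle of \emph{convex subcomplexes} $(I(x,z),\,I(z,y),\,[x,y])$; the non-crossing of dual curves along each of the two arms then follows immediately from Proposition~\ref{prop:DiscDiag}, because intervals are convex. You instead fix specific geodesics $[p,z]$ and $[z,q]$ whose hyperplane-crossing orders are compatible with that of $[p,q]$, and then derive the non-crossing from a planarity/linking argument. Both strategies work; the paper's is slicker since the non-crossing is handed to you by the convexity machinery, whereas yours requires the extra step of arranging the compatible order (which, as you note, is always possible because any linear extension of the separation poset on $\mathcal H(p,z)$ is realised by some geodesic).

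Two small points. First, the assertion ``minimality implies that each hyperplane of $X$ induces at most one dual curve of $E$'' does not actually need minimality: it already follows from the fact that $\partial E$ is the union of two geodesics, namely $[p,z]\cup[z,q]\subset\gamma$ and $[p,q]\subset[x,y]$, so each hyperplane meets $\partial E$ in at most two edges. Minimality is what you need for the absence of bigons, which in turn is required both for your linking argument (so that two dual curves cross at most once) and for invoking Proposition~\ref{prop:DiscEmbedding}. Second, your passage from ``no two horizontals cross, no two verticals cross'' to ``$E$ is a staircase'' is a bit quick. The cleanest way to fill it in is to observe that, since any two crossing dual curves must be of opposite types, $E$ contains no three pairwise crossing dual curves; hence $E$ is a CAT(0) square complex in which $[p,z]$ and $[z,q]$ are convex, and the projection map $v\mapsto(\mathrm{proj}_{[z,p]}(v),\mathrm{proj}_{[z,q]}(v))$ embeds $E$ into the rectangle $[0,|\mathcal H(z,p)|]\times[0,|\mathcal H(z,q)|]$ exactly as in the paper's proof.
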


\noindent
Here, given two vertices $a,b \in X$, $I(a,b)$ refers to the \emph{interval} between $a$ and $b$, i.e.\ the union of all the geodesics between $a$ and $b$.

\begin{proof}[Proof of Lemma~\ref{lem:Interval}.]
Let $D \to X$ be a disc diagram of minimal area bounded by the cycle of subcomplexes $I(x,z) \cup I(z,y) \cup \gamma$. Because a hyperplane in $X$ cannot cross twice a geodesic, necessarily no two dual curves in $D$ are sent to the same hyperplane in $X$. It follows from Proposition~\ref{prop:DiscEmbedding} that $D \to X$ is an isometric embedding. From now on, we identify $D$ with its image in $X$. The boundary of $D$ is the concatenation of a subsegment $[x',y'] \subset \gamma$, a geodesic $[y',z]$, and a geodesic $[z,x']$. According to Proposition~\ref{prop:DiscDiag}, no two dual curves in $D$ crossing $[z,x']$ (resp. $[z,y']$) are transverse and no dual curve crosses both $[z,x']$ and $[z,y']$. As a consequence, $D$ is a CAT(0) cube complex on its own right (otherwise, it would contain three squares intersecting cyclically, and so three pairwise intersecting dual curves, two of them having to intersect $[z,x']$ or $[z,y']$) and the geodesics $[z,x']$, $[z,y']$ are convex in $D$ (otherwise, $D$ would contain a square having two consecutive sides on $[z,x']$ or on $[z,y']$, producing two intersecting dual curves both crossing $[z,x']$ or both crossing $[z,y']$). Notice that a vertex in $D$ is uniquely determined by its projections onto $[z,x']$ and $[z,y']$. Indeed, two distinct vertices $p,q \in D$ must be separated by a hyperplane, and such a hyperplane must cross either $[z,x']$ or $[z,y']$ and so it has to separate the projections of $p,q$ onto $[z,x']$ or onto $[z,y']$. Therefore, we get an isometric embedding 
$$\left\{ \begin{array}{ccc} D & \hookrightarrow & [0,d(z,x')] \times [0,d(z,y')] \\ p & \to & \left( d(z, \mathrm{proj}_{[z,x']}(p)), d(z, \mathrm{proj}_{[z,y']}(p)) \right) \end{array} \right..$$
The image of $D$ is delimited by $[0,d(z,x')] \times \{0\}$, $\{0\} \times [0,d(z,y')]$, and the image of $[x',y']$, which connects $(d(z,x'),0)$ and $(0,d(z,y'))$. Consequently, $D$ is the staircase we are looking for.
\end{proof}

\subsection{A hyperbolicity criterion}

\noindent
We are now ready to state and prove our improvement of \cite[Theorem~4.1]{coningoff}, which gives a sufficient condition for the coning-off of a CAT(0) cube complex to be hyperbolic:

\begin{thm}\label{thm:WhenHyp}
Let $X$ be a CAT(0) cube complex. Fix a collection $\mathcal{P}$ of isometrically embedded subcomplexes and let $Y$ denote the corresponding cone-off of $X$. Assume that there exists a constant $K \geq 0$ such that, for every flat rectangle $[0,a] \times [0,b] \hookrightarrow X$, either the Hausdorff distance in $Y$ between $[0,a] \times \{i\}$ and $[0,a] \times \{j\}$ is $\leq K$ for all $0 \leq i,j \leq b$, or the Hausdorff distance in $Y$ between $\{i\} \times [0,b]$ and $\{j\} \times [0,b]$ is $\leq K$ for all $0 \leq i,j \leq a$. Then $Y$ is hyperbolic.
\end{thm}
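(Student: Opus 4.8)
The plan is to prove hyperbolicity of $Y$ via a thin-triangles (or, more conveniently, a thin-bigons/linear-isoperimetric-style) argument adapted to the cone-off, using the disc-diagram technology from the previous subsections together with the staircase structure from Lemma~\ref{lem:Interval}. The key geometric insight is that a geodesic triangle in $Y$ can be filled by a disc diagram in $X$ whose boundary is a cycle of (images of) $X$-geodesics and cone-points; after subdividing the diagram into flat pieces (rectangles, via Proposition~\ref{prop:FlatRectangles}) and staircases (via Lemma~\ref{lem:Interval}), the hypothesis guarantees that each flat piece is ``thin in $Y$'' along at least one of its two coordinate directions. So the plan is as follows.

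First I would reduce to a statement about geodesic triangles. Fix a geodesic triangle in $Y$ with vertices $x_1, x_2, x_3$; by Lemma~\ref{lem:MilnorSvarc}-type considerations (or simply because the cone-off of a geodesic space is geodesic and quasi-isometric to a graph), it suffices to bound the ``insize'' of such a triangle by a uniform constant depending only on $K$ and $\dim(X)$. Each side of the triangle can be taken, up to bounded error, to alternate between $X$-geodesic segments and passages through cone-points over subcomplexes in $\mathcal{P}$; concatenating the three sides gives a closed combinatorial path (after bounded perturbation) that is a cycle of subcomplexes in the sense of the Disc Diagrams subsection. Take a disc diagram $D \to X$ of minimal complexity bounded by this cycle.

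Next I would analyze the structure of $D$. Along each $X$-geodesic boundary arc, Proposition~\ref{prop:DiscDiag} tells us dual curves emanating from it do not self-cross, and Proposition~\ref{prop:DiscEmbedding} gives that $D \to X$ is an isometric embedding. Now decompose $D$ combinatorially into a union of flat rectangles and staircases: this is exactly the content of Lemma~\ref{lem:Interval} applied to pairs of corners, and of Proposition~\ref{prop:FlatRectangles} applied to the four-subcomplex cycles that arise between consecutive cone-point passages. For each maximal flat rectangle $R = [0,a]\times[0,b]$ appearing in this decomposition, the hypothesis of the theorem gives a direction (say horizontal) along which all the fibers $[0,a]\times\{i\}$ are within Hausdorff distance $K$ of one another in $Y$; this means $R$ has ``$Y$-width $\leq K$'' in one direction, so any point of $R$ is within $K$ of the boundary $\partial R$, and hence (tracking through the decomposition) within a bounded distance — roughly $K$ times a quantity controlled by $\dim(X)$, since the number of distinct ``slabs'' the diagram can be cut into is dimensionally bounded — of the boundary of $D$. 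Combined with the fact that the staircase pieces have their broken path on the boundary and are themselves thin, this yields that every vertex of $D$ is uniformly close in $Y$ to $\partial D$, i.e. to the triangle. Finally, the passages through cone-points contribute at most $\mathrm{diam}_Y \leq 2$ each, and there is a bounded number of them to account for along a geodesic (again by a dimension/minimality argument, or by absorbing them into the constant), so the triangle is uniformly thin in $Y$, proving hyperbolicity.

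The main obstacle will be making precise the claim that every interior vertex of $D$ is $Y$-close to $\partial D$: the decomposition of $D$ into rectangles and staircases is not canonical, and a rectangle thin in its horizontal direction may be adjacent to one thin in its vertical direction, so one must argue that a ``path to the boundary'' through a chain of such pieces has uniformly bounded $Y$-length. I expect this to require an induction on $\dim(X)$ or on the number of hyperplanes crossing a maximal flat piece, exploiting that each time one crosses from one slab to the next one genuinely descends in some dimensional count, so the chain has length bounded by $\dim(X)+1$; each step costs at most $K$ in $Y$, giving a total bound of roughly $(\dim(X)+1)K$. A secondary technical point is handling the cone-points cleanly at the corners of the triangle and where $X$-geodesic arcs meet $\mathcal{P}$-subcomplexes, which is where the hypothesis that the subcomplexes are only \emph{isometrically embedded} (not convex) forces us to work with intervals $I(\cdot,\cdot)$ and staircases rather than with a single flat square as in \cite{coningoff}.
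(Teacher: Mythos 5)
Your high-level idea --- fill with a disc diagram, decompose into flat pieces each of which is thin in $Y$ along one coordinate direction, and chase interior vertices to the boundary --- is in the right spirit, but the argument that actually makes this work is absent from your proposal, and the step you flag as ``the main obstacle'' is exactly where your plan does not close.

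The paper does not fill geodesic triangles of $Y$ directly. It applies Bowditch's hyperbolicity criterion (Proposition~\ref{prop:WhenHyp}) with $\eta(x,y)$ taken to be the image in $Y$ of the $X$-interval $I(x,y)$. Everything then reduces to a single claim (Claim~\ref{claim:ForHyp}): every $z\in I(x,y)$ lies within $Y$-distance $2K$ of a fixed $X$-geodesic $[x,y]$. To prove it, one applies Lemma~\ref{lem:Interval} to produce an isometrically embedded staircase with corner $z$ and broken path on $[x,y]$, and writes the staircase as a \emph{nested} family of flat rectangles $C_1,\ldots,C_r$, \emph{all sharing the corner $z$}, with opposite corners on $[x,y]$, ordered by decreasing height. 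By hypothesis each $C_i$ is vertical or horizontal, and a four-case check finishes: if some $C_i$ is both, or if some consecutive pair $C_i,C_{i+1}$ have opposite types (so that $C_i\cap C_{i+1}$ is both), then $z$ is within $2K$ of a point of $[x,y]$; if all $C_i$ are vertical then the widest one is, giving $d_Y(z,y')\le K$; if all are horizontal then the tallest one is, giving $d_Y(z,x')\le K$. The bound $2K$ is uniform, independent of $\dim(X)$, of $r$, and of the number of cone-points.

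The gap in your proposal is precisely here. Your picture is an unstructured \emph{chain} of adjacent flat rectangles, traversed one at a time from an interior vertex outward, with a conjectured inductive bound of $\dim(X)+1$ on the chain length ``because each crossing descends in some dimensional count''. That bound is not substantiated, and it is not what saves the argument: a staircase can contain arbitrarily many maximal rectangles, and the dimension plays no role. What collapses the chase to at most two $K$-steps, regardless of $r$, is the \emph{nesting} --- all rectangles share the corner $z$ --- together with the observation that the intersection of two consecutive rectangles inherits both thinness types. A secondary problem with the direct triangle-filling route is that a $Y$-geodesic can pass through unboundedly many cone-points, so the cycle of subcomplexes bounding your diagram has no a priori bound on its number of pieces and Proposition~\ref{prop:FlatRectangles} (a four-piece statement) does not slot in cleanly. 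The Bowditch reduction sidesteps this: the $\eta$-sets are honest $X$-intervals, and cone-points only appear in the short check that $d_Y(x,y)\le 1$ implies $\mathrm{diam}_Y(\eta(x,y))\le 4K+1$, which uses exactly that the coned-off subcomplexes are isometrically embedded and hence contain $X$-geodesics between their vertices.
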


\noindent
Our proof follows the lines of \cite[Theorem~4.1]{coningoff} and uses the following criterion:

\begin{prop}\label{prop:WhenHyp}\emph{(\cite[Proposition~3.1]{Bowditchcriterion})}
For every $D \geq 0$, there exists some $\delta \geq 0$ such that the following holds. Let $T$ be a graph. Assume that a connected subgraph $\eta(x,y)$, containing $x$ and $y$, is associated to each pair of vertices $(x,y) \in T^2$ such that:
\begin{itemize}
	\item for all vertices $x,y \in T$, $d(x,y) \leq 1$ implies $\mathrm{diam}(\eta(x,y)) \leq D$;
	\item for all vertices $x,y,z \in T$, the inclusion $\eta(x,y) \subset \left( \eta(x,z) \cup \eta(z,y) \right)^{+D}$ holds.
\end{itemize}
Then $T$ is $\delta$-hyperbolic.
\end{prop}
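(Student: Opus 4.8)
The plan is to reduce the whole statement to a single uniform comparison between the connecting sets $\eta(x,y)$ and honest geodesics, and then to read off thin triangles. Note first that, since each $\eta(x,y)$ is connected and contains $x$ and $y$, the graph $T$ is connected, hence a geodesic metric space; I write $[x,y]$ for a choice of combinatorial geodesic between vertices $x,y$. Replacing $\eta(x,y)$ by $\eta(x,y)\cup\eta(y,x)$ (and $D$ by $2D$) I may assume the assignment is symmetric. The key reduction is the following \emph{Main Claim}: there is a constant $C=C(D)$ such that $\eta(x,y)$ and \emph{every} geodesic $[x,y]$ lie at Hausdorff distance at most $C$. Granting this, every geodesic triangle $x,y,z$ is $(2C+D)$-thin: given $w\in[x,y]$, pick $w'\in\eta(x,y)$ with $d(w,w')\le C$ (Main Claim), then $w''\in\eta(x,z)\cup\eta(z,y)$ with $d(w',w'')\le D$ (second hypothesis), then a point of $[x,z]\cup[z,y]$ within $C$ of $w''$ (Main Claim again); so $T$ is $\delta$-hyperbolic with $\delta$ depending only on $D$, as desired.

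The first step toward the Main Claim is to establish both inclusions up to a \emph{logarithmic} error. Fix $x,y$, put $n=d(x,y)$, and subdivide $[x,y]$ at unit speed. Iterating the second hypothesis along a balanced binary splitting of the subdivision points — each of the $\lceil\log_2 n\rceil$ stages halves the intervals and adds $D$ to the accumulated error — and using that $\eta$ of two adjacent vertices has diameter at most $D$ and meets $[x,y]$, one obtains
$$\eta(x,y)\subseteq[x,y]^{+D(1+\log_2 n)}.$$
For the reverse inclusion I would exploit connectedness: $\eta(x,y)$ contains a path $\sigma$ from $x$ to $y$, and $u\mapsto d(x,u)$ is $1$-Lipschitz along $\sigma$ and runs from $0$ to $n$, so for any $w\in[x,y]$ there is $u\in\sigma$ with $d(x,u)=d(x,w)$; since $\sigma\subseteq\eta(x,y)\subseteq[x,y]^{+D(1+\log_2 n)}$ there is $v\in[x,y]$ near $u$, and because distances along the geodesic are governed by the single coordinate $d(x,\cdot)$ this $v$ is near $w$, so altogether $d(w,\eta(x,y))\le 2D(1+\log_2 n)$. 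Hence also $[x,y]\subseteq\eta(x,y)^{+2D(1+\log_2 n)}$.

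The decisive step, and the one I expect to be the main obstacle, is removing the logarithm. Let $f(n)$ be the least number with $d_{\mathrm{Haus}}(\eta(x,y),[x,y])\le f(n)$ whenever $d(x,y)\le n$ (over all geodesics); by the previous step $f(n)<\infty$. I would run a Gromov-style bootstrap: among configurations of diameter at most $n$, take one (almost) realising $f(n)$, with an offending point $p$ on $[x,y]$ or in $\eta(x,y)$ at distance $\approx f(n)$ from the other set; cut $[x,y]$ at its midpoint, apply the second hypothesis to the triple through that midpoint, and replace the configuration by the half whose geodesic still passes near $p$, recursing into the dyadic subinterval containing $p$ in the delicate case where $p$ falls near a cut point. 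The bookkeeping — controlling the additive losses so that they do not accumulate across the $O(\log n)$ levels, and invoking the $d(x,y)\le 1$ hypothesis to close the recursion at the bottom scale — is exactly the hard part; carried out carefully it yields a self-improving bound forcing $f$ to be bounded by some $C=C(D)$. This establishes the Main Claim, and with it the thin-triangles argument of the first paragraph finishes the proof.
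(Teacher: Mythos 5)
The paper does not prove this statement; it is imported verbatim from Bowditch's hyperbolicity criterion, so there is no in-paper argument to compare with. Judged on its own terms, your proposal gets the easy parts right but leaves the crux unresolved, and the mechanism you sketch for that crux does not actually close.

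What is correct: the reduction to a uniform Hausdorff comparison between $\eta(x,y)$ and geodesics is the right frame (once one has $d_{\mathrm{Haus}}(\eta(x,y),[x,y])\le C$, the $(2C+D)$-thinness of triangles follows immediately from hypothesis (2) exactly as you say); the symmetrization is harmless; the dyadic argument giving $\eta(x,y)\subseteq[x,y]^{+D(1+\log_2 n)}$ is standard and correct; and the intermediate-value trick using connectedness of $\eta(x,y)$ does yield the reverse inclusion with a comparable logarithmic error. All of this is the warm-up.

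The genuine gap is the ``remove the logarithm'' step, and the bootstrap you describe does not do it. Write $f_1(n)$ for the forward error, i.e.\ the least $r$ with $\eta(x,y)\subseteq[x,y]^{+r}$ whenever $d(x,y)\le n$. Your recursion --- take an extremal point $p$, cut $[x,y]$ at its midpoint $m$, apply hypothesis (2) to the triple $(x,m,y)$, and pass to the half still containing the offending point --- yields exactly $f_1(n)\le f_1(\lceil n/2\rceil)+D$. That is not a self-improving (contracting) inequality: unwinding it reproduces $f_1(n)\le D(1+\log_2 n)$, the bound you already had, and nothing forces $f_1$ to be bounded. The same happens on the reverse side, where there is an additional structural obstruction: hypothesis (2) bounds $\eta(x,y)$ by $\eta(x,m)\cup\eta(m,y)$ but gives no containment in the other direction, so one cannot recurse a point of $[x,m]$ close to $\eta(x,m)$ into a point close to $\eta(x,y)$. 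In short, the recursion you set up is the dyadic argument in disguise, not an improvement of it. Eliminating the logarithm is the real content of Bowditch's proposition and requires a genuinely different input --- e.g.\ a subquadratic isoperimetric/area estimate obtained by coning loops off along the $\eta$-sets and then invoking the Gromov--Papasoglu rigidity of subquadratic filling, or an explicit argument that sublinearly thin bigons force uniformly thin bigons --- none of which appears in your sketch. As written, the proposal proves only a logarithmic thinness estimate, which is strictly weaker than hyperbolicity until one supplies the missing rigidity step.
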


\noindent
 Here, given a metric space $X$, a subset $Y \subset X$, and a constant $D \geq 0$, we denote by $Y^{+D}$ the $D$-neighbourhood of $Y$, i.e.\ $Y^{+D}:= \{ x \in X \mid d(x,Y) \leq D \}$.  

\begin{proof}[Proof of Theorem~\ref{thm:WhenHyp}.]
The strategy is to define $\eta(x,y)$ as the subgraph in $Y$ generated by the interval $I(x,y)$ in $X$ for all $x,y \in X$ and then to apply Proposition~\ref{prop:WhenHyp}. In order to verify the assumptions, the following observation will be needed:

\begin{claim}\label{claim:ForHyp}
Let $x,y \in X$ be two vertices, $[x,y]$ a geodesic in $X$, and $z \in I(x,y)$ a third vertex. Then $z$ lies in the $2K$-neighbourhood of $[x,y]$ in $Y$.
\end{claim}

\noindent
According to Lemma \ref{lem:Interval}, there exists an isometrically embedded staircase $E \subset X$ such that $z$ is its corner and such that its broken path $[x',y']$ lies in $[x,y]$. Write $E$ as a union of flat rectangles $C_1, \ldots, C_r$ such that, for every $1 \leq i \leq r$, $z$ is a corner of $C_i$ and its opposite vertex lies in $[x,y]$. We index our rectangles so that the length of $C_i \cap [z,x']$ decreases. See Figure \ref{ColorStaircase}. We refer to a geodesic in $E$ parallel to $[z,x']$ (resp. to $[z,y']$) as \emph{vertical} (resp. \emph{horizontal}). By extension, we say that a flat rectangle in $E$ is \emph{vertical} (resp. \emph{horizontal}) if the Hausdorff distance in $Y$ between any two of its vertical (resp. horizontal) geodesics is $\leq K$. By assumption, each $C_i$ is vertical or horizontal. Four cases may happen:
\begin{itemize}
	\item If there exists some $1 \leq i \leq r$ such that $C_i$ is both vertical and horizontal, then the distance in $Y$ between $z$ and the vertex of $C_i$ opposite to it (which belongs to $[x,y]$) must be $\leq 2K$. 
	\item If $C_1, \ldots, C_r$ are all vertical, then in particular $C_r$ is vertical, which implies that $d_Y(z,y') \leq K$.
	\item If $C_1, \ldots, C_r$ are all horizontal, then in particular, $C_1$ is horizontal, which implies that $d_Y(z,x') \leq K$.
	\item If there exists some $1 \leq i \leq r-1$ such that $C_i$ is horizontal and $C_{i+1}$ vertical, or vice-versa, then the flat rectangle $C_i \cap C_{i+1}$ is both vertical and horizontal, so the distance in $Y$ between $z$ and the vertex of $C_i \cap C_{i+1}$ opposite to it (which belongs to $[x,y]$) must be $\leq 2K$.
\end{itemize}
This concludes the proof of our claim.
\begin{figure}
\begin{center}
\includegraphics[scale=0.4]{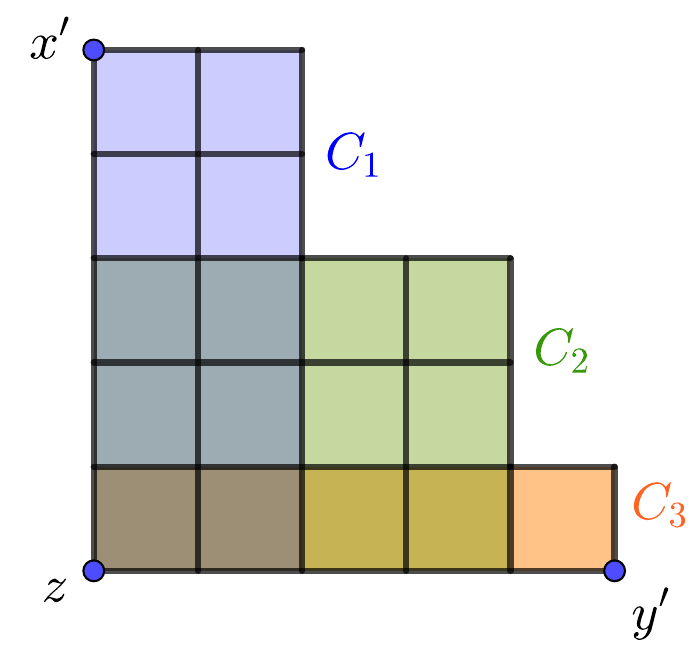}
\caption{Decomposition of the staircase $E$ as a union of flat rectangles.}
\label{ColorStaircase}
\end{center}
\end{figure}

\medskip \noindent
We are now ready to verify that Proposition~\ref{prop:WhenHyp} applies. First, let $x,y \in Y$ be two vertices satisfying $d_Y(x,y) \leq 1$. If $x=y$, then $\eta(x,y)$ is a single vertex; and if $x$ and $y$ are adjacent in $X$, then $\eta(x,y)$ is a single edge. Otherwise, if $d_X(x,y) \geq 2$, then there must exist some $P \in \mathcal{P}$ such that $x,y \in P$. Because $P$ is isometrically embedded, there exists a geodesic $\gamma \subset P$ between $x$ and $y$, and, as a consequence of Claim~\ref{claim:ForHyp}, $d_Y(z, \gamma) \leq 2K$. Therefore, 
$$\mathrm{diam}_Y(\eta(x,y)) \leq 4K+ \mathrm{diam}_Y(\gamma) =4K+1.$$
Thus, we have proved that $\mathrm{diam}_Y(\eta(x,y)) \leq 4K+1$ for all vertices $x,y \in Y$ satisfying the inequality $d_Y(x,y) \leq 1$. 

\medskip \noindent
Next, fix three vertices $x,y,z \in Y$ and a vertex $w \in \eta(x,y)$. Let $m$ denote the median point of $\{x,y,z\}$ and let $[m,x]$, $[m,y]$ be two geodesics in $X$. Because $[x,m] \cup [m,y]$ defines a geodesic in $X$, it follows from Claim~\ref{claim:ForHyp} that there exists a vertex $w' \in [x,m] \cup [m,y]$ such that $d_Y(w,w') \leq 2K$. Because $[x,m] \subset \eta(x,z)$ and $[m,y] \subset \eta(y,z)$, it follows that $z$ lies in the $2K$-neighbourhood of $\eta(x,z) \cup \eta(y,z)$ in $Y$, as desired. 

\medskip \noindent
Thus, Proposition~\ref{prop:WhenHyp} applies and we conclude that $Y$ is hyperbolic.
\end{proof}

\section{An obstruction to cyclic hyperbolicity}\label{section:Obstruction}

\noindent
In the opposite direction of Section~\ref{section:ConeOff}, we need to be able to show that some groups acting geometrically on CAT(0) cube complexes are not cyclically hyperbolic. The typical example to keep in mind is $\mathbb{F}_2 \times \mathbb{F}_2$ that acts on a product of two regular trees. As a warm up, let us sketch a proof that $\mathbb{F}_2 \times \mathbb{F}_2$ is not cyclically hyperbolic.

\medskip \noindent
Write $\mathbb{F}_2 \times \mathbb{F}_2 = \langle a,b \mid \ \rangle \times \langle c,d \mid \ \rangle$ and let $g_1, \ldots, g_n \in \mathbb{F}_2 \times \mathbb{F}_2$ be finitely many cyclically reduced elements.  Write each $g_i$ as $(\ell_i,r_i)$.  Fix an infinite geodesic ray $r_1$ in $\mathbb{F}_2 \times \{1\}$ such that the projection on $r_1$ of the axis of any conjugate of a $g_i$ has diameter at most $K$, for some uniform $K \geq 0$. This amounts to constructing an infinite reduced word written over $\{a,b\}^{\pm 1}$ sharing no large subwords with  the infinite word $\ell_i^\infty:= \ell_i \ell_i \cdots$ . Similarly, fix an infinite geodesic ray $r_2$ in $\{1\} \times \mathbb{F}_2$ such that the projection on $r_2$ of the axis of any conjugate of a $g_i$ has diameter at most $K$ (up to replacing $K$ with a larger constant). Then $r_1 \times r_2$ is a convex subcomplex isometric to $[0,+ \infty) \times [0,+ \infty)$ with a the property that the projection on $r_1 \times r_2$ of the axis of any conjugate of a $g_i$ has diameter $\leq 2K$. This implies that the flat quasi-isometrically embeds into the cone-off of $\mathbb{F}_2 \times \mathbb{F}_2$ over the axes of the conjugates of the $g_i$, proving that $\mathbb{F}_2 \times \mathbb{F}_2$ is not weakly hyperbolic relative to $\{g_1, \ldots, g_n\}$. 

\medskip \noindent
In this section, our goal is to generalise this observation. Roughly speaking, we want to show that, given a group acting geometrically on a CAT(0) cube complex, as soon as there is a subcomplex that ``looks like'' a product of two regular trees, then the group is not cyclically hyperbolic.

\subsection{Transverse trees of hyperplanes}

\noindent
In order to state our criterion, we need to introduce some terminology:

\begin{definition}
Let $X$ be a CAT(0) cube complex. A \emph{tree of hyperplanes} is the data of a rooted tree $T$ and a collection of pairwise disjoint hyperplanes $\mathcal{H}= \{ H_v \mid \in V(T) \}$ indexed by the vertices of $T$ such that, for all vertices $u,v,w \in T$, $H_v$ separates $H_u$ and $H_w$ if and only if $u,v,w$ lie in that order on an infinite ray starting from the root of $T$. A tree of hyperplanes 
\begin{itemize}
	\item is \emph{branching} if every vertex in the underlying rooted tree has at least two children; 
	\item is \emph{uniformly distributed} if there exists some constant $K\geq 0$ such that, for all vertices $u,v \in T$, the inequality $d(H_u,H_v) \leq K \cdot d_T(u,v)$ holds;
	\item has no \emph{transeparation} if every hyperplane separating at least two hyperplanes in $\mathcal{H}$ cannot be transverse to two hyperplanes in $\mathcal{H}$.
\end{itemize}
Two trees of hyperplanes $\mathcal{H}_1,\mathcal{H}_2$ are \emph{transverse} if every hyperplane in $\mathcal{H}_1$ is transverse to every hyperplane in $\mathcal{H}_2$.
\end{definition}

\noindent
For instance, a regular tree contains many trees of hyperplanes; and a product of regular trees contains many transverse trees of hyperplanes. Our main result is the following criterion:

\begin{prop}\label{prop:NotHyp}
Let $G$ be a group acting geometrically on a CAT(0) cube complex $X$. If $X$ contains two transverse trees of hyperplanes that both are branching, uniformly distributed, and have no transeparation, then $G$ is not cyclically hyperbolic.
\end{prop}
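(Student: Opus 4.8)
The plan is to mimic the warm-up argument for $\mathbb{F}_2 \times \mathbb{F}_2$, replacing the two free factors by the two transverse trees of hyperplanes. By Lemma~\ref{lem:MilnorSvarc}, showing that $G$ is not cyclically hyperbolic amounts to proving that for \emph{every} finite collection $g_1, \ldots, g_n \in G$, the cone-off of $X$ over the orbits of axes (or more precisely, over combinatorial subspaces on which the $\langle g_i \rangle$ act cocompactly) fails to be hyperbolic. First I would replace each $g_i$ by a power so that it is semisimple with a combinatorial axis $\alpha_i \subset X$ (using properness and cocompactness of $G \curvearrowright X$, together with the fact that in a CAT(0) cube complex elements of $G$ are, up to finite index, semisimple; any non-loxodromic $g_i$ has bounded orbits and contributes nothing). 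The conjugates $h \alpha_i h^{-1}$, $h \in G$, $i=1,\dots,n$, are the subspaces over which we cone off, and the goal is to exhibit a convex flat $F \cong [0,\infty) \times [0,\infty)$ whose combinatorial-neighbourhood projections onto all these axes have uniformly bounded diameter; such an $F$ then quasi-isometrically embeds into the cone-off, so the cone-off contains a quasi-flat and is not hyperbolic.

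The construction of $F$ is the heart of the argument. Let $\mathcal{H}_1, \mathcal{H}_2$ be the two transverse trees of hyperplanes, with underlying trees $T_1, T_2$; being branching, uniformly distributed, and having no transeparation are exactly the hypotheses needed to (i) build an infinite combinatorial ray $r_1$ which at every vertex of $T_1$ chooses a child, so that $r_1$ crosses the corresponding nested sequence of hyperplanes $H_{v_0}, H_{v_1}, \dots$ in $\mathcal{H}_1$, and (ii) arrange, using the branching, that the sequence of choices of children is ``generic'' enough that $r_1$ shares no long subpath with any $g_i^{\pm\infty}$ or its translates. Concretely, I would argue combinatorially: each axis $\alpha_i$ crosses the hyperplanes of $\mathcal{H}_1$ in a pattern governed by $T_1$, and by a pigeonhole/counting argument over the finitely many $\langle g_i\rangle$-orbits of hyperplanes, at each stage one can pick a child of the current vertex of $T_1$ so that the resulting ray $r_1$ does not stay fringe-close to any conjugate axis for longer than some uniform length $L$; the ``no transeparation'' hypothesis guarantees that the hyperplanes dual to $r_1$ are exactly the $H_{v_k}$'s and nothing crossing $r_1$ can be transverse to two of them, which forces the projection of any axis that interacts with $r_1$ to be controlled by how many consecutive $H_{v_k}$ it crosses — and ``uniformly distributed'' converts a bound on that number into a bound on the actual combinatorial distance. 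Doing the same in $T_2$ produces $r_2$. Since $\mathcal{H}_1$ and $\mathcal{H}_2$ are transverse, $r_1$ and $r_2$ span a convex subcomplex $F = r_1 \times r_2 \cong [0,\infty)^2$ (convexity follows from transversality and Sageev-style arguments on hyperplane combinatorics, as in \cite{SpecialRH}), and the projection of any conjugate axis onto $F$ has diameter bounded by roughly the sum of its projections onto $r_1$ and onto $r_2$, hence $\leq 2L + C$ for a uniform constant $C$ absorbing the difference between combinatorial projections onto $r_1, r_2$ and onto $F$.

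Once $F$ is in hand, the final step is soft: the cone-off $Y$ of $X$ over the family $\{h \alpha_i h^{-1}\}$ is obtained by attaching cones of radius $1/2$; the estimate ``the $R$-neighbourhood-projection of each coned subspace onto $F$ has diameter $\leq 2L+C$'' implies, by a standard argument (for instance the one behind \cite[Lemma~5.3 and~5.5]{Excentric} or a direct disc-diagram/ladder estimate), that the inclusion $F \hookrightarrow Y$ is a quasi-isometric embedding onto its image: a path in $Y$ connecting two far-apart points of $F$ and using a cone must pass within distance $R$ of the corresponding axis at two points, whose $F$-projections are then close, so the shortcut saves only a bounded amount. Hence $Y$ contains a quasi-isometrically embedded copy of the Euclidean quarter-plane and cannot be hyperbolic, for any choice of $g_1, \dots, g_n$; therefore $G$ is not cyclically hyperbolic.

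The main obstacle I anticipate is step (ii) of the construction of $r_1$: making precise the ``genericity'' that keeps $r_1$ from fellow-travelling any of the (infinitely many, but finitely many up to the $G$-action) conjugate axes for more than a bounded length, and extracting a \emph{uniform} bound $K$ independent of the $g_i$ — in fact $K$ is allowed to depend on the $g_i$, which helps, but one still needs the bound to hold along the \emph{entire} infinite ray, which requires a careful recursive/diagonal choice of children in $T_1$ exploiting branching at every vertex together with the ``uniformly distributed'' quantitative control. The ``no transeparation'' condition is what makes this bookkeeping tractable, since it prevents a single outside hyperplane from being transverse to a long block of the $H_{v_k}$'s and thereby decouples the interaction of each axis with $r_1$ from its interaction with $r_2$.
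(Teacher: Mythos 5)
Your overall strategy matches the paper's: reduce via Lemma~\ref{lem:MilnorSvarc} to showing a cone-off of $X$ is not hyperbolic, construct ``generic'' rays in each tree of hyperplanes that have bounded interaction with every conjugate axis, use transversality to build large flat rectangles, and show these are uniformly quasi-isometrically embedded in the cone-off. (One small technical difference: the paper does not construct a single convex quarter-plane $F$, but rather a family of larger and larger flat rectangles $R_i^-$ extracted via Proposition~\ref{prop:FlatRectangles} from cycles of hyperplane carriers, and then shows these are uniformly quasi-isometrically embedded; this sidesteps the convexity claim you make for $r_1 \times r_2$, which is not obviously true in general.)

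However, the heart of the proof is exactly the step you flag as problematic, and your sketch does not close it. The greedy ``at each node pick a child avoiding the bad axes'' approach is not obviously feasible: a vertex of $T_1$ need only have two children, yet there are infinitely many conjugate axes to dodge, and even if you dodge the one currently bothering you, a translate of some $\gamma_j$ may come along and cross a long segment of whatever branch you chose. Worse, modifying the ray at one depth can change which hyperplanes get crossed far downstream, so there is no obvious monotone quantity to run a pigeonhole argument on. The paper's actual argument (Claim~\ref{claim:Rays}) is an iterative modification procedure: start with an arbitrary ray $r_0$, define a ``bad'' hyperplane as one at which some conjugate axis crosses $L^2+L$ consecutive hyperplanes, locate the first bad hyperplane $J_1$, keep the ray unchanged for $L^2$ more steps, then branch to a different child and extend arbitrarily, obtaining $r_1$. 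The crucial and non-trivial point is that $\mathrm{bad}(r_1) > \mathrm{bad}(r_0)$, which either terminates the process or produces a convergent sequence of rays. Proving this strict increase is where the real work is: assuming a bad hyperplane persists forces two axes $\alpha$ (crossing $J_1,\ldots,J_{L^2+L}$) and $\beta$ (crossing $J_1,\ldots,J_{L^2},A$), and one derives a contradiction via a delicate counting argument (Fact~\ref{fact:CrossingBeta} and the two cases that follow) that uses Lemma~\ref{lem:TranslatesDisjoint} and especially Lemma~\ref{lem:Trans} --- the latter says roughly that once a hyperplane is transverse to $C$ positive $\langle g \rangle$-translates of another hyperplane along the axis of $g$, it is transverse to all of them. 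These lemmas about powers of isometries in finite dimension, and the choice of the constant $L \geq (\tau+2C-1)\tau+2$ to make the combinatorics close up, are the missing ingredients in your proposal; without them, ``no transeparation'' and ``uniformly distributed'' are not enough to convert the fellow-travelling bound into the existence of a ray avoiding all bad segments.
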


\noindent
Before turning to the proof of the proposition, we prove two elementary observations:

\begin{lemma}\label{lem:TranslatesDisjoint}
Let $X$ be a finite-dimensional CAT(0) cube complex and $g \in \mathrm{Isom}(X)$ an isometry admitting an axis $\gamma$. For every hyperplane $J$ crossing $\gamma$, the hyperplanes in $\{ g^{k \cdot \dim(X)!} J \mid k \in \mathbb{Z}\}$ are pairwise disjoint.
\end{lemma}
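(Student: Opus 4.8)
The plan is to show that $J$ and $g^{\dim(X)!}J$ are disjoint, and then argue that all the translates $g^{k\cdot\dim(X)!}J$ are pairwise disjoint by the same reasoning applied to suitable powers. The starting point is a standard fact about hyperplanes crossing an axis: if $\gamma$ is an axis of $g$ and $J$ is a hyperplane crossing $\gamma$, then the translates $g^kJ$ all cross $\gamma$ as well, and they meet $\gamma$ in the ``order'' dictated by $k$ — i.e.\ $g^kJ$ separates $\gamma\cap g^iJ$ from $\gamma\cap g^jJ$ whenever $i<k<j$. In particular the sequence $(g^kJ)_{k\in\mathbb{Z}}$ consists of distinct hyperplanes, each crossing $\gamma$, arranged monotonically along $\gamma$; consequently $g^iJ$ and $g^jJ$ are either disjoint or transverse for $i\neq j$, and they can only be transverse when $|i-j|$ is ``small'' in a sense we now make precise.

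The key combinatorial input is that in a finite-dimensional CAT(0) cube complex, a collection of pairwise transverse hyperplanes has cardinality at most $\dim(X)$ (each corresponds to an edge direction in a cube of that dimension). I would first observe that the set $\{g^kJ : k\in\mathbb{Z}\}$, being a monotone family along $\gamma$, has the property that if $g^iJ$ and $g^jJ$ are transverse with $i<j$, then $g^mJ$ is transverse to both for every $i\le m\le j$ — this is because $g^mJ$ crosses $\gamma$ between the crossing points of $g^iJ$ and $g^jJ$, and a hyperplane crossing $\gamma$ strictly between two transverse hyperplanes (both crossing $\gamma$) must be transverse to each of them (otherwise it would separate one of them from its own crossing point on $\gamma$, a contradiction). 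Hence any ``interval'' $\{g^iJ,\dots,g^jJ\}$ of mutually non-disjoint hyperplanes is a pairwise-transverse family, so $j-i+1\le\dim(X)$, giving $j-i\le\dim(X)-1$. Therefore $g^NJ$ is disjoint from $J$ as soon as $N\ge\dim(X)$; in particular $N=\dim(X)!$ works since $\dim(X)!\ge\dim(X)$.

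It remains to upgrade this to pairwise disjointness of the full family $\{g^{k\cdot\dim(X)!}J : k\in\mathbb{Z}\}$. For any two indices $k<\ell$, the hyperplanes $g^{k\cdot\dim(X)!}J$ and $g^{\ell\cdot\dim(X)!}J$ are $g^{k\cdot\dim(X)!}$-translates of $J$ and $g^{(\ell-k)\cdot\dim(X)!}J$; since $(\ell-k)\dim(X)!\ge\dim(X)$, the previous paragraph (applied with $N=(\ell-k)\dim(X)!$) shows $J$ and $g^{(\ell-k)\dim(X)!}J$ are disjoint, and disjointness is preserved by the isometry $g^{k\cdot\dim(X)!}$. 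This completes the argument. The one point requiring care — and the main obstacle — is the precise justification that a hyperplane crossing $\gamma$ between two transverse hyperplanes (all three crossing $\gamma$) must be transverse to both; this is where one genuinely uses that $\gamma$ is a geodesic and invokes the standard separation/helly properties of hyperplanes in CAT(0) cube complexes, but it is routine.
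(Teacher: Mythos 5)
Your approach diverges from the paper's, and it contains a genuine gap. The intermediate claim you propose --- that a hyperplane $B$ crossing $\gamma$ strictly between two transverse hyperplanes $A$ and $C$ (all three crossing $\gamma$) must be transverse to both --- is false, and the separation heuristic you offer does not prove it. What the separation argument actually gives is that $B$ cannot be disjoint from \emph{both} $A$ and $C$: if it were, then $A$ (hence $A\cap C$) would lie entirely in one halfspace of $B$ while $C$'s crossing point on $\gamma$ would lie in the other, contradicting that $C$ is connected. But $B$ may perfectly well be disjoint from exactly one of them (and then transverse to the other). Consequently the ``interval'' $\{g^iJ,\ldots,g^jJ\}$ need not be pairwise transverse, the bound $j-i\le\dim(X)-1$ does not follow, and the stronger assertion you extract --- that $J$ and $g^NJ$ are disjoint whenever $N\ge\dim(X)$ --- is simply false. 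The ``this is routine'' remark at the end is exactly where the proof breaks.

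For an explicit counterexample, consider the CAT(0) cube complex dual to the discrete pocset $\{W_k^{\pm}\}_{k\in\mathbb{Z}}$ in which $W_i$ and $W_j$ are declared transverse exactly when $|i-j|\in\{1,3\}$, and otherwise nested consistently with the order on $\mathbb{Z}$ (one checks that no three of these walls are pairwise transverse, so the complex is $2$-dimensional, and that the order is transitive with finite intervals). The shift $g\colon W_k\mapsto W_{k+1}$ is loxodromic with an axis crossing every $W_k$. Taking $J=W_0$, one has $J$ disjoint from $g^2J=W_2$ but $J$ transverse to $g^3J=W_3$, even though $3\ge\dim(X)=2$; moreover $W_2$ lies on the axis strictly between the transverse pair $W_0,W_3$ and is disjoint from $W_0$, contradicting your interval lemma directly. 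The paper's proof avoids this trap: it applies pigeonhole to the $\dim(X)+1$ hyperplanes $J,gJ,\ldots,g^{\dim(X)}J$ to produce some $1\le p\le\dim(X)$ with $J$ and $g^pJ$ non-transverse, deduces from the nesting of halfspaces along the axis that $\{g^{kp}J\}_{k\in\mathbb{Z}}$ is pairwise disjoint, and then observes that $\{g^{k\cdot\dim(X)!}J\}$ is a subcollection because $p$ divides $\dim(X)!$. The factorial is essential precisely because the good period $p$ is only known to lie somewhere in $\{1,\ldots,\dim(X)\}$; it cannot be replaced by an arbitrary $N\ge\dim(X)$.
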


\begin{proof}
Because $X$ does contain $\dim(X)+1$ pairwise transverse hyperplanes, at least two hyperplanes in $\{g^k J \mid 0 \leq k \leq \dim(X)\}$ are not transverse. So there exists $1 \leq p \leq \dim(X)$ such that $J$ and $g^pJ$ are not transverse, which implies that $\{g^{kp}J \mid k \in \mathbb{Z}\}$ is a collection of pairwise non-transverse hyperplanes crossing $\gamma$. Because $\{g^{k \cdot \dim(X)!}J \mid k \in \mathbb{Z} \}$ is a subcollection, the desired conclusion follows. 
\end{proof}

\begin{lemma}\label{lem:Trans}
Let $X$ be a finite-dimensional CAT(0) cube complex and $g \in \mathrm{Isom}(X)$ an isometry admitting an axis $\gamma$ (endowed with a left-right order  once identified as a copy of $\mathbb{R}$  such that $g$ translates points to the right). There exists a constant $C \geq 0$ such that, for every hyperplane $A$ crossing $\gamma$ and every hyperplane $B$ crossing $\gamma$ on the right of $A$, if $A$ is transverse to at least $C$ of the $\langle g^{\dim(X)!} \rangle^+$-translates of $B$, then $A$ is transverse to all the $\langle g^{\dim(X)!} \rangle^+$-translates of $B$
\end{lemma}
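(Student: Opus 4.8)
The plan is to work in the sequence of hyperplanes crossing the axis $\gamma$, grouped into their $\langle g^{\dim(X)!}\rangle$-orbits, and to exploit the "convexity" of transversality along a periodic line. Write $h := g^{\dim(X)!}$. By Lemma~\ref{lem:TranslatesDisjoint}, for any hyperplane $J$ crossing $\gamma$ the translates $\{h^kJ \mid k \in \mathbb{Z}\}$ are pairwise disjoint, so they cross $\gamma$ in a monotone order indexed by $k$ (say $h^kJ$ moves to the right as $k$ increases). First I would record the basic order-compatibility: if $A, B$ cross $\gamma$ with $B$ to the right of $A$, then the translates $h^kB$ also march to the right of the corresponding translates $h^kA$, and moreover the "interval" of hyperplanes between $A$ and $h^kB$ along $\gamma$ has size growing linearly in $k$ (since $h$ acts as translation on $\gamma$ and there are finitely many hyperplane-orbits, by cocompactness of the $G$-action — though here we only need that $g$ is a hyperbolic isometry with an axis, so the orbit of any hyperplane crossing $\gamma$ meets $\gamma$ with bounded gaps).

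The key geometric input is the following consequence of finite-dimensionality: a hyperplane $A$ crossing $\gamma$ cannot be transverse to arbitrarily many pairwise-disjoint hyperplanes all separated from $A$ by a common hyperplane. More precisely, I would argue that if $A$ is transverse to $h^{k_1}B$ and to $h^{k_2}B$ with $k_1 < k_2$, then $A$ is transverse to every $h^kB$ with $k_1 \le k \le k_2$: this is because the $h^kB$ for $k_1 \le k \le k_2$ all lie in the "slab" between $h^{k_1}B$ and $h^{k_2}B$, and a hyperplane transverse to both walls of such a slab (while also crossing $\gamma$, which passes through the slab) must cross every disjoint hyperplane contained in it — a standard helly-type / interval argument for hyperplanes, provable via a minimal disc diagram or via the observation that $A$ separates neither endpoint of the slab from the other. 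So the set $S_{A,B} := \{k \ge 1 \mid A \text{ transverse to } h^kB\}$ is an interval of positive integers (possibly empty, possibly all of $\mathbb{Z}_{>0}$).

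Now I would show that if $S_{A,B}$ is bounded above, its supremum is bounded by a constant $C$ depending only on $X$. Suppose $A$ is transverse to $h^kB$ but not to $h^{k+1}B$. Then $h^{k+1}B$ lies entirely on one side of $A$ — the side not containing (the $\gamma$-crossing point of) $A$'s "other" end — while $B = h^0B$ lies on the near side; in particular $A$ separates $B$ from $h^{k+1}B$, so $A$ lies in the interval of hyperplanes between $B$ and $h^{k+1}B$ along $\gamma$. Applying $h^{-j}$ for $0 \le j \le k$, the hyperplane $h^{-j}A$ separates $h^{-j}B$ from $h^{k+1-j}B$; in particular each of $A, h^{-1}A, \dots, h^{-k}A$ is a distinct hyperplane crossing the bounded-length segment of $\gamma$ between $B$ and $h\cdot B$... here I need to be careful — rather, I would locate all of $A, hA, \dots$ or use that the $k+1$ translates $h^{-j}B$ for $0\le j \le k$ together with $A$ constrain things. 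The cleanest route: the interval of hyperplanes between $B$ and $h^{k+1}B$ on $\gamma$ has cardinality $(k+1)\cdot N$ for $N$ the number of $h$-orbits of hyperplanes crossing $\gamma$, hence is unbounded in $k$, so that alone gives nothing; instead I use that $A$ being transverse to $h^kB$ means $A$ crosses the segment of $\gamma$ strictly between where $B$ and $h^kB$ cross, so $A$ is one of at most $kN$ hyperplanes — still unbounded.

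I expect this last step to be the main obstacle, and the right fix is to bound not the position of $A$ but the *index* $k$: the point is that $\langle h \rangle$ acts on the collection of hyperplanes transverse to $A$ and crossing $\gamma$, and this collection has bounded "width" in the following sense — by finite-dimensionality ($\le D := \dim X$), at most $D$ hyperplanes transverse to $A$ can be pairwise transverse, but the $h^kB$ are pairwise *disjoint*, so there is no bound from dimension directly. Instead, I would invoke cocompactness honestly: $G$ acts cocompactly (this is available — the lemma is applied in the cocompact setting, even if the statement says only "finite-dimensional"; if one insists on only finite-dimensionality one uses that $g$ hyperbolic forces the relevant orbit-structure along $\gamma$ to be "eventually periodic"). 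Under cocompactness, the pairs $(A', B')$ of hyperplanes crossing $\gamma$ with $B'$ to the right of $A'$ and $A'$ transverse to $B'$ fall into finitely many $\langle h\rangle$-orbits with representatives having bounded "gap"; so if $A$ is transverse to $h^{k}B$ for $k$ larger than this bound, then by the orbit-finiteness there are $k_1 < k_2 \le k$ with $(A, h^{k_1}B)$ and $(h^{m}A, h^{k_2}B) = h^m \cdot (A, h^{k_1}B)$ in the same orbit for suitable $m$ — and then a pumping argument (translating by $h^m$ repeatedly, combined with the interval property $S_{A,B}$ being an interval) forces $S_{A,B}$ to be all of $\mathbb{Z}_{>0}$. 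Taking $C$ to be this orbit-counting bound completes the proof: if $A$ is transverse to $\ge C$ positive $\langle h \rangle$-translates of $B$, then $S_{A,B}$ is unbounded, hence (being an interval) equals $\mathbb{Z}_{>0}$, i.e. $A$ is transverse to all positive translates of $B$.
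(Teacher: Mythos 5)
Your interval observation is sound and is indeed the (implicit) backbone of the paper's argument: writing $h = g^{\dim(X)!}$, for $k \geq 0$ the halfspaces of $h^k B$ not containing $A \cap \gamma$ are strictly nested, so if $A$ is transverse to $h^k B$ then $A$ is transverse to $h^j B$ for every $0 \leq j \leq k$; hence $S_{A,B}$ is a downward-closed interval, and the paper's passage from ``$A$ is transverse to infinitely many positive translates of $B'$'' to ``$A$ is transverse to all of them'' rests exactly on this.

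However, the step you flag as ``the main obstacle'' is a genuine gap, and the patch you propose does not close it. First, the lemma hypothesizes only finite dimension and a single isometry $g$ with an axis; no cocompact group action is available, and the statement is used without one. Second, even granting cocompactness, the assertion that the pairs $(A',B')$ of transverse hyperplanes both crossing $\gamma$ fall into finitely many $\langle h\rangle$-orbits (or come with a uniform bound on their gap) is false: in $\mathbb{R}^2$ with $\gamma$ the diagonal and $h$ a diagonal translation, every vertical hyperplane is transverse to every horizontal one regardless of how far apart they cross $\gamma$, and the orbit of $(\{x=0\},\{y=n\})$ records $n$, so there are infinitely many orbits. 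Third, translating a pair $(A,h^{k_1}B)$ by $h^m$ changes $A$ as well as the index, so the proposed pumping does not produce new elements of $S_{A,B}$ for the \emph{same} $A$.

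The missing idea is a small but crucial reduction to hyperplanes near $A$. Let $r(J)$ be the finite set of hyperplanes crossing $\gamma$ on the right of $J$ at distance at most $\|h\| = \|g\|\cdot\dim(X)!$; for $H \in r(J)$ let $\rho(J,H)$ be the number of positive $\langle h\rangle$-translates of $H$ transverse to $J$, and let $\rho(J)$ be the maximum of the finite values $\rho(J,H)$. Since $g$ commutes with $h$, $\rho$ is $\langle g\rangle$-invariant, and there are only finitely many $\langle g\rangle$-orbits of hyperplanes crossing $\gamma$ (the axis has compact quotient under $\langle g\rangle$ --- this needs no ambient cocompactness), so $C := \max_J \rho(J)$ is finite. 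Now given $A,B$ with $B$ to the right, replace $B$ by $B'$, the leftmost negative $\langle h\rangle$-translate of $B$ still crossing $\gamma$ on the right of $A$; then $B' \in r(A)$ and the positive $\langle h\rangle$-translates of $B'$ include those of $B$. If $A$ is transverse to more than $C$ of them, then $\rho(A,B')$ cannot be finite, hence is infinite, and by your interval property $A$ is transverse to all of them. Your proposal has the interval lemma but lacks this reduction, which is precisely what makes the invariance-plus-finitely-many-orbits argument apply.
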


\noindent
 Here, $\langle \cdot \rangle^+$ denote the subsemigroup generated by the subset under consideration. 

\begin{proof}[Proof of Lemma~\ref{lem:Trans}.]
Given a hyperplane $J$ crossing $\gamma$, let $r(J)$ denote the set of hyperplanes crossing $\gamma$ on the right of $J$ at distance $\leq \|g\| \cdot \dim(X)!$. For every $H \in r(J)$, let $\rho(J,H)$ denote the number of positive $\langle g^{\dim(X)!} \rangle$-translates of $H$ transverse to $J$. Set 
$$\rho(J):= \max \{ \rho(J,H) \mid H \in r(J) \text{ and } \rho(J,H)<\infty\}.$$
Observe that $\rho(gJ)=\rho(J)$ for every $g \in G$, so, because there exist only infinitely many $\langle g \rangle$-orbits of hyperplanes crossing $\gamma$, $C:= \max \{ \rho(J) \mid J \text{ hyperplane crossing } \gamma\}$ is finite. 

\medskip \noindent
Now, let $A,B$ be two hyperplanes crossing $\gamma$ such that $B$ crosses $\gamma$ on the right of $A$ and such that $A$ is transverse to more than $C$ positive $\langle g^{\dim(X)!} \rangle$-translates of $B$. Let $B'$ the leftmost negative $\langle g^{\dim(X)!} \rangle$-translate of $B$ that still crosses $\gamma$ on the right of $A$. Of course, the positive $\langle g^{\dim(X)!} \rangle$-translates of $B'$ are positive $\langle g^{\dim(X)!} \rangle$-translates of $B$, so $A$ is transverse to $>C \geq \rho(A)\geq \rho(A,B')$ positive $\langle g^{\dim(X)!} \rangle$-translates of $B'$, which implies that $A$ is actually transverse to all the positive $\langle g^{\dim(X)!} \rangle$-translates of $B'$ (and a fortiori of $B$).
\end{proof}

\begin{proof}[Proof of Proposition~\ref{prop:NotHyp}.]
Given elements $g_1, \ldots, g_k \in G$, we want to prove that $G$ is not weakly hyperbolic relative to $\{ \langle g_1 \rangle, \ldots, \langle g_k \rangle\}$. As a consequence of Lemma~\ref{lem:MilnorSvarc}, this amounts to fixing an axis $\gamma_i$ of $g_i$ for every $1 \leq i \leq k$ (such axes existing up to subdividing $X$ if necessary) and to showing that the cone-off $\dot{X}$ of $X$ over the $G$-translates of the $\gamma_i$ is not hyperbolic. More precisely, we are going to construct a collection of larger and larger flat rectangles in $X$ that uniformly quasi-isometrically embed in $\dot{X}$. 

\medskip \noindent
For this purpose, up to replacing the $g_i$ with their $(\dim(X)!)$th powers, we assume that the conclusions of Lemmas~\ref{lem:TranslatesDisjoint} and~\ref{lem:Trans} hold for each $g_i$; let $C$ denote the maximum of the constants given by Lemma~\ref{lem:Trans} for the $g_i$. Also, we let $\mathcal{H},\mathcal{V}$ denote the two underlying collections of hyperplanes of our transverse trees of hyperplanes, and we fix a sufficiently large constant $L$ such that
\begin{itemize}
	\item $L \geq (\tau +2C-1)\tau +2$
	\item and $L^2 > KL \tau +2$,
\end{itemize}
where $K$ denotes the maximum of the constants given by the uniform distribution of our trees of hyperplanes and where $\tau$ denotes the maximum of the translation lengths of the $g_i$. Our first step is to construct \emph{rays of hyperplanes} in $\mathcal{H},\mathcal{V}$, i.e.\ sequences of hyperplanes in which each hyperplane is a children of the previous one, that do not ``interact'' too much with the translates of the $\gamma_i$. More precisely:

\begin{claim}\label{claim:Rays}
Set $\mathcal{W}=\mathcal{H}$ or $\mathcal{V}$. There exists a ray of hyperplanes $r_\mathcal{W}$ in $\mathcal{W}$ such that no $G$-translate of a $\gamma_i$ crosses $L(L+1)$ consecutive hyperplanes of $r_\mathcal{W}$. 
\end{claim}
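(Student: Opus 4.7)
The strategy I would follow is to construct $r_\mathcal{W} = (w_0, w_1, w_2, \ldots)$ inductively, starting with $w_0$ the root of $T_\mathcal{W}$. Having chosen a good prefix $w_0, \ldots, w_n$ (i.e.\ no $L(L+1)$ consecutive hyperplanes crossed by a common $g\gamma_i$), I would seek a child $u$ of $w_n$ such that setting $w_{n+1} = u$ preserves the property. Call a child $u$ \emph{dangerous} if some $G$-translate $g\gamma_i$ crosses all of $H_{w_{n-L(L+1)+2}}, \ldots, H_{w_n}, H_u$ in that order. Since every vertex has at least two children by the branching hypothesis, the entire claim reduces to showing that at most one child of $w_n$ is dangerous.

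The first key step is to observe that a single translate $\sigma = g\gamma_i$ flags at most one dangerous child. After $\sigma$ crosses $H_{w_n}$ and enters the halfspace of $H_{w_n}$ opposite to the root, the first hyperplane of $\mathcal{W}$ it crosses must be an \emph{immediate} child of $w_n$: if $H_v$ were a deeper tree hyperplane in that halfspace, then the unique immediate child $c$ of $w_n$ with $v$ in its subtree satisfies $(w_n, c, v)$ in the required order on a ray, so by the separation axiom $H_c$ separates $H_{w_n}$ from $H_v$ and $\sigma$ has to cross $H_c$ first. Therefore the ``next tree crossing'' along $\sigma$ is a uniquely determined child of $w_n$, which is the only child $\sigma$ can possibly flag.

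The second step is to show all dangerous translates flag the \emph{same} child. If $g\gamma_i$ and $g'\gamma_{i'}$ both cross the common segment $H_{w_{n-L(L+1)+2}}, \ldots, H_{w_n}$, then by uniform distribution these $L(L+1)-1$ pairwise disjoint hyperplanes lie in a region of diameter at most $KL(L+1)$ near $H_{w_n}$, and Lemma~\ref{lem:TranslatesDisjoint} forces each axis to spend combinatorial length of order $\tau L(L+1)$ traversing a common sequence of pairwise disjoint hyperplanes. Here the choice $L \geq (\tau + 2C-1)\tau + 2$ together with $L^2 > KL\tau + 2$ is calibrated precisely so that Lemma~\ref{lem:Trans} (applied to each $g_i$, using the constant $C$) pins down the two axes tightly: any hyperplane transverse to many $\langle g_i^{\dim(X)!}\rangle$-translates of a given crossing must be transverse to all of them, and the no-transeparation hypothesis of $\mathcal{W}$ rules out intermediate hyperplanes between consecutive tree crossings contributing extra branches. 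Combined with the cocompactness of $G \curvearrowright X$, this forces $g\gamma_i$ and $g'\gamma_{i'}$ to share the same next-child hyperplane past $H_{w_n}$.

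The main obstacle will be this last comparison: upgrading ``two distinct translates sharing a long pattern of tree crossings'' to ``their next tree crossings after $H_{w_n}$ coincide''. I expect the bookkeeping to rely on combining Lemmas~\ref{lem:TranslatesDisjoint} and~\ref{lem:Trans} with the no-transeparation property of $\mathcal{W}$ in a pigeonhole fashion, using the quantitative choice of $L$ to rule out the ``branching'' of fellow-travelers at $w_n$. Once this is established, each step of the induction offers at least one safe child, and iterating yields the desired ray $r_\mathcal{W}$.
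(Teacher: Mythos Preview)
Your overall plan---use the branching of the tree to dodge bad axes---is the same idea the paper uses, and your Step~1 is fine: two distinct children $c_1,c_2$ of $w_n$ have $H_{c_1}^+\cap H_{c_2}^+=\emptyset$, so a geodesic cannot cross both. The problem is Step~2, which you correctly identify as the heart of the matter but do not actually carry out. In your symmetric setup both candidate axes $\sigma_1,\sigma_2$ share the \emph{same} block of $L(L+1)-1$ tree hyperplanes and then each is guaranteed only \emph{one} more crossing (into $H_{u_1}$, resp.\ $H_{u_2}$). The paper's argument needs something you do not have: a reserve of $L$ further tree hyperplanes that one axis crosses and the other cannot. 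Concretely, the paper does \emph{not} try to show that at most one child of the last vertex is dangerous. Instead it branches \emph{earlier}, at depth $L^2$ inside the bad block: the original bad axis $\alpha$ crosses $J_1,\ldots,J_{L^2+L}$, one switches $J_{L^2+1}$ to a sibling $A$, and if some $\beta$ still made $J_1$ bad along the new ray then $\beta$ crosses $J_1,\ldots,J_{L^2},A$ but is separated from $J_{L^2+2},\ldots,J_{L^2+L}$ by $J_{L^2+1}$. The collection $\mathcal{J}\subset\{J_{L^2+2},\ldots,J_{L^2+L}\}$ lying in a single $\langle a\rangle$-orbit (this is where $L\geq(\tau+2C-1)\tau+2$ enters) is then pushed back by negative powers of $a$ into the region between $J_1$ and $J_{L^2}$ (this is where $L^2>KL\tau+2$ enters), and Lemma~\ref{lem:Trans} forces enough of these translates to cross $\beta$ that one finds a hyperplane simultaneously in the $\langle a\rangle$- and $\langle b\rangle$-orbit of some $H$, yielding infinitely many $\langle a\rangle$-translates of $H$ crossing $\beta$, a contradiction since $\mathcal{J}$ itself cannot cross $\beta$. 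None of this can be run in your framework because you have no hyperplanes on $\sigma_1$'s side that $\sigma_2$ is forbidden to meet.

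Two smaller issues: the no-transeparation hypothesis is \emph{not} used in the proof of this claim (it is used afterwards, to control distances inside the flat rectangles $R_i$), so invoking it here is misplaced; and the appeal to ``cocompactness of $G\curvearrowright X$'' is not what makes the constant $C$ finite---that comes from the finiteness of $\langle g_i\rangle$-orbits of hyperplanes crossing $\gamma_i$, which is automatic for a single axis. If you want to salvage the greedy construction, you would need to look back $L^2$ steps rather than $L(L+1)-1$ and branch there, which effectively reproduces the paper's iterative scheme with its increasing ``first bad index''.
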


\noindent
We postpone the proof of this claim and show first how to conclude from here. Let $H_1,H_2,\ldots \subset \mathcal{H}$ and $V_1, V_2, \ldots \subset \mathcal{V}$ denote the two rays of hyperplanes given by Claim~\ref{claim:Rays}. For every $i \geq 2$, let $R_i$ denote the flat rectangle given by Proposition~\ref{prop:FlatRectangles} from the cycle of convex subcomplexes $(N(H_1),N(V_i),N(H_i),N(V_1))$; and $R_i^-$ the subrectangle of $R_i$ delimited by $V_2, V_{i-1}, H_2,H_{i-1}$. Fix an $i \geq 4$ and two vertices $x,y \in R_i^-$. Let $n,m,p,q$ denote the indices such that the hyperplanes in our rays separating $x$ and $y$ are precisely $H_n, \ldots, H_m$ and $V_p, \ldots, V_q$. If $\xi$ is a geodesic in the cone-off $\dot{X}$ between $x$ and $y$, it follows from Claim~\ref{claim:Rays} that $\xi$ has a vertex between $V_{n+kL(L+1)}$ and $V_{n+L(k+1)(L+1)}$ for every $0 \leq k < (m-n)/L(L+1)$, so the distance between $x$ and $y$ in $\dot{X}$ is at least $(m-n)/L(L+1)$. Similarly, one shows that the distance between $x$ and $y$ in $\dot{X}$ is at least $(q-p)/L(L+1)$. Now, we know from Proposition~\ref{prop:FlatRectangles} that the hyperplanes crossing $R_i$ are precisely the hyperplanes separating $V_1,V_i$ or $H_1,H_i$. Because our trees of hyperplanes have no transeparation, we deduce that, for every $2 \leq j \leq i-2$, the hyperplanes separating $V_j$ and $V_{j+1}$ in $R_i$ separates $V_{j-1}$ and $V_{j+2}$ in $X$, hence 
$$d_{R_i} (V_j,V_{j+1}) \leq d_X(V_{j-1},V_{j+2}) \leq 3K.$$
So the distance between $x$ and $y$ in $R_i$ is at most $3K(m-n+q-p+4)$. Thus we have proved that
$$\frac{1}{6KL(L+1)} d_{R_i}(x,y)-\frac{2}{L(L+1)} \leq d_{\dot{X}}(x,y) \leq d_{R_i}(x,y).$$
Thus, the flat rectangles $R_i^-$ are uniformly quasi-isometrically embedded in the cone-off $\dot{X}$. Because each $R_i^-$ is isometric to a $[0,a_i]\times[0,b_i]$ for some $a_i,b_i \geq i-4$, we conclude that $\dot{X}$ is not hyperbolic, as desired.

\medskip \noindent
From now on, we focus on the proof of Claim~\ref{claim:Rays}. Given an arbitrary ray of hyperplanes $r$, one says that a hyperplane in $r$ is \emph{bad} if there exists a $G$-translate of some $\gamma_i$ that crosses it together with the $L^2+L-1$ following hyperplanes; and we denote by $\mathrm{bad}(r)$ the index of the first bad hyperplane along $r$. 

\medskip \noindent
Now, fix an arbitrary ray of hyperplanes $r_0 \subset \mathcal{W}$. If $r_0$ has no bad hyperplane, then it is the ray we are looking for. Otherwise, let $J_1$ denote the first bad hyperplane of $r_0$, and let $J_2,\ldots, J_{L^2+L}$ denote the $L^2+L-1$ next hyperplanes. By definition of being bad, there exists a $G$-translate $\alpha$ of some $\gamma_i$ such that $\alpha$ crosses $J_1, \ldots, J_{L^2+L}$; let $a$ denote the corresponding $G$-conjugate of $g_i$ (up to replacing $a$ with $a^{-1}$, we assume that the positive orientation of $\alpha$ goes from $J_1$ to $J_2$). Because our tree of hyperplanes is branching, $J_{L^2}$ admits a children $A \in \mathcal{W}$ distinct from $J_{L^2+1}$. We construct a new ray of hyperplanes $r_1$ by cutting $r_0$ after $J_{L^2}$, replacing $J_{L^2+1}$ with $A$, and extending it in an arbitrary way.

\medskip \noindent
By construction, $r_0$ and $r_1$ have the same $(\mathrm{bad}(r_0)+L^2-1)$ first hyperplanes. We claim that $\mathrm{bad}(r_1)>\mathrm{bad}(r_0)$. This will allow us to conclude the proof of Claim~\ref{claim:Rays} because, by iterating the construction, one gets a sequence of rays of hyperplanes $r_0,r_1,\ldots$ either that stops at some $r_k$ without bad hyperplanes, producing the desired ray of hyperplanes, or that goes for ever with the property that $(\mathrm{bad}(r_i))$ increases and that $r_i,r_{i+1}$ agree on the first $\mathrm{bad}(r_i)$ hyperplanes for every $i \geq 0$. In the latter case, the sequence $(r_i)$ converges to a ray of hyperplanes with no bad hyperplanes, as desired.

\medskip \noindent
Assume for contradiction that $\mathrm{bad}(r_1)\leq \mathrm{bad}(r_0)$. As a consequence, there exists a $G$-translate $\beta$ of some $\gamma_i$ that crosses $J_1,\ldots, J_{L^2},A$; let $b$ denote the corresponding $G$-conjugate of $g_i$. The configuration is summarised by Figure~\ref{RayHyp}.

\medskip \noindent
Because $L \geq (\tau +2C-1)\tau +2$, $\{J_{L^2+2}, \ldots, J_{L^2+L}\}$ contains a subcollection $\mathcal{J}$ of size $\geq \tau +2C$ that lies in a single $\langle a \rangle$-orbit. 

\begin{figure}
\begin{center}
\includegraphics[width=0.7\linewidth]{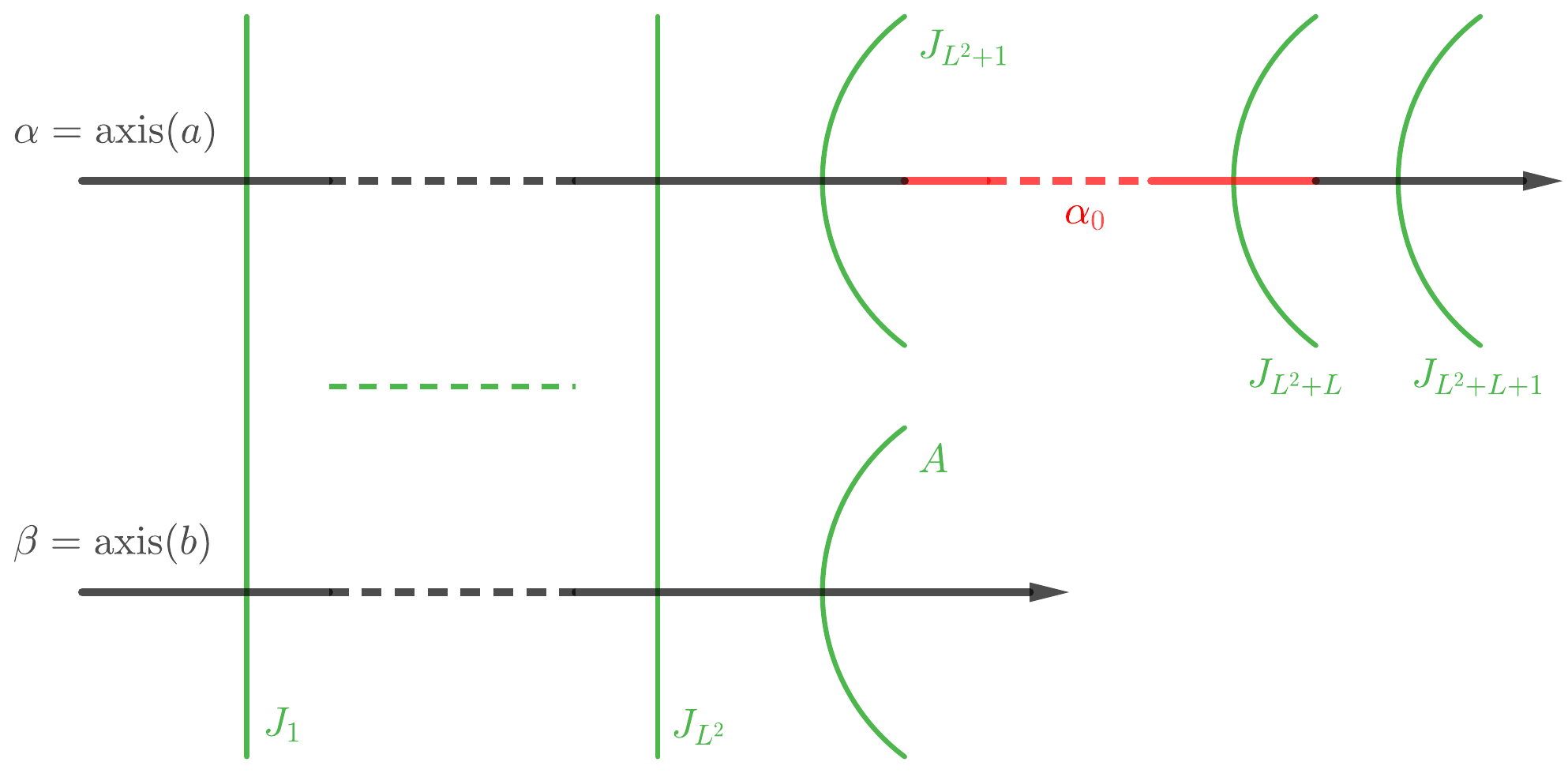}
\caption{How to construct a ray of hyperplanes with no bad hyperplane.}
\label{RayHyp}
\end{center}
\end{figure}
\begin{fact}\label{fact:CrossingBeta}
There exists an $\langle a \rangle$-translate of $\mathcal{J}$ that contains at least $\tau+1$ hyperplanes crossing~$\beta$.
\end{fact}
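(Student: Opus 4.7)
The plan is to analyze the $\langle a \rangle$-orbit of a fixed representative $J_0 \in \mathcal{J}$. Writing $\mathcal{J} = \{a^{k_1}J_0, \ldots, a^{k_m}J_0\}$ with $k_1 < \cdots < k_m$ and $m \geq \tau + 2C$, an $\langle a \rangle$-translate of $\mathcal{J}$ takes the form $a^{\ell}\mathcal{J} = \{a^{\ell+k_s}J_0\}$, so the fact reduces to producing an integer $\ell^\ast$ such that the set
\[
E := \{e \in \mathbb{Z} : a^e J_0 \text{ crosses } \beta\}
\]
contains at least $\tau + 1$ of the integers $\ell^\ast + k_s$.

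First I would check that $E$ is an interval of $\mathbb{Z}$. By Lemma~\ref{lem:TranslatesDisjoint} the translates $\{a^e J_0\}_{e \in \mathbb{Z}}$ are pairwise disjoint and strictly ordered along $\alpha$ by their positions $p_{J_0} + e\|a\|$. If $a^{e_1}J_0$ and $a^{e_3}J_0$ both cross $\beta$ with $e_1 < e_2 < e_3$, then $a^{e_2}J_0$ separates them in $X$, so $\beta$ has vertices on both sides of $a^{e_2}J_0$ and, being a combinatorial geodesic, must also cross it.

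Next I would lower-bound $|E|$. For each integer $e$ with $p_{J_0} + e\|a\| \in (p_1, p_{L^2})$, the hyperplane $a^e J_0$ crosses $\alpha$ strictly between $J_1$ and $J_{L^2}$; if it is moreover disjoint from both $J_1$ and $J_{L^2}$, then it separates them in $X$ and, since $\beta$ crosses $J_1$ and $J_{L^2}$, it is crossed by $\beta$ as well, giving $e \in E$. Applying Lemma~\ref{lem:Trans} (and its mirror version for negative powers) to $A \in \{J_1, J_{L^2}\}$ and $B = J_0$ leaves at most $O(C)$ integers $e$ for which $a^e J_0$ is transverse to $J_1$ or $J_{L^2}$; the alternative branch, in which $J_1$ or $J_{L^2}$ is transverse to \emph{all} positive (or all negative) $\langle a \rangle$-translates of $J_0$, is ruled out by combining the no-transeparation condition on $\mathcal{W}$ with the finite-dimensionality of $X$ (a single hyperplane of $\mathcal{W}$ cannot carry an unbounded pairwise-disjoint family of sub-hyperplanes cut out by the $\langle a \rangle$-orbit of $J_0$). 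Since $\|a\| \leq \tau$ and $p_{L^2} - p_1 \geq L^2 - 1$, the number of integers $e$ placing $p_{J_0} + e\|a\|$ in $(p_1, p_{L^2})$ is at least $(L^2-1)/\tau - 1$, so $E$ contains a block of consecutive integers of length at least $(L^2-1)/\tau - O(C)$.

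Finally I would align $\{k_1, \ldots, k_m\}$ with $E$. The uniform distribution of $\mathcal{W}$, together with the no-transeparation-based bound on intruder hyperplanes crossed by $\alpha$ between $J_{i_1}$ and $J_{i_m}$, yields $k_m - k_1 = (p_{i_m} - p_{i_1})/\|a\| \leq (KL + O(C))/\|a\|$. The two quantitative constraints $L \geq (\tau + 2C - 1)\tau + 2$ and $L^2 > KL\tau + 2$ are exactly what is needed to make $|E|$ strictly exceed $k_m - k_1$, so I can pick $\ell^\ast$ placing all of $\ell^\ast + \{k_1, \ldots, k_m\}$ inside $E$; then \emph{every} one of the $m \geq \tau + 2C$ hyperplanes of $a^{\ell^\ast}\mathcal{J}$ crosses $\beta$, well above the threshold $\tau + 1$. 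The main obstacle will be rigorously excluding the ``transverse-to-all'' case of Lemma~\ref{lem:Trans}, which is the step where the no-transeparation hypothesis on the tree of hyperplanes really has to be invoked; once this is settled, the remainder of the argument is arithmetic bookkeeping.
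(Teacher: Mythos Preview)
Your direct approach shares the key arithmetic ingredients with the paper's proof (the bound $k_m - k_1 \leq KL\tau/\|a\|$ on the span of $\mathcal{J}$, the lower bound $L^2-2$ on the length of $\alpha$ between $J_1$ and $J_{L^2}$, and the use of Lemma~\ref{lem:Trans}), but it has a genuine gap at exactly the place you flag as the main obstacle.

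The asymmetry is this. For $J_1$, the ``transverse-to-all'' branch of Lemma~\ref{lem:Trans} forces $J_1$ to be transverse to $J_0$ itself, which is impossible since $J_{L^2}$ separates $J_1$ from $J_0$ in the tree $\mathcal{W}$. But for $J_{L^2}$, the branch only yields transversality with $a^eJ_0$ for all sufficiently \emph{negative} $e$, and none of these translates lie in $\mathcal{W}$; there is no separation contradiction available. Your proposed fix---that no-transeparation together with finite dimension prevents $J_{L^2}$ from carrying an unbounded family of pairwise disjoint sub-hyperplanes---is simply false: any unbounded hyperplane in a CAT(0) cube complex is crossed by infinitely many pairwise disjoint hyperplanes (think of a horizontal line in $\mathbb{Z}^2$). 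No-transeparation constrains hyperplanes that separate two members of $\mathcal{W}$, and there is no reason the $a^eJ_0$ should do so.

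The paper resolves this by arguing \emph{by contradiction}: assume every $\langle a\rangle$-translate of $\mathcal{J}$ has at most $\tau$ hyperplanes crossing $\beta$. One then slides $\mathcal{J}$ by a negative power of $a$ so that it sits along $\alpha$ between $J_1$ and $J_{L^2}$; since at most $\tau$ members cross $\beta$, at least $(|\mathcal{J}|-\tau)/2\geq C$ of them must be transverse to $J_1$ or to $J_{L^2}$. The $J_1$ case is dispatched as above. In the $J_{L^2}$ case, Lemma~\ref{lem:Trans} gives infinitely many negative $\langle a\rangle$-translates transverse to $J_{L^2}$, and \emph{now the contradiction hypothesis is used again}: only finitely many of these can cross $\beta$, so infinitely many separate $\alpha\cap J_{L^2}$ from $\beta\cap J_{L^2}$, contradicting that these two sets lie at finite distance. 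Without the contradiction hypothesis you have no control over how many of these translates cross $\beta$, and the argument collapses. So the structure of the proof really does require assuming the negation of the Fact first.
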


\noindent
Assume for now that it is not the case. Let $P,Q$ denote respectively the first and last hyperplanes in $\mathcal{J}$ along $r_0$. We know that there exists some $q \geq 0$ such that $Q=a^qP$. Because $a$ always sends a hyperplane crossing its axis $\alpha$ to a non-transverse hyperplane, it follows that $P,aP, \ldots, a^qP$ separate $J_{L^2+1}$ and $J_{L^2+L+1}$, hence $p+1 \leq d(J_{L^2+1},J_{L^2+L+1}) \leq KL$. As a consequence, if $\alpha_0$ denotes the shortest subsegment of $\alpha$ crossed by $P,Q$, then its length is at most $(p+1) \| a \| \leq KL \tau$. On the other hand, the subsegment of $\alpha$ delimited by $J_1$ and $J_{L^2}$ has length at least
$$d(J_1,J_{L^2}) \geq \# \{J_2, \ldots, J_{L^2-1} \} = L^2 -2> LK \tau \geq \ell.$$
Therefore, there exists some negative $p \in \mathbb{Z}$ such that all the hyperplanes in $a^p \cdot \mathcal{J}$ intersect $\alpha$ between $J_1$ and $J_{L^2}$. By assumption, there exist at most $\tau$ hyperplanes in $a^p \cdot \mathcal{J}$ crossing $\beta$, so at least $(|\mathcal{J}|-\tau)/2 \geq C$ hyperplanes in $a^p \cdot \mathcal{J}$ crosses either $J_1$ or $J_{L^2}$. We distinguish two cases.

\medskip \noindent
\underline{Case 1:} there exist at least $C$ hyperplanes in $a^p \cdot \mathcal{J}$ crossing $J_{1}$. By applying Lemma~\ref{lem:Trans} to these hyperplanes, $a$, and $J_1$, we conclude that $\mathcal{J}$ contains a hyperplane transverse to $J_1$ (namely, a positive $\langle a \rangle$-translate of a hyperplane in $a^p \cdot \mathcal{J}$ transverse to $J_1$). But this is impossible since, in the ray of hyperplanes $\mathcal{W}$, $J_{L^2}$ separates $\mathcal{J}$ from $J_1$.

\medskip \noindent
\underline{Case 2:} there exist at least $C$ hyperplanes in $a^p \cdot \mathcal{J}$ crossing $J_{L^2}$. By applying Lemma~\ref{lem:Trans} to these hyperplanes, $a^{-1}$, and $J_{L^2}$, we conclude that there exist infinitely many negative $\langle a \rangle$-translate of hyperplanes in $a^p \cdot \mathcal{J}$ that cross $J_{L^2}$. Among such a collection, only finitely many of them may cross $\beta$, by assumption. So one gets infinitely many hyperplanes separating $\alpha \cap J_{L^2}$ and $\beta \cap J_{L^2}$, which is impossible.

\medskip \noindent
Thus, we have proved Fact~\ref{fact:CrossingBeta}. Let $\mathcal{J}'$ denote an $\langle a \rangle$-translate of $\mathcal{J}$ that contains at least $\tau +1$ hyperplanes crossing $\beta$. We already know that $\mathcal{J}'$ lies in a single $\langle a \rangle$-orbit. On the other hand, $\mathcal{J}'$ necessarily contains two hyperplanes in the same $\langle b \rangle$-orbit. As a consequence, there exist two hyperplanes $H,H'$ crossing $\alpha,\beta$ such that $H'$ lies both in the $\langle a \rangle$- and $\langle b \rangle$-orbits of $H$. As a consequence, $H$ has infinitely many $\langle a \rangle$-translates crossing both $\alpha,\beta$, which implies that all the $\langle a \rangle$-translates of $H$ cross both $\alpha,\beta$ (since $a$ sends a hyperplane crossing $\alpha$ to a disjoint hyperplane). But $H$ has an $\langle a \rangle$-translate in $\mathcal{J}$, and no hyperplane in $\mathcal{J}$ can cross $\beta$ since it is separated from $\beta$ by $J_{L^2+1}$. This concludes the proof of Claim~\ref{claim:Rays}.
\end{proof}

\subsection{Constructing trees of hyperplanes}

\noindent
Of course, in order to apply Proposition~\ref{prop:NotHyp}, we have to be able to construct trees of hyperplanes. Our main result in this direction is the following:

\begin{lemma}\label{lem:TreeHyp}
Let $X$ be a CAT(0) cube complex and $a,b \in \mathrm{Isom}(X)$ two isometries. Assume that there exist hyperplanes $A,B,J$ satisfying $aJ^+ \subset A^+$ and $bJ^+ \subset B^+$, where $A^+,B^+,J^+$ are halfspaces respectively delimited by $A,B,J$ that satisfy $A^+,B^+ \subset J^+$ and $A^+ \cap B^+ = \emptyset$. Then $\{ w J \mid w \in \langle a,b \rangle^+ \}$ defines a branching and uniformly distributed tree of hyperplanes.  Moreover, if $A,B,J$ are pairwise strongly separated, then any two hyperplanes in our tree are strongly separated. 
\end{lemma}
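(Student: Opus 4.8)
The plan is to set $T$ to be the rooted tree whose vertices are the positive words $w \in \langle a,b\rangle^+$ (equivalently the free monoid on $\{a,b\}$), with root the empty word, and where $wa, wb$ are the two children of $w$; the associated hyperplane of the vertex $w$ is $H_w := wJ$. One then has to verify three things: that $\mathcal{H} = \{wJ \mid w \in \langle a,b\rangle^+\}$ consists of pairwise disjoint hyperplanes; that the separation condition defining a tree of hyperplanes holds, i.e. $H_v$ separates $H_u$ from $H_w$ exactly when $u,v,w$ lie in that order on a ray from the root; and finally that the tree is branching (clear, since every vertex has exactly two children) and uniformly distributed.

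**The combinatorial core.** The main point is a nesting statement for the halfspaces. Writing $H_w^+ := wJ^+$, I would first show by induction on the length of $w$ that the halfspaces $\{ H_w^+ \}$ form a \emph{tree-like} family: if $v$ is a prefix of $w$ (i.e. $v$ is an ancestor of $w$ in $T$, say $w = vu$ with $u$ nonempty) then $H_w^+ \subsetneq H_v^+$, while if neither of $v,w$ is a prefix of the other then $H_v^+ \cap H_w^+ = \emptyset$. The base case is exactly the hypothesis: $aJ^+ = H_a^+ \subset A^+ \subset J^+ = H_\emptyset^+$, similarly for $b$, and $H_a^+ \cap H_b^+ \subset A^+ \cap B^+ = \emptyset$. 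For the inductive step, apply the isometry corresponding to the longest common prefix of $v$ and $w$: this reduces the "incomparable" case to the case where $v,w$ start with different letters, say $v = a v'$, $w = b w'$, and then $H_v^+ \subset aJ^+ \subset A^+$ and $H_w^+ \subset bJ^+ \subset B^+$ are disjoint because $A^+ \cap B^+ = \emptyset$; the "prefix" case $w = vu$ follows by applying the isometry $v$ to the length-one nesting $H_u^+ \subset J^+$. Once this nesting is established, disjointness of the hyperplanes themselves is immediate (two hyperplanes with disjoint or strictly nested halfspaces cannot cross and cannot coincide, using $A^+ \cap B^+ = \emptyset$ to rule out coincidence of $wJ$ and $w'J$ for $w \neq w'$ — one should check that $wJ = w'J$ would force $A^+, B^+$ to overlap), and the separation condition $H_v$ separates $H_u$ from $H_w$ iff $u,v,w$ are in ancestor order on a ray is a direct translation of the nesting/disjointness dichotomy into the language of halfspaces.

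**Uniform distribution.** For this I would use the action of $G = \mathrm{Isom}(X)$, or rather just the subgroup $\langle a, b\rangle$, nodally: set $C := \max\big( d(J, aJ), d(J, bJ) \big)$. Given $u, v \in T$ with $d_T(u,v) = n$, let $p$ be their longest common prefix; then $d_T(u,v) = |u| + |v| - 2|p|$ and, after translating by $p^{-1}$, it suffices to bound $d(H_{u'}, H_{v'})$ where $u', v'$ have no common prefix and $|u'| + |v'| = n$. Walking down the path in $T$ from the empty word to $u'$ one letter at a time and applying the triangle inequality — each step moves the relevant hyperplane by at most $C$ in distance, since $H_{ws}$ is a $w$-translate of $H_s \in \{aJ, bJ\}$ and hyperplanes at bounded distance stay at bounded distance under isometries — gives $d(J, H_{u'}) \leq C|u'|$, and likewise $d(J, H_{v'}) \leq C|v'|$, so $d(H_{u'}, H_{v'}) \leq C(|u'| + |v'|) = Cn = C \cdot d_T(u,v)$. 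This is where I need a small lemma: $d(wL, wL') = d(L, L')$ for hyperplanes $L, L'$ and $w$ an isometry, which is standard (hyperplanes and their carriers are permuted by $\mathrm{Isom}(X)$).

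**Expected obstacle.** The routine parts (branching, uniform distribution via the orbit argument) are painless; the one genuinely delicate point is getting the nesting dichotomy cleanly, in particular making sure the induction correctly handles the interaction between the "apply the common prefix" move and the hypotheses $A^+, B^+ \subset J^+$ — one must track that after translating by the common prefix $p$, the images $pA, pB, pJ$ still satisfy the configuration, which they do because the hypothesis is stated purely in terms of $a, b, J, A, B$ and translates covariantly. I also want to be slightly careful that the statement does \emph{not} claim "no transeparation"; that extra property is only obtained later (presumably under additional hypotheses in a subsequent lemma), so here I only need branching and uniform distribution, which keeps the argument short.
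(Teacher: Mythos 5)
Your proposal is correct and follows essentially the same route as the paper's proof: the same rooted tree $T$ on $\{a,b\}^+$ with $H_w := wJ$, the same nesting dichotomy (prefix $\Rightarrow$ nested halfspaces, incomparable $\Rightarrow$ disjoint halfspaces) proved by reducing to the base case via the common-prefix translate, and the same uniform-distribution bound with $K=\max(d(J,aJ),d(J,bJ))$ and the triangle inequality. Your extra remark about checking $wJ \neq w'J$ and the genuine disjointness (non-transversality) of the hyperplanes is a detail the paper elides but which your argument covers correctly.
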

\begin{figure}
\begin{center}
\includegraphics[width=0.7\linewidth]{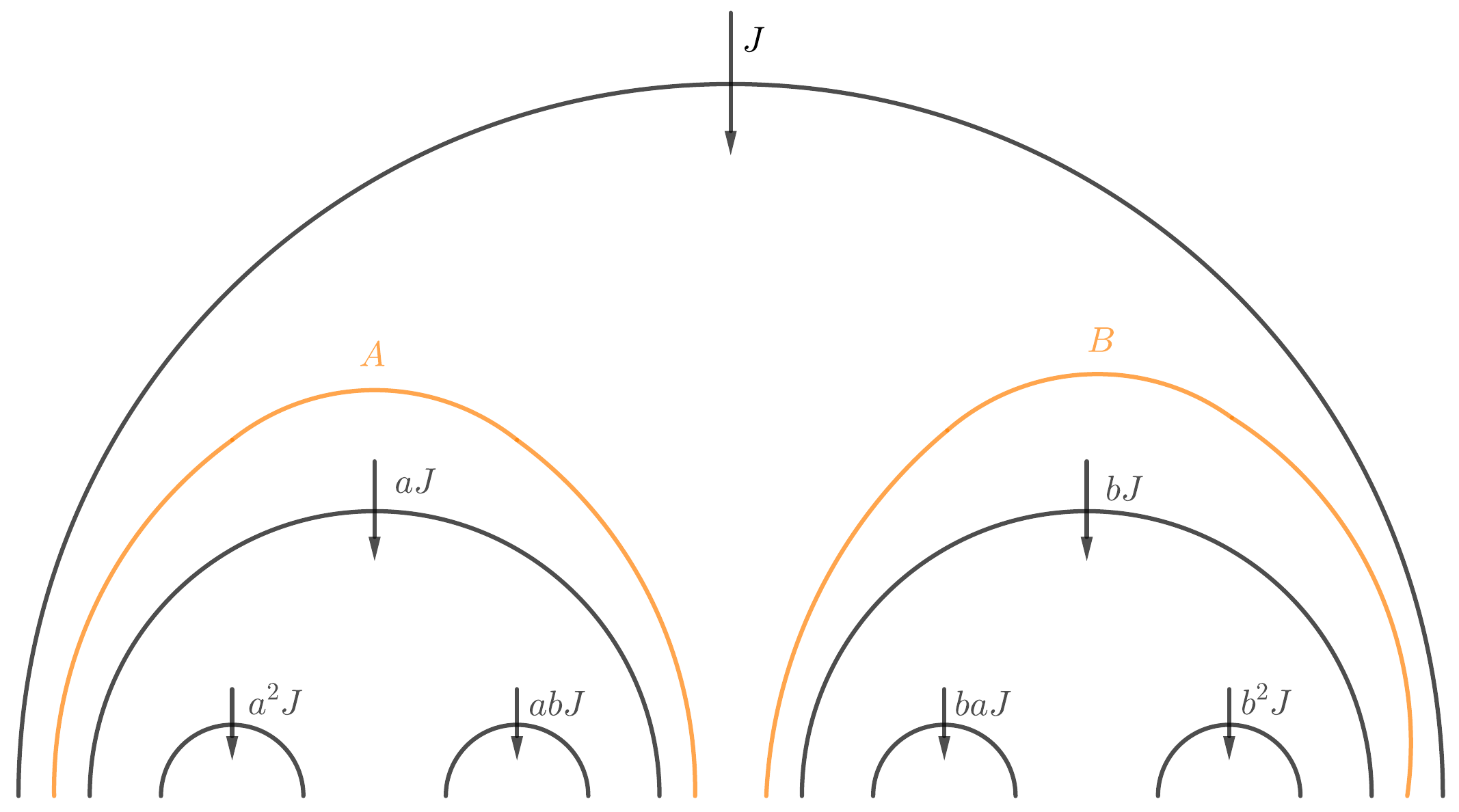}
\caption{Tree of hyperplanes constructed by Lemma~\ref{lem:TreeHyp}.}
\label{TreeHyp}
\end{center}
\end{figure}

\noindent
 Recall that we denote by $\langle \cdot \rangle^+$ the subsemigroup generated by the subset under consideration. 

\begin{proof}[Proof of Lemma~\ref{lem:TreeHyp}.]
 By a classical ping-pong argument, it is clear that $\langle a,b \rangle^+$ is free with $\{a,b\}$ as a basis. Therefore, we can think of $\langle a,b \rangle^+$ as the set of words written over $\{a,b\}$.  Let $T$ denote the rooted tree whose vertex-set is $\langle a,b\rangle^+$, whose root is the empty word, and whose edges connect two words if one can be obtained from the other by removing the last letter. For every vertex $u \in T$, set $H_u:=uJ$. See Figure~\ref{TreeHyp}. Observe that:

\begin{claim}\label{claim:One}
For all $u,v \in \langle a,b\rangle^+$, if $u$ is a prefix of $v$ then $uJ^+ \supset vJ^+$. 
\end{claim}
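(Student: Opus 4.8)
The plan is to prove Claim~\ref{claim:One} by a straightforward induction on the length of the suffix of $v$ after removing the prefix $u$, using the hypothesis $aJ^+ \subset A^+ \subset J^+$ and $bJ^+ \subset B^+ \subset J^+$ together with the fact that the isometries $a,b$ act on halfspaces and preserve inclusions.

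\medskip\noindent
First I would record the base case: if $v = u$ there is nothing to prove, since $uJ^+ \supseteq uJ^+$. Now suppose $v = uw$ with $w \in \{a,b\}^+$ a nonempty word, and write $w = w's$ where $s \in \{a,b\}$ is the \emph{last} letter of $w$, so that $v = uw's$. By the induction hypothesis applied to the pair $(u, uw')$ — note $uw'$ has a strictly shorter suffix over $u$ — we have $uJ^+ \supseteq uw'J^+$. It therefore suffices to show $uw'J^+ \supseteq uw'sJ^+ = v J^+$. Applying the isometry $uw'$ (which maps halfspaces to halfspaces and preserves inclusion) to the containment $sJ^+ \subseteq J^+$ — which holds because $sJ^+ \in \{aJ^+, bJ^+\} \subseteq \{A^+, B^+\}$ and $A^+, B^+ \subseteq J^+$ by hypothesis — yields $uw' sJ^+ \subseteq uw' J^+$, as required. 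Chaining the two inclusions gives $uJ^+ \supseteq uw'J^+ \supseteq uw'sJ^+ = vJ^+$, completing the induction.

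\medskip\noindent
I do not expect any serious obstacle here; the statement is essentially the observation that each generator contracts the fixed halfspace $J^+$ into itself, so any word in the positive monoid contracts it further, and a prefix word contracts it less than the full word. The only point requiring a little care is to make sure the isometries are applied to an inclusion that has already been established (i.e.\ always peel off the \emph{last} letter and invoke the induction hypothesis on the prefix first, rather than trying to peel off the first letter, since $A^+ \cap B^+ = \emptyset$ is not needed for this claim and the generators act on the \emph{left}). This lemma (Claim~\ref{claim:One}) will subsequently feed into the verification that distinct vertices of $T$ give pairwise disjoint hyperplanes and that the separation condition in the definition of a tree of hyperplanes holds — using $A^+ \cap B^+ = \emptyset$ to handle incomparable vertices — but for the claim itself only the nested-halfspace structure is used.
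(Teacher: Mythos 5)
Your proof is correct and follows essentially the same route as the paper: both arguments peel off the last letter of the longer word and apply the isometry given by the remaining prefix to the inclusion $sJ^+ \subseteq J^+$ (which holds via $sJ^+ \subseteq A^+$ or $B^+ \subseteq J^+$). The paper merely phrases this as first reducing the claim to the special case $uJ^+ \subseteq J^+$ for all $u$ and then inducting on $|u|$, whereas you run the induction directly on the suffix length $|v|-|u|$; the core step and the hypotheses used are identical.
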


\noindent
It amounts to saying that $uJ^+ \subset J^+$ for every $u \in \langle a,b\rangle^+$, which can be proved by induction. Indeed, if $a$ is the last letter of $u$, then we can write $u$ as $wa$ and
$$uJ^+ = waJ^+ \subset wA^+ \subset wJ^+ \subset J^+.$$
One argues similarly if $b$ is the last letter of $u$. Next, observe that:

\begin{claim}\label{claim:Two}
For all $u,v \in \langle a,b\rangle^+$, if no one is a prefix of the other then $uJ^+ \cap vJ^+ =\emptyset$.
\end{claim}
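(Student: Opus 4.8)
The statement to prove is: for all $u,v \in \{a,b\}^+$, if neither $u$ nor $v$ is a prefix of the other, then $uJ^+ \cap vJ^+ = \emptyset$. The plan is to induct on the length of the longest common prefix of $u$ and $v$ (equivalently, to reduce to the base case where the first letters already differ, after applying a group element).

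First I would handle the base case, where $u$ and $v$ have no common prefix, i.e. they begin with different letters. Writing $u = a u'$ and $v = b v'$ (the case $u = b u'$, $v = a v'$ being symmetric), I would use Claim~\ref{claim:One}: since $a$ is a prefix of $u = a u'$, we have $u J^+ \subset a J^+$; and since $b$ is a prefix of $v = b v'$, we have $v J^+ \subset b J^+$. Now by hypothesis $a J^+ \subset a A^+$... — more precisely, $a J^+ \subset A^+$ follows from $a J^+ \subset A^+$ being exactly the given hypothesis $aJ^+ \subset A^+$, and similarly $b J^+ \subset B^+$ is the given hypothesis. Hence $u J^+ \subset A^+$ and $v J^+ \subset B^+$, and since $A^+ \cap B^+ = \emptyset$ by hypothesis, we get $u J^+ \cap v J^+ = \emptyset$.

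For the inductive step, suppose $u$ and $v$ share a nonempty common prefix $w$, so $u = w u''$ and $v = w v''$ where $u''$ and $v''$ have no common prefix (and neither is empty, else one of $u,v$ would be a prefix of the other). By the base case applied to $u''$ and $v''$, we have $u'' J^+ \cap v'' J^+ = \emptyset$. Since $w$ acts as an isometry of $X$ (being an element of the group, as a word in $\langle a, b\rangle$), it maps disjoint halfspaces to disjoint halfspaces: $w(u'' J^+) \cap w(v'' J^+) = \emptyset$, that is, $u J^+ \cap v J^+ = w u'' J^+ \cap w v'' J^+ = \emptyset$, as desired.

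The only subtlety — and the step I would be most careful about — is making sure the base-case argument genuinely only uses the \emph{first} letters of $u$ and $v$ and nothing more, so that the reduction via $w$ is clean; this is exactly what Claim~\ref{claim:One} buys us, since it lets us forget the tails $u'$ and $v'$ entirely and bound $uJ^+$ inside $A^+$ (resp. $vJ^+$ inside $B^+$) using only the leading letter. Once that is in place, the induction is immediate and no further combinatorial bookkeeping is needed. (One could equivalently phrase the whole argument in one step: let $w$ be the longest common prefix, write $u = w \cdot a \cdot (\cdots)$ and $v = w \cdot b \cdot (\cdots)$ after possibly swapping, and conclude $uJ^+ \subset wA^+$, $vJ^+ \subset wB^+$, which are disjoint since $w$ is injective and $A^+ \cap B^+ = \emptyset$.)
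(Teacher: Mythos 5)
Your proof is correct and takes essentially the same route as the paper: the paper's one-line ``without loss of generality, assume $u,v$ have no common prefix'' is precisely your reduction by the isometry $w$ (the longest common prefix), after which both arguments invoke Claim~\ref{claim:One} to shrink $uJ^+$ and $vJ^+$ into $aJ^+\subset A^+$ and $bJ^+\subset B^+$, which are disjoint by hypothesis. You've merely made the WLOG step explicit.
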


\noindent
Without loss of generality, assume that $u,v$ have no common prefix. Up to switching $u$ and $v$, write $u$ as $au_0$ and $v$ as $bv_0$. Then, by applying Claim~\ref{claim:One}, we get
$$u J^+ \cap vJ^+ = au_0 J^+ \cap bv_0J^+ \subset aJ^+ \cap bJ^+ \subset A^+ \cap B^+ = \emptyset,$$
proving our claim.

\medskip \noindent
The combination of Claims~\ref{claim:One} and~\ref{claim:Two} shows that, for all vertices $u,v \in T$, the hyperplane $H_u$ separates $H_\emptyset=J$ and $H_v$ if and only if $u$ is a prefix of $v$, or equivalently that $u$ belongs to the geodesics from $\emptyset$ to $v$. This implies that our collection of hyperplanes defines a tree of spaces, which is branching by construction. In order to show that it is also uniformly distributed, fix two vertices $u,v \in T$ and write $u=wu_0$, $v=wv_0$ where $w$ is the maximal common prefix of $u,v$. Then
$$\begin{array}{lcl} d(H_u,H_v) & = & d(uJ,vJ) = d(u_0J,v_0J) = d(J,u_0^{-1}v_0 J) \\ \\ & \leq & K \cdot |u_0^{-1}v_0| \leq K \cdot \left( |u_0|+|v_0| \right) = K \cdot d_T(u,v) \end{array}$$
where $K:= \max( d(J,aJ),d(J,bJ))$. This completes the proof of the first assertion of our lemma.

\medskip \noindent
 Finally, assume that $A,B,J$ are pairwise strongly separated. Let $H_u$ and $H_v$ be two hyperplanes from our tree. As before, write $u=wu_0$ and $v = wv_0$ where $w$ is the maximal common prefix of $u$ and $v$. The first letter of $u_0$ must be different from the first letter of $v_0$, say that $u_0=au_1$ and $v_0=bu_1$ up to switching $a$ and $b$. According to Claim~\ref{claim:One}, $H_u \subset waJ^+$ and $H_v \subset wbJ^+$. Thus, in order to conclude that $H_u$ and $H_v$ are strongly separated, it suffices to verify that $waJ$ and $wbJ$, or equivalently $aJ$ and $bJ$, are strongly separated. But, on the one hand, $aJ \subset A^+$ and $bJ \subset B^+$; and, on the other hand, $A$ and $B$ are strongly separated by assumption. So the desired conclusion follows. 
\end{proof}

\section{Cyclically hyperbolic cubulable groups}

\subsection{Preliminaries}

\noindent
In this section, we record a few standard results and definitions related to CAT(0) cube complexes. 

\paragraph{Roller boundary.} Given a CAT(0) cube complex $X$, an \emph{orientation} of $X$ is a map $\sigma$ that associates to every hyperplane of $X$ one of the two halfspaces it delimits in such that $\sigma(J_1) \cap \sigma(J_2) \neq \emptyset$ for all hyperplanes $J_1,J_2$ of $X$. For instance, given a vertex $x \in X$, the map $\sigma_x$ that sends each hyperplane to its halfspace containing $x$ is an orientation, referred to as a \emph{principal orientation}. The \emph{Roller completion} $\overline{X}$ of $X$ is the cube-completion of the graph whose vertices are the orientations of $X$ and whose edges connect two orientations whenever they differ on a single hyperplane. Each connected component of $\overline{X}$ is a CAT(0) cube complex, and the map $x \mapsto \sigma_x$ induces an isomorphism from $X$ to the component of $\overline{X}$ containing the principal orientations. One identifies $X$ with its image in $\overline{X}$ and one refers to $\mathfrak{R}X:= \overline{X} \backslash X$ as the \emph{Roller boundary} of $X$. 

\medskip \noindent
Given a component $Y \subset \mathfrak{R}X$, halfspaces and hyperplanes of $Y$ can be identified with halfspaces and hyperplanes of $X$. More precisely, for every hyperplane $J$ separating at least two vertices in $Y$ (i.e.\ there exist at least two vertices of $Y$ that are orientations of $X$ differing at $J$) and for every halfspace $J^+$ delimited by $J$, the subcomplex of $Y$ given by the orientations $\sigma$ of $X$ satisfying $\sigma(J)=J^+$ is a halfspace of $Y$. Conversely, every halfspace of $Y$ arises in this way. As claimed, this observation allows us to identify halfspaces and hyperplanes of $Y$ with halfspaces and hyperplanes of $X$. Moreover, this identification preserves respectively intersection and transversality. 

\medskip \noindent
An orientation $\sigma$ of our component $Y$ uniquely extends to an orientation $\sigma^+$ of $X$: if a hyperplane $J$ crosses $Y$, set $\sigma^+(J)$ as the halfspace of $X$ corresponding the halfspace $\sigma(J)$ of $Y$; and otherwise set $\sigma^+(J)$ as the halfspace delimited by $J$ containing $Y$. Thus, the Roller completion of $\overline{Y}$, where $Y$ is thought of as a CAT(0) cube complex on its own right, naturally embeds into $\overline{X}$. In the sequel, we identify $\overline{Y}$ with its image in $\overline{X}$.

\medskip \noindent
Finally, we record a couple of statements for future use.

\begin{lemma}\label{lem:ProjBoundary}\emph{(\cite[Proposition~2.9]{MR4283595})}
Let $X$ be a finite-dimensional CAT(0) cube complex, $x \in X$ a vertex, and $Y \subset \mathfrak{R}X$ a component. There exists a unique vertex $\xi \in Y$ such that the hyperplanes separating $x$ from $\xi$ are precisely the hyperplanes separating $x$ from $Y$. 
\end{lemma}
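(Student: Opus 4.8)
The statement asserts existence and uniqueness of a point $\xi$ in the component $Y \subset \mathfrak{R}X$ whose hyperplane-separation set from $x$ is exactly the set $\mathcal{S}(x,Y)$ of hyperplanes separating $x$ from $Y$ (i.e., hyperplanes $J$ such that $Y$ lies entirely in the halfspace of $J$ not containing $x$). The natural approach is to define $\xi$ directly as an orientation of $X$ and then check it lies in $Y$ and has the desired property, with uniqueness following from the fact that a point in $\overline{X}$ is determined by its orientation.

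First I would construct the candidate orientation. For each hyperplane $J$ of $X$, set $\xi(J)$ to be: the halfspace of $J$ containing $x$ if $J \notin \mathcal{S}(x,Y)$ and $J$ does not cross $Y$; the halfspace of $J$ not containing $x$ (equivalently, the one containing $Y$) if $J \in \mathcal{S}(x,Y)$; and, if $J$ crosses $Y$, the halfspace $\sigma(J)$ where $\sigma = \sigma_x|_Y$ is the ``principal-type'' orientation of $Y$ obtained by restricting — more precisely, using Lemma~\ref{lem:ProjBoundary}'s ambient setup, one picks for a hyperplane crossing $Y$ the halfspace of $Y$ closest to $x$ and then extends via the identification of halfspaces of $Y$ with halfspaces of $X$ described in the paragraph preceding the lemma. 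I need to verify this is a genuine orientation, i.e., any two chosen halfspaces intersect. This is the consistency check: for $J_1, J_2$ both in $\mathcal{S}(x,Y)$ the halfspaces both contain $Y$ so they meet; for one in $\mathcal{S}(x,Y)$ and one crossing $Y$, the $\mathcal{S}(x,Y)$-halfspace contains $Y$ hence meets any halfspace of the form $\sigma(J_2)$; if $J$ neither crosses $Y$ nor separates $x$ from $Y$, then $x$ and $Y$ lie on the same side, so its chosen halfspace (the $x$-side) contains $Y$ too — reducing everything to the fact that the "side containing $Y$" choices, together with a consistent orientation on the hyperplanes crossing $Y$, form an orientation, which is essentially the content of the extension paragraph $\sigma \mapsto \sigma^+$.

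Next I would check $\xi \in Y$ (not just in $\overline{X}$): since $\xi$ agrees with the extension $\sigma^+$ of the orientation $\sigma$ of $Y$ obtained as the restricted principal orientation, and the extension construction in the preamble precisely lands $\sigma^+$ in $\overline{Y} \subseteq \overline{X}$, we get $\xi \in \overline{Y}$; that $\xi$ actually lies in $Y$ itself (the interior component, not its own Roller boundary) follows because $\sigma$ was chosen as a principal orientation of $Y$, i.e., $\sigma = \sigma_y$ for the limit behaviour toward $Y$ — here one uses that finite-dimensionality guarantees the "gate"/projection point exists as an honest vertex of $Y$. Then the separation set: a hyperplane $J$ separates $x$ from $\xi$ iff $\xi(J)$ is the non-$x$ halfspace; by construction this happens exactly when $J \in \mathcal{S}(x,Y)$, because for hyperplanes crossing $Y$ we chose the side of $Y$ closest to $x$, which by definition of the projection is the $x$-side, so those do not separate. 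Finally, uniqueness is immediate: if $\xi' \in Y$ has the same separation set from $x$, then $\xi'$ and $\xi$ are orientations of $X$ agreeing on every hyperplane (they agree on $\mathcal{S}(x,Y)$ by hypothesis, on hyperplanes crossing $Y$ since both must take the $x$-closest side, and on the rest since both take the $x$-side), hence $\xi' = \xi$.

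The main obstacle I anticipate is the consistency check that the proposed $\xi$ is a legitimate orientation — specifically handling hyperplanes that cross $Y$ versus those that do not, and making sure the "side of $Y$ closest to $x$" is well-defined for every hyperplane crossing $Y$ (this is where finite-dimensionality enters, via the existence of gates/projections onto halfspaces). Once that is settled, everything else is bookkeeping with the halfspace identifications already set up in the excerpt. If a cleaner route is wanted, one can instead cite the description of $\overline{X}$ as the set of all orientations with the product topology and observe that $\xi$ is the limit of $\mathrm{proj}_{Y_n}(x)$ along an exhaustion, but the direct orientation-theoretic argument above is self-contained and matches the formalism already developed.
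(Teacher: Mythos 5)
The overall scaffolding is fine (construct the candidate orientation $\xi$, check consistency, identify $\xi$ with the extension $\sigma^+$ of $\sigma:=\sigma_x\vert_Y$, deduce uniqueness from the fact that a point of $\overline{X}$ is determined by its orientation), but the crucial step is not proved. Note also that the paper itself does not prove this lemma --- it is quoted from \cite[Proposition~2.9]{MR4283595} --- so there is no internal proof to compare against; the proposal has to stand on its own.

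The gap is where you write that ``$\xi$ actually lies in $Y$ itself \ldots\ follows because $\sigma$ was chosen as a principal orientation of $Y$ \ldots\ here one uses that finite-dimensionality guarantees the gate/projection point exists as an honest vertex of $Y$.'' This is circular: the claim that $\sigma_x\vert_Y$ is a principal orientation of $Y$, equivalently that $\xi$ is a vertex of the component $Y$ rather than a point of $\mathfrak{R}Y$, is \emph{precisely} the content of the lemma, not a known fact one may invoke. Everything else you do is the easy bookkeeping; this is the only place where finite dimensionality is genuinely used, and you have asserted the conclusion instead of proving it. Concretely, what must be shown is: for any (hence some) vertex $\sigma_0\in Y$, the set $\mathcal{E}$ of hyperplanes that cross $Y$ and separate $x$ from $\sigma_0$ is finite, since $\xi$ and $\sigma_0$ disagree exactly on $\mathcal{E}$ and two orientations lie in the same component of $\overline{X}$ iff they differ on finitely many hyperplanes. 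Here is one way to fill the hole: suppose $\mathcal{E}$ is infinite. Since $X$ has no $(\dim X+1)$-tuple of pairwise transverse hyperplanes, Ramsey's theorem yields an infinite pairwise non-transverse subfamily of $\mathcal{E}$; as every member separates $x$ from $\sigma_0$, the halfspaces on the $\sigma_0$-side are totally ordered by inclusion, giving a strictly decreasing chain $D_1\supsetneq D_2\supsetneq\cdots$ with $x\notin D_i$ and $\sigma_0\in D_i$ for all $i$, all delimiting hyperplanes $J_i$ crossing $Y$. Because $J_1$ crosses $Y$, there is a vertex $\tau\in Y$ with $\tau\notin D_1$, hence $\tau\notin D_i$ for every $i$ (the $D_i$ are nested inside $D_1$); thus every $J_i$ separates $\tau$ from $\sigma_0$, contradicting that $\tau$ and $\sigma_0$ lie in the same component and so differ on only finitely many hyperplanes. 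Once $\mathcal{E}$ is finite, $\xi\in Y$ follows and the rest of your argument (separation set equals $\mathcal{S}(x,Y)$, uniqueness) goes through as you wrote it.
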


\noindent
 In the sequel, we will refer to the vertex $\xi$ as the \emph{projection of $x$ to $Y$}. 

\begin{lemma}\label{lem:RollerDim}
Let $X$ be a finite-dimensional CAT(0) cube complex. If $Y \subset \mathfrak{R}X$ is a component, then $\dim(Y)<\dim(X)$.
\end{lemma}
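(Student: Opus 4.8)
The plan is to bound the dimension of a boundary component $Y \subset \mathfrak{R}X$ by producing, from a top-dimensional cube of $Y$, a strictly larger family of pairwise transverse hyperplanes in $X$. Recall that $\dim(Y)$ equals the maximal number of pairwise transverse hyperplanes of $Y$, and that, as recorded in the preliminaries, hyperplanes of $Y$ are canonically identified with certain hyperplanes of $X$ in a way that preserves transversality. So if $\dim(Y) = d$, there are $d$ pairwise transverse hyperplanes $J_1, \dots, J_d$ of $X$, each of which crosses $Y$ (i.e. separates at least two vertices of $Y$). It then suffices to exhibit one further hyperplane of $X$ transverse to all of $J_1, \dots, J_d$; this will give $d+1$ pairwise transverse hyperplanes in $X$, hence $\dim(X) \geq d+1 > d = \dim(Y)$.

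First I would fix a vertex $\sigma \in Y$ lying in a $d$-cube of $Y$ dual to $J_1, \dots, J_d$, and pick a vertex $x \in X$. Consider the (finite, since $X$ is finite-dimensional and the combinatorial intervals are finite) sequence of hyperplanes separating $x$ from $\sigma$ — equivalently, on which $\sigma_x$ and $\sigma$ (extended via $\sigma^+$) disagree. Because $\sigma \in \mathfrak{R}X$ rather than in $X$, the set of hyperplanes separating $x$ from $\sigma$ is infinite, whereas each $J_i$ is just one hyperplane; the point is that there must be hyperplanes "beyond" the $J_i$ in the direction of $Y$. Concretely, since $J_1, \dots, J_d$ all cross $Y$, for the halfspace $\sigma(J_i)$ (the side containing points of $Y$ arbitrarily far out) one can find, along a combinatorial ray in $\overline{X}$ from $x$ converging to a point of $Y$ on the appropriate side of all the $J_i$, a hyperplane $H$ that is separated from $x$ by every $J_i$ and still crosses $Y$. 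I would argue that such an $H$ is transverse to each $J_i$: if $H$ and $J_i$ were nested, then since $H$ lies on the far side of $J_i$ from $x$, one of the two halfspaces would fail to meet $Y$ in the required way, contradicting that both $H$ and $J_i$ separate vertices of $Y$ on a common "cube corner" configuration. The cleanest way to make this precise is to work inside $\overline{Y}$: take the $d$-cube $c \subset Y$, choose a vertex $v$ of $Y$ not in the "star" determined by $c$, and observe that a geodesic in $Y$ from $v$ to the center-vertex of $c$ supplies a hyperplane of $Y$ crossing all of $J_1,\dots,J_d$ — wait, that would only give $\dim(Y) \geq d+1$, a contradiction with maximality; instead the extra hyperplane must come from $X \setminus$-the-hyperplanes-of-$Y$, i.e. a hyperplane of $X$ that does not cross $Y$ but separates two points of $\overline{Y}$ with one in $Y$.

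Thus the real mechanism is: pick $\xi \in Y$ as in Lemma~\ref{lem:ProjBoundary} for some vertex $x$, so that the hyperplanes between $x$ and $\xi$ are exactly those between $x$ and $Y$; these are infinitely many and all fail to cross $Y$. Among them, using that $J_1, \dots, J_d$ cross $Y$ and are pairwise transverse, I would locate one hyperplane $H$ separating $x$ from $Y$ that is transverse to every $J_i$ — such an $H$ exists because, moving from $x$ toward $Y$, after passing all the finitely many hyperplanes that "frame" the $d$-cube on the $x$-side, one reaches hyperplanes of $X$ that are nested around the whole of $Y$ and hence sit transverse to the $J_i$ (which genuinely cut $Y$ into two infinite pieces). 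The verification that $H \pitchfork J_i$ reduces to the standard fact that two halfspaces meet iff their bounding hyperplanes are not nested-with-disjoint-interiors, together with the consistency of the orientation defining $\xi$. Combining, $\{H, J_1, \dots, J_d\}$ is a family of $d+1$ pairwise transverse hyperplanes of $X$, so $\dim(X) \geq d + 1$, i.e. $\dim(Y) = d < \dim(X)$.

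The main obstacle, and the step deserving the most care, is showing that among the infinitely many hyperplanes separating $x$ from $Y$ at least one is transverse to all of $J_1, \dots, J_d$ simultaneously — not merely that infinitely many hyperplanes separate $x$ from $Y$. I expect this requires a short pigeonhole/finiteness argument: only finitely many hyperplanes of $X$ are nested (on the $x$-side) with a given $J_i$ in a way compatible with $J_i$ crossing $Y$, because each such hyperplane must separate $x$ from one of the two $Y$-sides of $J_i$, and the interval structure (Lemma~\ref{lem:Interval}-type finiteness, or simply finiteness of $I(x,\cdot)$ restricted appropriately) keeps these finite; since there are $d$ of the $J_i$ and infinitely many hyperplanes between $x$ and $Y$, one survives transverse to all of them. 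I would also double check the edge case $\dim(Y) = 0$, where the statement is immediate since $Y$ nonempty and $X$ must then have a hyperplane.
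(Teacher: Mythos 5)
Your strategy is the same as the paper's: identify the $d$ pairwise transverse hyperplanes of $Y$ with hyperplanes $J_1,\dots,J_d$ of $X$, invoke Lemma~\ref{lem:ProjBoundary} so that the hyperplanes separating a base vertex $x$ from $\xi\in Y$ are exactly the (infinitely many) hyperplanes separating $x$ from $Y$, and then extract from this family one hyperplane transverse to all the $J_i$. But the decisive finiteness step, which you flag yourself, is a genuine gap: you claim that for a fixed $J_i$ only finitely many of these hyperplanes fail to be transverse to $J_i$ because ``each such hyperplane must separate $x$ from one of the two $Y$-sides of $J_i$,'' and then appeal to ``finiteness of $I(x,\cdot)$ restricted appropriately.'' However, the two halfspaces $J_i^{\pm}$ are \emph{infinite} sets, so finiteness of intervals between $x$ and a fixed \emph{vertex} does not apply as stated; infinitely many hyperplanes can separate $x$ from an unbounded halfspace.

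The missing ingredient is the $d$-cube $C\subset X$ in which $J_1,\dots,J_d$ all cross (this cube exists precisely because the $J_k$ are pairwise transverse). The paper uses it as the finite anchor: none of the hyperplanes $H$ separating $x$ from $Y$ equals any $J_k$ (the $J_k$ cross $Y$, the $H$ do not), so each $H$ has $C$ entirely on one side; since only finitely many hyperplanes separate $x$ from any fixed vertex of the finite set $C$, all but finitely many such $H$ leave $C$ on the $x$-side. For such an $H$, each $J_k$ meets both halfspaces of $H$ in both of its own halfspaces --- it crosses $Y$ on the $Y$-side of $H$ and crosses $C$ on the $x$-side of $H$ --- so $H\pitchfork J_k$. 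Concretely, the paper extracts (via Ramsey and finite dimension, noting that two hyperplanes separating $x$ from $Y$ cannot face away from each other) a nested chain $D_1\supsetneq D_2\supsetneq\cdots$ with $Y\subset D_i$ and $x\notin D_i$, and takes $i$ large enough that $C\cap D_i=\emptyset$. Your per-$J_i$ pigeonhole variant would also close once anchored to fixed vertices $c^{\pm}\in C$ lying on the two sides of $J_i$, but without the cube $C$ the finiteness you invoke is not justified.
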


\begin{proof}
If $\dim(Y) \geq n$, then $Y$ contains $n$ pairwise transverse hyperplanes. These hyperplanes correspond to $n$ pairwise transverse hyperplanes $J_1, \ldots, J_n$ in $X$, and they have to cross in an $n$-cube $C \subset X$. Fix an arbitrary vertex $x \in X$ and let $\xi \in Y$ be the projection given by Lemma~\ref{lem:ProjBoundary}. Because there exist infinitely many hyperplanes separating $x,\xi$ and because $X$ is finite-dimensional, there must exist a decreasing sequence of halfspaces $D_1 \supset D_2 \supset \cdots$ such that $Y \subset D_i$ but $x\notin D_i$ for every $i \geq 1$.  Because $\bigcap_{i \geq 1} D_i = \emptyset$ in $X$ , we can fix some $i \geq 1$ large enough such that $C \cap D_i = \emptyset$.  On the one hand, $J_1, \ldots, J_n$ all cross $C$ and $Y$; on the other hand, the hyperplane $K_i$ delimiting $D_i$ separates $C$ and $Y$ . Therefore, $K_i,J_1, \ldots, J_n$ must be pairwise transverse, hence $n +1 \leq \dim(X)$. 
\end{proof}

\paragraph{Contracting isometries.} In a metric space $X$, an isometry $g$ is \emph{contracting} if there exist $x \in X$ and $D \geq 0$ such that $n \mapsto g^nx$ induces a quasi-isometric embedding $\mathbb{N} \to X$ and such that the orbit $\langle g \rangle \cdot x$ is \emph{$D$-contracting} in the sense that nearest point projection on $\langle g \rangle \cdot x$ of every ball disjoint from $\langle g \rangle \cdot x$ has diameter $\leq D$. When $X$ is a CAT(0) cube complex, contracting isometries (with respect to the combinatorial metric) can be characterised as follows:

\begin{prop}\label{prop:ContractingCharacterisation}\emph{(\cite[Theorem~3.13]{article3})}
Let $X$ be a CAT(0) cube complex. A loxodromic isometry $g \in \mathrm{Isom}(X)$ is contracting if and only if there exist a constant $L \geq 0$ and a hyperplane $J$ such that $\{g^n \cdot J \mid n \in \mathbb{Z}\}$ is a collection of pairwise $L$-well-separated hyperplanes.
\end{prop}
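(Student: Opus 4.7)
The plan is to prove both implications via an analysis of the combinatorial axis $\gamma$ of $g$, which exists (after subdividing $X$ if necessary) because $g$ is loxodromic. Throughout, I fix a base vertex $x \in \gamma$ and work with the combinatorial metric, exploiting the identification of geodesic segments with sequences of hyperplane crossings.

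For the ``if'' direction, assume the translates $\{g^n J\}$ are pairwise $L$-well-separated. Using Lemma~\ref{lem:TranslatesDisjoint}, after replacing $g$ by a bounded power we may assume these translates are pairwise disjoint and hence linearly ordered along $\gamma$. I would show that the orbit $\langle g \rangle \cdot x$ is $D$-contracting for a constant $D$ depending only on $L$ and $\dim(X)$. Given a ball $B$ disjoint from the orbit and two vertices $y_1, y_2 \in B$ with projections $p_1, p_2$ onto the orbit, build a minimal-complexity disc diagram bounded by the cycle of convex subcomplexes formed by $[y_1, y_2]$, $[y_i, p_i]$ ($i=1,2$), and the segment of $\gamma$ between $p_1$ and $p_2$. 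Propositions~\ref{prop:DiscDiag} and~\ref{prop:DiscEmbedding} identify the hyperplanes separating $p_1$ from $p_2$ with dual curves of the diagram: each such hyperplane is either some $g^n J$ or is transverse to some $g^n J$, and the well-separation bound together with an argument in the spirit of Proposition~\ref{prop:FlatRectangles} (ruling out large flat rectangles attached to $\gamma$ on the side of $B$) yields a uniform bound on $d(p_1, p_2)$.

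For the ``only if'' direction, assume $g$ is contracting and pick any hyperplane $J$ crossing $\gamma$. Suppose for contradiction that $\{g^n J\}$ is not uniformly well-separated: for every $k \geq 1$ there exist integers $m_k < n_k$ and a set $\mathcal{H}_k$ of at least $k$ hyperplanes each transverse to both $g^{m_k} J$ and $g^{n_k} J$. Translating by $g^{-m_k}$ we may assume $m_k = 0$, and set $J_k := g^{n_k} J$. Applying Proposition~\ref{prop:FlatRectangles} to the cycle of carriers $(N(J), N(H), N(J_k), N(H'))$ for hyperplanes $H, H' \in \mathcal{H}_k$ that are extremal along $\gamma$ produces a flat rectangle $R_k$ of transverse width at least $k$ spanning between $J$ and $J_k$. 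A vertex $z_k \in R_k$ lying on the side of $\mathcal{H}_k$ opposite to $\gamma$ can then be pushed arbitrarily far from $\gamma$ while its nearest-point projection to $\langle g \rangle \cdot x$ sweeps out the whole segment between $J \cap \gamma$ and $J_k \cap \gamma$, whose length is unbounded in $k$. This directly contradicts the contracting property of the orbit.

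The main obstacle is the projection estimate in the ``if'' direction: the well-separation hypothesis controls only hyperplanes simultaneously transverse to two fixed $g^n J$, whereas one needs to control all hyperplanes crossing $[p_1, p_2]$ that stay on the side of the ball $B$. Turning this into a configuration of transverse hyperplanes around the disc diagram, and then invoking the well-separation bound finitely many times (once per consecutive pair), is the delicate combinatorial step; the forward direction is essentially immediate once one recognises that the failure of well-separation directly produces flat rectangles along $\gamma$.
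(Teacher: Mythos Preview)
The paper does not prove this proposition at all: it is quoted verbatim as \cite[Theorem~3.13]{article3} and used as a black box. There is therefore no ``paper's own proof'' to compare against.

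Since you asked for feedback on the proposal itself, here are the genuine gaps. In the ``only if'' direction you misstate the negation: $L$-well-separated means that every collection of hyperplanes transverse to both $g^{m_k}J$ and $g^{n_k}J$ \emph{and containing no facing triple} has cardinality $\leq L$. Your sets $\mathcal{H}_k$ must be chosen without facing triples, and then your ``extremal along $\gamma$'' choice of $H,H'\in\mathcal{H}_k$ makes no sense, because nothing forces members of $\mathcal{H}_k$ to cross $\gamma$ at all. You also need to argue that $(N(J),N(H),N(J_k),N(H'))$ is genuinely a cycle of convex subcomplexes, which requires $H\cap H'=\emptyset$ (otherwise the cycle has length three and Proposition~\ref{prop:FlatRectangles} does not apply as stated). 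Finally, the claim that the projection of $z_k$ onto $\langle g\rangle\cdot x$ ``sweeps out the whole segment'' is exactly the delicate point: contracting controls projections of balls disjoint from the orbit, not projections of single vertices, so you need to exhibit a large ball disjoint from the orbit whose projection is large.

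In the ``if'' direction you explicitly flag the main obstacle and do not resolve it. The disc-diagram setup you describe is bounded by four subcomplexes, but $[y_1,y_2]$ and the geodesics $[y_i,p_i]$ are not convex, so Proposition~\ref{prop:DiscDiag} gives you nothing about dual curves meeting those sides; only the side along $\gamma$ (or rather its convex hull) is controlled. The standard route (as in the cited reference) is rather to bound the number of hyperplanes separating $p_1,p_2$ by counting how many consecutive pairs $g^nJ,g^{n+1}J$ they sit between and using well-separation on each pair, together with an estimate showing that a geodesic from $y_1$ to $y_2$ must enter a uniform neighbourhood of $\gamma$ once it crosses two of the $g^nJ$. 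As written, the proposal is a plan rather than a proof.
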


\noindent
Recall that, in a CAT(0) cube complex, a \emph{facing triple} is the data of three hyperplanes such that no one separates the other two. Two hyperplanes $J_1,J_2$ are \emph{$L$-well-separated} if they are not transverse and if every collection of hyperplanes transverse to both $J_1,J_2$ with no facing triple has cardinality $\leq L$. If $L=0$, i.e.\ if there is no hyperplane transverse to both $J_1$ and $J_2$, one says that $J_1$ and $J_2$ are \emph{strongly separated}. 

\begin{lemma}\label{lem:ContractingQuasiconvex}\emph{(\cite[Lemmas~2.20 and~2.21]{article3})}
Let $X$ be a CAT(0) cube complex and $g\in \mathrm{Isom}(X)$ a loxodromic isometry. If $g$ is contracting, then, for every axis $\gamma$ of $g$, the Hausdorff distance between $\gamma$ and its convex hull is finite.
\end{lemma}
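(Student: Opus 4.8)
The statement claims that if $g \in \mathrm{Isom}(X)$ is a contracting loxodromic isometry with axis $\gamma$, then $\gamma$ lies at finite Hausdorff distance from its combinatorial convex hull. Let me think about how to prove this.

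The key fact I have available: by Proposition~\ref{prop:ContractingCharacterisation}, there is a constant $L \geq 0$ and a hyperplane $J$ such that $\{g^n J \mid n \in \mathbb{Z}\}$ is a collection of pairwise $L$-well-separated hyperplanes, all crossing $\gamma$.

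Let me call the convex hull $\mathrm{Hull}(\gamma)$. One inclusion is trivial: $\gamma \subseteq \mathrm{Hull}(\gamma)$. I need the other direction: every vertex of $\mathrm{Hull}(\gamma)$ is within bounded distance of $\gamma$. A vertex $v$ lies in $\mathrm{Hull}(\gamma)$ iff for every halfspace $D$, $\gamma \subseteq D$ implies $v \in D$; equivalently, every hyperplane separating $v$ from $\gamma$ must separate two points of $\gamma$, i.e. cross $\gamma$. So I need to bound the number of hyperplanes crossing $\gamma$ and separating $v$ from $\gamma$ (more precisely, from the gate/projection of $v$ onto $\gamma$).

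Here is the strategy. Fix $v \in \mathrm{Hull}(\gamma)$ and let $p$ be the combinatorial projection (gate) of $v$ onto $\gamma$ (using that $\gamma$, being the axis, or at least its convex hull, is convex — one works with the gate onto $\mathrm{Hull}(\gamma)$ and then onto $\gamma$). Every hyperplane $H$ separating $v$ from $p$ crosses $\gamma$, by the hull condition and the gate property. Now I want to show the set $\mathcal{S}$ of such hyperplanes is bounded in number by a constant depending only on $L$ and $\dim(X)$, since then $d(v,p) = |\mathcal{S}|$ is bounded. The point is that $g$-translates $\{g^n J\}$ partition $\gamma$ into bounded-length segments (bounded by $\|g\| \cdot \dim(X)!$ or so, using that consecutive well-separated hyperplanes are at bounded distance — actually the translates are cofinal so the distance between $J$ and $gJ$ along $\gamma$ is exactly $\|g\|$), and between any two ``far apart'' translates $g^m J, g^n J$ there is a well-separated pair. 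Any hyperplane $H \in \mathcal{S}$ that crosses $\gamma$ on the segment of $\gamma$ far to one side of $v$'s projection region must be transverse to both $g^m J$ and $g^n J$ for a well-separated pair — but also all such hyperplanes together with the structure give a large collection transverse to a well-separated pair with no facing triple, contradicting $L$-well-separatedness once $|\mathcal{S}|$ is large. So $\mathcal{S}$ is finite with bound depending on $L$, $\dim(X)$, $\|g\|$.

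The main obstacle I anticipate is the bookkeeping to extract, from a large collection $\mathcal{S}$ of hyperplanes crossing $\gamma$ and separating $v$ from $\gamma$, a subcollection with no facing triple that is transverse to a single well-separated pair $g^mJ, g^nJ$. The hyperplanes in $\mathcal{S}$ cross $\gamma$, so they are ordered along $\gamma$; if many of them cross $\gamma$ within a single ``gap'' between consecutive translates $g^kJ, g^{k+1}J$, they form a chain of bounded length (bounded by the distance between the two translates along $\gamma$, which is at most $\|g\|$), so there are at most $\|g\|$ of them per gap. If $\mathcal{S}$ spills over many gaps, say more than $N$ of them, then a hyperplane crossing $\gamma$ in the leftmost gap and one in the rightmost gap are both transverse to $g^mJ$ and $g^nJ$ for some translates strictly in between — but these alone don't give a large no-facing-triple family. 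The right move is: take $\mathcal{S}$ restricted to a single large block of consecutive gaps, all these hyperplanes are transverse to the well-separated pair bounding the block; among hyperplanes all crossing a common geodesic segment, ``no facing triple'' is automatic for any nested chain, and a set of hyperplanes crossing a segment contains a nested chain of size at least $|\mathcal{S}|/(\text{width})$... actually hyperplanes crossing a common geodesic are automatically pairwise non-facing-as-a-triple in a controlled way since they're linearly ordered by the segment they cut — the cleanest statement is that a collection of hyperplanes each crossing a fixed geodesic, all transverse to a well-separated pair, has size $\leq L$ provided no three form a facing triple, and hyperplanes crossing a geodesic and all transverse to a fixed hyperplane $g^mJ$ have no facing triple among those nested in a consistent way. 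I would reduce to this by passing to a sub-chain. Thus the total count is $O(L \cdot \|g\| \cdot \dim(X)!)$, giving the uniform Hausdorff bound and completing the proof. (For the precise combinatorial lemma I would cite \cite[Lemmas~2.20 and~2.21]{article3} as the excerpt does, since this lemma is stated with that attribution.)
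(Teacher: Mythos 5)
This lemma is stated in the paper purely as a citation to \cite[Lemmas~2.20 and~2.21]{article3}; the present paper does not contain its own proof. So there is nothing to compare your sketch against within this paper, and I can only assess your plan on its own terms.

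Your overall ingredients are the natural ones: a vertex $v$ in the combinatorial convex hull of $\gamma$ has the property that every hyperplane separating $v$ from a closest point $p\in\gamma$ must cross $\gamma$; one then tries to bound the number of such hyperplanes using the well-separated family $\{g^nJ\}$ from Proposition~\ref{prop:ContractingCharacterisation} and a Dilworth-type extraction of a nested chain to avoid facing triples. These are reasonable starting points.

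However, there is a genuine gap, not mere bookkeeping, in the crucial step where you claim that the hyperplanes of $\mathcal{S}$ lying in a ``block of consecutive gaps'' are \emph{transverse to the well-separated pair $g^mJ,g^nJ$ bounding the block}. Fix $p=\gamma(0)$ and let $H\in\mathcal{S}$ cross $\gamma$ at position $t>0$. If $K$ is a hyperplane crossing $\gamma$ at a position $s$ with $0<s<t$ and $K$ does \emph{not} separate $v$ from $p$, then one can indeed verify that $H$ and $K$ are transverse (the four corners $H^{\pm}\cap K^{\pm}$ are witnessed by $\gamma(0)$, $v$, and two points of $\gamma$). But the symmetric statement for a translate $K$ crossing at $s>t$ \emph{fails}: the quadrant $H^-\cap K^+$ has no witness among $v$ and $\gamma$, and transversality is not forced. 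So your ``block'' argument only produces transversality to translates lying strictly between $p$ and $H$ that are themselves outside $\mathcal{S}$ — never to a translate beyond $H$. Moreover the translates $g^nJ$ that \emph{do} separate $v$ from $p$ (which may be many, since each contributes to the quantity you are trying to bound, making a direct count circular) give you nothing. You would then need to control how many translates near $p$ lie in $\mathcal{S}$, and the natural density estimate ($|\mathcal{S}\cap\{1,\dots,t\}|\le t/2$, coming from minimality of $p$) is too weak to bound this when $\|g\|>2$. In short, identifying a well-separated \emph{pair} that a long chain in $\mathcal{S}$ is transverse to requires a further idea that your sketch does not supply.

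A secondary point: your remark that ``hyperplanes crossing a common geodesic are automatically pairwise non-facing-as-a-triple'' is false as stated — three pairwise transverse hyperplanes crossing a geodesic (e.g.\ in a $3$-cube) form a facing triple. You do correct course by restricting to a nested chain, which does have no facing triple, and Dilworth does give a chain of size $\ge |\mathcal{S}|/\dim X$; but that only helps once the transversality issue above is resolved, which it is not. I would not expect this to be fixable by citing the attribution alone; if you want a self-contained argument you will need, at minimum, an additional structural observation (for instance, that the hyperplanes of $\mathcal{S}$ crossing $\gamma$ on opposite sides of $p$ are pairwise transverse, so at most one side can contain any of the pairwise-disjoint translates $g^nJ$) and even then the bound on the other side does not follow from what you have written.
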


\noindent
In the next statement, we refer to two contracting isometries as \emph{independent} if the intersection between any two neighbourhoods of any two of their axes is always bounded.

\begin{lemma}\label{lem:ContractingInfinity}
Let $X$ be a finite-dimensional CAT(0) cube complex and $g_1,g_2 \in \mathrm{Isom}(X)$ two loxodromic isometries. If $g_1,g_2$ are contracting and independent, then the points at infinity of axes of $g_1,g_2$ belong to distinct components of $\overline{X}$.
\end{lemma}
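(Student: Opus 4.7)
The plan is to argue by contradiction. First, observe that the two endpoints of a single axis of a loxodromic isometry always lie in distinct Roller components, since they are separated by infinitely many hyperplanes (all those crossed by the axis). Thus, it suffices to show that an endpoint of $\gamma_1$ and an endpoint of $\gamma_2$ cannot share a Roller component. Suppose for contradiction that they do; up to replacing $g_1$ or $g_2$ by its inverse, we may assume that the attracting fixed points $\xi_1, \xi_2$ of $g_1, g_2$ both lie in the same component $Y \subset \mathfrak{R}X$. In particular, $\xi_1$ and $\xi_2$ differ at only finitely many hyperplanes.

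Applying Proposition~\ref{prop:ContractingCharacterisation} to $g_1$, I fix a hyperplane $A$ crossed by $\gamma_1$ and a constant $L \geq 0$ such that $\{g_1^n A : n \in \mathbb{Z}\}$ is pairwise $L$-well-separated, and I orient $A$ so that its halfspace $A^+$ contains $\xi_1$. Then $(g_1^n A^+)_{n \geq 0}$ forms a nested decreasing sequence of halfspaces whose intersection in $\overline{X}$ contains $\xi_1$, and because only finitely many hyperplanes separate $\xi_1$ from $\xi_2$, there exists $N$ such that $\xi_2 \in g_1^n A^+$ for all $n \geq N$.

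Next, fix a basepoint $x_0 \in \gamma_2$. Since $g_2^m x_0$ converges to $\xi_2$ in the Roller topology, for each $n \geq N$ there is $m_n$ with $g_2^m x_0 \in g_1^n A^+$ for all $m \geq m_n$; choosing $n$ large enough so that $x_0 \notin g_1^n A^+$, the hyperplane $g_1^n A$ separates $x_0$ from $g_2^{m_n} x_0$, and hence is crossed by the subsegment of $\gamma_2$ joining these two points. Consequently $\gamma_2$ crosses $g_1^n A$ for all but finitely many $n$. Using Lemma~\ref{lem:ContractingQuasiconvex}, both axes lie within bounded neighborhoods of their convex hulls, so the crossings $g_1^n A \cap \gamma_2$ project onto $\gamma_1$ within bounded distance of the crossings $g_1^n A \cap \gamma_1$, which march off along $\gamma_1$ toward $\xi_1$. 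Hence the projection of $\gamma_2$ onto $\gamma_1$ is unbounded in the direction of $\xi_1$.

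To close the argument, I invoke the standard fact that, for the contracting quasi-axis $\gamma_1$, any quasi-geodesic with unbounded projection onto $\gamma_1$ must fellow-travel $\gamma_1$ on an unbounded ray; hence $\gamma_2$ must share its visual-boundary endpoint with $\gamma_1$ in the direction of $\xi_1$. This means $g_2$ fixes the corresponding visual-boundary endpoint of $g_1$, and the classical rank-one dichotomy then forces $g_1^p = g_2^q$ for some nonzero integers $p, q$, contradicting the independence of $g_1$ and $g_2$. The main obstacle is this last step: translating the combinatorial statement ``$\gamma_2$ crosses infinitely many of the $g_1^n A$'' into ``$g_1$ and $g_2$ share a power'' requires invoking the Morse/fellow-travel machinery for contracting quasi-geodesics, which is standard in the rank-one literature but must be cited carefully from the references used earlier in the paper.
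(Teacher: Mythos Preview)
Your approach is correct but takes a longer and heavier route than the paper's, and the final step is an unnecessary detour.

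The paper does not track the second axis at all. Instead, it proves the stronger intrinsic fact that, for \emph{any} contracting isometry $g$, the Roller component $Y$ containing an endpoint of its axis is bounded. The argument is a short contradiction using only the definition of $L$-well-separation: if $Y$ were unbounded one could find a chain $D_1\supsetneq\cdots\supsetneq D_{L+1}$ of halfspaces of $Y$ (hence of $X$); picking vertices $x\notin D_1$ and $y\in D_{L+1}$ in $X$, the well-separated hyperplanes $J_k,J_{k+1}$ eventually separate both $x,y$ from $Y$, which forces the $L+1$ hyperplanes bounding the $D_i$ (a nested family, so no facing triple) to be transverse to both $J_k$ and $J_{k+1}$. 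This is self-contained and yields a statement about a single contracting isometry. (The paper's ``it suffices'' is admittedly terse; to actually deduce the lemma from boundedness of $Y$ one still needs an observation like yours---that $\gamma_2$ must then cross infinitely many $J_n$---so your work is not wasted.)

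Your argument, by contrast, works directly with the pair $(\gamma_1,\gamma_2)$ and leans on the Morse/contracting machinery for quasi-geodesics. That is legitimate, but you overshoot at the end. Once you have established that $\gamma_2$ crosses $g_1^nA$ for all large $n$ and that the crossing points have unbounded nearest-point projection to $\gamma_1$, the contracting property of $\gamma_1$ already tells you that $\gamma_2$ meets a fixed neighbourhood $N_C(\gamma_1)$ in an unbounded set. This is \emph{exactly} the negation of independence as defined in the paper (``the intersection between any two neighbourhoods of any two of its axes is always bounded''), and you should stop there. Passing to the visual boundary, invoking a rank-one dichotomy, and concluding $g_1^p=g_2^q$ is both unnecessary and requires additional hypotheses (such as properness or cocompactness) that are not available here.
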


\begin{proof}
It suffices to show that, if $g \in \mathrm{Isom}(X)$ is a contracting isometry admitting a  combinatorial  axis $\gamma$, then the component $Y$ of $\mathfrak{R}X$ containing $\gamma(+ \infty)$ is bounded. As a consequence of Proposition~\ref{prop:ContractingCharacterisation}, there exist hyperplanes $\ldots, J_{-1},J_0,J_1,\ldots$ crossing $\gamma$ in that order that are pairwise $L$-well-separated for some $L \geq 0$. If $Y$ is unbounded, then there exist a decreasing sequence $D_1 \supset \cdots \supset D_{L+1}$ of halfspaces in $Y$ (which we also identify with a decreasing sequence of halfspaces in $X$). Fix two vertices $x,y \in X$ such that $x \notin D_1$ and $y \in D_{L+1}$. There exists some $k \in \mathbb{Z}$ such that $J_i$ separates $x,y$ from $Y$ for every $i \geq k$. Consequently, the hyperplanes delimiting $D_1, \ldots, D_{L+1}$ are all transverse to both $J_k$ and $J_{k+1}$, contradicting the fac that $J_k$ and $J_{k+1}$ are $L$-well-separated. We conclude that $Y$ has to be bounded.
\end{proof}

\noindent
 Despite the fact that our next statement holds for more general metric spaces, we restrict ourself to CAT(0) cube complexes in order to shorten the argument.

\begin{lemma}\label{lem:PurelyContractingFree}
Let $X$ be a CAT(0) cube complex. If $a,b \in \mathrm{Isom}(X)$ are two independent contracting isometries, then there exists $k \geq 1$ such that $\langle a^k,b^k \rangle$ is free of rank two and \emph{purely contracting} (i.e.\ every non-trivialement element of $\langle a^k,b^k \rangle$ is a contracting isometry). Moreover, for all non-trivial $g,h \in \langle a^k,b^k\rangle$, if $\langle g \rangle \cap \langle h \rangle= \{1\}$ then $g$ and $h$ are independent isometries.
\end{lemma}

\begin{proof}
According to Proposition~\ref{prop:ContractingCharacterisation}, there exist a constant $L \geq 0$ and two hyperplanes $J,H$ such that $\{a^n J \mid n \in \mathbb{Z} \}$ and $\{b^n J \mid n \in \mathbb{Z}\}$ are two collections of $L$-well-separated hyperplanes. Fix two axes $\alpha,\beta$ of $a,b$ respectively. Observe that:

\begin{claim}\label{claim:FewCommonHyp}
Only finitely many $\langle a \rangle$-translates of $J$ crosses $\beta$. Similarly, only finitely many $\langle b \rangle$-translates of $H$ crosses $\alpha$.
\end{claim}

\noindent
The two assertions are symmetric, so we only prove the first one. Because $a$ and $b$ are independent, it suffices to show that, if $a^iJ$ and $a^{i+1}J$ both cross $\beta$ for some $i \in \mathbb{Z}$, then $d(x,y) \leq 2L + 3 \|a\|$ for all $x \in\alpha$ and $y \in \beta$ lying between $a^iJ$ and $a^{i+1}J$. Indeed, because $a^iJ$ and $a^{i-1}J$ are $L$-well-separated, at most $L$ hyperplanes separating $x$ and $y$ are transverse to $a^{i-1}J$. Similarly, at most $L$ hyperplanes separating $x$ and $y$ are transverse to $a^{i+2}J$. Therefore, among the hyperplanes separating $x$ and $y$, all but at most $2L$ intersect the subsegment of $\alpha$ delimited by $a^{i-1}J$ and $a^{i+2}J$. Since this subsegment has length $3 \|a\|$, we conclude that $d(x,y) \leq 2L+3\|a\|$, as desired.

\medskip \noindent
As a consequence of Claim~\ref{claim:FewCommonHyp}, up to replacing $J$ with an $\langle a \rangle$-translate and $a$ with one of its powers, we can assume that $J$, $aJ$, $H$, and $bH$ define a facing quadruple, i.e.\ none of these hyperplanes separates two of the others. Let $J^+$ denote the halfspace delimited by $J$ that contains $H$ and $bH$, and $H^+$ the halfspace delimited by $H$ that contains $J$ and $aJ$. For all $D \geq 0$, we have
$$a^D(aJ^+ \cup H^- \cup bH^+) \subset a^D J^+ \text{ and } a^{-D}(J^- \cup H^- \cup bH^+) \subset a^{-D-1}J^-,$$
and similarly
$$b^D(bH^+ \cup J^- \cup aJ^+) \subset b^D H^+ \text{ and } b^{-D}( H^- \cup J^- \cup aJ^+) \subset b^{-D-1}H^-.$$
We are in good position to play ping pong, but we have to use a different metric. For all vertices $x,y \in X$, set
$$\delta_L(x,y) := \text{maximal number of pairwise $L$-well-separated hyperplanes separating $x$ and $y$}.$$
In \cite[Section~6.6]{MR4057355}, we proved that $\delta_L$ defines a new quasi-geodesic metric on (the vertices of) $X$ such that every isometry of $X$ with unbounded orbits in $(X,\delta_L)$ must be contracting. With respect to the metric $\delta_L$, the inclusions above imply that, for every $D_0 \leq D-1$, we have
$$a^D(aJ^+ \cup H^- \cup bH^+)^{+D_0} \subset a J^+ \text{ and } a^{-D}(J^- \cup H^- \cup bH^+)^{+D_0} \subset J^-,$$
and similarly
$$b^D(bH^+ \cup J^- \cup aJ^+)^{+D_0} \subset b H^+ \text{ and } b^{-D}( H^- \cup J^- \cup aJ^+)^{+D_0} \subset H^-.$$
Then, choosing $D_0$ sufficiently large so that $(X,\delta_L)$ is $D_0$-quasi-geodesic, the quasi-isometric ping-pong lemma \cite[Lemma~2.4]{MR2390351} applies and shows that $\langle a^D,b^D \rangle$ is a free group of rank two and that its orbits are quasi-isometrically embedded in $(X,\delta_L)$. 

\medskip \noindent
Consequently, every non-trivial element of $\langle a^D,b^D \rangle$ has unbounded orbits with respect to $\delta_L$, and consequently must be contracting. And, for all non-trivial elements $g,h \in \langle a^D,b^D \rangle$, if the intersection between two neighbourhoods of two axes of $g$ and $h$ is unbounded in $X$, then it must be unbounded in $(X,\delta_L)$, so $g$ and $h$ must have equal non-trivial powers. 
\end{proof}

\paragraph{Caprace-Sageev machinery.} Finally, we record for future use a couple of consequences of results from \cite{MR2827012}.

\begin{prop}\label{prop:Essential}
Let $G$ be a group acting on a finite-dimensional CAT(0) cube complex $X$. Assume that one of the following conditions is satisfied:
\begin{itemize}
	\item $X$ contains only finitely many $G$-orbits of hyperplanes;
	\item $G$ does not stabilise a component in $\mathfrak{R}X$.
\end{itemize}
Then there exists a convex subcomplex $Y \subset X$ on which $G$ acts essentially.
\end{prop}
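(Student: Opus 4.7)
The plan is to construct an \emph{essential core} inside $X$, following the methodology of Caprace--Sageev \cite{MR2827012}. Fix a basepoint $x_0 \in X^{(0)}$, and call a halfspace $\mathfrak{h}$ of $X$ \emph{$G$-shallow} if $\sup\{d(gx_0, \hat{\mathfrak{h}}) : g \in G,\ gx_0 \in \mathfrak{h}\} < \infty$, where $\hat{\mathfrak{h}}$ is the hyperplane bounding $\mathfrak{h}$, and \emph{$G$-deep} otherwise (a condition which is independent of $x_0$ up to a bounded additive error). Call a hyperplane \emph{$G$-essential} when both halfspaces it bounds are $G$-deep. The target is to produce a nonempty convex $G$-invariant subcomplex $Y \subseteq X$ whose crossing hyperplanes are exactly the $G$-essential ones; essentiality of the induced $G$-action on $Y$ is then automatic by definition.

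Under hypothesis (i), I would apply the standard procedure of \cite[Proposition~3.5]{MR2827012}: iteratively trim off $G$-orbits of $G$-shallow halfspaces (processing each entire orbit at once to preserve $G$-invariance). The finiteness of $G$-orbits of hyperplanes guarantees that this procedure terminates after finitely many steps, producing a nonempty essential core $Y$ containing the orbit $G \cdot x_0$.

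Under hypothesis (ii), I would first establish, via Zorn's lemma, the existence of a minimal nonempty convex $G$-invariant subcomplex of $X$. The non-trivial point is that every descending chain $Y_1 \supseteq Y_2 \supseteq \cdots$ of nonempty convex $G$-invariant subcomplexes has nonempty intersection. If not, one extracts a decreasing sequence of halfspaces $\mathfrak{h}_1 \supsetneq \mathfrak{h}_2 \supsetneq \cdots$ with $Y_n \subseteq \mathfrak{h}_n$ and $\bigcap_n \mathfrak{h}_n \cap X = \emptyset$. This sequence determines an orientation $\sigma$ lying in $\mathfrak{R}X$, and the $G$-invariance of the $Y_n$ forces the $\overline{X}$-component of $\sigma$ to be $G$-invariant (any $g \in G$ preserves each $Y_n$, hence preserves the ``escape direction'' they determine modulo finitely many hyperplanes), contradicting the hypothesis. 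Given a minimal $Y$, essentiality of the restricted $G$-action follows from minimality: if some halfspace $\mathfrak{h}$ of $Y$ were $G$-shallow, one would construct a strictly smaller nonempty convex $G$-invariant subcomplex by intersecting $Y$ with the $G$-translates of a suitably trimmed complementary halfspace, contradicting the minimality of $Y$.

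The main obstacle is the argument under hypothesis (ii): turning the absence of a minimal core into a $G$-stable Roller-boundary component. The technical heart lies in carefully relating the descending chain of halfspaces $(\mathfrak{h}_n)$ to the component structure of $\mathfrak{R}X$, showing that all $G$-translates of the limiting orientation $\sigma$ lie in the same component --- this is precisely what $G$-invariance of the chain $(Y_n)$ translates into at the boundary.
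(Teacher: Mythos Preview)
Under hypothesis~(i) you and the paper agree: both invoke \cite[Proposition~3.5]{MR2827012}. Under hypothesis~(ii) the approaches diverge. The paper does not run a Zorn's-lemma argument at all; it reduces immediately to Caprace--Sageev by proving the single claim: \emph{if $G$ fixes a point in the visual (CAT(0)) boundary of $X$, then $G$ stabilises a component of $\mathfrak{R}X$}. Concretely, if $r$ is a CAT(0) ray to a fixed boundary point, the map $\sigma_r$ sending each hyperplane to the halfspace eventually containing $r$ is an orientation; since $r$ and $gr$ are at finite Hausdorff distance for every $g\in G$, the orientations $\sigma_r$ and $g\sigma_r=\sigma_{gr}$ differ on only finitely many hyperplanes and hence lie in the same Roller component. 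By contraposition, hypothesis~(ii) gives no fixed point at infinity, and \cite[Proposition~3.5]{MR2827012} applies verbatim.

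Your direct route has a genuine gap at precisely the step you flag as the ``technical heart''. A strictly decreasing sequence of halfspaces $(\mathfrak{h}_n)$ does not by itself determine an orientation $\sigma$: you must specify $\sigma(H)$ for \emph{every} hyperplane $H$, including those transverse to all the $\mathfrak{h}_n$, and this involves a choice. More seriously, the assertion that ``$g$ preserves each $Y_n$, hence preserves the escape direction modulo finitely many hyperplanes'' is unjustified: $g$ permutes the halfspaces containing $Y_n$, but it need not send your particular extracted chain $(\mathfrak{h}_n)$ anywhere near itself, and a different extraction can land in a different Roller component. One clean repair is to use the gates $p_n:=\mathrm{proj}_{Y_n}(x_0)$ instead: since projections to convex subcomplexes are $1$-Lipschitz and $gY_n=Y_n$, one has $d(p_n,gp_n)\le d(x_0,gx_0)$ for all $n$, so any subsequential Roller limit of the $p_n$ has its entire $G$-orbit in a single component. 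But notice that this is essentially the paper's visual-boundary argument in combinatorial clothing, and once you have it the Zorn framework becomes superfluous: you have produced the stabilised Roller component directly, without ever needing a minimal subcomplex.
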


\noindent
Recall that an action on a CAT(0) cube complex is \emph{essential} if no orbit lies in a neighbourhood of some halfspace.

\begin{proof}[Proof of Proposition~\ref{prop:Essential}.]
Our proposition is a direct consequence of \cite[Proposition~3.5]{MR2827012} and the following observation:

\begin{claim}\label{claim:VisualBoundary}
If $G$ fixes a point in the visual boundary of $X$, then $G$ stabilises a component in $\mathfrak{R}X$.
\end{claim}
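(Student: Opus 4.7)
The plan is to associate to any boundary fixed point $\xi \in \partial_\infty X$ a specific component $Y_\xi \subset \mathfrak{R}X$ in a $G$-equivariant way. First I would fix a basepoint $x_0 \in X$ and take the unique CAT(0) geodesic ray $\rho \colon [0,\infty) \to X$ with $\rho(0)=x_0$ and $\rho(\infty)=\xi$. For every hyperplane $J$ of $X$, the preimage $\rho^{-1}(J)$ is a closed interval, since $J$ is a closed convex subspace and CAT(0) geodesics meet convex sets in connected sets. Hence $\rho$ eventually lies in exactly one halfspace $\sigma_\rho(J)$ bounded by $J$. The assignment $\sigma_\rho$ is an orientation of $X$, because any two such halfspaces both contain a common tail of $\rho$ and therefore intersect. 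Because $\rho$ is unbounded and crosses infinitely many hyperplanes, $\sigma_\rho$ is not principal and defines a point of $\mathfrak{R}X$; let $Y_\xi$ denote the component of $\overline{X}$ containing it.

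The heart of the argument is to check that $Y_\xi$ depends only on $\xi$, not on $x_0$ or the choice of ray. To this end I would compare $\sigma_\rho$ with the orientation $\sigma_{\rho'}$ arising from a ray $\rho'$ toward $\xi$ issued from another basepoint $x_0'$. By convexity of the distance function along a pair of asymptotic CAT(0) rays, $d(\rho(t),\rho'(t))$ is bounded above by $D := d(x_0,x_0')$ for all $t \geq 0$. A hyperplane $J$ is oriented differently by $\sigma_\rho$ and $\sigma_{\rho'}$ precisely when $J$ separates $\rho(t)$ from $\rho'(t)$ for every sufficiently large $t$. Because $X$ is finite-dimensional, the set $S_t$ of hyperplanes separating $\rho(t)$ from $\rho'(t)$ has cardinality bounded by a constant $N$ depending only on $D$ and $\dim(X)$. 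A pigeonhole argument then forces $\liminf_t S_t$ to have cardinality at most $N$: if it contained $N+1$ elements, each with an entry time after which it lies in $S_t$, then at any sufficiently large common time $t$ one would have $|S_t|\geq N+1$, a contradiction. Thus $\sigma_\rho$ and $\sigma_{\rho'}$ differ on only finitely many hyperplanes and belong to the same component of $\overline{X}$.

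With independence of the basepoint established, the conclusion is immediate. For any $g \in G$ with $g\xi=\xi$, the translate $g\rho$ is a CAT(0) ray from $gx_0$ to $\xi$, and its associated orientation is $g \cdot \sigma_\rho$; by the previous step, this orientation lies in the same component as $\sigma_\rho$. Hence $g \cdot Y_\xi = Y_\xi$, so $G$ stabilises $Y_\xi$, as required.

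The main obstacle I anticipate is justifying the uniform bound on $|S_t|$: one needs the fact that, in a finite-dimensional CAT(0) cube complex, the number of hyperplanes separating two points at bounded CAT(0) distance is itself bounded in terms of that distance and the dimension. This is standard but not entirely trivial, and ultimately reduces to the observation that within each cube a CAT(0) segment crosses at most $\dim(X)$ hyperplanes along a length comparable to the number of hyperplanes crossed, so that a CAT(0) segment of length $L$ crosses at most a linear function of $L$ hyperplanes. Once this quantitative input is in place, the rest of the argument is a clean transfer between the visual and Roller compactifications.
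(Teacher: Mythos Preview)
Your argument is correct and follows essentially the same route as the paper: construct the orientation $\sigma_\rho$ from a representative ray, then use that asymptotic rays stay at bounded CAT(0) distance to conclude the orientations differ on only finitely many hyperplanes, hence lie in the same Roller component. The paper's proof is a terse version of exactly this, simply asserting that finite Hausdorff distance between $r$ and $gr$ forces $\sigma_r$ and $\sigma_{gr}$ to differ on finitely many hyperplanes; your pigeonhole argument on the sets $S_t$ and the bound on $|S_t|$ via the bi-Lipschitz equivalence of the CAT(0) and combinatorial metrics in finite dimension supply the justification the paper omits. One very minor quibble: a CAT(0) ray can remain inside a hyperplane forever (e.g.\ a horizontal ray at height $1/2$ in $\mathbb{R}^2$), so ``exactly one halfspace'' is not literally true; but in that degenerate case either choice of halfspace works and the rest of the argument is unaffected.
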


\noindent
Let $r$ be a CAT(0) ray representing a point in the visual boundary fixed by $G$. The map $\sigma_r$ that sends each hyperplane to its halfspace containing eventually $r$ defines an orientation. Given a $g \in G$, we know that the Hausdorff distance with respect to the CAT(0) metric between $r$ and $gr$ is finite, so there may exist only finitely many hyperplanes on which $\sigma_r$ and $\sigma_{gr}$ differ. In other words, $\sigma_r$ and $\sigma_{gr}$ belongs to the same component. We conclude that the component of $\mathfrak{R}X$ containing $\sigma_r$ is stabilised by $G$. 
\end{proof}

\begin{prop}\label{prop:ForCriterion}
Let $G$ be a group acting essentially on an irreducible CAT(0) cube complex $X$ of finite dimension. Assume that $\overline{X}$ has infinitely many components and that
\begin{itemize}
	\item either $X$ is locally finite and $G$ acts cocompactly;
	\item or $G$ does not stabilise a component of $\mathfrak{R}X$.
\end{itemize}
Then there exist two elements $a,b \in G$ and a facing triple of three pairwise strongly separated hyperplanes $A,B,J$ such that $aJ^+ \subset A^+$ and $bJ^+ \subset B^+$ for some halfspaces $A^+,B^+,J^+$ respectively delimited by $A,B,J$ satisfying $A^+,B^+ \subset J^+$ and $A^+ \cap B^+ = \emptyset$. 
\end{prop}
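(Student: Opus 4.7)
The strategy is to exhibit a facing triple of pairwise strongly separated hyperplanes in the required nested configuration, and then apply the Caprace-Sageev double skewering lemma from \cite{MR2827012} to obtain the elements $a,b$.

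First, I would establish that $X$ contains a pair of strongly separated hyperplanes. Under hypothesis (a), this follows from Caprace-Sageev's rank-one theorem: an essential cocompact action on a locally finite, irreducible, finite-dimensional CAT(0) cube complex produces strongly separated hyperplanes unless $X$ is a Euclidean flat, and the latter possibility is ruled out because the Roller completion of a Euclidean cube complex has only finitely many components, contradicting our hypothesis on $\overline{X}$. Under hypothesis (b), Claim~\ref{claim:VisualBoundary} shows that $G$ has no fixed point in the visual boundary of $X$, and a variant of the same Caprace-Sageev machinery applies, with the non-stabilisation of a Roller component playing the role of ``no fixed point at infinity'' that cocompactness provides in the other case.

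Second, I would upgrade this pair into a facing triple of pairwise strongly separated hyperplanes $A,B,J$ with $A^+,B^+ \subset J^+$ and $A^+ \cap B^+ = \emptyset$, by iterating the flipping lemma and the double skewering lemma of \cite{MR2827012}. Concretely, starting from a strongly separated pair $H_1,H_2$ with $H_1^+ \supsetneq H_2^+$, one produces $G$-translates of $H_1,H_2$ whose halfspaces are nested inside suitable disjoint halfspaces of $H_1,H_2$; since strong separation is preserved by isometries of $X$, translates of a strongly separated pair remain strongly separated, and one then extracts from this family three hyperplanes realising the required nesting. This extraction step is the main obstacle of the proof: one must do the combinatorial bookkeeping carefully so that all three pairs remain strongly separated simultaneously.

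Third, the elements $a,b$ follow directly from the double skewering lemma applied to the nested pairs $(J,A)$ and $(J,B)$. Since $A^+ \subsetneq J^+$ strictly (because strongly separated hyperplanes are in particular disjoint and distinct), the double skewering lemma yields an element $a \in G$ with $aJ^+ \subsetneq A^+$; symmetrically, some $b \in G$ satisfies $bJ^+ \subsetneq B^+$. Combined with the facing triple configuration, this is exactly the conclusion of the proposition.
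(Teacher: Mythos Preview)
Your overall strategy matches the paper's: produce a facing triple of pairwise strongly separated hyperplanes, then obtain $a,b$ by double skewering. Step~3 is exactly what the paper does (the paper phrases it as ``every hyperplane is skewered,'' but what is being invoked is the double-skewering lemma of \cite{MR2827012}).

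The substantive difference is in how the facing triple of strongly separated hyperplanes is obtained, and in how the hypothesis that $\overline{X}$ has infinitely many components is used. You invoke that hypothesis only to exclude the Euclidean case in hypothesis~(a). The paper uses it much more centrally: via the machinery of \emph{directions} from Section~\ref{section:Horo} (specifically Lemma~\ref{lem:FinitelyManyDirections}, Corollary~\ref{cor:NestedChain}, and Fact~\ref{fact:DirectionTrichotomy}), it shows directly that infinitely many Roller components force a facing triple in $X$ (Claim~\ref{claim:FacingTriple}). This facing triple, combined with skewering, immediately rules out the quasi-line case. From there, the paper does \emph{not} attempt your ``combinatorial bookkeeping'' in step~2. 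Under hypothesis~(b) it cites \cite[Lemmas~2.2 and~2.3]{PingPong} to obtain a facing triple of strongly separated hyperplanes outright. Under hypothesis~(a) it appeals to \cite[Corollary~4.9]{MR2827012}, which reduces the problem to the case where all hyperplanes are uniformly bounded; in that residual case strong separation comes for free, since any two sufficiently distant hyperplanes are strongly separated, and one simply pushes the facing triple already found deep enough into its three halfspaces.

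Your route through step~2 (start from a strongly separated pair, then manufacture a facing triple by flipping and double skewering) can be made to work, but you should be aware that the flipping lemma requires no fixed point in the visual boundary, and under hypothesis~(a) alone this is not automatic --- the paper sidesteps this by the bounded-hyperplane argument rather than flipping. So the bookkeeping you flag as the main obstacle is real, and the paper's approach of citing \cite{PingPong} and splitting case~(a) on hyperplane boundedness is what buys a clean proof without it.
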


\begin{proof}
First of all, notice that $X$ contains a facing triple. This follows from the general observation:

\begin{claim}\label{claim:FacingTriple}
Let $X$ be a finite-dimensional CAT(0) cube complex. If $\overline{X}$ contains infinitely many components, then $X$ contains a facing triple.
\end{claim}

\noindent
In order to prove our claim, we anticipate on the results proved in Section~\ref{section:Horo}. Since none of the results proved in Section~\ref{section:Horo} are used in Section~\ref{section:Structure}, this does not create difficulty.

\medskip \noindent
According to Lemma~\ref{lem:FinitelyManyDirections}, up to equivalence, there exist only finitely many directions pointing to a given component, so, if $\mathfrak{R}X$ contains infinitely many components, there must exist an infinite collection $\mathscr{D}$ of pairwise non-equivalent directions. Because there cannot exist more than $\dim(X)$ pairwise transverse directions, it follows from Ramsey theorem that $\mathscr{D}$ contains an infinite subcollections $\mathscr{D}'$ of pairwise non-transverse directions. Because any $\sqsubset$-chain has length at most $\dim(X)$ according to Corollary~\ref{cor:NestedChain}, it follows from Fact~\ref{fact:DirectionTrichotomy} that $\mathscr{D}'$ contains three (in fact, infinitely many) pairwise independent directions, hence three pairwise disjoint halfspaces, i.e.\ a facing triple. This completes the proof of our claim.

\medskip \noindent
Because every hyperplane of $X$ is skewered by an element of $G$ according to \cite[Theorem~6.3]{MR2827012}, Claim~\ref{claim:FacingTriple} implies in particular that $X$ is not a quasi-line. Therefore, if we assume that $G$ does not stabilise a component in $\mathfrak{R}X$, then \cite[Lemmas~2.2 and~2.3]{PingPong} apply (thanks to Claim~\ref{claim:VisualBoundary}), proving that $X$ contains a facing triple of pairwise strongly separated hyperplanes. 
As already said, all the hyperplanes in $X$ are skewered by elements of $G$, providing the desired conclusion.

\medskip \noindent
Now, assume that $X$ is locally finite and that $G$ acts cocompactly. As a consequence of \cite[Corollary~4.9]{MR2827012}, the only case that remains to cover is when all the hyperplanes of $X$ are bounded (in fact, uniformly bounded since there exist only finitely many orbits of hyperplanes). Fix three hyperplanes in the three pairwise disjoint halfspaces delimited by a facing triple. Because $G$ acts essentially on $X$, we can choose these hyperplanes arbitrarily far from each other. If the distance between any two of these hyperplanes is larger than the maximal diameter of a hyperplane in $X$, then we get a facing triple of pairwise strongly separated hyperplanes. But, once again, we know from \cite[Theorem~6.3]{MR2827012} that all the hyperplanes in $X$ are skewered by elements in $G$, providing the desired conclusion.
\end{proof}

\subsection{Proof of the Structure Theorem}\label{section:Structure}

\begin{proof}[Proof of Theorem~\ref{thm:Structure}.]
Up to extracting a convex subcomplex, we assume without loss of generality that $G$ acts essentially on $X$ (see Proposition~\ref{prop:Essential}). Decompose $X$ as a product $L_1 \times \cdots \times L_m \times X_1 \times \cdots \times X_n$ of irreducible subcomplexes where $L_1, \ldots, L_m$ correspond to the factors having only finitely many components in their Roller completions. 

\begin{claim}\label{claim:QuasiLine}
For every $1 \leq i \leq m$, $L_i$ is a quasi-line.
\end{claim}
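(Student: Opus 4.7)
My plan is to argue by contradiction: fix $L := L_i$ and assume $L$ is not a quasi-line; the goal is to derive that $\mathfrak{R}L$ has infinitely many components, contradicting the defining property of the factors $L_1, \ldots, L_m$.

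First I would set up a convenient action. Since $G$ permutes the irreducible factors of $X$ and there are finitely many of them, a finite-index subgroup $G' \leq G$ preserves each factor, in particular $L$. Using that $\mathfrak{R}L$ has only finitely many components by hypothesis, I would further pass to a finite-index subgroup stabilizing each of them. The projection $\pi : G' \to \mathrm{Isom}(L)$ then yields an essential cocompact action of $\pi(G')$ on $L$, because essentiality and cocompactness for $G \curvearrowright X$ descend to each factor. If $\pi(G')$ were virtually cyclic, the cocompact action of an infinite cyclic subgroup on the irreducible essentially acted-on cube complex $L$ would force $L$ to be a quasi-line, contradicting the assumption; so $\pi(G')$ is not virtually cyclic.

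Next I would produce contracting isometries. Since $L$ is irreducible, finite-dimensional, essentially and cocompactly acted on by $\pi(G')$, and not a quasi-line, the Caprace--Sageev machinery of \cite{MR2827012} (as harvested in the proof of Proposition~\ref{prop:ForCriterion} via \cite[Theorem~6.3]{MR2827012} and the double-skewering argument) produces an element $g_1 \in \pi(G')$ skewering a pair of strongly separated hyperplanes; by Proposition~\ref{prop:ContractingCharacterisation}, $g_1$ is a contracting isometry of $L$. A standard ping-pong argument --- invoking that $\pi(G')$ is not virtually cyclic, so that some $h \in \pi(G')$ can be found whose conjugate $hg_1h^{-1}$ is independent of $g_1$ --- then yields an infinite family $\{g_k\}_{k \geq 1}$ of pairwise independent contracting isometries in $\pi(G')$.

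Finally, Lemma~\ref{lem:ContractingInfinity} applied to the pairs $(g_k, g_\ell)$ shows that the endpoints of their axes lie in pairwise distinct components of $\mathfrak{R}L$, producing infinitely many components and the desired contradiction. The main obstacle is the extraction of strongly separated hyperplanes in the absence of the hypothesis ``$\mathfrak{R}L$ has infinitely many components'' under which Proposition~\ref{prop:ForCriterion} is stated: one must revisit its proof and verify that the non-quasi-line assumption --- together with essentiality, cocompactness, irreducibility, and the fact that $\pi(G')$ stabilizes every Roller component --- suffices to produce the needed facing triple of strongly separated hyperplanes.
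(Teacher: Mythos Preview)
Your approach and the paper's share the same core ingredient: independent contracting isometries have axis endpoints in distinct Roller components (Lemma~\ref{lem:ContractingInfinity}), so finitely many components forces all contracting isometries of $L_i$ to be pairwise dependent. The difference lies in how the argument is organised around this.

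The paper argues directly rather than by contradiction. It invokes \cite[Theorem~6.3]{MR2827012} straight away to obtain a contracting isometry $g$ of $L_i$ and, more strongly, that \emph{every} hyperplane of $L_i$ is skewered by some contracting isometry. From Lemma~\ref{lem:ContractingInfinity} it then concludes that no two such isometries are independent; combining this with the fact that every hyperplane is skewered by one of them yields that $L_i$ has no facing triple. The final step---no facing triple plus one contracting axis forces $L_i$ to lie in a bounded neighbourhood of that axis---is a short argument using Lemma~\ref{lem:ContractingQuasiconvex} and Proposition~\ref{prop:ContractingCharacterisation}.

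Your route through Proposition~\ref{prop:ForCriterion} to extract strongly separated hyperplanes is an unnecessary detour, and the obstacle you flag is an artifact of that detour: Proposition~\ref{prop:ForCriterion} is formulated for the converse situation (infinitely many components), so reusing it here is circular. The fix is simply to cite \cite[Theorem~6.3]{MR2827012} directly for the existence of a rank-one (contracting) isometry in the irreducible, essential, cocompact, locally finite setting---exactly as the paper does---and skip strongly separated hyperplanes altogether. Once you have one contracting isometry and know $\pi(G')$ is not virtually cyclic, your conjugation/ping-pong step to produce an independent one and conclude via Lemma~\ref{lem:ContractingInfinity} is perfectly sound. So your contradiction argument works once the obstacle is sidestepped, and is arguably slightly shorter than the paper's since it does not need the ``no facing triple implies quasi-line'' step; the paper's direct argument, on the other hand, extracts the intermediate structural fact (no facing triple) explicitly.
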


\noindent
Fix an index $1 \leq i \leq m$. It follows from \cite[Theorem~6.3]{MR2827012} that $L_i$ admits at least one contracting isometry $g \in \mathrm{Isom}(X)$. However, $L_i$ cannot contain two independent contracting isometries. Indeed, otherwise a standard ping-pong argument allows us to construct infinitely many pairwise independent contracting isometries. But, according to Lemma~\ref{lem:ContractingInfinity}, the points at infinity of axes of independent contracting isometries belong to distinct components, contradicting the fact that $L_i$ has only finitely many components in its Roller completion. Because we also know from \cite[Theorem~6.3]{MR2827012} that every hyperplane is skewered by a contracting isometry, we deduce that $L_i$ does not contain a facing triple. Let us deduce that, given an axis $\gamma$ of $g$, there exists a constant $K \geq 0$ such that every vertex of $L_i$ lies at distance $\leq K$ from $\gamma$. Otherwise, since we know from Lemma~\ref{lem:ContractingQuasiconvex} that the convex hull of $\gamma$ is contained in a neighbourhood of $\gamma$, there would exist an arbitrarily long descending sequence of halfspaces $D_1 \subset \cdots \subset D_k$ containing $\gamma$ (and its convex hull). Because $L_i$ has no facing triple, infinitely many hyperplanes among $\{g^k J \mid k \in \mathbb{Z}\}$, where $J$ is an arbitrary hyperplane crossing $\gamma$, have to cross the $D_i$. According to Proposition~\ref{prop:ContractingCharacterisation}, we get a contradiction with the fact that $g$ is contracting. Thus, we have proved that $L_i$ is quasi-isometric to the line $\gamma$. 

\begin{claim}\label{claim:JustOneFactor}
If $G$ is cyclically hyperbolic, then $n \leq 1$. 
\end{claim}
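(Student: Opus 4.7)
The plan is to show that if $n \geq 2$, then $X$ contains two transverse trees of hyperplanes satisfying all the hypotheses of Proposition~\ref{prop:NotHyp}, which would contradict the cyclic hyperbolicity of $G$. Since the decomposition of $X$ into irreducible factors is unique up to reordering, a finite-index subgroup $G_0 \leq G$ preserves each factor, and the projection of $G_0$ onto each $X_j$ acts cocompactly (with $X_j$ locally finite) and essentially (since $G$ does on $X$).

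Assuming $n \geq 2$, I apply Proposition~\ref{prop:ForCriterion} to the action of $G_0$ on $X_j$ for each $j \in \{1,2\}$ (the complex $X_j$ is irreducible and has infinitely many components in its Roller completion by construction) to obtain elements $a_j, b_j \in G_0$ together with a facing triple of pairwise strongly separated hyperplanes $A_j, B_j, J_j$ such that $a_j J_j^+ \subset A_j^+$, $b_j J_j^+ \subset B_j^+$, with $A_j^+, B_j^+ \subset J_j^+$ and $A_j^+ \cap B_j^+ = \emptyset$. Lemma~\ref{lem:TreeHyp} then provides, for each $j$, a branching and uniformly distributed tree of hyperplanes $\mathcal{T}_j := \{w J_j \mid w \in \langle a_j, b_j\rangle^+\}$ in $X_j$.

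Viewed as hyperplanes of $X$, the trees $\mathcal{T}_1$ and $\mathcal{T}_2$ are transverse because their hyperplanes come from distinct irreducible factors. The core step is verifying the no-transeparation property. I will upgrade the strong separation of the generating triple to pairwise strong separation of all hyperplanes in $\mathcal{T}_j$: given $u \neq v$ in $\langle a_j, b_j\rangle^+$, translating by the inverse of their longest common prefix reduces to either the case where one word is empty (say $uJ_j = J_j$ and $vJ_j \subset A_j^+$) or the case where $u,v$ begin with different letters (so $uJ_j \subset A_j^+$ and $vJ_j \subset B_j^+$). In either case a hyperplane transverse to both would be forced, by the containments $A_j^+, B_j^+ \subset J_j^+$ and $A_j^+ \cap B_j^+ = \emptyset$, to be transverse to two members of $\{J_j, A_j, B_j\}$, contradicting their pairwise strong separation. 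Since any two hyperplanes in $\mathcal{T}_j$ are strongly separated, no hyperplane can be transverse to two of them, yielding no transeparation.

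Both $\mathcal{T}_1$ and $\mathcal{T}_2$ thus satisfy the hypotheses of Proposition~\ref{prop:NotHyp}, so $G$ is not cyclically hyperbolic, contrary to assumption. Hence $n \leq 1$. The main obstacle is the no-transeparation verification: one has to convert the strong separation of just three hyperplanes into a global property of the tree, which is where the containments $A_j^+, B_j^+ \subset J_j^+$ and $A_j^+ \cap B_j^+ = \emptyset$ provided by Proposition~\ref{prop:ForCriterion} become essential.
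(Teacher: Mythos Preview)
Your approach is essentially the same as the paper's: apply Proposition~\ref{prop:ForCriterion} to each of $X_1,X_2$, feed the output into Lemma~\ref{lem:TreeHyp}, and invoke Proposition~\ref{prop:NotHyp}. You even supply the detail (which the paper simply asserts) that the resulting tree $\mathcal{T}_j$ consists of pairwise strongly separated hyperplanes in $X_j$.

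There is, however, a gap in your no-transeparation step. The strong separation you prove holds \emph{in the factor $X_j$}, not in $X$: any hyperplane of $X$ coming from a factor other than $X_j$ is transverse to every hyperplane of $X_j$, so your assertion that ``no hyperplane can be transverse to two of them'' is literally false in the ambient complex $X$, which is where Proposition~\ref{prop:NotHyp} is being applied. What is missing is the observation that a hyperplane $J$ of $X$ \emph{separating} two members of $\mathcal{T}_j$ must itself lie in the factor $X_j$ (a hyperplane from another factor is transverse to everything in $X_j$ and hence separates nothing there); then your strong separation in $X_j$ finishes the job. The paper phrases the same fix contrapositively: any $J$ transverse to two hyperplanes of $\mathcal{T}_j$ that are strongly separated in $X_j$ must come from a different factor, hence is transverse to \emph{all} of $\mathcal{T}_j$ and so cannot separate two of them. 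Either formulation is one extra sentence; with it, your argument is complete and matches the paper's.
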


\noindent
Assume that $n \geq 2$. As a consequence of Proposition~\ref{prop:ForCriterion} and Lemma~\ref{lem:TreeHyp}, $X_1$ (resp. $X_2$) contains a branching and uniformly distributed tree of hyperplanes $\mathcal{H}_1$ (resp. $\mathcal{H}_2$) made of pairwise strongly separated hyperplanes. Clearly, $\mathcal{H}_1$ and $\mathcal{H}_2$ define two branching and uniformly distributed trees of hyperplanes in $X$ that are transverse. Observe that, given an $i=1,2$ and a hyperplane $J$ transverse to at least two distinct hyperplanes in $\mathcal{H}_i$, then $J$ must be a hyperplane of a factor different from $X_i$ since any two distinct hyperplanes in $\mathcal{H}_i$ are strongly separated in $X_i$, so $J$ has to be transverse to all the hyperplanes in $\mathcal{H}_i$. Thus, $\mathcal{H}_1$ and $\mathcal{H}_2$ have no transeparation. It follows from Proposition~\ref{prop:NotHyp} that $G$ is not cyclically hyperbolic, concluding the proof of Claim~\ref{claim:JustOneFactor}.

\medskip \noindent
If $n=0$, then $G$ must be virtually abelian and there is nothing to prove. From now on, we assume that $n = 1$. As a consequence of Claim~\ref{claim:QuasiLine}, we can deduce from \cite[Corollary~2.8]{MR3095714} (corrected in \cite[Proposition~2.2]{Fioravanti}) that $G$ virtually splits as a product $H \times \mathbb{Z}^m$ where $H$ acts geometrically on $X_1$. But, as a consequence of Proposition~\ref{prop:ForCriterion} (combined with \cite[Lemma~6.2]{MR2827012}), $H$ contains independent rank-one elements. It follows from \cite{MR3849623} (or alternatively \cite{BBF} or \cite[Section~6.6]{MR4057355}) that $H$ is acylindrically hyperbolic.
\end{proof}

\subsection{Horomorphisms}\label{section:Horo}

\noindent
Before proving the Tits alternative provided by Theorem~\ref{thm:TitsAlt}, we need to prove the following statement (Theorem~\ref{thm:IntroDevissage} from the introduction), which is of independent interest.

\begin{thm}\label{thm:Devissage}
 Let $G$ be a group acting on a finite-dimensional CAT(0) cube complex $X$. There exist a finite-index subgroup $H \leq G$ and a convex $H$-invariant subcomplex $Z \subset \overline{X}$ such that the following assertions hold:
\begin{itemize}
	\item if $[H,H]$ is infinite, then it acts essentially on $Z$ without virtually stabilising a component of $\mathfrak{R}Z$;
	\item every $Z$-elliptic element in $[H,H]$ is $X$-elliptic.
\end{itemize} 
\end{thm}

\noindent
The proof of the theorem is based on an elaboration of the morphisms constructed in the appendix of \cite{MR3509968}. 

\begin{definition}
Let $X$ be a CAT(0) cube complex, $x \in X$ a vertex, and $Y \subset \mathfrak{R}X$ a component. The \emph{horomorphism}\index{horomorphism} of $Y$ is the map $\mathfrak{h}_Y : \mathrm{stab}(Y) \to \mathbb{Z}$ defined by 
$$g \mapsto | \mathcal{W} (x|Y) \backslash g \mathcal{W} (x|Y) | - | g \mathcal{W}(x|Y) \backslash \mathcal{W}(x|Y) |$$
where $\mathcal{W}(x|Y)$ denotes the set of the hyperplanes of $X$ separating $x$ from $Y$. 
\end{definition}

\noindent
As proved in \cite[Proposition~4.H.1]{CornulierCommensurated}, the map thus constructed turns out to be a morphism that does not depend on the choice of the vertex $x$.

\begin{fact}\label{fact:TransferMap}
The map $\mathfrak{h}_Y$ is a well-defined morphism that does not depend on a particular choice of $x$.
\end{fact}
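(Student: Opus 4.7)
The plan is to recognise $\mathfrak{h}_Y$ as an instance of the classical Schlichting/transfer homomorphism associated with a commensurated subset of a $G$-set. So the first step is to prove that $\mathcal{W}(x|Y)$ is commensurated by $\mathrm{stab}(Y)$, meaning that $|\mathcal{W}(x|Y) \triangle g\mathcal{W}(x|Y)|<\infty$ for every $g \in \mathrm{stab}(Y)$. For this I would observe that, since $g$ stabilises $Y$, one has $g\mathcal{W}(x|Y) = \mathcal{W}(gx|Y)$; and a hyperplane that belongs to one of $\mathcal{W}(x|Y)$, $\mathcal{W}(gx|Y)$ but not the other must necessarily separate $x$ from $gx$. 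Hence $\mathcal{W}(x|Y) \triangle g\mathcal{W}(x|Y) \subset \mathcal{W}(x|gx)$, and the latter has cardinality equal to the combinatorial distance $d(x, gx)$, which is finite.

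The second step is to introduce the signed counting bracket $\delta(A,B) := |A \setminus B| - |B \setminus A|$ defined whenever $|A \triangle B|<\infty$, so that $\mathfrak{h}_Y(g) = \delta(\mathcal{W}(x|Y), g\mathcal{W}(x|Y))$. A short combinatorial check (decomposing $A \cup B \cup C$ according to which of the three sets each element belongs to) gives the two properties I need: the cocycle identity
\[
\delta(A,B) + \delta(B,C) = \delta(A,C),
\]
and the tautological $G$-invariance $\delta(gA, gB) = \delta(A,B)$.

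With these tools in hand, the morphism property follows immediately: for $g,h \in \mathrm{stab}(Y)$,
\[
\mathfrak{h}_Y(gh) = \delta(\mathcal{W}(x|Y), gh\mathcal{W}(x|Y)) = \delta(\mathcal{W}(x|Y), g\mathcal{W}(x|Y)) + \delta(g\mathcal{W}(x|Y), gh\mathcal{W}(x|Y)),
\]
and the last term equals $\delta(\mathcal{W}(x|Y), h\mathcal{W}(x|Y)) = \mathfrak{h}_Y(h)$ by $G$-invariance. For independence of the basepoint, given another vertex $y$, the inclusion $\mathcal{W}(x|Y) \triangle \mathcal{W}(y|Y) \subset \mathcal{W}(x|y)$ (same argument as in step one) ensures all the signed differences below are defined, and the cocycle identity combined with $G$-invariance gives
\[
\delta(\mathcal{W}(x|Y), g\mathcal{W}(x|Y)) = \delta(\mathcal{W}(x|Y), \mathcal{W}(y|Y)) + \delta(\mathcal{W}(y|Y), g\mathcal{W}(y|Y)) + \delta(g\mathcal{W}(y|Y), g\mathcal{W}(x|Y)),
\]
and the first and third terms cancel because $\delta(gA,gB) = \delta(A,B) = -\delta(B,A)$. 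Hence $\mathfrak{h}_Y^x = \mathfrak{h}_Y^y$. The only non-routine point is the finiteness input of step one; once that is verified, everything else reduces to algebraic manipulation of $\delta$, and this is exactly the content of \cite[Proposition~4.H.1]{CornulierCommensurated} invoked by the author.
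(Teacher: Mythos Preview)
Your proposal is correct and is precisely the argument behind the citation: the paper does not give its own proof of this fact but simply invokes \cite[Proposition~4.H.1]{CornulierCommensurated}, and what you have written is exactly the content of that proposition specialised to the commensurated subset $\mathcal{W}(x|Y)$ of the $\mathrm{stab}(Y)$-set of hyperplanes. The only substantive input beyond formal manipulation of $\delta$ is the finiteness of $\mathcal{W}(x|Y)\triangle g\mathcal{W}(x|Y)$, which you correctly reduce to the inclusion into $\mathcal{W}(x|gx)$.
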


\noindent
However, the horomorphism does not capture sufficiently precisely the structure of the stabiliser of a component in the Roller completion. We are going to construct a \emph{full horomorphism} by combining several horomorphisms.

\begin{definition}
Let $X$ be a CAT(0) cube complex. A \emph{direction} $(D_i)_{i \geq 1}$ is a decreasing sequence $D_1 \supsetneq D_2 \supsetneq \cdots$ of halfspaces. It is a \emph{direction to $Y$}\index{direction to a component at infinity} if $Y \subset D_i$ for every $i \geq 1$.
\end{definition}

\noindent
Observe that a direction to a given component may not exist. This happens for instance in infinite cubes, since they do not even contain decreasing sequences of halfspaces. Nevertheless, as an immediate consequence of the proof of Lemma~\ref{lem:RollerDim} (or of \cite[Proposition~2.9]{MR4283595}):

\begin{fact}\label{fact:DirectionsExist}
Let $X$ be a finite-dimensional CAT(0) cube complex and $Y \subset \mathfrak{R}X$ a component. There exists a direction to $Y$.
\end{fact}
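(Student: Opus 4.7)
The plan is to fix a basepoint $x \in X$ and apply Lemma~\ref{lem:ProjBoundary} to produce a point $\xi \in Y$ with the property that the hyperplanes separating $x$ from $\xi$ coincide with those separating $x$ from $Y$. Since $\xi$ lies in the Roller boundary, $\xi$ and the principal orientation $\sigma_x$ must differ on infinitely many hyperplanes, so the family $\mathcal{H}$ of halfspaces containing $Y$ but not $x$ is infinite. The task then reduces to extracting from $\mathcal{H}$ an infinite strictly decreasing chain for inclusion.

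The key observation is that any two $A, B \in \mathcal{H}$ satisfy $A \cap B \supseteq Y \neq \emptyset$, which rules out the ``facing'' configuration and forces $A$ and $B$ to be either nested or transverse. Because $X$ is finite-dimensional, no subfamily of $\mathcal{H}$ consisting of pairwise transverse halfspaces can exceed $\dim(X)$ in size. Hence Dilworth's theorem—applied to the nesting order on $\mathcal{H}$, whose antichains coincide with pairwise-transverse subfamilies—yields an infinite chain in $\mathcal{H}$ that is totally ordered by inclusion.

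To conclude, I would rule out the increasing case. Using the standard identity $d(x, D) = \#\{ H \text{ halfspace} : H \supseteq D,\ x \notin H \}$, which expresses the combinatorial distance to a halfspace in terms of separating hyperplanes, a strict inclusion $D \subsetneq D'$ in $\mathcal{H}$ yields $d(x, D) > d(x, D')$ strictly; indeed the halfspace $D$ itself is counted on the left but not on the right, since $D \not\supseteq D'$. An infinite increasing chain in $\mathcal{H}$ would therefore produce an infinite strictly decreasing sequence of positive integers, which is impossible. The chain is therefore decreasing, providing a direction to $Y$.

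The only subtle step is this final monotonicity argument, and even it reduces to a small combinatorial identity about the one-skeleton metric; the rest is a direct combination of Lemma~\ref{lem:ProjBoundary} with finite-dimensional combinatorics, which is presumably why the author describes the fact as immediate.
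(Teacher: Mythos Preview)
Your argument is correct and matches the paper's approach: the author simply declares the fact an immediate consequence of Lemma~\ref{lem:ProjBoundary}, and the extraction of a decreasing chain from the infinitely many separating halfspaces via finite-dimensionality is exactly the reasoning spelled out in the proof of Lemma~\ref{lem:RollerDim}. One small addition worth making explicit: to conclude ``nested or transverse'' you also need $A^c \cap B^c \neq \emptyset$, but this holds since $x$ lies in both complements by the definition of $\mathcal{H}$.
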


\noindent
In the sequel, given two components $Y,Z \subset \mathfrak{R}X$ in the Roller boundary of a CAT(0) cube complex $X$, we write $Y \prec Z$ if every direction to $Y$ is also a direction to $Z$.

\begin{definition}
Let $X$ be a CAT(0) cube complex and $Y \subset \mathfrak{R}X$ a component. Let $\mathcal{Y}$ denote the set of the components $Z$ satisfying $Z \prec Y$ and let $\mathrm{stab}_0(Y)$ denote the subgroup of the stabiliser of $Y$ in $\mathrm{Isom}(X)$ that stabilises all the components in $\mathcal{Y}$. The \emph{full horomorphism}\index{full horomorphism} of $Y$ is
$$\mathfrak{H}_Y : \left\{ \begin{array}{ccc} \mathrm{stab}_0(Y) & \to & \mathbb{Z}^\mathcal{Y} \\ g & \mapsto & \left( \mathfrak{h}_Z(g) \right)_{Z \in \mathcal{Y}} \end{array} \right.$$
\end{definition}

\noindent
Our main result about full horomorphism is the following:

\begin{thm}\label{thm:KernelFullHoro}
Let $X$ be a finite-dimensional CAT(0) cube complex and $Y \subset \mathfrak{R}X$ a component. Then $\mathrm{stab}_0(Y)$ has finite index in $\mathrm{stab}(Y)$, and an element in $\mathrm{ker}(\mathfrak{H}_Y)$ is $X$-elliptic if and only if it is $Y$-elliptic. 
\end{thm}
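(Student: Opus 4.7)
The plan is to prove the two assertions separately, with the main work being the characterisation of $\ker(\mathfrak{H}_Y)$. For the finite-index claim, I would first observe that the relation $\prec$ is $\mathrm{Isom}(X)$-invariant, so $\mathrm{stab}(Y)$ permutes $\mathcal{Y}$ and $\mathrm{stab}_0(Y)$ is exactly the kernel of this permutation action; hence it suffices to prove that $\mathcal{Y}$ is finite. I would combine Lemma~\ref{lem:FinitelyManyDirections} (finitely many equivalence classes of directions to $Y$) with Corollary~\ref{cor:NestedChain} (every $\sqsubset$-chain has length at most $\dim(X)$): each $Z \in \mathcal{Y}$ is detected by at least one equivalence class of directions to $Y$, and the $\sqsubset$-bound controls how many distinct $Z$ a single class can detect, yielding $|\mathcal{Y}| \leq \dim(X) \cdot N$ where $N$ is the (finite) number of direction classes.

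The easy direction of the elliptic characterisation, $X$-elliptic $\Rightarrow$ $Y$-elliptic, in fact holds for every $g \in \mathrm{stab}(Y)$: if $g$ fixes a vertex $v \in X$, then the projection $\xi$ of $v$ onto $Y$ provided by Lemma~\ref{lem:ProjBoundary} is characterised uniquely by the set of hyperplanes separating $v$ from $Y$, and applying $g$ (which preserves $Y$ and fixes $v$) to this characterisation gives $g\xi = \xi$. For the converse, suppose $g \in \ker(\mathfrak{H}_Y)$ fixes $\eta \in Y$; fix a basepoint $x \in X$ with projection $\xi \in Y$. A direct hyperplane case analysis shows that $d_X(x, g^n x)$ decomposes as the number of hyperplanes crossing $Y$ that separate $x$ from $g^n x$, which equals $d_Y(\xi, g^n \xi) \leq 2\, d_Y(\xi, \eta)$ and is uniformly bounded, plus the cardinality $|\mathcal{W}(x|Y) \triangle g^n \mathcal{W}(x|Y)|$ of the symmetric difference, which consists precisely of hyperplanes disjoint from $Y$ that separate $x$ from $g^n x$.

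The main obstacle is to uniformly bound this last symmetric difference. The signed version $\mathfrak{h}_Y(g^n) = n\,\mathfrak{h}_Y(g)$ vanishes, but this alone does not control the unsigned count; here one genuinely needs the full-horomorphism hypothesis $\mathfrak{h}_Z(g) = 0$ for every $Z \in \mathcal{Y}$. I would partition $\mathcal{W}(x|Y)$ into finitely many $\mathrm{stab}_0(Y)$-invariant strata indexed by $\mathcal{Y}$, for instance letting $\mathcal{S}_Z$ consist of the hyperplanes that separate $x$ from $Z$ but not from any $Z' \prec Z$ strictly below, and then proceed by induction on the $\prec$-depth (bounded by $\dim(X)$ via Corollary~\ref{cor:NestedChain}). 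In the base case, a $\prec$-minimal stratum behaves like a family of halfspaces along a single direction and the vanishing of $\mathfrak{h}_Z(g)$ on it directly forces $|\mathcal{S}_Z \triangle g^n \mathcal{S}_Z|$ to be bounded; the inductive step peels off the topmost stratum and applies the hypothesis to the remaining lower-dimensional configuration. Summing the contributions across the finitely many strata yields the desired uniform bound on $d_X(x, g^n x)$, and hence $g$ is $X$-elliptic.
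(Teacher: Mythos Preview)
Your decomposition of $d_X(x,g^nx)$ into the $Y$-crossing contribution (bounded by $Y$-ellipticity) and the symmetric difference $|\mathcal{W}(x|Y)\triangle g^n\mathcal{W}(x|Y)|$ is correct and natural, but the argument you sketch to bound the latter is the entire difficulty and you have not carried it out. Two concrete problems. First, your base-case claim that a $\prec$-minimal stratum ``behaves like a family of halfspaces along a single direction'' is unjustified: a $\prec$-minimal $Z$ can in principle admit several (e.g.\ transverse) equivalence classes of directions, and the vanishing of the single signed count $\mathfrak{h}_Z(g^n)=0$ does not by itself bound the unsigned quantity $|\mathcal{W}(x|Z)\triangle g^n\mathcal{W}(x|Z)|$. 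Second, your inductive step is not defined: to ``peel off'' a stratum you would want $Z$-ellipticity of $g$, but you only know $Y$-ellipticity, and the fact that $g$ fixes a point of $\overline{Z}\supset Y$ says nothing about the dynamics of $g$ on $Z$ itself.

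The paper bypasses this entirely by arguing the contrapositive and placing the real work in Proposition~\ref{prop:FullHoro}: if $g$ is $X$-loxodromic (which one may assume after a power), one \emph{constructs} a component $Z\prec Y$ from a family of pairwise disjoint $\langle g\rangle$-translates of a hyperplane crossing an axis, and checks directly that $\mathfrak{h}_Z(g)\neq 0$. Hence $g\in\ker(\mathfrak{H}_Y)$ forces the other alternative, namely unbounded orbits in $Y$. This explicit construction of $Z$ is precisely the missing idea in your approach. For the finite-index claim your strategy (show $\mathcal{Y}$ finite) is right, but your bound $\dim(X)\cdot N$ via chain lengths is not how the paper proceeds and the counting is unclear; the paper instead observes (Proposition~\ref{prop:FinitelyManyComponents} via Corollary~\ref{cor:DeterminedDirection}) that a component is determined by its set of direction classes, giving $|\mathcal{Y}|\le 2^N$.
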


\noindent
The theorem will be a straightforward consequence of Propositions~\ref{prop:FullHoro} and~\ref{prop:FinitelyManyComponents} below. 

\begin{prop}\label{prop:FullHoro}
Let $X$ be a finite-dimensional CAT(0) cube complex, $Y \subset \mathfrak{R}X$ a component, and $g \in \mathrm{Isom}(X)$ a loxodromic isometry stabilising $Y$. Either $g$ has unbounded orbits in $Y$ or there exists a component $Z \prec Y$ stabilised by some power $g^k$ such that $\mathfrak{h}_Z(g^k) \neq 0$. 
\end{prop}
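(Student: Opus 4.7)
The plan is to assume that $g$ has bounded orbits on $Y$ and construct $Z$ from the geometry of an axis of $g$.

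I first claim that no hyperplane of $X$ skewered by $g$ can cross $Y$. Indeed, if $J$ is such a hyperplane (with $gJ^+\subsetneq J^+$) crossing $Y$, then by Lemma~\ref{lem:TranslatesDisjoint} the family $\{g^{k\cdot\dim(X)!}J\}_{k\in\mathbb{Z}}$ consists of pairwise disjoint hyperplanes. Each of them crosses $g^{k\cdot\dim(X)!}Y=Y$, giving a bi-infinite chain of pairwise disjoint hyperplanes of $Y$ which $g^{\dim(X)!}$ shifts along; this makes $g^{\dim(X)!}$, and hence $g$, loxodromic on $Y$, contradicting the bounded-orbit hypothesis.

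Next, fix an axis $\gamma$ of $g$ in $X$. Every hyperplane crossing $\gamma$ is skewered by $g$ and, by the previous step, has $Y$ entirely on one of its sides. Replacing $g$ with $g^{-1}$ if necessary (harmless, since $\mathfrak{h}_Z(g^k)\neq 0$ iff $\mathfrak{h}_Z(g^{-k})\neq 0$), I select a hyperplane $J$ crossing $\gamma$ with $Y\subset J^+$ and $gJ^+\subsetneq J^+$. The sequence $(g^n J^+)_{n\geq 0}$ is then a strictly decreasing chain of halfspaces of $X$ forming a direction to $Y$. Let $C:=\bigcap_{n\geq 0}g^n J^+$, viewed inside $\overline X$. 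The set $C$ is a nonempty $g$-invariant convex subcomplex of $\overline X$, contains $Y$, and lies entirely in $\mathfrak{R}X$ (a vertex $v\in X\cap C$ would give $g^{-n}v\in J^+$ for every $n\geq 0$, impossible as $g^{-n}v$ eventually lies on the $J^-$-side along $\gamma$).

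I then take $Z$ to be the canonical component of $\mathfrak{R}X$ in $C$ whose Roller closure coincides with $C$; its existence and uniqueness rely on the structural theory of directions developed in the rest of Section~\ref{section:Horo} (Lemma~\ref{lem:FinitelyManyDirections}, Corollary~\ref{cor:NestedChain}, and Fact~\ref{fact:DirectionTrichotomy}). By canonicity, $g$ stabilises $Z$. Any halfspace of $X$ containing $Z$ must contain all of $\overline Z=C$, and in particular $Y$, so $Z\prec Y$. Finally, for the horomorphism value: the hyperplanes in the orbit $\{g^n J\}_{n\in\mathbb{Z}}$ do not cross $Z$ (since $Z\subset g^n J^+$ for each $n$) and all place $Z$ on their $J^+$-side. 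Choosing $x_0\in\gamma$ suitably, the unique hyperplane from this orbit crossing $[x_0,gx_0]$ contributes $+1$ to $\mathfrak{h}_Z(g)$. The remaining hyperplanes crossing $\gamma$ between $x_0$ and $gx_0$ either cross $Z$ (contributing $0$) or have $Z$ on their $gx_0$-side (contributing $+1$ as well), but none contribute $-1$. Thus $\mathfrak{h}_Z(g)\geq 1\neq 0$.

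The main obstacle is the selection of $Z$ from $C$: one must show that $C$ collapses canonically to the Roller closure of a single component $Z\subset\mathfrak{R}X$, and that the remaining hyperplanes crossing $\gamma$ interact with $Z$ as described above. Both of these rely on the direction machinery introduced later in Section~\ref{section:Horo}, which classifies how directions to distinct components of $\mathfrak{R}X$ can nest, be transverse, or be independent.
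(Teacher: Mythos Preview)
Your overall strategy is the same as the paper's, but the step you flag as ``the main obstacle'' is in fact a genuine gap. You want $Z$ to be ``the canonical component of $\mathfrak{R}X$ in $C$ whose Roller closure coincides with $C$'', and you cite Lemma~\ref{lem:FinitelyManyDirections}, Corollary~\ref{cor:NestedChain}, and Fact~\ref{fact:DirectionTrichotomy} for existence and uniqueness. But those results classify how directions nest and bound chains of directions; none of them produces a component $Z$ with $\overline{Z}=C$, nor tells you which hyperplanes of $X$ cross $Z$. And this last point is exactly what you need to justify your horomorphism computation: the assertion that ``none contribute $-1$'' amounts to saying that every hyperplane crossing $[x_0,gx_0]$ either crosses $Z$ or has $Z$ on its $gx_0$-side, which you cannot check without knowing the hyperplanes of $Z$.

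The paper's fix is to construct $Z$ explicitly rather than characterise it abstractly. For each $x\in X$ one writes down an orientation $\sigma_{\mathcal{V},x}$ (choose the halfspace containing some $V^+$ if one exists, else the halfspace containing $x$) and defines $Z$ as the component containing these orientations. The concrete description then yields immediately (Fact~\ref{fact:SigmaVx}) that the hyperplanes crossing $Z$ are exactly those crossing every $V^+$; from this both $Z\prec Y$ and the sign analysis for $\mathfrak{h}_Z$ (the paper's Claim~\ref{claim:ForHoro}) follow in a few lines. In hindsight this $Z$ does satisfy $\overline{Z}=C$, so your intuition is correct, but the route to proving it is precisely to exhibit its points. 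A smaller issue: your sentence ``Every hyperplane crossing $\gamma$ is skewered by $g$'' is false in general (a hyperplane may be transverse to its $g$-translate); you must first replace $g$ by a power, as you effectively already did via Lemma~\ref{lem:TranslatesDisjoint} in your first paragraph.
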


\begin{proof}
Fix an axis $\gamma$ of $g$. Because $X$ is finite-dimensional, we can find an arbitrarily large collection of pairwise non-transverse hyperplanes crossing $\gamma$. Because there exist only finitely many $\langle g \rangle$-orbits of hyperplanes crossing $\gamma$, it follows that, up to replacing $g$ with one of its powers, there exists a hyperplane $J$ such that $\mathcal{V} := \{g^k \cdot J \mid k \in \mathbb{Z}\}$ is a collection of pairwise non-transverse hyperplanes crossing~$\gamma$. 

\medskip \noindent
Observe that, if there exists a hyperplane in $\mathcal{V}$ crossing $Y$, then $g$ has unbounded orbits in $Y$. Indeed, because $Y$ is $\langle g \rangle$-invariant, all the hyperplanes in $\mathcal{V}$ would have to cross $Y$, and we would find a halfspace in $Y$ such that $gD \supsetneq D$. Consequently, we assume from now on that no hyperplane in $\mathcal{V}$ crosses $Y$. For convenience, we fix a left-right order on $\gamma$ such that, if we denote by $V^+$ the halfspace delimited by any $V \in \mathcal{V}$ that contains the right part of $\gamma$, then $Y \subset V^+$. Up to replacing $g$ with its inverse, we assume that $g$ acts on $\gamma$ as a positive translation.

\medskip \noindent
Now, define $Z \subset \mathfrak{R}X$ as the component containing the orientations
$$\sigma_{\mathcal{V},x} : H \mapsto \left\{ \begin{array}{l} \text{halfspace containing a $V^+$ if it exists} \\ \text{halfspace containing $x$ otherwise} \end{array} \right., \ x \in X.$$
It is straightforward to check that each $\sigma_{\mathcal{V},x}$ is an orientation and that:

\begin{fact}\label{fact:SigmaVx}
For all $x,y \in X$, the hyperplanes on which $\sigma_{\mathcal{V},x}$ and $\sigma_{\mathcal{V},y}$ disagree are exactly the hyperplanes separating $x$ and $y$ that cross all the halfspaces in $\{V^+ \mid V \in \mathcal{V} \}$.
\end{fact}

\noindent
So the $\sigma_{\mathcal{V},x}$ all belong to a common a component of $\mathfrak{R}X$, and we choose this component for $Z$. Observe that $g$ stabilises $\{\sigma_{\mathcal{V},x} \mid x \in X\}$ because $\mathcal{V}$ is $\langle g \rangle$-invariant, so $g$ stabilises $Z$. We begin by proving that every direction to $Z$ is a direction to $Y$. So let $D:=(D_i)_{i \geq 1}$ be a direction to $Z$. If it is not a direction to $Y$, then there must exist some $i \geq 1$ such that $D_i$ dos not contain $Y$. But all the $V^+$ contain both $Z$ and $Y$, so the hyperplane $H_i$ delimiting $D_i$ must cross all the $V^+$. We deduce from Fact~\ref{fact:SigmaVx} that, if $x,y \in X$ are any two vertices separated by $H_i$, then $\sigma_{\mathcal{V},x}$ and $\sigma_{\mathcal{V},y}$ are separated by $H_i$, which is impossible since $H_i$ cannot cross $Z$. Thus, $D$ has to define a direction to $Y$.

\medskip \noindent
Next, we want to prove that $\mathfrak{h}_Z(g) \neq 0$. We begin by observing that:

\begin{claim}\label{claim:ForHoro}
Fix two vertices $x,y \in \gamma$ with $y$ on the right of $x$. No hyperplane separating $x$ from $y$ can separate $y$ from $Z$. 
\end{claim}

\noindent
It suffices to show that a hyperplane $H$ separating $y$ from $Z$ has to cross $\gamma$ on the right of $y$. Then, it will not be able to separate $x$ and $y$ since otherwise it would cross $\gamma$ twice. Assume for contradiction that $H$ does not intersect the subsegment $\gamma^+ \subset \gamma$ that lies on on the right of $y$. So $H$ separates $\gamma^+$ and $Z$. Because the $V^+$ all intersect both $\gamma^+$ and $Z$, it follows that $H$ crosses all the $V^+$. We deduce from Fact~\ref{fact:SigmaVx} that, if $a,b \in X$ are any two vertices separated by $H$, then $\sigma_{\mathcal{V},a}$ and $\sigma_{\mathcal{V},b}$ are separated by $H$, which is impossible since $H$ does not cross $Z$. This proves Claim~\ref{claim:ForHoro}

\medskip \noindent
As a consequence of Claim~\ref{claim:ForHoro}, we have 
$$\mathfrak{h}_Z(g^2) = |\mathcal{W}(x|g^2x) \cap \mathcal{W}(x|Z)| \geq |\mathcal{W}(x|g^2x) \cap \mathcal{V}|\geq 1,$$
where we chose an arbitrary vertex $x \in \gamma$ (see Fact~\ref{fact:TransferMap}). We conclude that $\mathfrak{h}_Z(g) \neq 0$, as desired. 
\end{proof}

\begin{prop}\label{prop:FinitelyManyComponents}
Let $X$ be a finite-dimensional CAT(0) cube complex and $Y \subset \mathfrak{R}X$ a component. There exist only finitely many components $Z$ satisfying $Z \prec Y$.
\end{prop}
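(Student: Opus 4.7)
The plan is to first reformulate $\prec$ as a condition on individual halfspaces, and then bound the resulting combinatorial count using the finite dimension of $X$.

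First I would show that $Z \prec Y$ is equivalent to the apparently simpler condition that every halfspace $D$ of $X$ with $Z \subseteq D$ also satisfies $Y \subseteq D$ (equivalently, the set of halfspaces containing $Z$ is included in that of $Y$, or still equivalently $Y \subseteq \overline{Z}$ in $\overline{X}$). Only one direction is nontrivial: given a halfspace $D$ with $Z \subset D$ but $Y \not\subset D$, I need to extend $D$ to an infinite descending chain $D = D_1 \supsetneq D_2 \supsetneq \cdots$ with $Z \subset D_i$ for all $i$, which then defines a direction to $Z$ failing to be a direction to $Y$. Fixing a vertex $x \in D$, Lemma~\ref{lem:ProjBoundary} applied to $Z$ produces a projection $\xi_Z \in Z$ such that the hyperplanes separating $x$ from $\xi_Z$ are exactly those separating $x$ from $Z$; these are infinitely many, and by convexity of halfspaces the combinatorial geodesics from $x$ approaching $\xi_Z$ remain in $D$, so infinitely many of these hyperplanes lie in $D$. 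Extracting a cofinal descending subsequence yields the desired chain.

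From this reformulation I would draw two structural consequences: first, $\mathcal{C}(Z) \supseteq \mathcal{C}(Y)$, since a hyperplane crossing $Y$ has no halfspace containing $Y$ and hence none containing $Z$, forcing it to cross $Z$; and second, on every hyperplane outside $\mathcal{C}(Z)$ the consensus sides of $Z$ and $Y$ agree. Consequently each $Z \prec Y$ is uniquely determined by its crossing set $\mathcal{C}(Z) \supseteq \mathcal{C}(Y)$, reducing the proposition to bounding the number of realisable such sets.

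For this final bound I would argue by contradiction and dimension. Suppose infinitely many distinct $Z_n \prec Y$ exist; for each, pick $H_n \in \mathcal{C}(Z_n) \setminus \mathcal{C}(Y)$, so that $Y$ lies in a halfspace $D_n$ of $H_n$ while $Z_n$ crosses $H_n$. By Ramsey combined with the bound $\dim(X)$ on pairwise transverse hyperplanes, I pass to an infinite pairwise non-transverse subfamily, then refine further to a nested or facing subfamily. In either case, combining Lemma~\ref{lem:RollerDim} with the constraint that each $\mathcal{C}(Z_n)$ is the crossing set of a valid component should yield a contradiction, for instance by producing an orientation of $X$ that differs from a fixed $\sigma_Y \in Y$ on infinitely many hyperplanes while remaining in a single component of $\overline{X}$. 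The main obstacle lies precisely in this third step: executing the Ramsey-and-dimension bookkeeping cleanly, particularly when $Y$ is unbounded and $\mathcal{C}(Y)$ infinite, is delicate. A cleaner alternative, which I would also pursue, is to associate to each $Z \prec Y$ an equivalence class of directions to $Y$ in an injective manner, thereby reducing the finiteness statement to the finiteness of equivalence classes of directions pointing to $Y$ that is anticipated later in the section.
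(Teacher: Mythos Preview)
Your reformulation of $Z \prec Y$ as $Y \subseteq \overline{Z}$ (step~1) is correct and is exactly the paper's Lemma~\ref{lem:DirectionPrec}; your sketch of the nontrivial direction is essentially the paper's.

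Step~2 has a gap. Knowing that $Z$ and $Y$ agree on consensus sides outside $\mathcal{C}(Z)$ only pins down $\overline{Z}$ as a subset of $\overline{X}$; it does not yet single out the component $Z$ inside $\overline{Z}$. The claim is nonetheless true---$Z$ is the unique component of $\overline{Z}$ crossed by every hyperplane of $\mathcal{C}(Z)$, since any other such component would have the same hyperplanes and transversalities yet strictly smaller dimension by Lemma~\ref{lem:RollerDim}---but you need to say this.

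Step~3, approach~A, does not work as written: the hyperplanes $H_n$ you select need not be distinct, and a single hyperplane from $\mathcal{C}(Z_n)\setminus\mathcal{C}(Y)$ retains too little of $Z_n$ to force a contradiction after any Ramsey refinement. You are right to flag this as the obstacle.

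Your alternative, approach~B, \emph{is} the paper's argument, with one correction. An injective map from $\{Z : Z\prec Y\}$ to single direction-classes need not exist: for $X=\mathbb{Z}^2$ with $Y$ the north-east corner point, the components $N$, $E$, $NE$ all satisfy $Z\prec Y$, but there are only two direction-classes to $Y$ (``go north'' and ``go east''). What the paper uses instead is that $Z$ is determined by the \emph{set} of equivalence classes of directions pointing to it (Corollary~\ref{cor:DeterminedDirection}), and this set is a subset of the finitely many classes of directions to $Y$ (Lemma~\ref{lem:FinitelyManyDirections}), giving a finite bound. Once you run approach~B this way, the crossing-set detour of step~2 becomes unnecessary.
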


\noindent
Before turning to the proof of the proposition, we need some preliminary work.

\begin{definition}
Let $X$ be a CAT(0) cube complex. Two directions $A=(A_i)_{i \geq 1}$ and $B=(B_i)_{i \geq 1}$ are:
\begin{itemize}
	\item \emph{independent} if $A_i \cap B_j = \emptyset$ for some $i,j \geq 1$;
	\item \emph{transverse} if there exists some $N \geq 1$ such that $A_i$ and $B_j$ are transverse for all $i,j \geq N$;
	\item \emph{nested}, denoted by $A \sqsubset B$, if, for every $i \geq 1$, there exists some $j \geq 1$ such that $A_j \subset B_i$;
	\item \emph{equivalent} if $A \sqsubset B$ and $B \sqsubset A$.
\end{itemize}
\end{definition}

\noindent
Let us start with an easy observation:

\begin{fact}\label{fact:DirectionTrichotomy}
Let $X$ be a CAT(0) cube complex and $A,B$ two directions. If $A$ and $B$ are neither independent nor transverse, then $A \sqsubset B$ or $B \sqsubset A$. 
\end{fact}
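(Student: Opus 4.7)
My plan is to argue by contradiction: assume $A=(A_i)$ and $B=(B_j)$ are directions that are not independent, not transverse, and satisfy neither $A \sqsubset B$ nor $B \sqsubset A$, and derive that some vertex of $X$ lies in every $A_m$, contradicting the fact that combinatorial distance in a CAT(0) cube complex equals the number of separating hyperplanes.

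From the failure of nesting I fix indices $k_A, k_B \geq 1$ such that $A_s \not\subset B_{k_A}$ and $B_s \not\subset A_{k_B}$ for every $s \geq 1$. The failure of transversality lets me build inductively monotone sequences $i_n, j_n \to \infty$, both $\geq \max(k_A,k_B)$, such that $A_{i_n}$ and $B_{j_n}$ are non-transverse. For non-transverse halfspaces at least one of the four intersections $A_{i_n}^{\pm} \cap B_{j_n}^{\pm}$ is empty, so one of the four configurations holds: $A_{i_n} \subset B_{j_n}$, $B_{j_n} \subset A_{i_n}$, $A_{i_n} \cap B_{j_n} = \emptyset$, or $A_{i_n} \cup B_{j_n} = X$. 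The first forces $A_{i_n} \subset B_{j_n} \subset B_{k_A}$, contradicting the choice of $k_A$; the second contradicts $k_B$ symmetrically; the third contradicts the non-independence hypothesis. So the only remaining possibility is $A_{i_n} \cup B_{j_n} = X$, equivalently $B_{j_n}^c \subset A_{i_n}$, for every $n$.

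Now pick any vertex $v \in B_{j_1}^c$, nonempty as a halfspace. Monotonicity $j_n \geq j_1$ gives $B_{j_n}^c \supset B_{j_1}^c$, so $v \in B_{j_n}^c \subset A_{i_n}$ for every $n$. Since $i_n \to \infty$ and $A$ is decreasing, for every $m \geq 1$ one can find $n$ with $i_n \geq m$, yielding $v \in A_{i_n} \subset A_m$; hence $v \in A_m$ for all $m$. Picking any vertex $w \in A_1^c$, the inclusion $A_1^c \subset A_m^c$ gives $w \in A_m^c$, so every hyperplane $\partial A_m$ separates $v$ from $w$. As $A$ is strictly decreasing, the halfspaces $A_m$ and hence the hyperplanes $\partial A_m$ are pairwise distinct, producing infinitely many separating hyperplanes between $v$ and $w$, which contradicts $d(v,w) < \infty$.

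The crux of the argument is the case analysis on the non-transverse pair: three of the four configurations are killed immediately by the hypotheses, and the remaining ``covering'' configuration $A_{i_n} \cup B_{j_n} = X$ is precisely the one that traps a vertex inside every halfspace of the direction $A$. Everything else is routine bookkeeping about how halfspaces in a CAT(0) cube complex interact.
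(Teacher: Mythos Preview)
Your proof is correct and follows essentially the same case analysis as the paper: find non-transverse pairs $A_{i_n}, B_{j_n}$ with large indices, rule out disjointness by non-independence, and deduce nesting. The main difference is structural: you argue by contradiction (assuming both nestings fail, via the indices $k_A, k_B$), whereas the paper argues directly that one of the inclusions $A_{i(n)} \subset B_{j(n)}$ or $B_{j(n)} \subset A_{i(n)}$ must occur infinitely often.

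Your treatment is actually more careful on one point. For a non-transverse pair of halfspaces there are four possible configurations, and after non-independence kills $A_{i_n} \cap B_{j_n} = \emptyset$, three remain: the two inclusions and the ``covering'' case $A_{i_n} \cup B_{j_n} = X$. The paper's proof jumps straight to the two inclusions without visibly disposing of the covering case (and contains a typo listing the same inclusion twice). Your argument handles it explicitly: if the covering case held along a sequence with $i_n, j_n \to \infty$, a fixed vertex $v \in B_{j_1}^c$ would lie in every $A_m$, forcing infinitely many hyperplanes to separate $v$ from any $w \in A_1^c$. This is a clean way to close the gap, and the rest of your bookkeeping is fine.
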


\begin{proof}
Write $A$ as $(A_i)_{i \geq 1}$ and $B$ as $(B_i)_{i \geq 1}$. Because $A$ and $B$ are neither independent nor transverse, we know that, for every $n \geq 1$, there exist $i(n),j(n) \geq n$ such that $A_{i(n)} \subset B_{j(n)}$ or $A_{i(n)} \subset B_{j(n)}$. If the first inclusion occurs for infinitely many indices, then, given a $k \geq 1$, there exists some $n \geq 1$ such that $i(n),j(n) \geq k$ and $A_{i(n)} \subset B_{j(n)}$, hence $B_k \supset B_{j(n)} \supset A_{i(n)}$. Thus, we have $A \sqsubset B$. Similarly, if the second inclusion occurs for infinitely many indices, then $B \sqsubset A$. 
\end{proof}

\noindent
The first key property we have to show is that the length of a chain with respect to $\sqsubset$ is always bounded above the dimension of the cube complex under consideration. This will be a consequence of the following observation:

\begin{lemma}\label{lem:NestedTransverse}
Let $X$ be a CAT(0) cube complex and $A=(A_i)_{i \geq 1},B=(B_i)_{i \geq 1}$ two directions. If $A,B$ are not equivalent but $A \sqsubset B$, then there exists some $p \geq 1$ such that, for every $i \geq p$, there exists some $q \geq 1$ such that $A_i$ is transverse to $B_j$ for every $j \geq q$. 
\end{lemma}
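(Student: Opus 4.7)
The plan is to take $p = i_0$, where $i_0$ witnesses $B \not\sqsubset A$ (that is, $B_j \not\subseteq A_{i_0}$ for every $j \geq 1$), and to show that for each $i \geq i_0$ the set of indices $j$ for which $A_i$ is not transverse to $B_j$ is finite.

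The first step will be to cut down the possible relations between $A_i$ and $B_j$ for $i \geq i_0$. The nesting $B_j \subseteq A_i$ is ruled out immediately, since $A_i \subseteq A_{i_0}$ would then force $B_j \subseteq A_{i_0}$, contradicting the choice of $i_0$. Disjointness $A_i \cap B_j = \emptyset$ is ruled out by combining $A \sqsubset B$ with the nesting of the $A_n$: for some $n \geq i$ we have $A_n \subseteq B_j$, hence $\emptyset \neq A_n \subseteq A_i \cap B_j$, a contradiction. So at every such pair $(i,j)$ the only remaining possibilities are that $A_i$ is transverse to $B_j$, the nesting $A_i \subsetneq B_j$, or the co-nesting $A_i \cup B_j = X$ (equivalently $B_j^c \subsetneq A_i$).

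The crux of the argument will be to establish that $\bigcap_j B_j = \emptyset$ in the vertex set, and I expect this step to be the main observation. The idea will be to exploit the connectedness of $X$: if $v \in \bigcap_j B_j$, choose for each $k \geq 1$ a vertex $w_k \in B_k \setminus B_{k+1}$, which exists by strict descent of the direction. For every $j \geq k+1$ the inclusion $B_j \subseteq B_{k+1}$ gives $w_k \notin B_j$, while $v \in B_j$, so $\partial B_j$ separates $v$ from $w_k$. This produces infinitely many pairwise distinct hyperplanes separating $v$ and $w_k$, forcing the combinatorial distance $d(v, w_k)$ to be infinite, which contradicts the finiteness of distances between vertices in a connected CAT(0) cube complex.

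Once $\bigcap_j B_j = \emptyset$ is in hand, the remainder of the argument is quick. Since $A_i$ is a non-empty halfspace, the case $A_i \subsetneq B_j$ cannot hold for all $j$; and this case is downward-closed in $j$ (if it holds at some $j$, then for $j' < j$ we have $A_i \subsetneq B_j \subsetneq B_{j'}$), so there exists an index $j^*(i)$ past which it fails. Running the same reasoning on the complement halfspace $A_i^c$, which is also non-empty, eliminates the co-nesting case $B_j^c \subseteq A_i$ beyond some index $j^{**}(i)$, using the downward closedness $B_{j'}^c \subsetneq B_j^c \subseteq A_i$ for $j' < j$. For every $j \geq \max(j^*(i), j^{**}(i))$, the only remaining option is that $A_i$ is transverse to $B_j$, which yields the required $q(i)$ and completes the proof with $p = i_0$.
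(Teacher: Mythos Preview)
Your proof is correct and follows the same overall strategy as the paper: take $p$ to witness $B\not\sqsubset A$, then for each $i\ge p$ eliminate all non-transverse configurations between $A_i$ and $B_j$ for large $j$. The set-up and the elimination of $B_j\subseteq A_i$ and $A_i\cap B_j=\emptyset$ are identical to the paper's.

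The finishing argument differs. The paper invokes the \emph{finite interval condition}: once one knows that every non-transverse $B_m$ satisfies $A_i\subset B_m\subset B_n$ (with $n$ the least bad index), there are only finitely many halfspaces sandwiched between $A_i$ and $B_n$, so only finitely many bad $m$. You instead prove $\bigcap_j B_j=\emptyset$ directly from finiteness of combinatorial distances, and then observe that each of the two remaining nested configurations ($A_i\subset B_j$ and $A_i^c\subset B_j$) defines a downward-closed set of indices that cannot be all of $\mathbb{N}$. Both arguments are valid and closely related (the finite interval condition is essentially a refinement of your emptiness statement). Your version is arguably more self-contained, and it is more careful in one respect: you explicitly treat the co-nesting case $B_j^c\subset A_i$, which the paper's write-up silently folds into ``we must have $A_i\subset B_m$'' without justification. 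A minor cosmetic point: you introduce $w_k$ for every $k$, but the argument only uses a single $w_k$ for one fixed $k$.
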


\noindent
The conclusion should be read as follows: up to removing finitely many halfspaces from $A$, every halfspace in $A$ is transverse to all but finitely many halfspaces in $B$.

\begin{proof}[Proof of Lemma \ref{lem:NestedTransverse}.]
Because $A\sqsubset B$, we know that, for every $n \geq 1$, there exists some $j(n) \geq 1$ such that $B_n \supset A_{j(n)}$. And, because we do not have $B \sqsubset A$, we know that there exists some $p \geq 1$ such that $A_p$ does not contain any $B_j$. Fix some $i \geq p$. Up to removing finitely many halfspaces from $B$, we assume without loss of generality that $j(n) \geq i$ for every $n \geq 1$. Let $n$ the smallest index such that $B_n$ is not transverse to $A_i$. Then, for every $m \geq 1$ such that $B_m$ is not transverse to $A_i$, we must have $A_i \subset B_m \subset B_n$. Because there can exist only finitely many such intermediate halfspaces, it follows that $B_m$ is transverse to $A_i$ for all but finitely many $m$.
\end{proof}

\begin{cor}\label{cor:NestedChain}
Let $X$ be a CAT(0) cube complex and $A_1, \ldots, A_k$ a collection of pairwise non-equivalent directions. If $A_1 \sqsubset \cdots \sqsubset A_k$, then $k \leq \dim(X)$.
\end{cor}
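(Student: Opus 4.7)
The plan is to convert the chain $A_1 \sqsubset \cdots \sqsubset A_k$ of pairwise non-equivalent directions into a genuine collection of $k$ pairwise transverse hyperplanes in $X$, and then invoke the definition of the dimension of a CAT(0) cube complex. Concretely, if I can select one halfspace $A_m(n_m)$ from each direction in such a way that every pair $A_i(n_i), A_j(n_j)$ with $i < j$ is transverse, then their bounding hyperplanes are $k$ pairwise crossing hyperplanes, which together cross inside a $k$-dimensional cube, forcing $k \leq \dim(X)$.

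To produce such a choice, I invoke Lemma~\ref{lem:NestedTransverse} once for each pair $(i,j)$ with $i<j$: since $A_i \sqsubset A_j$ and $A_i,A_j$ are not equivalent, there exist an index $p_{ij} \geq 1$ and, for every $m \geq p_{ij}$, an index $q_{ij}(m) \geq 1$ such that the halfspace $A_i(m)$ is transverse to $A_j(n)$ for every $n \geq q_{ij}(m)$.

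I then pick the indices $n_1, \ldots, n_k$ one at a time. At stage $m$, I choose $n_m$ satisfying
\[
n_m \geq \max_{j>m} p_{mj} \qquad \text{and} \qquad n_m \geq \max_{i<m} q_{im}(n_i),
\]
with the convention that the first $\max$ is ignored when $m=k$ and the second when $m=1$. The first bound keeps $n_m$ in the regime where Lemma~\ref{lem:NestedTransverse} can subsequently be applied with $A_m$ playing the role of the $\sqsubset$-smaller direction, while the second bound guarantees, by the conclusion of the lemma applied to the pair $(i,m)$, that $A_i(n_i)$ and $A_m(n_m)$ are transverse for every $i<m$. Running this induction through $m = 1, \ldots, k$ yields the desired pairwise transverse family.

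Since the hyperplanes bounding $A_1(n_1), \ldots, A_k(n_k)$ are $k$ pairwise crossing hyperplanes of $X$, they cross in a $k$-dimensional cube, and hence $k \leq \dim(X)$. The only point requiring care is the two-sided bookkeeping of the thresholds $p_{mj}$ and $q_{im}(n_i)$ at each stage; this is entirely mechanical and presents no real obstacle, so the corollary follows immediately from Lemma~\ref{lem:NestedTransverse} together with the definition of the cubical dimension.
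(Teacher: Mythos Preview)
Your proof is correct and follows the same approach as the paper: apply Lemma~\ref{lem:NestedTransverse} to each pair $A_i \sqsubset A_j$ with $i<j$, then select one halfspace from each direction so that the resulting family is pairwise transverse, yielding $k$ pairwise transverse hyperplanes and hence $k \leq \dim(X)$. The paper's proof is a terse one-sentence version of exactly this argument, whereas you have carefully spelled out the two-sided threshold bookkeeping; your inductive choice of the $n_m$ is precisely what the paper's ``therefore'' is hiding.
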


\begin{proof}
As a consequence of Lemma~\ref{lem:NestedTransverse}, we know that, for every $1 \leq i \leq k-1$, all but finitely many halfspaces in $A_i$ are transverse to all but finitely many hyperplanes in $A_{i+1} \cup \cdots \cup A_k$. Therefore, there exist halfspaces $D_1 \in A_1, \ldots, D_k \in A_k$ that are pairwise transverse, hence $k \leq \dim(X)$ as desired.
\end{proof}

\noindent
The second key idea we want to show is that a component in the Roller boundary of a CAT(0) cube complex is uniquely determined by the directions pointing to it. This will be a consequence of the following observation:

\begin{lemma}\label{lem:DirectionPrec}
Let $X$ be a finite-dimensional CAT(0) cube complex and $Y,Z \subset \mathfrak{R}X$ two components. Then $Y$ lies in $\overline{Z}$ if and only if every direction to $Z$ is a direction to $Y$.
\end{lemma}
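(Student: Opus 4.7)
My approach will rely on the following characterization of the embedding $\overline{Z} \hookrightarrow \overline{X}$: an orientation $y$ of $X$ lies in $\overline{Z}$ precisely when it selects, for every hyperplane $H$ of $X$ not crossing $Z$, the halfspace of $H$ containing $Z$. Since $Z \subset D$ forces the bounding hyperplane of $D$ not to cross $Z$, this is equivalent to saying that $y \in \overline{Z}$ if and only if every halfspace $D$ of $X$ with $Z \subset D$ also contains $y$. Therefore $Y \subset \overline{Z}$ if and only if every halfspace of $X$ containing $Z$ also contains $Y$.

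The forward implication of the lemma is then immediate: if $(D_i)_{i \geq 1}$ is a direction to $Z$, each $D_i$ is a halfspace containing $Z$ and hence, by the characterization, contains $Y$, so $(D_i)$ is a direction to $Y$.

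For the converse I argue by contraposition. Assuming $Y \not\subset \overline{Z}$, I pick $y \in Y \setminus \overline{Z}$ together with a halfspace $D_1$ such that $Z \subset D_1$ and $y \notin D_1$. I then iteratively extend $D_1$ to a direction $(D_i)_{i \geq 1}$ to $Z$ by producing, at each stage, a halfspace $D_{i+1} \subsetneq D_i$ still containing $Z$. Every $D_i$ is contained in $D_1$ and therefore excludes $y$, so the resulting direction to $Z$ is not a direction to $Y$, contradicting the hypothesis.

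The core subclaim is thus that every halfspace $D$ with $Z \subset D$ admits $D' \subsetneq D$ with $Z \subset D'$. To establish it I fix $v \in X \setminus D$ and apply Lemma~\ref{lem:ProjBoundary} to obtain $\eta \in Z$ with $\mathcal{W}(v|\eta) = \mathcal{W}(v|Z)$, an infinite set because $v \in X$ and $\eta \in \mathfrak{R}X$ lie in distinct components of $\overline{X}$. Setting $J = \partial D$, the separating hyperplanes other than $J$ fall into three types: lying in $D^c$ (only finitely many, since each must separate $v$ from the carrier of $J$ inside $D^c$), transverse to $J$, or strictly inside $D$. A Ramsey-type extraction, enabled by the finite dimensionality of $X$, produces an infinite pairwise non-transverse subfamily, and a further case analysis using $\dim(Z) < \dim(X)$ (Lemma~\ref{lem:RollerDim}) will locate a member $H$ strictly inside $D$. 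Taking $D' = H^+$, where $H^+$ denotes the halfspace of $H$ disjoint from $D^c$, then yields $D' \subsetneq D$, and $Z \subset H^+$ follows from $H \in \mathcal{W}(v|Z)$. The main obstacle is precisely this last step: ruling out the pathological minimal configuration in which all but finitely many separators would be transverse to $J$, which is where the finite-dimensionality and the structure of the projection onto $\overline{J}$ must be used most carefully.
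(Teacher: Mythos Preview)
Your reduction is sound: the characterization ``$Y \subset \overline{Z}$ if and only if every halfspace containing $Z$ contains $Y$'' is exactly how the paper sets things up, and your forward implication matches the paper's converse paragraph. The contrapositive structure for the other implication is also fine and boils down to the same core subclaim the paper proves: given a halfspace $D$ with $Z \subset D$, find $D' \subsetneq D$ with $Z \subset D'$.

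The gap is in your proposed argument for this subclaim. Neither the Ramsey extraction nor the bound $\dim(Z)<\dim(X)$ rules out the ``all separators transverse to $J$'' configuration that you yourself flag as the main obstacle. An infinite chain of pairwise non-transverse hyperplanes can perfectly well be simultaneously transverse to a fixed $J$ without any dimension obstruction (think of the vertical hyperplanes in $\mathbb{Z}^2$ relative to a fixed horizontal one), and $\dim(Z)$ is irrelevant here since none of those hyperplanes cross $Z$ --- they all separate $v$ from $Z$. So as written the plan does not close.

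The paper's argument for the subclaim is short and bypasses both Ramsey and the dimension bound. Instead of picking $v$ arbitrarily in $X\setminus D$, choose $x \in N(J)$ and let $\xi\in Z$ be the projection given by Lemma~\ref{lem:ProjBoundary}. If every hyperplane separating $x$ from $\xi$ were transverse to $J$, then flipping $\xi$ at $J$ (i.e.\ setting $\xi'(H)=\xi(H)$ for $H\neq J$ and $\xi'(J)=\xi(J)^c$) would still be a well-defined orientation, adjacent to $\xi$ in $\overline{X}$ and hence lying in $Z$; but then $J$ would separate $\xi$ from $\xi'$ inside $Z$, contradicting that $J$ does not cross $Z$. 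Therefore some separator $J_1$ is not transverse to $J$, and since $x\in N(J)$ this $J_1$ must lie inside $D$, yielding $D_1\subsetneq D$ with $Z\subset D_1$. Iterating produces the desired direction. The move you are missing is precisely this choice of basepoint in $N(J)$ together with the flipping trick; once you have it, the Ramsey and dimension considerations become unnecessary.
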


\begin{proof}
Assume that every direction to $Z$ is a direction to $Y$ and let $\sigma \in Y$ be an orientation of $X$. Saying that $\sigma$ belongs to $\overline{Z}$ amounts to saying that, for every hyperplane $J$ of $X$ that does not cross $Z$, $\sigma(J)$ is the halfspace $D$ delimited by $J$ containing $Z$. 

\medskip \noindent
Fix a vertex $x \in N(J)$ and let $\xi \in Z$ the projection of $x$ on $Z$ as given by Lemma~\ref{lem:ProjBoundary}. If all the hyperplanes separating $x$ from $\xi$ (and, a fortiori, from $Z$) are transverse to $J$, then 
$$\xi' : H \mapsto \left\{ \begin{array}{cl} \xi(H) & \text{if $H \neq J$} \\ \xi(H)^c & \text{otherwise} \end{array} \right.$$
defines an orientation. By construction, $\xi'$ differ with $\xi$ exactly on $J$, so $\xi'$ would provide a neighbour of $\xi$ in $Z$ such that $\xi,\xi'$ are separated by $J$, contradicting the fact that $J$ does not cross $Z$. Thus, there exists a hyperplane $J_1$ separating $J$ and $Z$. Let $D_1$ denote the halfspace delimited by $J_1$ containing $Z$. 

\medskip \noindent
By iterating the construction, one gets a direction $D \supsetneq D_1 \supsetneq D_2 \supsetneq \cdots$ to $Z$. But it has to be a direction to $Y$ as well, hence $Y \subset D$, which implies that $\sigma(J)=D$ as desired.

\medskip \noindent
Conversely, assume that $Y \subset \overline{Z}$ and let $D=(D_i)_{i \geq 1}$ be a direction to $Z$. Fix an $i \geq 1$. Saying that $Z$ lies in $D_i$ amounts to saying that $\sigma(J_i)=D_i$ for every orientation $\sigma\in Z$ of $X$, where $J_i$ denotes the hyperplane delimiting $D_i$. By definition of $\overline{Z}$ (as a subspace of $\overline{X}$), we must also have $\sigma(J_i)=D_i$ for every orientation $\sigma \in Y$, hence $Y \subset D_i$. Therefore, $D$ is a direction to $Y$ as well.
\end{proof}

\begin{cor}\label{cor:DeterminedDirection}
Let $X$ be a finite-dimensional CAT(0) cube complex. For any two distinct components of $\mathfrak{R}X$, there exists a direction to one of them that is not a direction to the~other.
\end{cor}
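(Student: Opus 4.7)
The plan is to argue by contradiction using Lemma~\ref{lem:DirectionPrec} to convert the hypothesis into inclusions inside Roller completions, and then to exploit Lemma~\ref{lem:RollerDim} to derive a contradictory dimension inequality. Suppose $Y$ and $Z$ are two distinct components of $\mathfrak{R}X$ such that every direction to $Y$ is a direction to $Z$ and, conversely, every direction to $Z$ is a direction to $Y$. Applying Lemma~\ref{lem:DirectionPrec} in both directions yields $Y \subset \overline{Z}$ and $Z \subset \overline{Y}$.

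Because $Y$ and $Z$ are two distinct components of $\overline{X}$, they are disjoint, and the inclusions refine to $Y \subset \mathfrak{R}Z$ and $Z \subset \mathfrak{R}Y$. Since $Y$ is connected, it lies in a single connected component $Z_0$ of $\mathfrak{R}Z$. Under the identification of $\overline{Z}$ with its image in $\overline{X}$ recalled in the preliminaries, $Z_0$ is a subcomplex of $\overline{X}$ containing $Y$, so $\dim(Y) \leq \dim(Z_0)$. A first application of Lemma~\ref{lem:RollerDim} to $X$ gives $\dim(Z) < \dim(X) < \infty$, which allows us to apply Lemma~\ref{lem:RollerDim} a second time, now to $Z$, to conclude that $\dim(Z_0) < \dim(Z)$. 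Combining these inequalities yields $\dim(Y) < \dim(Z)$, and the symmetric argument (swapping the roles of $Y$ and $Z$) yields $\dim(Z) < \dim(Y)$, a contradiction.

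The only delicate point I foresee is ensuring that the component $Z_0$ of $\mathfrak{R}Z$, when viewed as a subcomplex of $\overline{X}$, has intrinsic dimension equal to its dimension inside $\overline{Z}$, so that both applications of Lemma~\ref{lem:RollerDim} compose cleanly. This is guaranteed by the identification of hyperplanes of $Z$ with the hyperplanes of $X$ crossing $Z$ (and analogously for halfspaces), recalled at the beginning of the preliminaries, which ensures that the cube structure of $\overline{Z}$ is inherited from that of $\overline{X}$. Once this compatibility is taken for granted, the argument is a clean two-line dimension count.
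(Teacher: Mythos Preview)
Your proof is correct and follows essentially the same approach as the paper: apply Lemma~\ref{lem:DirectionPrec} in both directions to get $Y \subset \overline{Z}$ and $Z \subset \overline{Y}$, then invoke Lemma~\ref{lem:RollerDim} to derive the dimension contradiction $\dim(Y) < \dim(Z) < \dim(Y)$. The paper compresses this last step into the single sentence ``We conclude from Lemma~\ref{lem:RollerDim} that $Y=Z$,'' whereas you have unpacked it carefully (including the intermediate passage through a component $Z_0 \subset \mathfrak{R}Z$ and the verification that $Z$ is finite-dimensional so that Lemma~\ref{lem:RollerDim} applies to it).
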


\begin{proof}
Assume that $Y,Z \subset \mathfrak{R}X$ are two components having the same directions. By applying Lemma~\ref{lem:DirectionPrec} twice, it follows that $Y \subset \overline{Z}$ and $Z \subset \overline{Y}$. We conclude from Lemma~\ref{lem:RollerDim} that $Y=Z$. 
\end{proof}

\noindent
By putting everything together, we can prove the following statement, which will essentially prove Proposition~\ref{prop:FinitelyManyComponents}.

\begin{lemma}\label{lem:FinitelyManyDirections}
Let $X$ be a finite-dimensional CAT(0) cube complex and $Y \subset \mathfrak{R} X$ a component. Up to equivalence, there exist only finitely many directions to $Y$.
\end{lemma}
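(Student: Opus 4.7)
The plan is to argue by contradiction: suppose $Y$ admits infinitely many pairwise non-equivalent directions $A^{(1)}, A^{(2)}, \ldots$. I would first observe that any two directions to $Y$ cannot be independent, since if $A_i^{(p)}$ and $A_j^{(q)}$ were disjoint halfspaces of $X$, then no orientation of $X$ could simultaneously select both, contradicting the existence of an element of $Y$ lying in both halfspaces (which holds by the very definition of a direction to $Y$). Consequently, Fact~\ref{fact:DirectionTrichotomy} forces every pair of distinct $A^{(p)}, A^{(q)}$ to be either transverse or $\sqsubset$-comparable in one of the two directions. I would then $2$-color the pairs $\{p,q\}$ accordingly and invoke Ramsey's theorem to extract an infinite monochromatic subfamily.

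If the monochromatic color is \emph{transverse}, then for any finite subfamily $A^{(i_1)}, \ldots, A^{(i_k)}$, one can take $N$ larger than the finitely many transversality thresholds, ensuring that $A_N^{(i_a)}$ and $A_N^{(i_b)}$ are transverse for all $a \neq b$. This yields $k$ pairwise transverse halfspaces of $X$ for arbitrarily large $k$, contradicting the finite-dimensionality of $X$. If instead the color is \emph{strictly nested}, I would first check that $\sqsubset$ descends to a partial order on equivalence classes of directions: reflexivity and transitivity are immediate from the definition, and antisymmetry modulo equivalence holds by definition. Pairwise comparability on the infinite monochromatic set is then exactly a chain, producing an infinite $\sqsubset$-chain of pairwise non-equivalent directions. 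This contradicts Corollary~\ref{cor:NestedChain}, which bounds the length of such chains by $\dim(X)$.

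The main obstacle is really the initial observation that independence cannot occur between two directions to $Y$; everything else is a clean Ramsey-plus-dimension argument. The extraction of pairwise transverse halfspaces from pairwise transverse directions requires only a routine simultaneous truncation, and the nested case reduces immediately to the chain bound already established. One small subtlety is making sure Ramsey gives a \emph{chain} rather than merely a pairwise comparable set in the second case, but this is precisely what a partial order structure on the equivalence classes guarantees.
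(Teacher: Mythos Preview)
Your proof is correct and follows essentially the same approach as the paper: a Ramsey argument reducing to either pairwise transverse directions (bounded by $\dim(X)$) or pairwise $\sqsubset$-comparable directions (bounded by Corollary~\ref{cor:NestedChain}). You are in fact more explicit than the paper, which applies Fact~\ref{fact:DirectionTrichotomy} without spelling out that two directions to the same component $Y$ can never be independent, and which does not verify that pairwise $\sqsubset$-comparability on non-equivalent classes actually yields a chain.
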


\begin{proof}
Assume for contradiction that there exist infinitely many pairwise non-equivalent directions $D_1,D_2, \ldots$ to $Y$. Because at most $\dim(X)$ directions can be pairwise transverse, it follows from Ramsey theorem that we can assume that $D_1,D_2, \ldots$ are also pairwise non-transverse. We deduce from Fact~\ref{fact:DirectionTrichotomy} that $D_1,D_2, \ldots$ are pairwise $\sqsubset$-related. Consequently, we find arbitrarily long $\sqsubset$-chains, contradicting Corollary~\ref{cor:NestedChain}.
\end{proof}

\begin{proof}[Proof of Proposition~\ref{prop:FinitelyManyComponents}.]
According to Lemma~\ref{lem:FinitelyManyDirections}, there exist only finitely many directions to $Y$ up to equivalence. On the other hand, we know from Corollary~\ref{cor:DeterminedDirection} that a component of $\mathfrak{R}X$ is uniquely determined by the directions pointing to it, up to equivalence; and, according to Lemma~\ref{lem:DirectionPrec}, every component containing $Y$ in its Roller completion has its directions pointing to $Y$. We conclude that there may exist only finitely many components containing $Y$ in their completions. 
\end{proof}

\begin{proof}[Proof of Theorem~\ref{thm:KernelFullHoro}.]
The fact that $\mathrm{stab}_0(Y)$ has finite index in $\mathrm{stab}(Y)$ is an immediate consequence of Proposition~\ref{prop:FinitelyManyComponents}. Next, fix an element $g \in \mathrm{stab}_0(Y)$. If $g$ is not $X$-elliptic, then there exists some $k \geq 1$ such that $g^k$ is $X$-loxodromic. We deduce from Proposition~\ref{prop:FullHoro} that $g^k$, and a fortiori $g$, has unbounded orbits in $Y$. Conversely, assume that $g$ is not $Y$-elliptic. Because $Y$ is finite-dimensional according to Lemma~\ref{lem:RollerDim}, one can find a halfspace $D$ and a power $k \geq 1$ such that $g^{k}D \subsetneq D$. But halfspaces of $Y$ can be thought of as halfspaces of $X$, so the same property must be satisfied in $X$, proving that $g$ is not $X$-elliptic either.
\end{proof}

\noindent
We are finally ready to prove the main result of this section, namely Theorem~\ref{thm:Devissage}.  We start by recording the following consequence of our understanding of directions:

\begin{lemma}\label{lem:NoCompStab}
Let $X$ be a finite-dimensional CAT(0) cube complex. If $a$ and $b$ are two independent contracting isometries, then no component of $\mathfrak{R}X$ is virtually stabilised by both $a$ and $b$.
\end{lemma}

\begin{proof}
As a consequence of Lemma~\ref{lem:ContractingInfinity}, it suffices to prove that, given a contracting isometry $g \in \mathrm{Isom}(X)$, the only components of $\mathfrak{R}X$ that are virtually stablised by $g$ are the two components that contain the points at infinity of a given axis $\gamma$ of $g$. Recall from Proposition~\ref{prop:ContractingCharacterisation} that there exist a constant $L \geq 0$ and a hyperplane $J$ such that $\mathcal{J}:= \{g^nJ \mid n \in \mathbb{Z}\}$ is a collection of pairwise $L$-well-separated hyperplanes. 

\medskip \noindent
Let $Y \subset \mathfrak{R}X$ be a component. First, of all, notice that $Y$ is crossed by at most one hyperplane from $\mathcal{J}$. Indeed, if $J_1,J_2 \in \mathcal{J}$ both cross $Y$ and if $H_1,H_2, \ldots$ are the hyperplanes given by a direction to $Y$ (which exists according to Fact~\ref{fact:DirectionsExist}), then $J_1$ and $J_2$ have to be transverse to all but finitely many of the $H_i$. Since $J_1$ and $J_2$ are $L$-well-separated, this is not possible. Thus, there are two cases to consider.

\medskip \noindent
First, assume that $Y$ lies between $g^iJ$ and $g^{i+2}J$ for some $i \in \mathbb{Z}$. Then each $g^{2k}Y$ lies between $g^{i+2k}J$ and $g^{i+2(k+1)}J$, which implies that the $g^{2k}Y$ are pairwise distinct. A fortiori, $g$ does not virtually stabilise $Y$.

\medskip \noindent
Next, assume that $Y$ is contained in $g^nJ^+$ for every $n \in \mathbb{Z}$, where $J^+$ is a halfspace delimited by $J$. Up to replacing $g$ with $g^{-1}$, we assume that $gJ^+ \subset J^+$. Notice that, given an arbitrary component $W \subset \mathfrak{R}X$ satisfying $W \subset g^nJ^+$ for every $n \in \mathbb{Z}$ and a direction $(D_i)_{i \geq 0}$ to $W$, necessarily $(D_i)_{i \geq 0}$ is equivalent to $(g^iJ^+)_{i \geq 0}$. Otherwise, it would follow from Fact~\ref{fact:DirectionTrichotomy} and Lemma~\ref{lem:NestedTransverse} that there exist $j \geq 0$ such that $g^jJ$ and $g^{j+1}J$ are transverse to more than $L$ hyperplanes delimited the $D_i$, which is impossible since $g^jJ$ and $g^{j+1}J$ are $L$-well-separated. We conclude that there exists a unique component of $\mathfrak{R}X$ contained in all the $g^nJ^+$, namely $Y$, which has to coincide with a component of $\mathfrak{R}X$ containing a point at infinity of an axis of $g$.
\end{proof} 

\begin{proof}[Proof of Theorem~\ref{thm:Devissage}.]
 Let $Y$ be a component of $\overline{X}$ whose orbit under $G$ is finite. We choose $Y$ with the smallest possible dimension. Let $H \leq G$ be a finite-index subgroup contained in $\mathrm{stab}_0(Y)$. (Recall that, as a consequence of Theorem~\ref{thm:KernelFullHoro}, $\mathrm{stab}_0(Y)$ has finite index in $\mathrm{stab}(Y)$.) By construction, $H$ cannot stabilise a component of $\mathfrak{R}Y$, so it follows from Proposition~\ref{prop:Essential} that there is a convex subcomplex $E \subset Y$ on which $H$ acts essentially. Decompose $E$ as a product $E_1 \times \cdots \times E_n$ of irreducible subcomplexes. Up to replacing $H$ with a finite-index subgroup, we can assume that $H$ preserves the product structure of $E$.

\medskip \noindent
Fix an index $1 \leq i \leq n$. It follows from our choice of $Y$ that $H$ cannot virtually stabilise a component of $\mathfrak{R}E_i$. As a consequence of Proposition~\ref{prop:ForCriterion}, combined with Proposition~\ref{prop:ContractingCharacterisation}, $H$ contains two independent contracting isometries $a_i$ and $b_i$, with respect to the action of $H$ on $E_i$. We know from Lemma~\ref{lem:PurelyContractingFree} that, up to replacing $a_i,b_i$ with some of their powers, $F_i:= \langle a_i ,b_i \rangle$ is a purely contracting free subgroup of rank two and $[F_i,F_i]$ must contain two independent contracting elements. In particular, this implies, according to Lemma~\ref{lem:NoCompStab}, that $[F_i,F_i]$ does not virtually stabilise a component of $\mathfrak{R}E_i$. A fortiori, $[H,H]$ does not virtually stabilise a component of $\mathfrak{R}E_i$, and we deduce from Proposition~\ref{prop:Essential} that there exists a convex subcomplex $Z_i \subset E_i$ on which $[H,H]$ acts essentially. 

\medskip \noindent
So far, we know that $[H,H]$ acts essentially on the convex subcomplex $Z:= Z_1 \times \cdots \times Z_n \subset E$. Now, observe that $[H,H]$ is contained in the kernel of the full horomorphism of $Y$. Therefore, Theorem~\ref{thm:KernelFullHoro} implies that an element of $[H,H]$ is $X$-elliptic if and only if it is $Z$-elliptic. 
\end{proof}

\subsection{Proof of the Strong Tits Alternative}

\begin{proof}[Proof of Theorem~\ref{thm:TitsAlt}.]
 Let $X$ be a CAT(0) cube complex on which $G$ acts geometrically and let $K \leq G$ be an arbitrary subgroup. According to Theorem~\ref{thm:Devissage}, there exists a finite-index subgroup $H \leq K$ and a convex $H$-invariant subcomplex $Z \subset \overline{X}$ such that: 
\begin{itemize}
	\item if $[H,H]$ is infinite, then it acts essentially on $Z$ without virtually stabilising a component of $\mathfrak{R}Z$;
	\item every $Z$-elliptic element in $[H,H]$ is $X$-elliptic.
\end{itemize}
The second item implies that $[H,H]$ acts on $Z$ with torsion stabilisers. But we know that every torsion subgroup of $G$ is finite. Indeed, it follows from the proof of \cite[Theorem~5.1]{MR1347406} that every infinite finitely generated subgroup of $G$ contains an infinite-order element, so every torsion subgroup of $G$ must be an increasing union of finite subgroups. Since $G$ contains only finitely many conjugacy classes of finite subgroups (as any CAT(0) groups), the desired conclusion follows. As a consequence, $[H,H]$ acts on $Z$ properly. Therefore, if $Z$ is reduced to a single vertex, then $[H,H]$ must be finite and $H$ is virtually abelian. From now on, we assume that $Z$ is not reduced to a single vertex. Decompose $Z$ as a product of irreducible unbounded subcomplexes $Z_1 \times \cdots \times Z_n$ and fix a finite-index subgroup $Q \leq [H,H]$ that preserves the product structure. This is possible according to \cite[Proposition~2.6]{MR2827012}.

\medskip \noindent
Given an index $1 \leq i \leq n$, $Q$ acts essentially on $Z_i$ without virtually stabilising a component in $\mathfrak{R}Z_i$. (In particular, this implies that $\overline{Z_i}$ has infinitely many components.) Thus, the assumptions of Proposition~\ref{prop:ForCriterion} hold, and we can apply Lemma~\ref{lem:TreeHyp}, which proves that $Z_i$ contains a branching and uniformly distributed tree of pairwise strongly separated hyperplanes $\mathcal{H}_i$. A fortiori, $\mathcal{H}_i$ is a branching and uniformly distributed tree of hyperplanes in $X$. Moreover, if $J$ is a hyperplane of $X$ that separates two hyperplanes in $\mathcal{H}_i$, then it has to be a hyperplane of $Z$ and even a hyperplane of $Z_i$ (by convexity of $Z$), which implies that $J$ cannot be transverse to two hyperplanes in $\mathcal{H}_i$ since the hyperplanes in $\mathcal{H}_i$ are pairwise strongly separated. Thus, $\mathcal{H}_i$ has no transeparation. 

\medskip \noindent
Because the trees of hyperplanes $\mathcal{H}_i$ and $\mathcal{H}_j$ are clearly transverse for all distinct $1 \leq i,j \leq n$, it follows from Proposition~\ref{prop:NotHyp} that necessarily $n=1$. In other words, $Z$ is irreducible. It follows that the elements $a,b \in [H,H]$ given by Proposition~\ref{prop:ForCriterion} are independent rank-one isometries, which implies that $[H,H]$ is acylindrically hyperbolic (by applying \cite{MR3849623}, or alternatively \cite[Example~2.1(3)]{BBF} or \cite[Section~6.6]{MR4057355}).  
\end{proof}

\section{Cyclically hyperbolic special groups}

\subsection{Combinatorics of special cube complexes}

\noindent
In this section, we recall the definition of special cube complexes and record some results required for the proof of Theorem~\ref{thm:Special} in the next section. 

\begin{definition}
Let $J$ be a hyperplane with a fixed orientation $\vec{J}$ in a (nonpositively curved) cube complex. We say that $J$ is:
\begin{itemize}
	\item \textit{2-sided} if $\vec{J} \neq \vec{J}^{-1}$, where $\vec{J}^{-1}$ denote opposite orientation of $\vec{J}$;
	\item \textit{self-intersecting} if there exist two edges dual to $J$ which are non-parallel sides of some square;
	\item \textit{self-osculating} if there exist two oriented edges dual to $\vec{J}$ with the same initial points or the same terminal points, but which do not belong to a same square.
\end{itemize}
Moreover, if $H$ is another hyperplane, then $J$ and $H$ are:
\begin{itemize}
	\item \textit{transverse} if there exist two edges dual to $J$ and $H$ respectively which are non-parallel sides of some square,
	\item \textit{inter-osculating} if they are transverse, and if there exist two edges dual to $J$ and $H$ respectively with one common endpoint, but which do not belong to a same square.
\end{itemize}
\end{definition}
\noindent
Sometimes, one refers 1-sided, self-intersecting, self-osculating and inter-osculating hyperplanes as \textit{pathological configurations of hyperplanes}. The last three configurations are illustrated on Figure \ref{figure17}.
\begin{figure}
\begin{center}
\includegraphics[trim={0, 19.5cm 1cm 0},clip,scale=0.45]{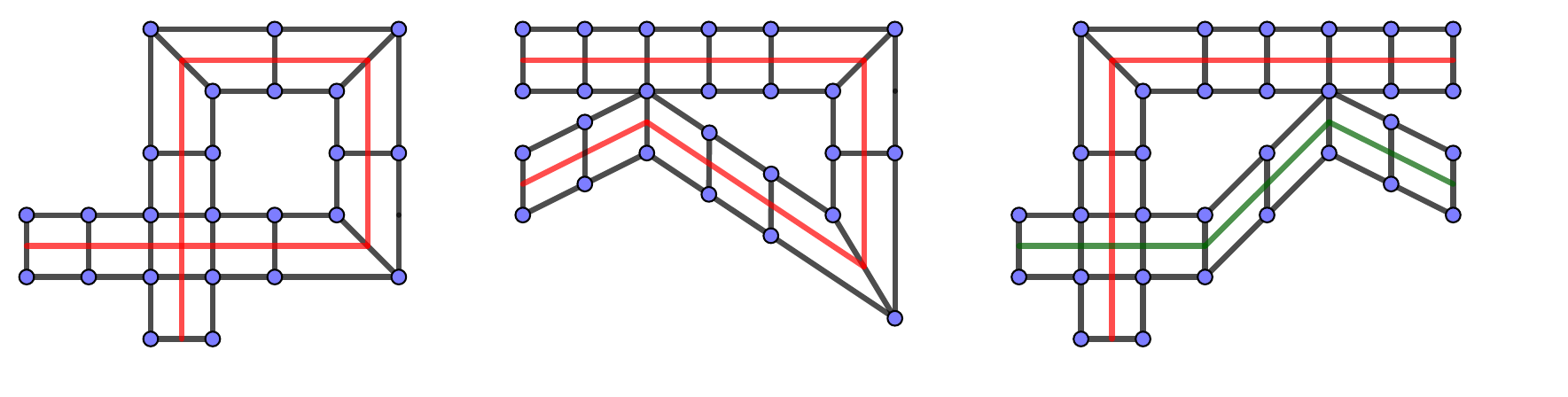}
\caption{From left to right: self-intersection, self-osculation, inter-osculation.}
\label{figure17}
\end{center}
\end{figure}

\begin{definition}
A \emph{special cube complex} is a nonpositively curved cube complex whose hyperplanes are two-sided and which does not contain self-intersecting, self-osculating or inter-osculating hyperplanes. A group which can be described as the fundamental group of a (compact) special cube complex is \emph{(cocompact) special}.
\end{definition}

\noindent
Let $X$ be a cube complex (not necessarily nonpositively-curved). For us, a \emph{path} in $X$ is the data of a sequence of successively adjacent edges. What we want to understand is when two such paths are homotopic (with fixed endpoints). For this purpose, we need to introduce the following elementary transformations. One says that:  
\begin{itemize}
	\item a path $\gamma \subset X$ contains a \emph{backtrack} if the word of oriented edges read along $\gamma$ contains a subword $ee^{-1}$ for some oriented edge $e$;
	\item a path $\gamma' \subset X$ is obtained from another path $\gamma \subset X$ by \emph{flipping a square} if the word of oriented edges read along $\gamma'$ can be obtained from the corresponding word of $\gamma$ by replacing a subword $e_1e_2$ with $e_2'e_1'$ where $e_1',e_2'$ are opposite oriented edges of $e_1,e_2$ respectively in some square of $X$. 
\end{itemize}
We claim that these elementary transformations are sufficient to determine whether or not two paths are homotopic. More precisely:

\begin{prop}\label{prop:cubehomotopy}
Let $X$ be a cube complex and $\gamma,\gamma' \subset X$ two paths with the same endpoints. The paths $\gamma,\gamma'$ are homotopic (with fixed endpoints) if and only if $\gamma'$ can be obtained from $\gamma$ by removing or adding backtracks and flipping squares. 
\end{prop}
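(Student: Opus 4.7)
The ``if'' direction is immediate, since each elementary move is itself a homotopy rel endpoints. A backtrack insertion or removal realizes the trivial null-homotopy of the loop $ee^{-1}$; a square flip replacing $e_1 e_2$ by $e_2' e_1'$ is a homotopy through the corresponding square $2$-cell. So any path obtained from $\gamma$ by a finite sequence of elementary moves remains homotopic to $\gamma$ rel endpoints.

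For the ``only if'' direction, assume $\gamma$ and $\gamma'$ are homotopic rel their common endpoints $p,q$. Then the concatenation $\alpha := \gamma \cdot (\gamma')^{-1}$ is a null-homotopic loop at $p$, so by van Kampen's lemma (recalled at the start of Section~\ref{section:ConeOff}, and valid in any cube complex since it is a general fact about combinatorial $2$-complexes), $\alpha$ bounds a combinatorial disc diagram $\varphi : D \to X$. Viewing $D$ as a topological disc with $\partial D$ split at $p$ and $q$ into two arcs $\partial_+ D$ and $\partial_- D$ satisfying $\varphi(\partial_+ D) = \gamma$ and $\varphi(\partial_- D) = \gamma'$, the plan is to sweep the front arc from $\partial_+ D$ to $\partial_- D$ across $D$, producing a sequence of edge paths $\gamma = \gamma_0, \gamma_1, \ldots, \gamma_m = \gamma'$ in which each $\gamma_{i+1}$ is obtained from $\gamma_i$ by a single elementary move. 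I proceed by induction on $\mathrm{Area}(D)$.

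In the base case $\mathrm{Area}(D) = 0$, the diagram $D$ is a finite tree; both arcs $\partial_+ D$ and $\partial_- D$ from $p$ to $q$ traverse the unique tree geodesic $[p,q] \subset D$ plus some spurs (dead-end branches), so $\gamma$ and $\gamma'$ both reduce to $\varphi([p,q])$ by successive backtrack removals (peeling leaf edges of $D$). In the inductive step $\mathrm{Area}(D) \geq 1$, first remove any spurs appearing along $\partial_+ D$ via backtrack moves on $\gamma_i$, then locate a square $S \subset D$ meeting $\partial_+ D$ along two consecutive edges. The square flip at $S$ replaces those two edges of $\gamma_i$ with the opposite corner path of $S$, producing $\gamma_{i+1}$ bounded together with $\partial_- D$ by the disc diagram $D \setminus S$, whose area is $\mathrm{Area}(D) - 1$; the induction hypothesis finishes the argument.

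The main obstacle is producing, at each inductive step, an outermost square $S$ with two adjacent edges on the front arc $\partial_+ D$. This is a planarity statement: after spur removal from $\partial_+ D$, at least one square of $D$ must meet $\partial_+ D$ along two consecutive edges, for otherwise a careful dual-graph/Euler-characteristic argument contradicts the simple connectivity of $D$. The analysis is complicated slightly by the fact that $X$ is not assumed nonpositively curved, so $\varphi$ may be highly non-injective (dual curves of $D$ may repeat hyperplanes of $X$, squares may fold, etc.); however, these degeneracies only affect the image in $X$ and not the purely combinatorial structure of $D$ as a planar disc complex, so the corner-square search goes through as in the classical setting.
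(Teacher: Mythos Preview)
Your ``if'' direction is fine, and the disc-diagram strategy for the ``only if'' direction is sound in outline, but the inductive step contains a genuine gap. The claim that, after removing spurs from $\partial_+ D$, some square of $D$ must meet $\partial_+ D$ along two consecutive edges is false. Take $D$ to be a single square, with $\partial_+ D$ one of its four edges and $\partial_- D$ the other three: there are no spurs, $\mathrm{Area}(D)=1$, yet the unique square meets $\partial_+ D$ in only one edge, so no flip on $\gamma$ is available. More generally, the Euler-characteristic argument you invoke does guarantee that a spur-free disc diagram of positive area has corner squares (squares with two consecutive edges on $\partial D$), but those corners may all sit at the junction vertices $p,q$ or on $\partial_- D$; nothing forces one onto the interior of $\partial_+ D$.

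The repair is easy. One option: allow backtrack \emph{insertions}, not only removals, on the $\gamma_i$ side. In the single-square example, appending a backtrack along the adjacent square-edge turns $\gamma$ into a path whose first two edges form a flippable corner. A cleaner option: argue symmetrically, using the guaranteed corner somewhere on $\partial D$ to reduce the area from whichever side it lies on, so that both $\gamma$ and $\gamma'$ are connected by elementary moves to a common intermediate path; since every elementary move is reversible, concatenating the two sequences (the second reversed) gives a sequence from $\gamma$ to $\gamma'$.

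For comparison, the paper does not run a disc-diagram induction at all. It simply observes that square flips are exactly the defining relations of the fundamental groupoid of a square complex and cites Brown's \emph{Topology and Groupoids} for the standard fact that two edge-paths are homotopic rel endpoints if and only if they represent the same morphism in that groupoid, i.e.\ differ by backtrack moves and those relations. Your approach is more explicit and constructive once patched, but requires the extra care above.
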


\noindent
This statement follows from the fact that flipping squares provide the relations of the fundamental groupoid of $X$; see \cite[Statement 9.1.6]{BrownGroupoidTopology} for more details. 

\medskip \noindent
In the rest of this subsection, we describe (a particular case of) the formalism introduced in \cite{SpecialRH}. 

\medskip \noindent
From now on, we fix a (not necessarily compact) special cube complex $X$. The \emph{crossing graph}\footnote{The graph $\Delta X$ will define the right-angled Artin group in which the fundamental group of $X$ will embed. This is the same graph used in \cite{MR2377497}, where no name is assigned to it. Here, we use the suggestive terminology \emph{crossing graph}, but it should not be confused with related but different graphs introduced in various contexts, e.g.\ \cite{MR1153934, MR1379364, Roller, MR1920184, MR3217625}.} $\Delta X$ is the graph whose vertices are the hyperplanes of $X$ and whose edges link two transverse hyperplane. 

\medskip \noindent
Given a vertex $x_0 \in X$, a word $w=J_1 \cdots J_r$ of oriented hyperplanes $J_1, \ldots, J_r$ is \emph{$x_0$-legal} if there exists a path $\gamma$ in $X$ starting from $x_0$ such that the oriented hyperplanes it crosses are successively $J_1, \ldots, J_r$. We say that the path $\gamma$ \emph{represents} the word $w$. 

\begin{fact}\label{fact:uniquepath}\emph{\cite[Fact 3.3]{SpecialRH}}
An $x_0$-legal word is represented by a unique path in $X$. 
\end{fact}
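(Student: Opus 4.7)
The plan is to prove uniqueness by induction on the length $r$ of the word $w = J_1 \cdots J_r$. The base case $r = 0$ is immediate since there is only one path of length $0$ starting at $x_0$, namely the constant path at $x_0$. Existence of a representing path is built into the definition of ``$x_0$-legal'', so only uniqueness is at stake.

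For the inductive step, I would take two paths $\gamma$ and $\gamma'$ starting at $x_0$ that both represent $w$, and look at their initial edges $e$ and $e'$. By construction, both are oriented edges with initial vertex $x_0$ and both are dual to the oriented hyperplane $\vec{J_1}$. The crux is to show $e = e'$; once this is done, the terminal vertex $x_1$ of $e=e'$ is common to both paths, the remainders $\gamma \setminus e$ and $\gamma' \setminus e'$ are $x_1$-legal paths representing the shorter word $J_2 \cdots J_r$, and the induction hypothesis forces them to coincide, yielding $\gamma = \gamma'$.

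The heart of the argument is therefore ruling out the existence of two distinct oriented edges $e \neq e'$ at $x_0$ both dual to $\vec{J_1}$. I would split into two cases depending on whether $e$ and $e'$ belong to a common square of $X$ or not. If they do, then since they share the endpoint $x_0$ they must be two non-parallel (adjacent) sides of that square; but then the square witnesses two non-parallel edges dual to the single hyperplane $J_1$, which is exactly the definition of $J_1$ being self-intersecting, contradicting specialness. If $e$ and $e'$ do not lie in a common square, then they are two oriented edges dual to $\vec{J_1}$ sharing their initial vertex and not contained in a common square, which is exactly self-osculation of $J_1$, again contradicting specialness. Either way we reach a contradiction, so $e = e'$.

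The only real obstacle is this dichotomy: one must recognize that the hypothetical configuration ``two distinct oriented edges at a common vertex dual to the same oriented hyperplane'' is precisely what the combined forbidden configurations of self-intersection and self-osculation were designed to exclude. Once the dichotomy is set up cleanly, specialness finishes the step and the induction is essentially bookkeeping.
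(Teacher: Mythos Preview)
Your argument is correct. The paper does not actually prove this fact; it is imported verbatim from \cite[Fact~3.3]{SpecialRH} with a citation and no proof given here. Your inductive argument, reducing to the claim that at any vertex there is at most one oriented edge dual to a given oriented hyperplane and then splitting into the self-intersection/self-osculation dichotomy, is exactly the standard justification and is what underlies the cited fact.
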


\noindent
The previous fact allows us to define the \emph{terminus} of an $x_0$-legal word $w$, denoted by $t(w)$, as the ending point of the unique path representing $w$. 

\medskip \noindent
Set $\mathcal{L}(X) = \{ x \text{-legal words} \mid x \in X\}$ the set of all legal words. (If $x_1,x_2 \in X$ are two distinct points, we consider the empty $x_1$-legal and $x_2$-legal words as distinct.) We consider the equivalence relation $\sim$ on $\mathcal{L}(X)$ generated by the following transformations:
\begin{description}
	\item[(cancellation)] if a legal word contains $JJ^{-1}$ or $J^{-1}J$, remove this subword;
	\item[(insertion)] given an oriented hyperplane $J$, insert $JJ^{-1}$ or $J^{-1}J$ as a subword of a legal word;
	\item[(commutation)] if a legal word contains $J_1J_2$ where $J_1,J_2$ are two transverse oriented hyperplanes, replace this subword with $J_2 J_1$. 
\end{description}
So two $x$-legal words $w_1,w_2$ are equivalent with respect to $\sim$ if there exists a sequence of $x$-legal words 
$$m_1=w_1, \ m_2, \ldots, m_{r-1}, \ m_r=w_2$$ 
such that $m_{i+1}$ is obtained from $m_i$ by a cancellation, an insertion, or a commutation for every $1 \leq i \leq r-1$. Define $\mathcal{D}(X)= \mathcal{L}(X)/ \sim$ as a set of \emph{diagrams}. The following observation allows us (in particular) to define the \emph{terminus} of a diagram as the terminus of one of the legal words representing it.

\begin{fact}\label{fact:transformations}\emph{\cite[Fact 3.4]{SpecialRH}}
Let $w'$ be an $x_0$-legal word obtained from another $x_0$-legal word $w$ by a cancellation / an insertion / a commutation. If $\gamma',\gamma$ are paths representing $w',w$ respectively, then $\gamma'$ is obtained from $\gamma$ by removing a backtrack / adding a backtrack / flipping a square. 
\end{fact}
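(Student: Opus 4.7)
The plan is to handle the three elementary transformations --- cancellation, insertion, and commutation --- separately, showing in each case that the symbolic substitution at some position in $w$ corresponds to a local modification of the unique representing path at exactly the matching position. In each case, the two consecutive edges of $\gamma$ that read the affected pair of letters share a common vertex, and the specialness of $X$ forces them into a rigid local configuration: either a backtrack or a corner of a square. The untouched portions of $\gamma$ outside the modified region transfer unchanged to $\gamma'$ by Fact~\ref{fact:uniquepath}, since the unchanged prefix and suffix of $w$ have unique representing subpaths that must coincide on both sides of the modification.

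For cancellation, write $w = u_1 J J^{-1} u_2$, and let $e_1, e_2$ be the two consecutive edges of $\gamma$ reading $J$ then $J^{-1}$. Both are dual to the hyperplane $J$ and meet at the vertex $v$ where $e_1$ ends and $e_2$ begins. Reorienting so that both point in the direction of $\vec{J}$, one obtains two oriented edges dual to $\vec{J}$ sharing the terminal vertex $v$. Two distinct edges dual to the same hyperplane that lie in a common square are necessarily opposite sides of that square and therefore share no vertex; hence $e_1$ and $\overline{e_2}$ cannot lie in a common square. The non-self-osculating hypothesis then forces $e_1 = \overline{e_2}$, so $\gamma$ crosses $e_1$ and immediately returns along it, and removing this backtrack yields $\gamma'$. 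The insertion case is symmetric: attaching a backtrack $e \overline{e}$ at the appropriate vertex of $\gamma$ produces a path whose associated word differs from $w$ by the inserted pair $JJ^{-1}$, where $J$ is the hyperplane dual to $e$, oriented appropriately.

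For commutation, write $w = u_1 J_1 J_2 u_2$ with $J_1, J_2$ transverse, and let $e_1, e_2$ be the corresponding consecutive edges of $\gamma$, so that $e_1$ ends where $e_2$ begins and they are dual respectively to $J_1, J_2$. Since $X$ contains no inter-osculating pair of hyperplanes, $e_1$ and $e_2$ must bound a common square $S$. Let $e_2', e_1'$ be the two sides of $S$ opposite to $e_1, e_2$ respectively: $e_2'$ starts at the initial vertex of $e_1$ and is dual to $J_2$, while $e_1'$ ends at the terminal vertex of $e_2$ and is dual to $J_1$. Replacing the subpath $e_1 e_2$ in $\gamma$ with $e_2' e_1'$ is precisely a square flip, and the resulting path reads $J_2 J_1$ at the modified position, matching the commuted word.

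The main --- and essentially only --- obstacle is the local rigidity step in each case: without the special-cube-complex hypothesis, two consecutive edges of $\gamma$ meeting at a vertex with the prescribed hyperplane labels could fail to sit inside a backtrack or inside a square, and the symbolic transformation on $w$ would have no geometric counterpart on $\gamma$. Once this rigidity is established via non-self-osculation and non-inter-osculation, the global compatibility between $\gamma'$ and the transformed word is automatic from the uniqueness in Fact~\ref{fact:uniquepath}.
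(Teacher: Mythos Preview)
The paper does not supply its own proof of this fact: it is quoted verbatim from \cite[Fact~3.4]{SpecialRH} and used as a black box. Your argument is the natural and correct one --- the specialness axioms (no self-osculation, no inter-osculation) are exactly what pins down the local configuration of the two affected edges, and Fact~\ref{fact:uniquepath} handles the global compatibility of the unmodified prefix and suffix.

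One small notational slip: in the commutation paragraph you write ``let $e_2', e_1'$ be the two sides of $S$ opposite to $e_1, e_2$ respectively'', which would make $e_2'$ parallel to $e_1$ and hence dual to $J_1$; but immediately afterwards you (correctly) say $e_2'$ is dual to $J_2$. You mean $e_i'$ to be the side opposite to $e_i$, matching the paper's convention for flipping a square. With that correction the argument goes through cleanly.
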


\noindent
In the sequel, an \emph{$(x,y)$-diagram} will refer to a diagram represented by an $x$-legal word with terminus $y$, or just an \emph{$(x,\ast)$-diagram} if we do not want to specify its terminus. A diagram which is an $(x,x)$-diagram for some $x \in X$ is \emph{spherical}. 

\medskip \noindent
If $w$ is an $x_0$-legal word and $w'$ a $t(w)$-legal word, we define the \emph{concatenation} $w \cdot w'$ as the word $ww'$, which is $x_0$-legal since it is represented by the concatenation $\gamma \gamma'$ where $\gamma, \gamma'$ represent respectively $w,w'$. Because we have the natural identifications
\begin{table}[h]
	\centering
		\begin{tabular}{cccc}
			$\mathcal{L}(X)$ & $\leftrightarrow$ & paths in $X$ & (Fact \ref{fact:uniquepath}) \\ 
			$\sim$ & $\leftrightarrow$ & homotopy with fixed endpoints & (Fact \ref{fact:transformations}, Proposition \ref{prop:cubehomotopy}) 
		\end{tabular}
\end{table}

\noindent
it follows that the concatenation in $\mathcal{L}(X)$ induces a well-defined operation in $\mathcal{D}(X)$, making $\mathcal{D}(X)$ isomorphic to the fundamental groupoid of $X$. As a consequence, if we denote by $M(X)$ the Cayley graph of the groupoid $\mathcal{D}(X)$ with respect to the generating set given by the oriented hyperplanes, and, for every $x \in X$, $M(X,x)$ the connected component of $M(X)$ containing the trivial path $\epsilon(x)$ based at $x$, and $\mathcal{D}(X,x)$ the vertex-group of $\mathcal{D}(X)$ based at $\epsilon(x)$, then the previous identifications induce the identifications
\begin{table}[h]
	\centering
		\begin{tabular}{ccc}
			$\mathcal{D}(X)$ & $\leftrightarrow$ & fundamental groupoid of $X$  \\ 
			$\mathcal{D}(X,x)$ & $\leftrightarrow$ & $\pi_1(X,x)$  \\ 
			$M(X,x)$ & $\leftrightarrow$ & universal cover $\widetilde{X}^{(1)}$ with a specified basepoint  \\ 
		\end{tabular}
\end{table}

\noindent
More explicitly, $\mathcal{D}(X,x)$ is the group of $(x,x)$-diagrams endowed with the concatenation, and $M(X,x)$ is the graph whose vertices are the $(x,\ast)$-diagrams and whose edges link two diagrams $w_1$ and $w_2$ if there exists some oriented hyperplane $J$ such that $w_2=w_1J$. 

\medskip \noindent 
A more precise description of the identification between $M(X,x)$ and $\widetilde{X}^{(1)}$ is the following:
\begin{table}[h!]
	\centering
		\begin{tabular}{l}
			\hspace{5cm} $M(X,x) \longleftrightarrow \left( \widetilde{X}, \widetilde{x} \right)$ \\ \\ 
			\begin{tabular}{c} $(x,\ast)$-diagram represented \\ by an $x$-legal word $w$ \end{tabular}  $\mapsto$ \begin{tabular}{c} path $\gamma \subset X$ \\ representing $w$ \end{tabular}  $\mapsto$  \begin{tabular}{c} lift $\widetilde{\gamma} \subset \widetilde{X}$ of $\gamma$ \\ starting from $\widetilde{x}$ \end{tabular} $\mapsto$ \begin{tabular}{c} ending \\ point of $\widetilde{\gamma}$ \end{tabular} \\ \\
			\begin{tabular}{c} $(x,\ast)$-diagram represented by the \\ $x$-legal word corresponding to $\gamma$ \end{tabular} $\mapsfrom$ \begin{tabular}{c} image \\ $\gamma \subset X$ of $\gamma$ \end{tabular} $\mapsfrom$ \begin{tabular}{c} path $\widetilde{\gamma} \subset \widetilde{X}$ \\ from $\widetilde{x}$ to $y$ \end{tabular} $\mapsfrom$ $y$
		\end{tabular}
\end{table}

\noindent
A diagram may be represented by several legal words. Such a word is \emph{reduced} if it has minimal length, i.e.\ it cannot be shortened by applying a sequence of cancellations, insertions and commutation. It is worth noticing that, in general, a diagram is not represented by a unique reduced legal word, but two such reduced words differ only by some commutations. (For instance, consider the homotopically trivial loop defined by the paths representing two of our reduced legal words, consider a disc diagram of minimal area bounded by this loop, and follow the proof of \cite[Theorem 4.6]{MR1347406}. Alternatively, use the embedding constructed in \cite[Section 4.1]{SpecialRH} (which does not use the present discussion) and conclude by applying the analogous statement which holds in right-angled Artin groups.) As a consequence, we can define the \emph{length} of a diagram as the length of any reduced legal word representing it. It is worth noticing that our length coincides with the length which is associated to the generating set given by the oriented hyperplanes in the groupoid $\mathcal{D}(X)$. The next lemma follows from this observation.

\begin{lemma}\label{lem:GeodesicsSpecial}
Let $D_1,D_2 \in M(X,x)$ be two $(x,\ast)$-diagrams. If $J_1 \cdots J_n$ is a reduced legal word representing $D_1^{-1}D_2$, then 
$$D_1, \ D_1J_1, \ D_1J_1J_2, \ldots, \ D_1J_1 \cdots J_n$$
is a geodesic from $D_1$ to $D_2$ in $M(X,x)$. Conversely, any geodesic between $D_1$ and $D_2$ arises in this way.
\end{lemma}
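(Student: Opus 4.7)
The plan is to unpack the definition of $M(X,x)$ as a Cayley graph of the groupoid $\mathcal{D}(X)$ with generating set the oriented hyperplanes, and to observe that the distance in $M(X,x)$ between two diagrams is precisely the length, in the groupoid-theoretic sense recalled just before the lemma, of their quotient. Everything then follows by matching paths with legal words.

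More precisely, I would first argue that a combinatorial path $D_1=E_0, E_1, \ldots, E_n=D_2$ in $M(X,x)$ is in bijection with $t(D_1)$-legal words $K_1 \cdots K_n$ such that $D_1 \cdot [K_1 \cdots K_n] = D_2$, via $E_i = D_1 \cdot [K_1 \cdots K_i]$. Indeed, by definition of $M(X,x)$, two diagrams are adjacent exactly when one is obtained from the other by right-multiplication by an oriented hyperplane, and the groupoid operation coincides with concatenation of legal words on representatives. The condition $D_1 \cdot [K_1 \cdots K_n] = D_2$ simply reads $[K_1 \cdots K_n] = D_1^{-1}D_2$.

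Consequently, $d_{M(X,x)}(D_1, D_2)$ equals the minimum length of a legal word representing the diagram $D_1^{-1}D_2$, which is the groupoid word-length of $D_1^{-1}D_2$ with respect to the generating set of oriented hyperplanes. As noted in the paragraph preceding the lemma, this word-length coincides with the length of $D_1^{-1}D_2$ in the sense introduced there, i.e., the common length $n$ of any reduced legal word representing it (two reduced representatives differing only by commutations, which preserve length).

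From here the two assertions are immediate. Given a reduced legal word $J_1 \cdots J_n$ representing $D_1^{-1}D_2$, the sequence $D_1, D_1J_1, \ldots, D_1J_1 \cdots J_n$ is by construction a path of length $n = d_{M(X,x)}(D_1, D_2)$ from $D_1$ to $D_2$, hence a geodesic. Conversely, any geodesic from $D_1$ to $D_2$ has length $n$ and, via the bijection above, corresponds to a $t(D_1)$-legal word of length $n$ representing $D_1^{-1}D_2$; such a word must be reduced, for otherwise a cancellation would produce a strictly shorter legal word representative, and hence a strictly shorter path in $M(X,x)$ between $D_1$ and $D_2$, contradicting geodesicity.

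There is no real obstacle here: the only thing to be careful about is that the ``length'' of a diagram defined ad hoc in the paragraph preceding the lemma genuinely agrees with the natural word-length in the Cayley graph $M(X,x)$, and this is exactly the statement (already established in the preceding discussion) that reduced legal words representing a given diagram are pairwise related by commutations only, so in particular share a common length.
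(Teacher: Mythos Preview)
Your proof is correct and follows exactly the approach the paper indicates: the paper does not give a detailed proof but simply says the lemma ``follows from this observation,'' namely that the length of a diagram coincides with the word-length in the Cayley graph $M(X,x)$, and you have spelled out precisely this argument. The only minor remark is that in the converse direction you need not invoke cancellations explicitly: since ``reduced'' is defined as having minimal length, a legal word of length $n=d_{M(X,x)}(D_1,D_2)$ representing $D_1^{-1}D_2$ is reduced by definition.
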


\noindent
Essentially by construction, one has:

\begin{fact}\label{fact:diagvsRAAG}
\emph{\cite[Fact 4.3]{SpecialRH}}
Let $X$ be a special cube complex and $x \in X$ a basepoint. Two $x$-legal words of oriented hyperplanes are equal in $\mathcal{D}(X,x)$ if and only if they are equal in the right-angled Artin group $A(\Delta X)$. 
\end{fact}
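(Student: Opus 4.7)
The forward direction is immediate from the definitions: each of the three generators of the equivalence $\sim$ is already a relation in $A(\Delta X)$. Cancellation and insertion of $JJ^{-1}$ or $J^{-1}J$ are free-group relations, and commutation $J_1 J_2 = J_2 J_1$ for transverse oriented hyperplanes is precisely the defining relation of $A(\Delta X)$, because transverse hyperplanes of $X$ are by construction the adjacent vertices in the crossing graph $\Delta X$. Hence any $\sim$-equivalence already yields equality in $A(\Delta X)$.

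For the converse, the plan is to route through the canonical combinatorial map $\phi : X \to S_{\Delta X}$ into the Salvetti complex of $A(\Delta X)$, which sends each oriented edge of $X$ dual to a hyperplane $J$ to the generator-loop of $S_{\Delta X}$ labelled by $J$, and extends to $n$-cubes of $X$ by sending them to the $n$-tori of $S_{\Delta X}$ spanned by commuting generators. Specialness of $X$ is exactly what makes $\phi$ a well-defined combinatorial local isometry, and by Haglund--Wise this induces an injection $\pi_1(X,x) \hookrightarrow \pi_1(S_{\Delta X}) = A(\Delta X)$, with lift $\widetilde{\phi} : \widetilde{X} \to \widetilde{S_{\Delta X}}$ an isometric embedding on universal covers. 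By Fact \ref{fact:uniquepath}, each $x$-legal word $w$ corresponds to a unique path $\gamma_w \subset X^{(1)}$ starting at $x$, and by construction the word of generators read along $\phi(\gamma_w)$ is exactly $w$. If two $x$-legal words $w_1,w_2$ are equal in $A(\Delta X)$, then $\phi(\gamma_{w_1})$ and $\phi(\gamma_{w_2})$ represent the same element of the fundamental groupoid of $S_{\Delta X}$, so they are homotopic rel endpoints; lifting through the isometric embedding on universal covers produces a homotopy rel endpoints between $\gamma_{w_1}$ and $\gamma_{w_2}$ in $X$ (in particular forcing $w_1,w_2$ to share the same terminus). Proposition \ref{prop:cubehomotopy} and Fact \ref{fact:transformations} then translate this homotopy into a sequence of cancellations, insertions, and commutations showing $w_1 \sim w_2$, i.e.\ equality in $\mathcal{D}(X,x)$.

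The main obstacle is precisely what the detour through $\phi$ is designed to circumvent: a naive algebraic derivation of $w_1 = w_2$ inside $A(\Delta X)$ may pass through intermediate words that do not lift to any $x$-based path in $X$ and are therefore not legal. Performing the equivalence geometrically in $X$, through square flips and backtrack removals, automatically preserves legality at every step by Fact \ref{fact:transformations}. Specialness is what reconciles the two sides: the no-inter-osculation property forces any two consecutive edges dual to transverse hyperplanes along a legal path to bound a square (so a commutation is realized as a genuine square flip), and the no-self-osculation property ensures that a $JJ^{-1}$ subword of a legal path comes from a true backtrack on the same edge rather than from a spurious algebraic cancellation.
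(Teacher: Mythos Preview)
The paper does not supply a proof for this fact; it is quoted from \cite{SpecialRH} with the remark ``essentially by construction.'' Your argument is correct and follows the standard Haglund--Wise route: specialness makes the canonical map $\phi : X \to S_{\Delta X}$ a local isometry, hence $\pi_1$-injective with an isometric embedding $\tilde\phi$ on universal covers, and you then transport the homotopy back to $X$. One phrasing to tighten: the homotopy between $\gamma_{w_1}$ and $\gamma_{w_2}$ is not literally ``lifted through $\tilde\phi$''; rather, the lifts of the $\gamma_{w_i}$ to $\tilde X$ from a common basepoint must share terminal points (because their $\tilde\phi$-images do, and $\tilde\phi$ is injective), and then simple connectedness of $\tilde X$ provides the homotopy, which descends to $X$. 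Also note that the step translating the homotopy to a $\sim$-equivalence uses the converse of Fact~\ref{fact:transformations} (path moves imply word moves), which is at least as immediate as the stated direction. Your final paragraph is accurate commentary on why a direct algebraic derivation in $A(\Delta X)$ could leave the class of legal words without the specialness hypotheses, but it is not needed once you have routed the argument through $\phi$.
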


\noindent
As a consequence, a reduced legal word represents a trivial element in $\mathcal{D}(X,x)$ if and only if it is empty.

\subsection{Characterising cyclic hyperbolicity}

\noindent
Before turning to the proof of Theorem~\ref{thm:Special}, we record a couple of elementary general observations about cyclic hyperbolicity. 

\begin{lemma}\label{lem:CyclicVSabelian}
Let $G$ be a finitely generated group. If $G$ is weakly hyperbolic relative to a finite collection of finitely generated abelian subgroups, then it is cyclically hyperbolic.
\end{lemma}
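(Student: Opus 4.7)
The plan is to replace each finitely generated abelian subgroup by a finite family of cyclic subgroups and to compare the two resulting cone-offs. Let $\mathcal{H}=\{H_1,\ldots,H_n\}$ be the given finite family of finitely generated abelian subgroups, fix a finite generating set $S$ of $G$, and let $Y$ be the cone-off of $\mathrm{Cayl}(G,S)$ over the cosets of subgroups in $\mathcal{H}$, which is hyperbolic by assumption. For each $i$, write $H_i \cong \mathbb{Z}^{k_i} \oplus F_i$ with $F_i$ finite, and pick elements $h_{i,1},\ldots,h_{i,k_i} \in H_i$ whose projections to $\mathbb{Z}^{k_i}$ form a basis, so that $\langle h_{i,1},\ldots,h_{i,k_i}\rangle$ has finite index in $H_i$. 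Let $\mathcal{C} = \{\langle h_{i,j}\rangle\}_{i,j}$ be the resulting finite collection of cyclic subgroups, and let $Y'$ be the cone-off of $\mathrm{Cayl}(G,S)$ over the cosets of subgroups in $\mathcal{C}$. The goal is to prove that $Y'$ is hyperbolic.

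The key step is to establish a uniform bound $\mathrm{diam}_{Y'}(gH_i) \leq D$ for every $g\in G$ and every $i$. Fix a finite set $T_i \subset H_i$ of coset representatives of $\langle h_{i,1},\ldots,h_{i,k_i}\rangle$ in $H_i$, and let $N$ be a uniform upper bound on the $S$-lengths of the elements of $T_1 \cup \cdots \cup T_n$. Given $h \in H_i$, write $h = h_{i,1}^{n_1}\cdots h_{i,k_i}^{n_{k_i}} \cdot t$ with $t \in T_i$. For each $1 \leq j \leq k_i$, the partial products $gh_{i,1}^{n_1}\cdots h_{i,j-1}^{n_{j-1}}$ and $gh_{i,1}^{n_1}\cdots h_{i,j}^{n_j}$ lie in a common coset of $\langle h_{i,j}\rangle$, hence are at distance at most $2$ in $Y'$ through the corresponding cone vertex. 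Chaining these inequalities and then multiplying by $t$, I would conclude $d_{Y'}(g,gh) \leq 2k_i + N$, giving the desired uniform bound with $D := 2\max_i k_i + N$.

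With this bound at hand, I would compare the metrics $d_Y$ and $d_{Y'}$ restricted to the common vertex set $G$. Since every coset of $\mathcal{C}$ is contained in a coset of $\mathcal{H}$, additional cone vertices of $Y$ simply increase the set of short paths, so $d_Y \leq d_{Y'}$ on $G$. Conversely, any combinatorial geodesic in $Y$ from $x$ to $y \in G$ consists of $a$ Cayley edges and of $b$ pairs of cone edges each going into and out of a cone vertex over some coset $gH_i$; the endpoints of such a pair lie in a common coset of $\mathcal{H}$ and are therefore at distance at most $D$ in $Y'$, while Cayley edges are unchanged. Hence $d_{Y'}(x,y) \leq a + Db \leq \max(1,D/2)\cdot d_Y(x,y)$, so the identity on $G$ extends to a $G$-equivariant quasi-isometry between $Y$ and $Y'$. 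Since $Y$ is hyperbolic, so is $Y'$, which shows that $G$ is cyclically hyperbolic.

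I expect no serious obstacle: the only genuine content is the coset-diameter estimate, which is a direct and elementary consequence of the fact that a finitely generated abelian group has bounded diameter after coning over finitely many cyclic subgroups generating a finite-index subgroup.
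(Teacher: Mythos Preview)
Your proof is correct and follows essentially the same approach as the paper: replace each abelian subgroup by the cyclic subgroups generated by a basis, and show that the identity on $G$ induces a quasi-isometry between the two cone-offs, the key point being that each coset $gH_i$ has uniformly bounded diameter in the cyclic cone-off. Your treatment is in fact slightly more careful than the paper's, which speaks of ``free bases of the $A_i$'' without addressing torsion and uses the edge-model of the cone-off to get the diameter bound $\max_i \mathrm{rank}(A_i)$; your explicit handling of the finite part via coset representatives and the resulting bound $2\max_i k_i + N$ is a harmless refinement of the same idea.
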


\begin{proof}
Let $\mathcal{A}=\{A_1,\ldots, A_n\}$ be a collection of finitely generated abelian subgroups such that $G$ is weakly hyperbolic relative to $\mathcal{A}$. Let $\mathcal{Z}$ denote a collection of cyclic subgroups obtained from free bases of the $A_i$. Let $Y_1$ denote the cone-off of $G$ over the cosets of subgroups in $\mathcal{A}$ and let $Y_2$ denote the cone-off of $G$ over the cosets of subgroups in $\mathcal{Z}$. Here, we think of a cone-off as the graph obtained by adding an edge between any two vertices that belong to a common coset. Thus, the identity $G \to G$ induces a map $\varphi : Y_1 \to Y_2$. Observe that $\varphi^{-1}(Z)$ has diameter one for every coset $Z$ of a subgroup in $\mathcal{Z}$, and that $\varphi(A)$ has diameter at most $\max( \mathrm{rank}(A_i), 1 \leq i \leq n)$ for every coset $A$ of a subgroup in $\mathcal{A}$. This implies that $\varphi$ is a quasi-isometry. Consequently, $Y_2$ has to be hyperbolic, proving that $G$ is cyclically hyperbolic. 
\end{proof}

\begin{lemma}\label{lem:FiniteIndexHyp}
Let $G$ be a finitely generated group and $H \leq G$ a finite-index subgroup. Then $G$ is cyclically hyperbolic if and only if so is $H$.
\end{lemma}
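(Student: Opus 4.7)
The strategy is to use Lemma~\ref{lem:MilnorSvarc} symmetrically in both directions, applied to the geodesic metric space $X = \mathrm{Cayl}(G,S)$ on which both $G$ and $H$ act properly and cocompactly (the latter because $H$ has finite index in $G$). The elementary fact driving the argument is that every subgroup of a cyclic group is cyclic, so intersections of the form $H \cap g\langle g_i\rangle g^{-1}$ remain cyclic.

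For the forward direction, given witnessing cyclic subgroups $\langle g_1\rangle,\ldots,\langle g_n\rangle \leq G$, I would observe that the family of $G$-cosets $\{g\langle g_i\rangle : g\in G,\, 1\leq i\leq n\}$ decomposes into finitely many $H$-orbits under left multiplication (indexed by the double cosets $H\backslash G/\langle g_i\rangle$, which are finite in number since $[G:H]<\infty$). For each orbit representative $g_0\langle g_i\rangle$, the $H$-stabilizer $K := H \cap g_0\langle g_i\rangle g_0^{-1}$ is a cyclic subgroup of $H$ of finite index in $g_0\langle g_i\rangle g_0^{-1}$, and therefore acts cocompactly on $g_0\langle g_i\rangle$. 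Let $K_1,\ldots,K_N$ denote the resulting finite collection of cyclic subgroups of $H$. Applying Lemma~\ref{lem:MilnorSvarc} twice, once to $G\curvearrowright X$ with subspaces $X_{\langle g_i\rangle} = \langle g_i\rangle$ and once to $H\curvearrowright X$ with subspaces $X_{K_\ell} = g_0\langle g_i\rangle$, one finds that both the cone-off of $\mathrm{Cayl}(G)$ over $\{\langle g_i\rangle\}$-cosets and the cone-off of $\mathrm{Cayl}(H)$ over $\{K_\ell\}$-cosets are quasi-isometric to the same cone-off of $X$ over the $G$-translates $\{g\langle g_i\rangle\}$. Hyperbolicity of the first transfers to the third, so $H$ is cyclically hyperbolic with respect to $\{K_1,\ldots,K_N\}$.

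For the backward direction, my plan is to first reduce to the case where $H$ is normal in $G$ by passing to the normal core $N = \bigcap_{g\in G}gHg^{-1}$, which is normal in $G$ and finite-index in both $G$ and $H$; the forward direction then shows that $N$ inherits cyclic hyperbolicity from $H$, and one may assume $H = N$ is normal. Choosing coset representatives $x_1 = e, x_2,\ldots,x_r$ of $H$ in $G$, I would take as cyclic subgroups of $G$ the family $\{\langle x_l h_j x_l^{-1}\rangle : 1\leq l \leq r,\, 1\leq j\leq m\}$, all lying in $H$ by normality, and then apply Lemma~\ref{lem:MilnorSvarc} symmetrically to identify, up to quasi-isometry, the cone-off of $\mathrm{Cayl}(G)$ over $G$-cosets of these subgroups with the cone-off of $\mathrm{Cayl}(H)$ over $H$-cosets of the same family (the $H$-stabilizers agree with the subgroups themselves here, thanks to normality). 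The main obstacle is then to verify that this latter cone-off is hyperbolic, knowing only that the cone-off over the sub-family $\{\langle h_j\rangle\}$-cosets (the case $l=1$) is hyperbolic. I expect to handle this using the fact that conjugation by each $x_l$ is an automorphism of $H$ inducing a quasi-isometry of $\mathrm{Cayl}(H)$ that sends $\langle h_j\rangle$-cosets to $\langle x_l h_j x_l^{-1}\rangle$-cosets, so that each extra family of coned subsets sits at uniformly bounded Hausdorff distance from already-coned ones (since only finitely many $l$'s are involved), keeping the total cone-off quasi-isometric to the hyperbolic one provided by hypothesis.
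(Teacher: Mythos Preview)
Your forward direction ($G$ cyclically hyperbolic $\Rightarrow$ $H$ cyclically hyperbolic) is correct and in fact more carefully argued than the paper's. The paper replaces each $g_i$ by a power $g_i^k\in H$ and then asserts that the inclusion $H\hookrightarrow G$ induces a quasi-isometry between the two cone-offs; your double-coset approach via Lemma~\ref{lem:MilnorSvarc}, producing the cyclic subgroups $K=H\cap g_0\langle g_i\rangle g_0^{-1}$ of $H$ together with subspaces $g_0\langle g_i\rangle$, makes the identification of the two cone-offs with a single cone-off of $X$ completely transparent.

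Your backward direction has a genuine gap in the final step. After reducing to $H$ normal and setting up the enlarged family $\{\langle x_lh_jx_l^{-1}\rangle\}_{l,j}$, you correctly (with your double-coset machinery) identify the cone-off of $\mathrm{Cayl}(G)$ over $G$-cosets of this family with the cone-off of $\mathrm{Cayl}(H)$ over $H$-cosets of the same family. The problem is your last claim: that each $H$-coset $h\langle x_lh_jx_l^{-1}\rangle$ lies at uniformly bounded Hausdorff distance from some $H$-coset of an $\langle h_j\rangle$. The justification you offer --- that conjugation by $x_l$ is a quasi-isometry of $\mathrm{Cayl}(H)$ sending one family to the other --- does not imply a Hausdorff-distance bound (a quasi-isometry can move individual subsets arbitrarily far), and in fact the claim is false. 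Take $G=\mathbb{F}_2=\langle a,b\rangle$, let $H$ be the index-$2$ kernel of the map $a\mapsto 1,\ b\mapsto 0$ to $\mathbb{Z}/2$ (so $H$ is free on $a^2,\,b,\,aba^{-1}$), and set $h_1=b$, $x_2=a$. Then $\langle x_2h_1x_2^{-1}\rangle=\langle aba^{-1}\rangle$, and for any fixed $h'\in H$ one computes in $\mathrm{Cayl}(G)$ that $d\big((aba^{-1})^m,\,h'\langle b\rangle\big)\to\infty$ as $|m|\to\infty$. Equivalently, in the cone-off of $\mathrm{Cayl}(H)$ over $\langle b\rangle$-cosets one has $d(e,(aba^{-1})^m)=|m|$, so the extra coned set $\langle aba^{-1}\rangle$ is unbounded there. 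Thus the enlarged cone-off is \emph{not} quasi-isometric to the original one via the identity, and your argument does not show it is hyperbolic.

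By contrast, the paper handles this direction in one line: given that $H$ is weakly hyperbolic relative to $\{\langle h_j\rangle\}$, it asserts directly that the inclusion $H\hookrightarrow G$ induces a quasi-isometry between the cone-off of $H$ and the cone-off of $G$ over the same family $\{\langle h_j\rangle\}$, so $G$ is already cyclically hyperbolic with respect to $\{\langle h_j\rangle\}$ --- no conjugates, no enlarged family. Whether or not one finds that assertion fully justified as stated, it is a different route from yours; your attempt to reach hyperbolicity of the $G$-cone-off by first enlarging the peripheral family inside $H$ and then comparing cone-offs of $H$ runs into the obstacle above.
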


\begin{proof}
Assume that $H$ is weakly hyperbolic relative to $\mathcal{H}=\{ \langle h_1 \rangle, \ldots, \langle h_n \rangle \}$ for some $h_1, \ldots, h_n \in H$. The inclusion $H \hookrightarrow G$ induces a quasi-isometry from the cone-off of $H$ over the cosets of subgroups in $\mathcal{H}$ to the cone-off of $G$ over the cosets of subgroups in $\mathcal{H}$, so the latter has to be hyperbolic, proving that $G$ is cyclically hyperbolic.

\medskip \noindent
Conversely, assume that $G$ is weakly hyperbolic relative to $\mathcal{G}=\{ \langle g_1 \rangle, \ldots, \langle g_n \rangle \}$ for some $g_1, \ldots, g_n \in G$. There exists some $k \geq 1$ such that $g_1^k,\ldots, g_n^k \in H$. Set $\mathcal{G}':=\{ \langle g_1^k \rangle, \ldots, \langle g_n^k \rangle\}$. Observe that the identity $G \to G$ induces a quasi-isometry from the cone-off of $G$ over the cosets of subgroups in $\mathcal{G}$ to the cone-off of $G$ over the cosets of subgroups in $\mathcal{G}'$, so the latter is also hyperbolic. But, as before, the inclusion $H \hookrightarrow G$ induces a quasi-isometry from the cone-off of $H$ over the cosets of subgroups in $\mathcal{G}'$ to the cone-off of $G$ over the cosets of subgroups in $\mathcal{G}'$, so the former has to be hyperbolic, proving that $H$ is cyclically hyperbolic.
\end{proof}

\begin{proof}[Proof of Theorem~\ref{thm:Special}.]
Let $G$ be a virtually cocompact special group. According to Lemma~\ref{lem:FiniteIndexHyp}, we can assume without loss of generality that $G$ is cocompact special. Let $X$ be a special cube complex with $G$ as its fundamental group and fix a vertex $x_0 \in X$. Assume first that $G$ does not contain $\mathbb{F}_2 \times \mathbb{F}_2$ as a subgroup. From now on, we identify $G$ with $\mathcal{D}(X,x_0)$.

\medskip \noindent
For every $x \in X$, we fix an $(x_0,x)$-legal word $w(x)$. If $\Lambda \subset \Delta X$, we denote by $w(x) \langle \Lambda \rangle w(x)^{-1}$ the subgroup given by the diagrams represented by $(x_0,x_0)$-legal words of the form $w(x) \cdot w \cdot w(x)^{-1}$ where $w$ is a word of hyperplanes in $\Lambda$. Also, we denote by $w(x) \langle \Lambda \rangle$ the (convex) subcomplex of $M(X,x_0)$ generated by the vertices represented by $x_0$-legal words of the form $w(x) \cdot w$ where $w$ is again a word of hyperplanes in $\Lambda$. Set
$$\mathcal{H} := \{ w(x) \langle \Lambda \rangle w(x)^{-1} \text{ abelian} \mid x \in X, \Lambda \subset \Delta X\}.$$ 
We claim that $G$ is weakly hyperbolic relative to $\mathcal{H}$.  This will imply, according to Lemma~\ref{lem:CyclicVSabelian}, that $G$ is cyclically hyperbolic, as desired.  Let $Y$ denote the cone-off of $M(X,x_0)$ over the $\mathcal{D}(X,x_0)$-translates of the subcomplexes in 
$$\{ w(x) \langle \Lambda \rangle \mid w(x) \langle \Lambda \rangle w(x)^{-1} \text{ abelian}, x \in X, \Lambda \subset \Delta X \}.$$ 
Our goal is to deduce from Theorem~\ref{thm:WhenHyp} that $Y$ is hyperbolic, which is sufficient for our purpose according to Lemma~\ref{lem:MilnorSvarc}.

\medskip \noindent
So let $R : [0,a] \times [0,b] \to M(X,x_0)$ be a flat rectangle with $a,b$ larger than the number of vertices in $X$. For convenience, we identify $[0,a] \times [0,b]$ with its image in $M(X,x_0)$. Let $w$ the $(x_0,\ast)$-diagram representing the vertex $(0,0)$. Let $\Phi \subset \Delta X$ denote the collection of the hyperplanes labelling the edges in $[0,a] \times \{0\}$ (or equivalently, in $[0,a] \times \{i\}$ for every $0 \leq i \leq b$) and $\Psi \subset \Delta X$ the hyperplanes labelling the edges in $\{0\} \times [0,b]$ (or equivalent, in $\{i\} \times [0,b]$ for every $0 \leq i \leq a$). Notice that every hyperplane in $\Phi$ is transverse to every hyperplane in $\Psi$, i.e.\ the subgraph in $\Delta X$ spanned by $\Phi \cup \Psi$ is a join $\Phi \ast \Psi$, and that $[0,a] \times [0,b] \subset w \langle \Phi \ast \Psi \rangle$. Also, notice that 
$$w \langle \Phi \ast \Psi \rangle w^{-1}= w \langle \Phi \rangle w^{-1} \times w \langle \Psi \rangle w^{-1}$$
where $ w \langle \Phi \rangle w^{-1}$ and $w \langle \Psi \rangle w^{-1}$ are both non-trivial since $a,b$ are sufficiently large. Indeed,  because $a$ is larger than the number of vertices in $X$ , $[0,a] \times \{0\}$ must contain at least two vertices $p,q$ with the same image in $X$.  Thinking of $p$ and $q$ as $(x_0,\ast)$-diagrams, they have the same terminus , so we get a diagram 
$$w \cdot w^{-1}q \cdot p^{-1}w \cdot w^{-1} \in w \cdot \langle \Phi \rangle \cdot w^{-1}$$
that is not trivial because Lemma~\ref{lem:GeodesicsSpecial} shows that $qp^{-1}$ is itself non-trivial. The same argument applies to $w \langle \Psi \rangle w^{-1}$. 

\medskip \noindent
Because $\mathcal{D}(X,x_0)$ does not contain $\mathbb{F}_2 \times \mathbb{F}_2$, one of our two factors does not contain $\mathbb{F}_2$, i.e.\ is abelian. Say that $w \langle \Phi \rangle w^{-1}$ is abelian. Then, for every $0 \leq i \leq b$, $[0,a] \times \{i\}$ lies in $u \langle \Phi \rangle$ for some word $u$ with $u \langle \Phi \rangle u^{-1}$ abelian. But, if $x$ denotes the terminus of a word representing $u$, then $u \langle \Phi \rangle$ is a $\mathcal{D}(X,x_0)$-translate of $w(x) \langle \Phi \rangle$, namely: $u \langle \Phi \rangle = uw(x)^{-1} \cdot w(x) \langle \Phi \rangle$. Consequently, $[0,a] \times \{i\}$ has diameter $2$ in $Y$. We conclude that Theorem~\ref{thm:WhenHyp} applies, proving that $\mathcal{D}(X,x_0)$ is weakly hyperbolic relative to $\mathcal{H}$, and finally that $\mathcal{D}(X,x_0)$, or equivalently $G$, is cyclically hyperbolic according to Lemma~\ref{lem:CyclicVSabelian}.

\medskip \noindent
Conversely, if $G$ contains $\mathbb{F}_2 \times \mathbb{F}_2$ as a subgroup, then we know from Corollary~\ref{cor:F2xF2} that $G$ is not cyclically hyperbolic.
\end{proof}

\section{A few examples}\label{section:Examples}

\noindent
In this final section, we discuss applications of our general results to a few classical families of groups. Unfortunately, it would be too long to describe the cubulations of all these groups, so several statements below are left without a proof. This is also justified by the fact that, even though they take place in different frameworks, the proofs are essentially all the same. 

\medskip \noindent
The key idea underlying this section is that, in the examples below, it is easier proving that a given group is cyclically hyperbolic rather than proving that it does not contain $\mathbb{F}_2 \times \mathbb{F}_2$ as a subgroup. Roughly speaking, in order to prove that a group cyclically hyperbolic, we consider all the ``obvious'' infinite cyclic subgroups and apply Proposition~\ref{prop:NotHyp} in order to deduce that our group is weakly hyperbolic relative them. In some sense, it suffices to show that the group has no ``obvious'' subgroup $\mathbb{F}_2 \times \mathbb{F}_2$ in order to deduce that it has no subgroup $\mathbb{F}_2 \times \mathbb{F}_2$ at all.

\paragraph{Right-angled Coxeter groups.} Given a simplicial graph $\Gamma$, the \emph{right-angled Coxeter group} $A(\Gamma)$ is defined by the presentation
$$\langle u \text{ vertex in } \Gamma \mid u^2=1 \ (u \text{ vertex}), \ [u,v]=1 \ (\{u,v\} \text{ edge in } \Gamma) \rangle.$$
The cube-completion $X(\Gamma)$ of the Cayley graph $\mathrm{Cayl}(C(\Gamma),V(\Gamma))$ is a CAT(0) cube complex on which the derived subgroup of $C(\Gamma)$ acts specially. In particular, $C(\Gamma)$ is virtually cocompact special. 

\begin{prop}\label{prop:RACGcyclhyp}
Let $\Gamma$ be a finite simplicial graph. The right-angled Coxeter group $C(\Gamma)$ is cyclically hyperbolic if and only if $\Gamma$ does not contain an induced copy of $K_{3,3}$, $K_{3,3}^+$, or $K_{3,3}^{++}$. 
\end{prop}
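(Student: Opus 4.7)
By Theorem~\ref{thm:Special}, since $C(\Gamma)$ is virtually cocompact special (acting geometrically on the Davis cube complex $X(\Gamma)$), cyclic hyperbolicity of $C(\Gamma)$ is equivalent to $C(\Gamma)$ containing no copy of $\mathbb{F}_2 \times \mathbb{F}_2$. The plan is therefore to show that $\Gamma$ contains an induced copy of $K_{3,3}$, $K_{3,3}^+$, or $K_{3,3}^{++}$ if and only if $C(\Gamma)$ contains $\mathbb{F}_2 \times \mathbb{F}_2$.

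For the backward direction, suppose $\Gamma$ contains one of these graphs as an induced subgraph $\Lambda$. Decomposing $\Lambda$ as a join $\Lambda_1 \ast \Lambda_2$ with $|\Lambda_i|=3$, each factor $\Lambda_i$ is isomorphic either to $\overline{K_3}$ (three mutually non-adjacent vertices) or to $K_2 \sqcup \{\star\}$ (an edge together with an isolated vertex). In the first case $C(\Lambda_i) \cong \mathbb{Z}_2 \ast \mathbb{Z}_2 \ast \mathbb{Z}_2$, and in the second $C(\Lambda_i) \cong (\mathbb{Z}_2 \times \mathbb{Z}_2) \ast \mathbb{Z}_2$; both are virtually non-abelian free groups, so each contains a copy of $\mathbb{F}_2$. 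Consequently, the parabolic subgroup $C(\Lambda) = C(\Lambda_1) \times C(\Lambda_2) \leq C(\Gamma)$ contains $\mathbb{F}_2 \times \mathbb{F}_2$.

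For the converse, following the strategy advocated at the beginning of this section, I would prove cyclic hyperbolicity of $C(\Gamma)$ directly, whence the absence of $\mathbb{F}_2 \times \mathbb{F}_2$ follows from Corollary~\ref{cor:F2xF2}. For every non-edge $\{u,v\}$ of $\Gamma$, the element $uv \in C(\Gamma)$ has infinite order and admits a combinatorial axis in $X(\Gamma)$. Let $Y$ denote the cone-off of $X(\Gamma)$ over the $C(\Gamma)$-translates of these axes; by Lemma~\ref{lem:MilnorSvarc} it suffices to show that $Y$ is hyperbolic, which I would do via Theorem~\ref{thm:WhenHyp}. Given any flat rectangle $[0,a] \times [0,b] \hookrightarrow X(\Gamma)$, one must show that at least one coordinate direction collapses in $Y$.

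Any such flat rectangle arises from a pair of induced subgraphs $\Phi, \Psi \subseteq \Gamma$ with $\Phi \ast \Psi$ a join of $\Gamma$, both $\Phi$ and $\Psi$ containing at least one non-edge, the horizontal edges labeled by $\Phi$-letters and the vertical edges by $\Psi$-letters. The main obstacle is the following combinatorial claim: if $\Gamma$ has no induced $K_{3,3}$-type subgraph, then at least one of $C(\Phi), C(\Psi)$ is virtually free abelian. Granting this, if for instance $C(\Psi)$ is virtually abelian then $\Psi$ decomposes as a join of a clique with finitely many non-edges $\{u_1,v_1\}, \ldots, \{u_k,v_k\}$ and $C(\Psi) \cong D_\infty^k \times \mathbb{Z}_2^m$, so any vertical segment of the rectangle is a word in $\Psi$-letters lying in such a product; in $Y$ each power $(u_iv_i)^{n_i}$ collapses inside a single $\langle u_iv_i\rangle$-coset and each clique letter contributes distance at most $1$, so vertical distances in $Y$ are uniformly bounded and the Hausdorff distance in $Y$ between any two horizontal lines is uniformly bounded, as required by Theorem~\ref{thm:WhenHyp}. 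To establish the combinatorial claim, suppose neither $C(\Phi)$ nor $C(\Psi)$ is virtually abelian. A finite case check on the four graphs on three vertices shows that a right-angled Coxeter group on three generators is virtually free abelian precisely when the underlying graph is $K_3$ or $P_3$; hence each of $\Phi, \Psi$ contains an induced three-vertex subgraph isomorphic to $\overline{K_3}$ or to $K_2 \sqcup \{\star\}$. Joining such a subgraph inside $\Phi$ with such a subgraph inside $\Psi$ (both sitting inside $\Phi \ast \Psi \subseteq \Gamma$) produces precisely one of $K_{3,3}$, $K_{3,3}^+$, or $K_{3,3}^{++}$ as an induced subgraph of $\Gamma$, contradicting the hypothesis.
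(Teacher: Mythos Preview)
Your argument is essentially the paper's own proof: cone off $X(\Gamma)$ over the $\langle uv\rangle$-axes for non-adjacent $u,v$, analyse a flat rectangle via the join $\Phi\ast\Psi$ of label sets, and force one side to be a clique joined with pairs of non-adjacent vertices. Two small remarks: your opening detour through Theorem~\ref{thm:Special} is unnecessary, since you end up proving cyclic hyperbolicity directly; and your ``hence'' (from the three-vertex case check to the existence of an induced $\overline{K_3}$ or $K_2\sqcup\{\star\}$ in $\Phi$) tacitly uses exactly the content of Lemma~\ref{lem:RACGflat}, namely that a graph with no such induced three-vertex subgraph must be a join of a clique with disjoint edges --- this is the one place where the paper makes the step explicit and you gloss over it.
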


\noindent
Here, $K_{3,3}$ refers to the complete bipartite graph, which is also the join of two triples of isolated vertices; $K_{3,3}^+$ refers to the join of a triple of isolated vertices with $K_1 \sqcup K_2$  (i.e.\ the disjoint union of an isolated vertex with a single edge); and finally $K_{3,3}^{++}$ refers to the join of two copies of $K_1 \sqcup K_2$. See Figure~\ref{Graphs}. Our proof of the proposition relies on the following elementary observation:

\begin{lemma}\label{lem:RACGflat}
A simplicial graph $\Gamma$ decomposes as the join of a complete graph and pairs of isolated vertices if and only if it does not contain an induced subgraph that is either $K_3^{\mathrm{opp}}$ (i.e.\ a triple of isolated vertices) or $K_1 \sqcup K_2$ (i.e.\ the disjoint union of an isolated vertex with a single edge).
\end{lemma}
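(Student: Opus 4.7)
The plan is to work in the complement graph. The induced subgraph $K_3^{\mathrm{opp}}$ in $\Gamma$ is exactly an induced triangle $K_3$ in $\bar{\Gamma}$, and the induced subgraph $K_1 \sqcup K_2$ in $\Gamma$ is exactly an induced path $P_3$ in $\bar{\Gamma}$. So the assumption on $\Gamma$ translates into: $\bar{\Gamma}$ contains no induced $K_3$ and no induced $P_3$. On the other hand, the conclusion translates into: $\bar{\Gamma}$ is a disjoint union of isolated vertices and isolated edges, since the complement of the join of isolated vertices and pairs of isolated vertices is exactly a disjoint union of single vertices and single edges.

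For the forward direction, I would verify the two forbidden configurations directly in a join $\Gamma = K \ast P_1 \ast \cdots \ast P_r$ (with $K$ complete and each $P_i$ a pair of isolated vertices). Any two vertices from distinct factors are adjacent, so a triple of pairwise non-adjacent vertices would have to lie inside a single factor, which is impossible since $K$ has no non-adjacent pair and each $P_i$ has only two vertices. For $K_1 \sqcup K_2$, the edge of $K_2$ must lie inside one factor, hence inside $K$; then the remaining vertex would have to be non-adjacent to both endpoints, which is impossible whether it lies in $K$ (all of $K$ is complete) or in another factor (joined to all of $K$).

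For the converse, the central ingredient is the well-known fact that a $P_3$-free graph is a disjoint union of cliques; I would prove it in one line by observing that in a $P_3$-free graph being in a common connected component is a transitive, hence equivalence, relation that coincides with adjacency or equality. Once $\bar{\Gamma}$ is known to be a disjoint union of cliques, the triangle-freeness forces each clique to have size at most $2$, i.e. $\bar{\Gamma}$ is a disjoint union of isolated vertices and isolated edges. Taking the complement once more then gives the desired join decomposition of $\Gamma$, with the isolated vertices of $\bar{\Gamma}$ assembling into the complete factor and each edge of $\bar{\Gamma}$ giving one pair of isolated vertices.

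The main potential obstacle is just the characterization of $P_3$-free graphs, but this is elementary; the rest of the argument is routine bookkeeping about complements of disjoint unions versus joins.
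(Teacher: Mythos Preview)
Your approach via the complement graph is correct and is essentially the same as the paper's, which also passes to the opposite graph and concludes that it is a disjoint union of isolated vertices and edges.

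There is one small slip in your direct forward argument: the edge of an induced $K_1 \sqcup K_2$ need not lie inside a single factor of the join --- it could span two factors, since any two vertices from distinct factors are adjacent. The fix is to argue from the isolated vertex instead: a vertex non-adjacent to two others cannot exist in the join, since a vertex in $K$ is adjacent to every other vertex and a vertex in some $P_i$ has exactly one non-neighbour. The paper in fact shortcuts both directions with precisely this observation: excluding $K_3^{\mathrm{opp}}$ and $K_1 \sqcup K_2$ says exactly that no vertex has two non-neighbours, i.e.\ $\bar\Gamma$ has maximum degree at most one, which immediately yields the description of $\bar\Gamma$ as a disjoint union of vertices and edges without the detour through the $P_3$-free characterisation.
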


\begin{proof}
Clearly, a simplicial graph contains $K_3^{\mathrm{opp}}$ or $K_1 \sqcup K_2$ as an induced subgraph if and only if it has a vertex that this not adjacent to at least two other vertices. In other words, $\Gamma$ does not contain an induced copy of $K_3^{\mathrm{opp}}$ or $K_1 \sqcup K_2$ if and only if each of its vertices is adjacent to all the other vertices but possibly one, which amounts to saying that the opposite graph of $\Gamma$ is a disjoint union of isolated vertices and edges.
\end{proof}

\begin{proof}[Proof of Proposition~\ref{prop:RACGcyclhyp}.]
If $\Gamma$ contains an induced subgraph $\Lambda$ isomorphic to $K_{3,3}$ (resp.  $K_{3,3}^+$, $K_{3,3}^{++}$), then $\langle \Lambda \rangle$ is isomorphic to $(\mathbb{Z}_2 \ast \mathbb{Z}_2 \ast \mathbb{Z}_2)^2$ (resp. $(\mathbb{Z}_2 \ast \mathbb{Z}_2 \ast \mathbb{Z}_2) \times ( \mathbb{Z}_2 \ast \mathbb{Z}_2^2)$, $(\mathbb{Z}_2 \ast \mathbb{Z}_2^2)^2$), which contains a subgroup isomorphic to $\mathbb{F}_2 \times \mathbb{F}_2$. It follows from Corollary~\ref{cor:F2xF2} that $C(\Gamma)$ is not cyclically hyperbolic.

\medskip \noindent
Conversely, assume that $\Gamma$ does not contain an induced copy of $K_{3,3}$, $K_{3,3}^+$, or $K_{3,3}^{++}$. Let $\mathcal{H}$ denote the collection of all the subgroups in $C(\Gamma)$ of the form $\langle uv \rangle$ where $u,v \in V(\Gamma)$ are two non-adjacent vertices. We claim that $C(\Gamma)$ is weakly hyperbolic relative to $\mathcal{H}$. For this purpose, we denote by $Y$ the cone-off of $X(\Gamma)$ over the cosets of subgroups in $\mathcal{H}$ and we want to deduce from Theorem~\ref{thm:WhenHyp} that $Y$ is hyperbolic.

\medskip \noindent
Let $R : [0,a] \times [0,b] \hookrightarrow X(\Gamma)$ be a flat rectangle. For convenience, we identify $[0,a] \times [0,b]$ with its image in $X(\Gamma)$. Let $\Phi$ (resp. $\Psi$) denote the subgraph of $\Gamma$ generated by the vertices that label all the edges dual to hyperplanes crossing $[0,a] \times \{0\}$ (resp. $\{0\} \times [0,b]$). Because every hyperplane crossing $[0,a] \times \{0\}$ is transverse to every hyperplane crossing $\{0\} \times [0,b]$, every vertex in $\Phi$ is adjacent to every vertex in $\Psi$. Observe that, if $\Phi,\Psi$ both contain an induced copy of $K_3^\mathrm{opp}$ or $K_1 \sqcup K_2$, then $\Gamma$ contains an induced copy of $K_{3,3}$, $K_{3,3}^+$, or $K_{3,3}^{++}$. It follows from Lemma~\ref{lem:RACGflat} that one of $\Phi,\Psi$, say $\Phi$, is a join of a complete graph $\Lambda$ with union of pairs of isolated vertices $\{u_1,v_1\}, \ldots, \{u_n,v_n\}$. Because, for every $0 \leq i \leq b$, $[0,a] \times \{i\}$ is contained in a coset of $\langle \Phi \rangle$, it suffices to show that $\langle \Phi \rangle$ is bounded in $Y$ in order to deduce that Theorem~\ref{thm:WhenHyp} applies. But $\langle \Phi \rangle$ decomposes as the product of $\langle \Lambda \rangle$, which has diameter at most 
 $$\mathrm{clique}(\Gamma) := \text{maximal cardinality of a complete subgraph in } \Gamma$$  
in $X(\Gamma)$ and a fortiori in $Y$, and of the lines $\langle u_i,v_i \rangle$, which have diameter at most $2$ in $Y$ by our choice of $\mathcal{H}$. Consequently, $\langle \Phi \rangle$ has diameter at most $\mathrm{clique}(\Gamma) + |V(\Gamma)|$ in $Y$. 

\medskip \noindent
We conclude that $C(\Gamma)$ is weakly hyperbolic relative to $\mathcal{H}$, as claimed, and finally that $C(\Gamma)$ is cyclically hyperbolic.
\end{proof}

\begin{remark}
Observe that Proposition~\ref{prop:RACGcyclhyp} contradicts the main theorem from \cite{wrelhypRACG}, which implies that every right-angled Coxeter group is cyclically hyperbolic. However, this theorem claims in particular that 
$$G:=\langle a_1,a_2,a_3,b_1,b_2,b_3 \mid a_i^2=b_j^2= [a_i,b_j]=1 \ (1 \leq i,j \leq 3) \rangle,$$ 
which is a product of two subgroups $A:= \langle a_1,a_2,a_3 \rangle$ and $B:= \langle b_1,b_2,b_3 \rangle$ isomorphic to $\mathbb{Z}_2 \ast \mathbb{Z}_2 \ast \mathbb{Z}_2$, is weakly hyperbolic relative to $\{\langle a_1,a_2 \rangle, \langle a_2,a_3\rangle, \langle a_1,a_3\rangle\}$. But this is clearly false since the corresponding cone-off $Y$ of the Cayley graph of $G$ is quasi-isometric to the product between the cone-off $C$ of $\mathrm{Cayl}(A,\{a_1,a_2,a_3\})$ over the cosets of the subgroups in $\{\langle a_1,a_2\rangle, \langle a_2,a_3 \rangle, \langle a_1,a_3\rangle\}$ with $\mathrm{Cayl}(B,\{b_1,b_2,b_3\})$. The latter Cayley graph is of course unbounded, but $C$ is also unbounded (for instance, the infinite word $(a_1a_2a_3)^\infty$ labels an infinite geodesic ray in the Cayley graph of $A$ that remains unbounded in $C$). So the cone-off $Y$ cannot be hyperbolic, proving that $G$ is not weakly hyperbolic relative to $\{\langle a_1,a_2 \rangle, \langle a_2,a_3\rangle, \langle a_1,a_3\rangle\}$. 
\end{remark}

\paragraph{Right-angled Artin groups.} Given a simplicial graph $\Gamma$, the \emph{right-angled Artin group} $A(\Gamma)$ is defined by the presentation
$$\langle u \text{ vertex in } \Gamma \mid [u,v]=1 \ (\{u,v\} \text{ edge in } \Gamma) \rangle.$$
By reproducing almost word for word our argument for right-angled Coxeter groups, we find that:

\begin{prop}\label{prop:RAAGcyc}
Let $\Gamma$ be a finite simplicial graph. The right-angled Artin group $A(\Gamma)$ is cyclically hyperbolic if and only if $\Gamma$ does not contain an induced $4$-cycle.
\end{prop}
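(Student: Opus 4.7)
The plan is to follow the proof of Proposition~\ref{prop:RACGcyclhyp} almost verbatim, replacing the subgroups generated by pairs of non-adjacent involutions with subgroups generated by cliques. For the ``only if'' direction, if $\Gamma$ contains an induced $4$-cycle on vertices $u_1,v_1,u_2,v_2$ (with $u_1 \not\sim u_2$ and $v_1 \not\sim v_2$), then the corresponding special subgroup $\langle u_1,u_2\rangle \times \langle v_1,v_2\rangle$ of $A(\Gamma)$ is isomorphic to $\mathbb{F}_2 \times \mathbb{F}_2$, so Corollary~\ref{cor:F2xF2} forbids $A(\Gamma)$ from being cyclically hyperbolic.

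For the converse, assume $\Gamma$ has no induced $4$-cycle. Let $X$ denote the universal cover of the Salvetti complex of $\Gamma$, on which $A(\Gamma)$ acts geometrically and specially; each hyperplane of $X$ carries a well-defined label in $V(\Gamma)$. Let $\mathcal{A}$ be the collection of subgroups $\langle \Lambda \rangle \leq A(\Gamma)$ as $\Lambda$ ranges over the cliques of $\Gamma$; each such subgroup is free abelian. By Lemma~\ref{lem:CyclicVSabelian} combined with Lemma~\ref{lem:MilnorSvarc}, it is enough to show that the cone-off $Y$ of $X$ over the $A(\Gamma)$-translates of the subcomplexes $\langle \Lambda \rangle \subset X$ is hyperbolic, and I plan to verify this by applying Theorem~\ref{thm:WhenHyp}.

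Given a flat rectangle $R : [0,a] \times [0,b] \hookrightarrow X$, let $\Phi$ (resp.\ $\Psi$) be the set of labels of hyperplanes crossing $[0,a] \times \{0\}$ (resp.\ $\{0\} \times [0,b]$). Since two hyperplanes of $X$ sharing a label are parallel while the horizontal and vertical hyperplanes of the rectangle are pairwise transverse, we must have $\Phi \cap \Psi = \emptyset$, and every $u \in \Phi$ is adjacent in $\Gamma$ to every $v \in \Psi$; i.e.\ $\Phi * \Psi \subset \Gamma$ is a genuine join. The key combinatorial input is: at least one of $\Phi, \Psi$ is a clique of $\Gamma$. Otherwise, picking $u_1 \neq u_2 \in \Phi$ non-adjacent and $v_1 \neq v_2 \in \Psi$ non-adjacent would produce four distinct vertices spanning an induced $4$-cycle, contradicting the hypothesis on $\Gamma$. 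Say $\Phi$ is a clique; then for every $0 \leq i \leq b$ the row $[0,a] \times \{i\}$ sits in the coset $R(0,i) \cdot \langle \Phi \rangle$, which is one of the subcomplexes coned off, and hence has diameter at most $2$ in $Y$. Theorem~\ref{thm:WhenHyp} then applies with $K=2$, so $Y$ is hyperbolic and $A(\Gamma)$ is cyclically hyperbolic.

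The only subtle point, and the place where most of the care is needed, is the verification of the join property for $\Phi$ and $\Psi$ inside $\Gamma$ (in particular ruling out $\Phi \cap \Psi \neq \emptyset$); once that is in place the induced $4$-cycle appears automatically, and the rest of the argument is a direct transcription of the RACG case.
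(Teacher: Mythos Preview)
Your proof is correct and follows essentially the same approach as the paper, which explicitly says the RAAG case is obtained ``by reproducing almost word for word our argument for right-angled Coxeter groups.'' Your version is in fact slightly cleaner than a literal transcription, since in the RAAG setting the combinatorial input simplifies (one of $\Phi,\Psi$ must be a clique, rather than the more elaborate decomposition of Lemma~\ref{lem:RACGflat}); the paper also notes an alternative route via \cite{MR2475886} combined with Theorem~\ref{thm:Special}.
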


\noindent
Alternatively, we can also apply \cite{MR2475886} that proves that $A(\Gamma)$ contains $\mathbb{F}_2 \times \mathbb{F}_2$ if and only if $\Gamma$ contains an induced $4$-cycle. Since right-angled Artin groups are cocompact special, the desired conclusion follows immediately from Theorem~\ref{thm:Special}.

\paragraph{Graph products of finite groups.} Generalising right-angled Artin and Coxeter groups, the \emph{graph product} $\Gamma \mathcal{G}$ associated to a simplicial graph $\Gamma$ and a collection of groups $\mathcal{G}=\{G_u \mid u \in V(\Gamma)\}$ indexed by the vertices of $\Gamma$, referred to as \emph{vertex-groups}, is defined as the quotient
$$\left( \underset{u \in V(\Gamma)}{\ast} G_u \right) / \langle \langle [g,h]=1 \ (g \in G_u, h\in G_v, \{u,v\} \text{ edge in } \Gamma) \rangle \rangle.$$
Mimicking the arguments used in the proof of Proposition~\ref{prop:RACGcyclhyp}, one can prove that:

\begin{prop}\label{prop:GraphProducts}
Let $\Gamma$ be a finite simplicial graph and $\mathcal{G}$ a collection of non-trivial finite groups indexed by $V(\Gamma)$. The graph product $\Gamma \mathcal{G}$ is cyclically hyperbolic if and only if it $\Gamma$ does not contain induced subgraphs as given by Figure~\ref{Graphs}, i.e.\
\begin{itemize}
	\item a copy of $K_{3,3}$, $K_{3,3}^+$, or $K_{3,3}^{++}$;
	\item a copy of $K_{2,2}$ with two adjacent vertices indexing vertex-groups of sizes $\geq 3$;
	\item a copy of $K_{3,3}$ with one vertex of degree $3$ indexing a vertex-group of size $\geq 3$;
	\item a join of $K_1 \sqcup K_2$ with two isolated vertices, one of them indexing a vertex-group of size $\geq 3$.
\end{itemize}
\end{prop}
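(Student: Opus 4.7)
The plan is to mimic closely the proof of Proposition~\ref{prop:RACGcyclhyp}, exploiting the fact that $\Gamma \mathcal{G}$ acts geometrically on a standard CAT(0) cube complex $X$ whose hyperplanes are labelled by the vertices of $\Gamma$ (two hyperplanes being transverse iff their labels are adjacent) and is virtually cocompact special. The forward direction invokes Corollary~\ref{cor:F2xF2}; the reverse direction cones off over a suitable family of cyclic subgroups and verifies hyperbolicity of the resulting cone-off via Theorem~\ref{thm:WhenHyp}.

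For the forward direction, one relies on the classical fact that a free product $A \ast B$ of non-trivial groups fails to contain $\mathbb{F}_2$ only when $A \cong B \cong \mathbb{Z}/2$, in which case $A \ast B \cong D_\infty$. Each forbidden configuration decomposes as an explicit join $\Lambda_1 \ast \Lambda_2$ in $\Gamma$, so $\langle \Lambda_1 \cup \Lambda_2 \rangle = \langle \Lambda_1 \rangle \times \langle \Lambda_2 \rangle$. For $K_{3,3}$, $K_{3,3}^+$, and $K_{3,3}^{++}$ each factor is respectively $(\mathbb{Z}/2)^{\ast 3}$ or $\mathbb{Z}/2 \ast (\mathbb{Z}/2)^2$, both containing $\mathbb{F}_2$. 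For $K_{2,2}$ with two adjacent vertices $a,b$ of group size $\geq 3$, the decomposition $\{a,c\} \ast \{b,d\}$ yields $(G_a \ast G_c) \times (G_b \ast G_d)$ where the size hypothesis forces each factor to contain $\mathbb{F}_2$. The two remaining configurations (the $K_{2,3}$-type and the ``$K_1 \sqcup K_2$ joined with two isolated vertices''-type) are handled similarly. In each case $\mathbb{F}_2 \times \mathbb{F}_2 \hookrightarrow \Gamma \mathcal{G}$, so Corollary~\ref{cor:F2xF2} rules out cyclic hyperbolicity.

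For the reverse direction, the key combinatorial step — the analogue of Lemma~\ref{lem:RACGflat} — is that for an induced subgraph $\Lambda \subseteq \Gamma$, the span $\langle \Lambda \rangle$ contains $\mathbb{F}_2$ if and only if $\Lambda$ contains one of (a) three pairwise non-adjacent vertices, (b) an induced $K_1 \sqcup K_2$, or (c) a non-edge with at least one endpoint of group size $\geq 3$. Failing all three, $\Lambda$ is a join of a clique with pairs of non-adjacent $\mathbb{Z}/2$-vertices, and $\langle \Lambda \rangle \cong F \times D_\infty^m$ for some finite $F$. Define $\mathcal{H}$ as the collection of infinite cyclic subgroups of the form $\langle g_u g_v \rangle$ for non-adjacent $u, v \in V(\Gamma)$ with $|G_u| = |G_v| = 2$ (where $g_u, g_v$ are the unique non-trivial elements). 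I claim that $\Gamma \mathcal{G}$ is weakly hyperbolic relative to $\mathcal{H}$; by Theorem~\ref{thm:WhenHyp} it suffices to verify the flat-rectangle criterion in the cone-off $Y$ of $X$. Given $[0,a] \times [0,b] \hookrightarrow X$, let $\Phi, \Psi \subseteq V(\Gamma)$ label the hyperplanes crossing its horizontal and vertical sides; they form a join $\Phi \ast \Psi$ in $\Gamma$. If both $\langle \Phi \rangle$ and $\langle \Psi \rangle$ contained $\mathbb{F}_2$, then picking minimal $\mathbb{F}_2$-subgraphs of types (a)/(b)/(c) in each and joining them would produce one of the five forbidden configurations — contradicting the hypothesis. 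So one of them, say $\langle \Phi \rangle$, is of the form $F \times D_\infty^m$; each $D_\infty$-factor becomes bounded of diameter $\leq 2$ in $Y$ once its index-two cyclic subgroup $\langle g_{u_i} g_{v_i} \rangle \in \mathcal{H}$ is collapsed, so the whole $\langle \Phi \rangle$ — and hence every horizontal line of the rectangle — has uniformly bounded diameter in $Y$.

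The main obstacle is the exhaustive matching of the six unordered pairs from $\{(a),(b),(c)\}$ with the five configurations in the statement: $(a,a) \leftrightarrow K_{3,3}$; $(a,b) \leftrightarrow K_{3,3}^+$; $(b,b) \leftrightarrow K_{3,3}^{++}$; $(a,c) \leftrightarrow K_{2,3}$ with a degree-$3$ vertex of group size $\geq 3$; $(b,c) \leftrightarrow$ the join of $K_1 \sqcup K_2$ with two isolated vertices, one of size $\geq 3$; and $(c,c) \leftrightarrow K_{2,2}$ with two adjacent vertices of group size $\geq 3$ (the two size-$\geq 3$ vertices must be adjacent, since otherwise they would lie on the same side of the $K_{2,2}$-join and the other non-edge would violate condition (c)). A secondary subtlety is the combinatorial lemma itself: the ``only if'' direction requires the structural classification of subgraphs avoiding (a), (b), and (c) — a join of a clique with $\mathbb{Z}/2$-pairs — while the ``if'' direction uses that $A \ast B$ contains $\mathbb{F}_2$ as soon as $\max(|A|,|B|) \geq 3$ and both factors are non-trivial, and that $\mathbb{Z}/2 \ast (\mathbb{Z}/2 \times \mathbb{Z}/2)$ already contains $\mathbb{F}_2$ (handling case (b) regardless of vertex-group sizes).
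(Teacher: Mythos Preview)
The paper leaves this proposition without proof, saying only that one should mimic the argument for Proposition~\ref{prop:RACGcyclhyp} using the action of $\Gamma\mathcal{G}$ on its quasi-median Cayley graph. Your outline does precisely this: the combinatorial lemma characterising when a parabolic $\langle\Lambda\rangle$ contains $\mathbb{F}_2$ via the three obstructions (a), (b), (c), together with the six-case matching of joined pairs from $\{(a),(b),(c)\}$ to the listed configurations, are exactly the missing details, and they are correct.

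One technical point deserves care. The object you describe --- hyperplanes labelled by $V(\Gamma)$, transverse if and only if their labels are adjacent in $\Gamma$ --- is the quasi-median Cayley graph of $\Gamma\mathcal{G}$, not a CAT(0) cube complex. In any genuine cubulation of $\Gamma\mathcal{G}$, a vertex $v$ with $|G_v|\geq 3$ contributes several pairwise transverse hyperplane types, so hyperplanes with the same $\Gamma$-label can cross and the label sets $\Phi,\Psi$ arising from a flat rectangle need not be disjoint. This is exactly why the paper recommends the quasi-median setting: there the hyperplane combinatorics are as you state, and the flat-rectangle argument together with (the quasi-median analogue of) Theorem~\ref{thm:WhenHyp} go through verbatim --- the paper asserts this transfer is ``no difficulty''. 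Your argument becomes literally correct once ``CAT(0) cube complex'' is replaced by ``quasi-median graph'' and Theorem~\ref{thm:WhenHyp} is invoked in that form.
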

\begin{figure}
\begin{center}
\includegraphics[width=0.7\linewidth]{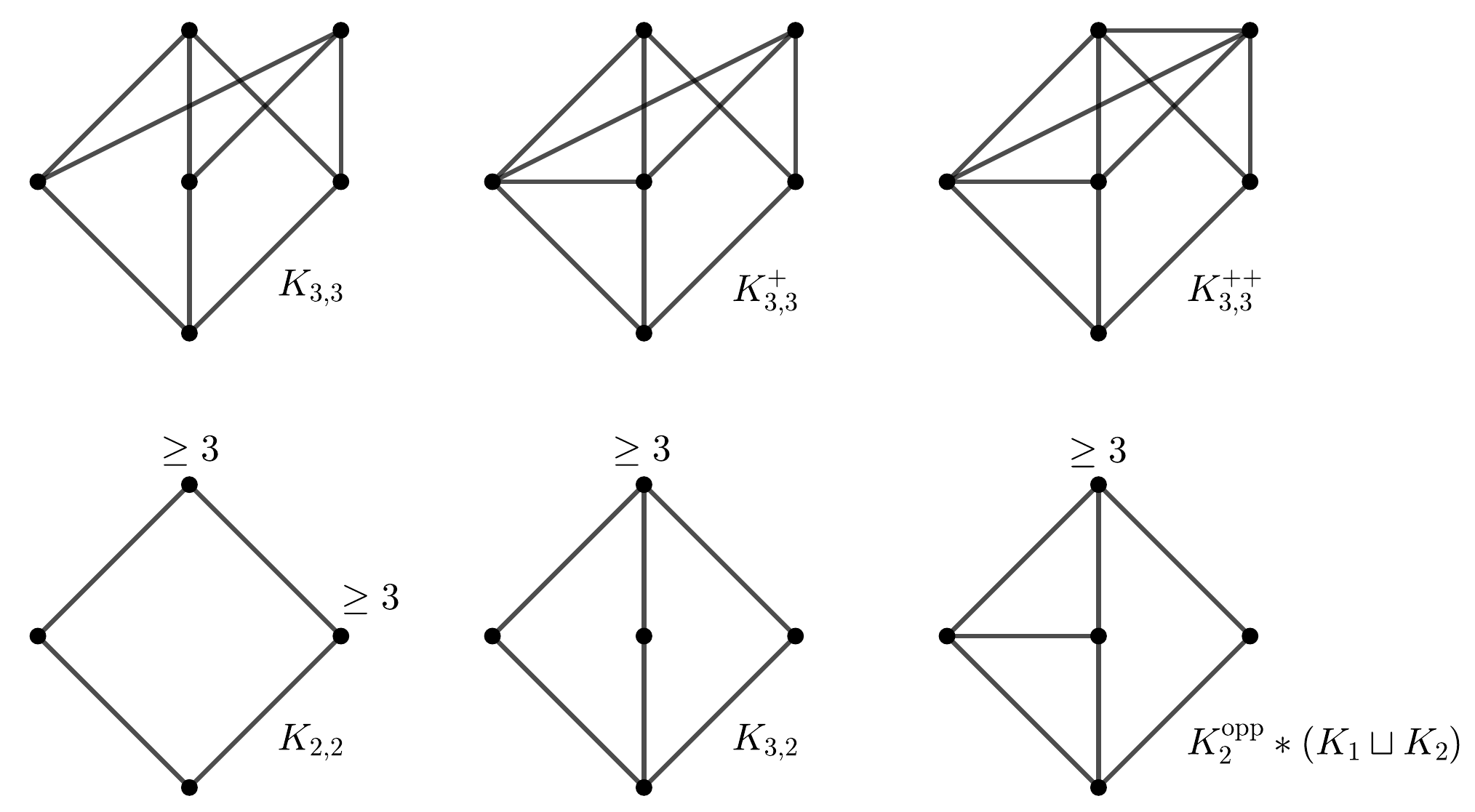}
\caption{Configurations that prevent being cyclically hyperbolic.}
\label{Graphs}
\end{center}
\end{figure}

\noindent
Even though graph products of finite groups act geometrically on CAT(0) cube complexes and are virtually cocompact special (see for instance \cite{MR2413337, MR3010817}), the best way to reproduce the arguments we used for right-angled Coxeter groups is to make our graph products of finite groups act on \emph{quasi-median graphs} as explained in \cite[Section~8]{Qm} (see also \cite[Section~2.2]{MR4295519} for a more direct description). Because quasi-median graphs share essentially the same geometry as CAT(0) cube complexes (see \cite[Section~2]{Qm}), there is no difficulty here. Nevertheless, we do not repeat the arguments and leave Proposition~\ref{prop:GraphProducts} without a proof.

\medskip \noindent
One easily sees that, if the simplicial graph $\Gamma$ from Proposition~\ref{prop:GraphProducts} contains one of the configurations given by Figure~\ref{Graphs}, then the graph product $\Gamma \mathcal{G}$ contains a subgroup isomorphic to $\mathbb{F}_2 \times \mathbb{F}_2$. Consequently, the combination of Proposition~\ref{prop:GraphProducts} and Corollary~\ref{cor:F2xF2} yields the following generalisation of Corollary~\ref{cor:ProductInRACG}:

\begin{cor}
Let $\Gamma$ be a simplicial graph and $\mathcal{G}$ a collection of non-trivial finite groups indexed by $V(\Gamma)$. The graph product $\Gamma \mathcal{G}$ contains $\mathbb{F}_2 \times \mathbb{F}_2$ as a subgroup if and only if $\Gamma$ contains one of the configurations given by Figure~\ref{Graphs}.
\end{cor}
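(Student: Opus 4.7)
My plan is to prove the two implications separately. For the ``no configuration implies no $\mathbb{F}_2 \times \mathbb{F}_2$'' direction I argue by contraposition: assume $\Gamma$ contains none of the configurations in Figure~\ref{Graphs}. Then Proposition~\ref{prop:GraphProducts} yields that $\Gamma \mathcal{G}$ is cyclically hyperbolic. Since graph products of finite groups are virtually cocompact special, and in particular admit a geometric action on a CAT(0) cube complex (cited at the opening of this paragraph via \cite{MR2413337, MR3010817}), Corollary~\ref{cor:F2xF2} applies to conclude that $\Gamma \mathcal{G}$ cannot contain $\mathbb{F}_2 \times \mathbb{F}_2$.

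For the converse, the plan is a case analysis on the configurations listed in Figure~\ref{Graphs}. Each of them is an induced subgraph of the form $\Lambda_1 \ast \Lambda_2$ (a join of two small subgraphs), so after setting $H_i := \langle G_v : v \in V(\Lambda_i) \rangle \leq \Gamma \mathcal{G}$, the defining relations of a graph product force every element of $H_1$ to commute with every element of $H_2$, so that $H_1$ and $H_2$ generate their direct product $H_1 \times H_2$ inside $\Gamma \mathcal{G}$. Moreover, the standard description of parabolic subgroups of a graph product identifies $H_i$ with the graph product $\Lambda_i \mathcal{G}|_{V(\Lambda_i)}$. It therefore suffices to exhibit a non-abelian free subgroup inside each $H_i$.

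Depending on the configuration, $H_i$ is of one of the following forms: a free product $G_a \ast G_b \ast G_c$ of three non-trivial finite groups (when $\Lambda_i$ is three isolated vertices), a mixed product $(G_a \times G_b) \ast G_c$ (when $\Lambda_i = K_1 \sqcup K_2$), or a two-factor free product $G_a \ast G_b$ in the $K_{2,2}$ case. I then invoke two elementary facts on free products of finite groups: first, a free product of two non-trivial groups contains $\mathbb{F}_2$ unless both factors have order $2$ (the infinite dihedral exception); second, a free product of three non-trivial finite groups always contains $\mathbb{F}_2$ (for the only potentially problematic case $\mathbb{Z}_2 \ast \mathbb{Z}_2 \ast \mathbb{Z}_2$, the kernel of the natural map to $\mathbb{Z}_2^3$ is torsion-free of index $8$ by Kurosh's theorem and has Euler characteristic $-4$, hence is free of rank $5$). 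In the $(G_a \times G_b) \ast G_c$ case one applies the first fact with $|G_a \times G_b| \geq 4 > 2$. The numerical side conditions attached to each configuration in Proposition~\ref{prop:GraphProducts} (vertex-groups of size $\geq 3$ at the adjacent vertices of $K_{2,2}$, etc.) are precisely what is needed to avoid the $\mathbb{Z}_2 \ast \mathbb{Z}_2$ exception in the relevant factor.

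I expect the only obstacle to be essentially clerical: verifying that the case list is exhaustive, and checking in each case that the numerical hypotheses from Proposition~\ref{prop:GraphProducts} match exactly the hypotheses needed by the two free-subgroup facts above. Since Proposition~\ref{prop:GraphProducts} has already isolated the precise list of obstructions to cyclic hyperbolicity, this matching is routine, and the substance of the corollary is really carried by that proposition together with Corollary~\ref{cor:F2xF2}.
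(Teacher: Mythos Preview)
Your proposal is correct and follows exactly the paper's approach: the paper states the corollary as an immediate consequence of Proposition~\ref{prop:GraphProducts} and Corollary~\ref{cor:F2xF2} for one direction, and dismisses the other direction with ``One easily sees that \ldots'' --- your case analysis is precisely the verification behind that phrase. The only difference is that you spell out the free-subgroup arguments for each join factor, which the paper omits.
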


\paragraph{Graph braid groups.} Given a topological space $X$ and an integer $n \geq 1$, the \emph{configuration space of $n$ points in $X$} is 
$$C_n(X)= \{ (x_1, \ldots, x_n) \in X^n \mid \text{for every $i \neq j$, $x_i \neq x_j$} \}.$$
So a point in $C_n(X)$ is the data of $n$ ordered and pairwise distinct points in $X$. The corresponding configuration space of unordered collections of points is the quotient $UC_n(X)= C_n(X) / \mathfrak{S}_n$ where the symmetric group $\mathfrak{S}_n$ acts by permuting the coordinates. Given an initial configuration $\ast \in UC_n(X)$, the \emph{braid group} $B_n(X,\ast)$ is the fundamental group of $UC_n(X)$ based at $\ast$. Basically, it is the group of trajectories of $n$ points  or \emph{particles}  in $X$, starting and ending at $\ast$ (not necessarily in the same order), up to isotopy. When $X$ is a one-dimensional CW-complex, one refers to a \emph{graph braid group}. In this case, as observed in \cite{GraphBraidGroups}, $UC_n(X)$ can be discretised in order to obtained a (homotopy equivalent) nonpositively curved cube complex, which turns out to be special as shown in \cite{CrispWiest}. 

\medskip \noindent
In \cite{MR4227231}, we exploited the formalism introduced in \cite{SpecialRH} that allows us to study graph braid groups like right-angled Artin groups. By adapting the proof of Proposition~\ref{prop:RACGcyclhyp} to this framework, one obtains the following statement:

\begin{prop}\label{prop:Braid}
Let $\Gamma$ be a connected one-dimensional CW-complex and $n \geq 0$ an integer. The graph braid group $B_n(\Gamma)$ is cyclically hyperbolic if and only if, for all disjoint connected subcomplexes $\Lambda_1,\Lambda_2 \subset \Gamma$ and for all integers $n_1,n_2 \geq 0$ satisfying $n_1+n_2 \leq n$, there exists $i \in \{1,2\}$ such that $B_{n_i}(\Lambda_i)$ is cyclic.
\end{prop}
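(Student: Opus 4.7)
My plan is to mirror, almost verbatim, the argument used for right-angled Coxeter groups in Proposition \ref{prop:RACGcyclhyp}, exploiting the diagrammatic formalism for graph braid groups developed in \cite{MR4227231} in place of the special-cube-complex formalism of Section 5.1.

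For the necessity direction, suppose there exist disjoint connected subcomplexes $\Lambda_1,\Lambda_2\subset\Gamma$ and integers $n_1,n_2\geq 0$ with $n_1+n_2\leq n$ such that neither $B_{n_i}(\Lambda_i)$ is cyclic. The disjointness of $\Lambda_1,\Lambda_2$ makes braids supported on $\Lambda_1$ commute with braids supported on $\Lambda_2$, and fixing a configuration of the remaining $n-n_1-n_2$ particles away from $\Lambda_1\cup\Lambda_2$ (possible because $\Gamma$ is connected) produces an injective homomorphism $B_{n_1}(\Lambda_1)\times B_{n_2}(\Lambda_2)\hookrightarrow B_n(\Gamma)$. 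For a connected one-dimensional CW-complex $\Lambda$, a non-cyclic graph braid group $B_k(\Lambda)$ always contains a non-abelian free subgroup: this follows from the cocompact special structure of graph braid groups \cite{CrispWiest} combined with the Tits alternative for cocompact special groups \cite{MR2164832}, together with the case-by-case verification that the only virtually abelian $B_k(\Lambda)$ for connected $\Lambda$ are cyclic. Consequently $\mathbb{F}_2\times\mathbb{F}_2\hookrightarrow B_n(\Gamma)$, and Corollary \ref{cor:F2xF2} rules out cyclic hyperbolicity.

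For the converse, let $X=UD_n(\Gamma)$ denote the discretized unordered configuration space, a special cube complex on which $B_n(\Gamma)$ acts geometrically after subdividing $\Gamma$ sufficiently, by \cite{GraphBraidGroups,CrispWiest}. Following \cite{MR4227231}, I would associate to each pair $(\Lambda,k)$ consisting of a connected subcomplex $\Lambda\subset\Gamma$ and an integer $k$ with $B_k(\Lambda)$ cyclic a ``diagram subcomplex'' $D(\Lambda,k)\subset X$, in analogy with the subcomplexes $w(x)\langle\Lambda\rangle$ used in the proof of Theorem \ref{thm:Special}. Letting $Y$ be the cone-off of $X$ over all $B_n(\Gamma)$-translates of the $D(\Lambda,k)$, I would then aim to apply Theorem \ref{thm:WhenHyp}: given a flat rectangle $[0,a]\times[0,b]\hookrightarrow X$ with $a,b$ large, the combinatorics of legal words analogous to Fact \ref{fact:diagvsRAAG} forces the hyperplanes crossing $[0,a]\times\{0\}$ and those crossing $\{0\}\times[0,b]$ to be labelled respectively by edges of two disjoint subcomplexes $\Lambda_1,\Lambda_2\subset\Gamma$ with particle counts $n_1,n_2$ satisfying $n_1+n_2\leq n$. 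Decomposing each $\Lambda_i$ into its connected components and applying the hypothesis componentwise yields, on at least one side of the flat, a uniformly bounded product of cyclic cosets, which collapses that side to bounded diameter in $Y$. Theorem \ref{thm:WhenHyp} then gives the hyperbolicity of $Y$, and the combination of Lemma \ref{lem:MilnorSvarc} with Lemma \ref{lem:CyclicVSabelian} concludes that $B_n(\Gamma)$ is cyclically hyperbolic.

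The hard part will be the combinatorial bookkeeping around flat rectangles, especially when the subcomplexes $\Lambda_1,\Lambda_2$ produced by a given flat are not themselves connected: decomposing into connected components and invoking the hypothesis pair by pair forces the family $\mathcal{H}$ to be enlarged to include certain finite products of cyclic subgroups, so that Lemma \ref{lem:CyclicVSabelian} is needed to pass from ``weakly hyperbolic relative to finitely generated abelian subgroups'' back to cyclic hyperbolicity. A secondary but non-trivial technical ingredient is the verification of the dichotomy ``cyclic or contains $\mathbb{F}_2$'' for $B_k(\Lambda)$ with $\Lambda$ connected, which should be handled using the explicit structure of graph braid groups of trees together with the natural maps $\pi_1(\Lambda)\to B_k(\Lambda)$ arising from a single moving particle on a graph with non-trivial first homology.
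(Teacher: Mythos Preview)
Your proposal is correct and follows exactly the approach the paper indicates: the paper does not give a detailed proof of this proposition but states explicitly that it is obtained ``by adapting the proof of Proposition~\ref{prop:RACGcyclhyp}'' to the framework of \cite{MR4227231}, which is precisely what you outline. Your identification of the two genuine technical points---the connectedness bookkeeping for the subcomplexes arising from a flat rectangle, and the dichotomy ``cyclic or contains $\mathbb{F}_2$'' for $B_k(\Lambda)$ with $\Lambda$ connected---is accurate, and the paper implicitly defers the latter to \cite[Lemmas~4.3 and~4.16]{MR4227231}.
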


\noindent
It is worth noticing that \cite[Lemmas~4.3 and~4.16]{MR4227231} characterise exactly when a graph braid group is cyclic, so Proposition~\ref{prop:Braid} can be formulated just in terms of forbidden subcomplexes. For instance, for graph braid groups with two particles:

\begin{cor}
Let $\Gamma$ be a connected one-dimensional CW-complex. The graph braid group $B_2(\Gamma)$ is cyclically hyperbolic if and only if $\Gamma$ does not contains two disjoint connected subcomplexes that both contain at least two cycles.
\end{cor}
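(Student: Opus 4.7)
The plan is to specialise Proposition~\ref{prop:Braid} to $n=2$ and enumerate the pairs $(n_1,n_2)$ of non-negative integers with $n_1+n_2 \leq 2$, namely $(0,0)$, $(1,0)$, $(0,1)$, $(2,0)$, $(0,2)$ and $(1,1)$. Since $B_0$ of any subcomplex is the trivial group and hence cyclic, any pair in which $n_1=0$ or $n_2=0$ automatically satisfies the conclusion of the proposition. Therefore the only pair that can possibly obstruct cyclic hyperbolicity is $(n_1,n_2)=(1,1)$, so the equivalence of Proposition~\ref{prop:Braid} reduces to the claim that $B_2(\Gamma)$ is cyclically hyperbolic if and only if there do not exist two disjoint connected subcomplexes $\Lambda_1,\Lambda_2 \subset \Gamma$ such that both $B_1(\Lambda_1)$ and $B_1(\Lambda_2)$ are non-cyclic.

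The next step is to translate ``$B_1(\Lambda)$ non-cyclic'' into a combinatorial condition on $\Lambda$. Since $\Lambda$ is a connected one-dimensional CW-complex, it has the homotopy type of a graph, and $B_1(\Lambda) \cong \pi_1(\Lambda)$ is a free group whose rank equals the first Betti number of $\Lambda$, that is, the number of independent cycles of $\Lambda$. Consequently $B_1(\Lambda)$ is cyclic if and only if $\Lambda$ contains at most one cycle, and non-cyclic if and only if it contains at least two cycles. Alternatively one may invoke \cite[Lemmas~4.3 and~4.16]{MR4227231} directly, as the discussion following Proposition~\ref{prop:Braid} already does.

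Putting the two steps together yields the desired equivalence. The argument is really just bookkeeping on top of Proposition~\ref{prop:Braid}, so the only point that requires any care is to confirm that the proposition allows $n_i = 0$, so that the cases $(0,2)$ and $(2,0)$ are indeed covered and harmlessly discharged by the triviality of $B_0$; this is immediate from the statement of the proposition, which quantifies over all non-negative integers $n_1,n_2$ with $n_1+n_2 \leq n$. In particular, there is no genuine geometric obstacle to overcome here: the corollary is a direct specialisation of Proposition~\ref{prop:Braid}.
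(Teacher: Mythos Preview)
Your proof is correct and follows exactly the approach the paper indicates: the corollary is stated without proof immediately after Proposition~\ref{prop:Braid}, with the remark that \cite[Lemmas~4.3 and~4.16]{MR4227231} characterise when a graph braid group is cyclic, and your argument carries out precisely this specialisation to $n=2$. The only case-analysis you add (discarding pairs with some $n_i=0$ and reducing to $(1,1)$ via $B_1(\Lambda)\cong\pi_1(\Lambda)$) is the expected bookkeeping the paper leaves implicit.
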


\noindent
It is worth noticing that, if $\Gamma$ contains two vertices of degree $\geq 3$ and if $n \geq 6$, then $B_n(\Gamma)$ contains a subgroup isomorphic to $B_3(Y) \times B_3(Y)$, where $Y$ denotes a star with three arms, which is a product of two non-abelian groups. Therefore, if $B_n(\Gamma)$ is cyclically hyperbolic with $n \geq 6$, then $\Gamma$ has to contain at most one vertex of degree $\geq 3$. In the terminology of \cite{MR4227231}, this means that $\Gamma$ is a \emph{flower graph}, and, according to \cite[Corollary~4.7]{MR4227231}, this implies that $B_n(\Gamma)$ is free. Thus, there do not exist interesting examples of cyclically hyperbolic graph braid groups with a high number of particles. 

\medskip \noindent
As a concrete application of Proposition~\ref{prop:Braid}, the graph braid group $B_n(K_m)$ with $n \geq 2$ particles over the complete graph $K_m$ is cyclically hyperbolic if and only if $m \leq 7$. Observe that one obtains infinitely many examples that are not free, since $B_n(K_m)$ contains $B_2(K_5)$ as a subgroup, which is a surface group \cite[Example~5.1]{GraphBraidGroups}, for all $n \geq 2$ and $m \geq 6$. This contrasts with hyperbolic and toric relatively hyperbolic graph braid groups \cite[Examples~4.10 and~4.27]{MR4227231}.

\paragraph{BMW-groups.} It follows from Proposition~\ref{prop:NotHyp} that groups acting on products of trees (with infinite boundaries) are not cyclically hyperbolic. This includes the so-called BMW-groups \cite{MR3931408}. In \cite{MR2174099}, it is proved that there exists a BMW-group $G$ that is commutative-transitive (i.e.\ for all $a,b,c \in G$, if $[a,b]=[b,c]=1$ then $[a,c]=1$). Of course, $G$ cannot contain a subgroup isomorphic to $\mathbb{F}_2 \times \mathbb{F}_2$ (and even to $\mathbb{F}_2 \times \mathbb{Z}$), proving that the converse of Corollary~\ref{cor:F2xF2} provided by Theorem~\ref{thm:Special} to virtually cocompact special groups cannot be generalised to arbitrary groups acting geometrically on CAT(0) cube complexes.

\addcontentsline{toc}{section}{References}

\bibliographystyle{alpha}
{\footnotesize\bibliography{CyclicHypCCI}}

\Address

%

\end{document}